\newtheoremstyle{mystyle}
  {}
  {0}
  {\itshape}
  {}
  {\bfseries}
  {.}
  { }
  {\thmname{#1}\thmnumber{ #2}\thmnote{ (#3)}}
\theoremstyle{mystyle}
\newtheorem{Thm}{Theorem}[section]
\newtheorem{Lem}[Thm]{Lemma}
\newtheorem{Cor}[Thm]{Corollary}
\newtheorem{Prop}[Thm]{Proposition}
\theoremstyle{definition}
\newtheorem{Def}[Thm]{Definition}
\newtheorem{Ex}[Thm]{Example}
\theoremstyle{remark}
\newtheorem{Rmk}[Thm]{Remark}
\newcommand{\R}{\mathbb R}
\newcommand{\Z}{\mathbb Z}
\newcommand{\Q}{\mathbb Q}
\let\C\nothing
\newcommand{\C}{\mathbb C}
\newcommand{\Diff}{\mathrm{Diff}}
\newcommand{\gl}{\mathfrak{gl}}
\newcommand{\cat}[1]{\mathchoice
  {\ensuremath{\mbox{\bfseries {\upshape {#1}}}}}
  {\ensuremath{\mbox{\bfseries {\upshape {#1}}}}}
  {\scalebox{.7}{\ensuremath{\mbox{\bfseries {\upshape {#1}}}}}}
  {\scalebox{.5}{\ensuremath{\mbox{\bfseries {\upshape {#1}}}}}}%
  }
\newcommand{\TanWeb}[1]{\mathbf{Webs}_{#1}}
\newcommand{\TanWebsing}[1]{\TanWeb{#1}^{\mathrm{sing}}}
\newcommand{\Foams}[1]{\mathbf{Foams}_{#1}}
\newcommand{\BNfunc}[1]{ \left\llbracket #1 \right\rrbracket }
\newcommand{\Kb}{\cat{K}^b} 
\newcommand{\HChb}{\cat{HCh}^b} 
\newcommand{\listsymbol}{--}
\newcommand{\Khdot}{x}
\title{Khovanov skein lasagna modules with $1$-dimensional inputs}
\author{Qiuyu Ren}
\address{Department of Mathematics, University of California, Berkeley, Berkeley, CA 94720, USA}
\email{qiuyu\_ren@berkeley.edu}
\author{Ian Sullivan}
\address{Department of Mathematics, University of California, Davis, One Shields Avenue, Davis, CA 95616, USA}
\email{iasullivan@ucdavis.edu}
\author{Paul Wedrich}
\address{Fachbereich Mathematik, Universit\"at Hamburg, 
Bundesstra{\ss}e 55, 
20146 Hamburg, Germany
\href{https://paul.wedrich.at/}{paul.wedrich.at}}
\email{paul.wedrich@uni-hamburg.de}
\author{Michael Willis}
\address{Department of Mathematics, Texas A\&M University, College Station, TX 77840, USA}
\email{msw188@tamu.edu}
\author{Melissa Zhang}
\address{Department of Mathematics, University of California, Davis, One Shields Avenue, Davis, CA 95616, USA}
\email{mlzhang@ucdavis.edu}
\begin{document}

\begin{abstract}
We construct a variant of Khovanov skein lasagna modules, which takes the Khovanov homology in connected sums of $S^1\times S^2$ defined by Rozansky and Willis as the input link homology. To carry out the construction, we prove functoriality of Rozansky-Willis's homology for cobordisms in a class of $4$-manifolds that we call $4$-dimensional relative $1$-handlebody complements, by using, as a bypass, an isomorphism proved in Sullivan--Zhang \cite{sullivan2024kirby} relating the Rozansky-Willis homology and the classical Khovanov skein lasagna module of links on the boundary of $D^2\times S^2$. Along the way, we also present new results on diffeomorphism groups, on Gluck twists for Khovanov skein lasagna modules, and on the functoriality of $\mathfrak{gl}_2$ foams.
\end{abstract}

\maketitle

\section{Introduction}
In \cite{morrison2022invariants}, Morrison--Walker--Wedrich defined a package of smooth $4$-manifold invariants, including the so-called Khovanov skein lasagna modules. Roughly, for every compact oriented $4$-manifold $X$ and framed oriented link $L\subset\partial X$, the Khovanov skein lasagna module of $(X,L)$, denoted $\mathcal S_0^2(X;L)$, is the $R$-module generated by properly embedded framed oriented surfaces $\Sigma$ (called skeins) in $X\backslash B$ for some $B$, with Khovanov decorations on the inputs, modulo certain relations. Here $B$ is the tubular neighborhood of an embedded finite $0$-dimensional CW complex in the interior of $X$ (i.e. a disjoint union of finitely many open $4$-balls), and $R$ is a fixed commutative coefficient ring (usually suppressed from the notation). The Khovanov decoration on such a skein $\Sigma$ consists of labels $v_i\in KhR_2(\Sigma\cap\partial B_i)$, one for each component $B_i$ of $B$, where $KhR_2$ denotes the Khovanov homology \cite{khovanov2000categorification} of links in $S^3$ over $R$, suitably renormalized.

Explicit formulas for computing Khovanov skein lasagna modules in terms of handle decompositions are available \cite{manolescu2022skein,manolescu2023skein}, with the caveat that formulas concerning new $1$- and $2$-handle attachments usually involve an infinite colimit, and are therefore impractical to carry out in general. Nevertheless, in the absence of $1$-handles, interesting explicit calculations have been made using these formulas \cite{sullivan2024kirby,ren2024khovanov}. Notably, Ren--Willis's calculation \cite{ren2024khovanov} shows that Khovanov skein lasagna modules can detect exotic $4$-manifolds.

In the presence of $1$-handles, however, the formula for $\mathcal S_0^2(X;L)$ \cite{manolescu2023skein} is computationally complex,
rendering explicit computations virtually impossible except in the simplest cases. The purpose of this paper is to define a variant of Khovanov skein lasagna modules over the rationals, denoted $\bar{\mathcal S}_0^2(X;L)$, that removes this complexity. Roughly, the construction is the same as the usual Khovanov skein lasagna modules, except that skeins will live in the complement of tubular neighborhoods of embedded finite $1$-dimensional CW complexes in the interior of $X$, and the decorations will be elements in a suitable renormalization of the Khovanov homology for links in connected sums of $S^1\times S^2$'s defined by Rozansky \cite{rozansky2010categorification} and Willis \cite{willis2021khovanov}. We state our main result informally as follows.

Throughout the rest of our paper, unless stated otherwise, the base ring will be the field of rational numbers $\Q$; we henceforth suppress it from the notation.

\begin{Thm}[Definition~\ref{def:S02_bar}]\label{thm:main}
There is a well-defined invariant $\bar{\mathcal S}_0^2(X;L)$ for pairs $(X,L)$ of compact oriented smooth $4$-manifold $X$ and framed oriented link $L\subset\partial X$, which is a $\Q$-vector space graded by $(\tfrac12\Z)^2\times H_2(X,L;\Z/2)$. When $X=\natural^n(S^1\times B^3)$, the invariant $\bar{\mathcal S}_0^2(X;L)$ is canonically isomorphic to $\widetilde{KhR}_2^-(L)$, a suitable renormalization of the Rozansky-Willis homology of the framed oriented link $L\subset\#^n(S^1\times S^2)$ over $\Q$.
\end{Thm}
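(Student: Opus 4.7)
The plan is to adapt the Morrison--Walker--Wedrich lasagna construction to the $1$-handlebody setting: replace $0$-dimensional CW puncture sets by $1$-dimensional ones, so that the boundary links of skeins live on connected sums of $S^1\times S^2$'s, and use Rozansky--Willis homology as the decoration. Concretely, I would define $\bar{\mathcal S}_0^2(X;L)$ as the $\Q$-vector space generated by triples $(\Gamma,\Sigma,v)$, where $\Gamma$ is a finite $1$-dimensional CW complex embedded in the interior of $X$ with tubular neighborhood $N(\Gamma)$, where $\Sigma$ is a framed oriented surface properly embedded in $X\setminus N(\Gamma)$ with $\partial\Sigma\subset L\cup\partial N(\Gamma)$, and where $v\in\widetilde{KhR}_2^-(\Sigma\cap\partial N(\Gamma))$; the relations are $\Q$-linearity in $v$, isotopy of $(\Gamma,\Sigma)$, and a lasagna skein relation allowing any cobordism contained in a thickening $N(\Gamma')\supset N(\Gamma)$ to be absorbed into the label.

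The crux of the construction is to show that $\widetilde{KhR}_2^-$ is functorial for link cobordisms inside any $4$-dimensional relative $1$-handlebody complement $W=X\setminus N(\Gamma)$; without this one cannot even state the lasagna skein relation coherently. I would establish this via the bypass provided by the Sullivan--Zhang isomorphism \cite{sullivan2024kirby}, which identifies $\widetilde{KhR}_2^-(L)$ with the classical skein lasagna module $\mathcal S_0^2(D^2\times S^2;L)$. The latter is functorial for cobordisms in $D^2\times S^2$ essentially by the $\mathfrak{gl}_2$ foam formalism, and I would transport this functoriality across the Sullivan--Zhang identification. Extending the statement to arbitrary $W$ requires gluing the local picture over the $1$-handles comprising $N(\Gamma)$; here the additional results on functoriality of $\mathfrak{gl}_2$ foams advertised in the abstract enter.

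Well-definedness of $\bar{\mathcal S}_0^2(X;L)$ then follows by standard cofinality arguments: any two choices of $\Gamma$ can be embedded in a common $\Gamma'$, and the skein relation combined with functoriality identifies the resulting modules canonically. The $(\tfrac12\Z)^2$-grading comes from the homological and quantum gradings on $\widetilde{KhR}_2^-$ together with a framing correction that accounts for surface genus and produces half-integer shifts, while the $H_2(X,L;\Z/2)$ component records the homology class of $\Sigma$ relative to $L$. Finally, when $X=\natural^n(S^1\times B^3)$, I would choose $\Gamma$ to be a spine of $X$ so that $X\setminus N(\Gamma)$ is diffeomorphic to a collar on $\partial X=\#^n(S^1\times S^2)$; then every generator $(\Gamma,\Sigma,v)$ can be reduced via the skein relation to its label $v\in\widetilde{KhR}_2^-(L)$, yielding the claimed canonical isomorphism.

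The main obstacle is the functoriality step. A direct proof from Rozansky and Willis's stabilization construction would require tracking infinite sequences of cables compatibly with surface cobordisms, which seems prohibitively technical. The bypass through Sullivan--Zhang is what makes the problem tractable, but it still demands upgrading the $\mathfrak{gl}_2$ foam formalism to the relative $1$-handlebody setting and verifying that the Sullivan--Zhang isomorphism is natural with respect to cobordisms; this naturality, together with Gluck-twist invariance of skein lasagna modules, is what allows gluing functoriality across $1$-handles and forms the technical heart of the paper.
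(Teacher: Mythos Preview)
Your proposal is essentially the paper's approach: define $\bar{\mathcal S}_0^2$ with $1$-dimensional inputs, reduce to functoriality of $\widetilde{KhR}_2^-$ on relative $1$-handlebody complements, and prove the latter by leveraging the Sullivan--Zhang isomorphism rather than checking movie moves directly. The collar argument for $X=\natural^n(S^1\times B^3)$ is exactly what the paper does.

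A few points where the paper's execution differs from your sketch, worth noting:

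\begin{itemize}
\item The Sullivan--Zhang isomorphism does not identify $\widetilde{KhR}_2^-(L)$ with $\mathcal S_0^2(D^2\times S^2;L)$; rather it gives $\mathcal S_0^2(D_{std};L)\cong\widetilde{KhR}_2^{+}(L)\otimes\mathcal S_0^2(D_{std})$. The passage from $+$ to $-$ is a genuine step: the paper \emph{turns cobordisms inside out}, embedding the $1$-handlebody $X_1$ into $S^4$, so that the transpose cobordism $(W^t,\Sigma^t)$ induces a lasagna gluing map on the complementary $D_{std}$'s. One then extracts the desired $\widetilde{KhR}_2^+$ map as the ``first tensor factor'' of this gluing, taking associated graded with respect to a lasagna quantum filtration. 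Finally $\widetilde{KhR}_2^-$ is defined by dualizing and orientation-reversing. Your sentence ``the latter is functorial for cobordisms in $D^2\times S^2$'' elides this reversal.
\item The Gluck-twist result is not used to ``glue functoriality across $1$-handles''; rather, it removes the dependence on the choice of embedding $X_1\hookrightarrow S^4$ (equivalently, on a spin structure on $X_1$). The paper first proves functoriality for spin morphisms, then uses Gluck-twist isomorphisms on $\mathcal S_0^2$ to pass to the nonspin case.
\item The diffeomorphism group result (barbell diffeomorphisms in $\Diff_\partial(D_{std})$) is needed to show that the Rozansky--Willis homology is well-defined on \emph{abstract} $\sqcup\#(S^1\times S^2)$, not just on the concrete model; this is a separate issue from cobordism functoriality and is what your sketch calls the parametrization problem.
\item The $\mathfrak{gl}_2$ foam functoriality is used for sign fixes (the appendix), not for gluing over $1$-handles.
\item The half-integer gradings come from the renormalization $(tq^{-1})^{w(L)/2}$ of Rozansky--Willis homology (which depends on writhe), not from surface genus.
\end{itemize}
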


The main difficulty of defining $\bar{\mathcal S}_0^2(X;L)$ concerns the extension of Rozansky-Willis homology to link cobordisms embedded in a special class of $4$-manifolds, called $4$-dimensional relative $1$-handlebody complements, in a functorial way. The precise statement is formulated in Theorem~\ref{thm:KhR_2-}, which is the main theorem of this paper. We state one very special case of this, namely the functoriality of $\widetilde{KhR}_2^-$ for cobordisms in $I\times\#^m(S^1\times S^2)$.

\begin{Thm}[Theorem~\ref{thm:concrete_functoriality}]\label{thm:concrete_functoriality_intro}
For any link cobordism $\Sigma\subset I\times\#^m(S^1\times S^2)$ between framed oriented links $L_0,L_1\subset\#^m(S^1\times S^2)$, there is an induced $\Q$-linear map $\widetilde{KhR}_2^-(\Sigma)\colon\widetilde{KhR}_2^-(L_0)\to\widetilde{KhR}_2^-(L_1)$ of bidegree $(0,-\chi(\Sigma))$, such that the assignment $\Sigma\mapsto\widetilde{KhR}_2^-(\Sigma)$ is functorial.
\end{Thm}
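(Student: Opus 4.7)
The plan is to derive Theorem~\ref{thm:concrete_functoriality_intro} as a special case of the main Theorem~\ref{thm:KhR_2-} by exhibiting the cylinder $I\times\#^m(S^1\times S^2)$ as a $4$-dimensional relative $1$-handlebody complement. The $1$-handlebody $\natural^m(S^1\times B^3)$ has boundary $\#^m(S^1\times S^2)$, and an open collar of this boundary provides the required identification. Any link cobordism $\Sigma\subset I\times\#^m(S^1\times S^2)$ therefore falls within the scope of the general functoriality statement.

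The construction of $\widetilde{KhR}_2^-(\Sigma)$ proceeds via the Sullivan--Zhang bypass \cite{sullivan2024kirby}, which identifies a suitable renormalization of the Rozansky--Willis homology of a link $L\subset\#^m(S^1\times S^2)$ with (a direct summand of) the Khovanov skein lasagna module $\mathcal S_0^2(W;L)$, where $W$ is a $4$-manifold filling with $\partial W=\#^m(S^1\times S^2)$ (e.g.\ $W=D^2\times S^2$ when $m=1$). Given $\Sigma$ in the cylinder, one glues it onto $W$ to obtain an extended skein, applies the Morrison--Walker--Wedrich functoriality of $\mathcal S_0^2$ from \cite{morrison2022invariants}, and transports back via the inverse Sullivan--Zhang isomorphism. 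The bidegree $(0,-\chi(\Sigma))$ is then read off from the Khovanov grading conventions combined with the chosen renormalization.

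Carrying out this program requires three substeps. First, I would verify that the assignment $\Sigma\mapsto\widetilde{KhR}_2^-(\Sigma)$ is independent of the auxiliary choice of filling $W$ and of any handle decomposition used in the bypass. Second, I would confirm the bidegree formula by tracking elementary cobordism moves (births, deaths, saddles) through both sides of the Sullivan--Zhang identification, using that the Khovanov--Rozansky $\gl_2$ functoriality on each side shifts the quantum grading by $-\chi$ of the piece and preserves the homological grading. Third, I would establish functoriality under composition by showing that concatenating two cobordisms along $\{1\}\times\#^m(S^1\times S^2)$ corresponds, after transport, to composing the associated lasagna module maps; this is essentially a consequence of how the Sullivan--Zhang identification interacts with stacking of collars.

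The main obstacle is the first substep: rigorously controlling independence of choices. Different fillings of $\#^m(S^1\times S^2)$ by $1$-handlebodies are related by handle slides and cancellations, and one must show that the transported map on $\widetilde{KhR}_2^-$ is insensitive to these moves and to the induced diffeomorphisms of the boundary. This reduces to two inputs announced in the abstract, namely the behavior of $\mathcal S_0^2$ under Gluck twists and the analysis of the relevant mapping class groups of $\#^m(S^1\times S^2)$ together with the $\gl_2$ foam functoriality statements. Once well-definedness is secured, the remaining two substeps follow formally from the corresponding properties of the Khovanov skein lasagna module.
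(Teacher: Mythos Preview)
Your high-level strategy---defining the cobordism map via the Sullivan--Zhang isomorphism and lasagna gluing---matches the paper's, but the proposal has the logical dependencies backwards and omits the main technical idea.

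First, the paper does \emph{not} derive Theorem~\ref{thm:concrete_functoriality_intro} from Theorem~\ref{thm:KhR_2-}. It is the other way round: the concrete functoriality (Theorem~\ref{thm:concrete_functoriality}, i.e.\ this statement) is proved first, in Section~\ref{sec:concrete_functoriality}, and is then used as an ingredient in proving the abstract Theorem~\ref{thm:KhR_2-}. In the concrete setting the filling $D_{std}=\natural^m(D^2\times S^2)$ is canonical, so your first substep (independence of the filling, Gluck twists, mapping class groups) is simply not relevant here---those issues arise only when upgrading from concrete to abstract in Sections~\ref{sec:abstract_spin_homology}--\ref{sec:nonspin}.

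Second, and more seriously, you write that one ``applies the Morrison--Walker--Wedrich functoriality of $\mathcal S_0^2$'' and then ``transports back via the inverse Sullivan--Zhang isomorphism.'' The gap is that the gluing map $\mathcal S_0^2(I\times S_{std};\Sigma^t)\colon\mathcal S_0^2(D_{std};L_1)\to\mathcal S_0^2(D_{std};L_0)$ does \emph{not} in general split as a tensor product under the Sullivan--Zhang isomorphism $\mathcal S_0^2(D_{std};L)\cong\widetilde{KhR}_2^+(L)\otimes\mathcal S_0^2(D_{std})$. The paper gives an explicit counterexample immediately after Theorem~\ref{thm:concrete_functoriality_las}: a closed core $S^2$ induces a nonzero map on $\mathcal S_0^2(D_{std})$ but must induce zero on $\widetilde{KhR}_2^+(\emptyset)=\Q$ for degree reasons. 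The key idea you are missing is the \emph{lasagna quantum grading}: the gluing map is only nonincreasing in this auxiliary filtration, and it is only the \emph{associated graded} map that factors as $\widetilde{KhR}_2^+(\Sigma)\otimes gr(\mathrm{id}_\alpha)$ for a shifting automorphism $\mathrm{id}_\alpha$. Extracting $\widetilde{KhR}_2^+(\Sigma)$ therefore requires proving this filtration statement (Theorem~\ref{thm:concrete_functoriality_las}), which the paper does by decomposing $\Sigma$ into elementary moves (Proposition~\ref{prop:concrete_decomposition}). These moves go well beyond births, deaths, and saddles: the substantive cases are finger moves, crossing moves, over/underpass moves, and handleslides through the surgery regions, each requiring a diagram chase tracking the Rozansky projectors through \eqref{eq:SZ_MN}--\eqref{eq:SZ_simp}. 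Once Theorem~\ref{thm:concrete_functoriality_las} is established, functoriality under composition is indeed formal---but it hinges on the associated-graded structure, not on a naive transport.
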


We mention three new ingredients of various flavors required for proving Theorem~\ref{thm:main}, which might be of independent interest.

\begin{Thm}[Theorem~\ref{thm:diff_D2S2_rel_boundary}]\label{thm:diff_D2S2_rel_boundary_intro}
Let $k\ge1$, $m_1,\cdots,m_k\ge0$. The diffeomorphism group of $D_{std}:=\#_{i=1}^k\natural^{m_i}(D^2\times S^2)$ rel boundary fits into an exact sequence $$1\to\Diff_{\partial,loc}(D_{std})\to\Diff_\partial(D_{std})\to\Z^{m(m-1)/2}\times(\Z/2)^{k-1}\to1.$$ Here, $\Diff_{\partial,loc}(D_{std})$ denotes the subgroup consisting of diffeomorphisms isotopic rel boundary to one supported in a local $4$-ball, and $m=\sum_{i=1}^km_i$.
\end{Thm}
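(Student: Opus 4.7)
The plan is to construct a surjective homomorphism $\Phi\colon\Diff_\partial(D_{std})\to\Z^{m(m-1)/2}\times(\Z/2)^{k-1}$ explicitly, then identify its kernel with $\Diff_{\partial,loc}(D_{std})$.

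First I would fix a standard handle decomposition of $D_{std}$ with a single $0$-handle and $m=\sum_{i=1}^k m_i$ two-handles attached along a $0$-framed unlink, grouped into $k$ clusters reflecting the $\#$-decomposition. For each unordered pair $\{p,q\}\subset\{1,\ldots,m\}$ I would construct a diffeomorphism $\sigma_{pq}\in\Diff_\partial(D_{std})$ by a sphere-slide (``barbell''-type) move along a $3$-sphere separating the cocores of the two handles, yielding a candidate $\Z$-factor. For each of the $k-1$ separating $3$-spheres in the $\#$-decomposition I would construct a sphere-twist diffeomorphism $\tau_a$ using the nontrivial loop in $\pi_1(\mathrm{SO}(4))=\Z/2$, yielding the expected $(\Z/2)^{k-1}$ factor.

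To detect these elements modulo $\Diff_{\partial,loc}$, I would construct the components of $\Phi$ as follows. For each $\{p,q\}$ I would use a pairwise linking-number invariant $\ell_{pq}$, defined via the action of $\phi\in\Diff_\partial(D_{std})$ on a fixed collection of properly embedded dual $2$-disks (one per two-handle) and measuring the resulting change in an appropriate algebraic intersection count; this invariant shifts by $\pm1$ under $\sigma_{pq}$ and vanishes on the other $\sigma$'s, on every $\tau_a$, and on diffeomorphisms supported in an interior $4$-ball. For each separating $3$-sphere I would define a $\Z/2$-valued invariant extracted from the induced action on relative spin structures (equivalently, from the signed change of a normal framing of a transverse arc); a global parity relation across the $k$ boundary components accounts for $k-1$ rather than $k$ independent invariants. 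Checking that these invariants are well-defined group homomorphisms vanishing on $\Diff_{\partial,loc}$ is routine.

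The main obstacle will be exactness in the middle: any $\phi\in\Diff_\partial(D_{std})$ killed by $\Phi$ must lie in $\Diff_{\partial,loc}$. My approach is to use the relative $4$-dimensional light-bulb theorem of Gabai and subsequent refinements to isotope $\phi$ rel boundary so that it preserves setwise each of the $m$ core $2$-spheres of the $D^2\times S^2$ summands. Vanishing of the $\ell_{pq}$'s then permits a further isotopy making $\phi$ the identity on tubular neighborhoods of these cores. The residual diffeomorphism is supported on the complement of these neighborhoods, which decomposes as a collection of $4$-balls glued along the $k-1$ separating $3$-spheres; vanishing of the spin-twist invariants lets us consolidate the residue into a single local $4$-ball, placing $\phi$ in $\Diff_{\partial,loc}$. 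The hardest technical step will be the relative light-bulb isotopies, where careful control of normal bundles, the $2$-sphere link (to avoid undesired framing changes), and the boundary conditions is essential.
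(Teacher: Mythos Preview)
Your overall strategy matches the paper's almost exactly: define a homomorphism via the action on relative homology/intersection data (the paper packages this as $h_1\colon\Diff_\partial(D_{std})\to\mathfrak o(m;\Z)$ coming from the action on the classes $[C_j]\in H_2^{U_j}(D_{std})$) together with the action on relative spin structures (the paper's $h_2$ to $H_3(D_{std};\Z/2)\cong(\Z/2)^{k-1}$), exhibit surjectivity via barbell diffeomorphisms and $3$-sphere Dehn twists, and prove exactness in the middle by a lightbulb argument.

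However, there is a genuine confusion in your exactness step. You propose to apply the lightbulb theorem to the \emph{core spheres} $S_j$ and then claim their complement is a union of $4$-balls. Neither works. The intersection form on $H_2(D_{std})$ is identically zero, so the closed spheres $S_j$ admit no transverse \emph{spheres}; their only geometric duals are the cocore \emph{disks} $C_j$, so Gabai's theorem does not apply to them. Even if you could straighten the $S_j$'s, the complement of $\bigcup\nu(S_j)$ is not a collection of $4$-balls: deleting the core from a single $D^2\times S^2$ leaves $I\times S^1\times S^2$, not $B^4$. The paper fixes this by reversing the roles: it applies the relative lightbulb theorem \cite[Theorem~10.1]{gabai20204} to the \emph{cocore disks} $C_j$, which do have disjoint dual spheres (namely the $S_j$'s) and trivial normal bundle rel boundary. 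Once $\psi$ is isotoped to fix $\bigcup\nu(C_j)$, the complement is precisely $S^4$ with $k$ open balls removed; the spin-structure invariant then handles the $k-1$ connecting arcs exactly as you suggested. With this swap of cores for cocores your argument becomes correct and essentially identical to the paper's.
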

When $D_{std}=D^2\times S^2$, this is a version of Gabai's $4$-dimensional lightbulb theorem \cite[Corollary~1.7]{gabai20204}. We deduce Theorem~\ref{thm:diff_D2S2_rel_boundary_intro} as a consequence of Gabai's result. The cokernel of $\Diff_{\partial,loc}(D_{std})\to\Diff_\partial(D_{std})$ in Theorem~\ref{thm:diff_D2S2_rel_boundary_intro} is generated by Dehn twists along embedded $3$-spheres, as well as implanted barbell diffeomorphisms defined by Budney--Gabai \cite{budney2019knotted}. We rediscovered the barbell diffeomorphism during this work and will present an alternative description of it in the proof of Theorem~\ref{thm:diff_D2S2_rel_boundary}. Since $D_{std}$ embeds into $B^4$, $\pi_0(\Diff_\partial(B^4))\xrightarrow{\cong}\pi_0(\Diff_{\partial,loc}(D_{std}))$ via a local embedding $B^4\subset D_{std}$. By comparing to the work of Orson--Powell \cite[Theorem~A(1)]{orson2025mapping}, the cokernel can also be identified with the topological mapping class group.

\begin{Cor}
The natural map $$\Diff_\partial(D_{std})/\Diff_{\partial,loc}(D_{std})\cong\pi_0(\Diff_\partial(D_{std}))/\pi_0(\Diff_{\partial}(B^4))\to\pi_0(\mathrm{Homeo}_\partial(D_{std}))$$ is an isomorphism.\qed
\end{Cor}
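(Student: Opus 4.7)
The plan is to identify both sides of the map with the same explicit abelian group, using Theorem~\ref{thm:diff_D2S2_rel_boundary_intro} for the source and Orson--Powell's computation for the target.

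First I would verify that the map is well-defined. By the Alexander trick, any self-homeomorphism of $B^4$ restricting to the identity on the boundary is topologically isotopic rel boundary to the identity. Consequently, any diffeomorphism of $D_{std}$ smoothly isotopic rel boundary to one supported in a local $4$-ball is topologically isotopic rel boundary to the identity, so $\Diff_{\partial,loc}(D_{std})$ lies in the kernel of $\Diff_\partial(D_{std}) \to \pi_0(\mathrm{Homeo}_\partial(D_{std}))$, yielding a well-defined homomorphism on the quotient.

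Next I would identify the source with $\Z^{m(m-1)/2}\times(\Z/2)^{k-1}$ using Theorem~\ref{thm:diff_D2S2_rel_boundary_intro} together with the isomorphism $\pi_0(\Diff_\partial(B^4))\cong\pi_0(\Diff_{\partial,loc}(D_{std}))$ noted in the preceding paragraph. The generators of this cokernel are the Dehn twists along embedded $3$-spheres and the implanted barbell diffeomorphisms. On the target side, I would cite \cite[Theorem~A(1)]{orson2025mapping} to identify $\pi_0(\mathrm{Homeo}_\partial(D_{std}))$ with the same abelian group $\Z^{m(m-1)/2}\times(\Z/2)^{k-1}$, with generators given by the corresponding topological Dehn twists and barbell homeomorphisms.

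Finally, since the forgetful map sends each smooth Dehn twist (resp. implanted barbell diffeomorphism) to its underlying topological class, which is precisely the corresponding generator in the Orson--Powell presentation, the induced map carries generators to generators and is therefore an isomorphism. I expect the main obstacle to lie in this last step: one must carefully match the Dehn twist and barbell generators appearing in the proof of Theorem~\ref{thm:diff_D2S2_rel_boundary_intro} (where the barbell will be described via an alternative construction) with those used in Orson--Powell's topological classification. Once this comparison of generators is made explicit, the corollary follows immediately.
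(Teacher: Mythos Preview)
Your proposal is correct and follows essentially the same approach as the paper: identify the source via Theorem~\ref{thm:diff_D2S2_rel_boundary_intro}, the target via Orson--Powell, and compare. The paper treats this corollary as immediate (marked with a \qed), with the preceding paragraph supplying exactly the ingredients you outline; your version simply spells out the generator-matching step the paper leaves implicit.
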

In other words, modulo diffeomorphisms contained in $B^4$, $D_{std}=\#_{i=1}^k\natural^{m_i}(D^2\times S^2)$ does not admit exotic diffeomorphisms. This is in contrast to the existence of such exotic diffeomorphisms on many $4$-manifolds (including contractible ones) detected using gauge theory; see \cite{ruberman1998obstruction,kronheimer2021dehn,konno2023exotic,qiu2024exotic} and references therein.

\begin{Thm}[Theorem~\ref{thm:gluck}]\label{thm:gluck_intro}
Gluck twists induce isomorphisms on Khovanov skein lasagna modules (over $\Q$).
\end{Thm}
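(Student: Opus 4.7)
Let $S\subset X$ be the $2$-sphere along which we Gluck twist, with tubular neighborhood $\nu(S)\cong D^2\times S^2$. Writing $X = Y\cup_{S^2\times S^1}(D^2\times S^2)$ and $X' = Y\cup_\phi (D^2\times S^2)$ for the Gluck map $\phi$, the plan is to compare the two skein lasagna modules by localizing their construction to the $D^2\times S^2$ piece. The key inputs will be the Sullivan--Zhang isomorphism $\mathcal{S}_0^2(D^2\times S^2;L')\cong\widetilde{KhR}_2^-(L')$ together with the functoriality of $\widetilde{KhR}_2^-$ from Theorem~\ref{thm:concrete_functoriality_intro}.

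First, I would use the Manolescu--Neithalath handle-decomposition colimit formula for $\mathcal{S}_0^2$ to present $\mathcal{S}_0^2(X;L)$ as a colimit, indexed by framed oriented links $L'\subset S^2\times S^1$, of $\widetilde{KhR}_2^-(L')$ coupled with an ``outside'' contribution from $Y$, and similarly for $\mathcal{S}_0^2(X';L)$ but with the image $\phi(L')$ in place of $L'$ in the inside factor. The outside contributions agree by construction, so what remains is to identify the inside data canonically across the two diagrams.

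Second, I would invoke Theorem~\ref{thm:concrete_functoriality_intro} to produce, from the mapping cylinder of $\phi$ in $I\times(S^2\times S^1)$, a functorial endomorphism of $\widetilde{KhR}_2^-(L')$, and then show this endomorphism is the identity. Since $\phi$ arises from the generator of $\pi_1(SO(3))$, the Gluck cobordism decomposes into pieces built from sphere Dehn twists, which translate in the underlying foam calculus into neck cuts across closed separating $2$-spheres and therefore act trivially on Rozansky--Willis homology. Combining these two ingredients identifies the colimit diagrams for $\mathcal{S}_0^2(X;L)$ and $\mathcal{S}_0^2(X';L)$ canonically, yielding the desired isomorphism.

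The main obstacle is the second step: since $\phi$ represents a nontrivial isotopy class in $\pi_0(\Diff(S^2\times S^1))$, triviality of its action on $\widetilde{KhR}_2^-$ is a genuine algebraic identity, not a consequence of any smooth isotopy argument. An alternative, more geometric route would be to use Theorem~\ref{thm:diff_D2S2_rel_boundary_intro} to upgrade the Gluck mapping cylinder, after stabilization by a local $S^2\times S^2$-summand, to a diffeomorphism lying in $\Diff_{\partial,loc}$ of a suitable model; the triviality of its action on $\mathcal{S}_0^2$ is then built in, and can be transferred back to $\widetilde{KhR}_2^-$ via naturality of the Sullivan--Zhang isomorphism.
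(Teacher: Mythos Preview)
There is a genuine gap in your second step. The Gluck map $\tau\in\Diff(S^1\times S^2)$ is \emph{not} isotopic to the identity: it represents the nontrivial class in $\pi_0(\Diff(S^1\times S^2))$ coming from $\pi_1(SO(3))$, and the naive formula $(\theta,x)\mapsto(\theta,\mathrm{rot}_{t\theta}(x))$ is not well-defined for non-integer $t$ since $\theta\in\R/2\pi\Z$. Hence there is no trace cobordism in $I\times(S^1\times S^2)$ from $L'$ to $\tau(L')$ to which Theorem~\ref{thm:concrete_functoriality_intro} could apply, and the ``mapping cylinder'' yields no endomorphism of $\widetilde{KhR}_2^-(L')$. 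What must actually be supplied is an identification $\mathcal S_0^2(D^2\times S^2;U)\cong\mathcal S_0^2(D^2\times S^2;\tau(U))$ compatible with gluing to the exterior; since $U$ and $\tau(U)$ are not even isotopic as framed links, this is a genuine construction. Your heuristic that the action ``translates into neck cuts'' conflates the $2$-sphere Dehn twist $\tau$ with the $3$-sphere Dehn twists treated in Section~\ref{sec:diff_rel_bdy}; these are unrelated, and indeed the paper later computes the action of $\tau$ on $\widetilde{KhR}_2^+$ explicitly (end of the proof of Theorem~\ref{thm:nonspin_repara_las}) and it is nontrivial, given by saddling in a distinguished torus-link class. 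Your alternative route via Theorem~\ref{thm:diff_D2S2_rel_boundary_intro} also fails: that theorem concerns $\Diff_\partial(D_{std})$, but $\tau$ does not extend across $D^2\times S^2$ precisely because it is nonspin, so there is no rel-boundary diffeomorphism to analyze.

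The paper's argument is structurally different. By general position and neck-cutting one reduces to lasagna fillings meeting $\nu(S)\cong D^2\times S^2$ in a union of cocores, with boundary a belt link $U$; such a filling is the image of a canonical class $1\in\mathcal S_0^2(D^2\times S^2;U)$ under gluing with the exterior piece. After twisting, $U$ becomes a twisted belt link $\tau(U)$, and the paper constructs distinguished classes $1_\pm\in\mathcal S_0^2(D^2\times S^2;\tau(U))$ via an explicit isomorphism $\widetilde{KhR}_2^+(\tau(U))\cong\widetilde{KhR}_2^+(U)$ (termwise Reidemeister simplifications through the projector, together with St\v{o}si\'c's torus-link computation). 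The Gluck map $\tau_{X,L,S,\pm}$ is then defined by replacing $1$ with $1_\pm$. Well-definedness under finger and Whitney moves of the skein across $S$, and invertibility via $\tau_{\mp}\circ\tau_{\pm}=\mathrm{id}$, are checked by direct computation in Lemma~\ref{lem:twist_belt_link}.
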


In particular, Khovanov skein lasagna modules over $\Q$ cannot detect exotica arising from Gluck twists (to our knowledge, no such phenomenon has ever been detected on compact orientable $4$-manifolds). This was already hinted at in Ren--Willis \cite[Section~6.10]{ren2024khovanov}. See Theorem~\ref{thm:gluck} for a more precise statement as well as some formal properties enjoyed by the induced map. Insensitivity of Khovanov skein lasagna modules to Gluck twists as a consequence of \cite{sullivan2024kirby} was also observed by Krushkal--Wedrich, following a different approach independent of Theorem~\ref{thm:gluck}; this may appear elsewhere.

\begin{Thm}[Theorem~\ref{thm:gl2_webs_functorial}]\label{thm:gl2_webs_functorial_intro}
The universal Khovanov-Rozansky $\gl_2$ homology for $\gl_2$ webs in $S^3$ and $\gl_2$ foams in $I\times S^3$ between them is functorial (on the chain level up to chain homotopy, over $\Z[E_1,E_2]$). Moreover, it can be extended to singular $\gl_2$ foams, and the induced map by such a foam is independent of the embedding of the interior of $2$-labeled faces.
\end{Thm}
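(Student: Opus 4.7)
The plan is to leverage the established functoriality of the universal Khovanov-Rozansky $\gl_2$ invariant for \emph{link} cobordisms over $\Z[E_1,E_2]$, due to Morrison-Walker-Wedrich and its universal-coefficient strengthenings (Ehrig-Tubbenhauer, Beliakova-Hogancamp-Putyra-Wehrli, Sano), and extend it to the richer category of $\gl_2$ webs and foams. First I would fix the $\gl_2$ foam $2$-category over $\Z[E_1,E_2]$ (in the Blanchet / Ehrig-Stroppel-Tubbenhauer form), whose objects are $\gl_2$ webs and whose $2$-morphisms are foams modulo the standard local relations: neck-cutting, theta-foam evaluation, and the universal dot relation $X^2=E_1X-E_2$. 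For a $\gl_2$ web $W\subset S^3$ I define $\BNfunc{W}$ by the cube of resolutions at each $2$-labeled edge, yielding a bounded complex whose chain objects are planar $1$-labeled webs, i.e.\ ordinary links, with the usual Bar-Natan-type differentials. For a $\gl_2$ foam $F\colon W_0\to W_1$ in $I\times S^3$ I use a generic height function to decompose $F$ into a sequence of elementary local moves (births, deaths, and saddles of $2$-labeled facets, isotopies, and local moves near trivalent vertices), and assign the standard chain map to each elementary piece.

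Functoriality then reduces to showing that the defining relations of the foam $2$-category are respected up to chain homotopy. The relations supported on strata without web vertices or singular seams are precisely those governing classical link cobordism functoriality; the fact that these hold over $\Z[E_1,E_2]$ is in the cited literature. The new content is the finite list of local relations involving web vertices: the pitchfork-type moves, the saddle-through-vertex moves, and the digon/square reductions. Each expands into an explicit algebraic identity in the universal Frobenius algebra $\Z[E_1,E_2][X]/(X^2-E_1X+E_2)$, verifiable case-by-case. Since $\gl_2$ webs carry a natural orientation inherited from the link diagram, the sign/coherence pathologies that arise for $\gl_N$ with $N\ge 3$ do not appear here.

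For the extension to singular $\gl_2$ foams, I would use neck-cutting along a small meridian circle transverse to each singular locus in the interior of a $2$-labeled face; this rewrites the singular foam as a finite $\Z[E_1,E_2]$-linear combination of nonsingular foams with dots placed on either side of the cut. Independence of the embedding of the interior of a $2$-labeled face then reduces to two local facts: neck-cutting commutes with isotopies of the face rel boundary, and dot migration past the singular seams and through the face is governed by the same foam relations, which depend only on the boundary combinatorics of the face. Combining these gives a well-defined chain map depending only on the boundary behavior of each $2$-labeled face, as claimed.

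The main obstacle I expect is the bookkeeping in the finite enumeration of movie moves at web vertices, and in particular verifying that the chain homotopies produced by the universal Frobenius algebra calculation are compatible with one another across the relevant movies. The singular-foam independence, by contrast, is essentially formal once neck-cutting is in place, since everything is local and reduces to the standard dot migration relations.
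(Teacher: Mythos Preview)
Your proposal has a genuine gap at the $\R^3 \to S^3$ upgrade for webs. Functoriality for $\gl_2$ webs in $\R^3$ is already Queffelec's theorem; the new content is that the \emph{sweep-around move} induces the identity, and for webs the sweeping strand may be $2$-labeled, which is not covered by the link-cobordism literature you cite. The paper handles this in two stages: first, Beliakova--Hogancamp--Putyra--Wehrli gives a forgetful isomorphism $CKhR_2^{univ}(W)\cong CKhR_2^{univ}(u(W))$ to the underlying $1$-labeled link, so the sweep-around holds up to \emph{sign}; second, the sign is fixed by a $4$-dimensional Lee foam evaluation formula (their Theorem~A.4), whose proof is a genuine bordism computation in $\R^4$ (Sanderson) after a sequence of skein simplifications. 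Your remark that ``sign pathologies do not arise for $\gl_2$'' skips precisely this step---the sign is not automatic, and the Lee-evaluation argument is the actual work here.

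Your treatment of singular foams is also too vague to go through as stated. Neck-cutting on a $2$-labeled face does not by itself remove a transverse double point or a framing point; the paper instead excises a small $4$-ball around each singularity and inserts an explicit cocycle $z(L)$ in the chain complex of the link of the singularity (a Hopf link $H_{ij}^\pm$ or a framed unknot), then checks that the resulting map is independent of the tubing paths. Independence of the $2$-labeled interior embedding is then \emph{not} formal: it requires a topological classification (their Lemma~A.9) of moves relating two singular foams with the same $u(F)$ and germ data---including the fork-finger move through a seam---and an explicit verification of invariance under each move using the computed induced maps (their Table~1). Dot migration alone does not see these moves.
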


See Appendix~\ref{sec:sign} for the precise setup we are using in Theorem~\ref{thm:gl2_webs_functorial_intro}, in particular our definition of singular $\gl_2$ foams. It suffices to say here that we are allowing transverse double points between $1$-,$2$- or $2$-,$2$-labeled faces, as well as singular points on faces that introduce framing changes. The functoriality of $\gl_2$ homology for links and link cobordisms in $S^3$ was proved by Morrison--Walker--Wedrich \cite{morrison2022invariants}, following the work of Jacobsson \cite{jacobsson2004invariant}, Blanchet \cite{blanchet2010oriented}, and others. The functoriality of $\gl_2$ homology for $\gl_2$ webs in $\R^3$ and $\gl_2$ foams between them was proved by Queffelec \cite{queffelec2022gl2}. Theorem~\ref{thm:gl2_webs_functorial_intro} is a simultaneous generalization of these results.\medskip

Our proof of the functoriality of $\widetilde{KhR}_2^-$ is rather unconventional. Sullivan--Zhang \cite{sullivan2024kirby} proved that the Rozansky-Willis homology can be recovered from the Khovanov skein lasagna module of $\natural^m(D^2\times S^2)$ with boundary links in an appropriate sense (see Theorem~\ref{thm:SZ}). To check the functoriality of Rozansky-Willis homology in the strong sense we need, instead of checking all movie moves (``second order'' moves) relating sequences of elementary cobordisms (the authors are unaware of how to obtain a complete set of movie moves in our setup), we employ Sullivan--Zhang's result as a bypass. As lasagna gluing operations are manifestly functorial, this alternative viewpoint significantly simplifies the task, allowing us to perform checks only on the elementary cobordisms (``first order'' moves) themselves.

After some preliminaries in Section~\ref{sec:prelim}, we are able to give a more comprehensive overview of this paper in Section~\ref{sec:outline}. In Section~\ref{sec:input_homology}, we state the precise functoriality statement for $\widetilde{KhR}_2^-$ (Theorem~\ref{thm:KhR_2-}). In Section~\ref{sec:def_S02_bar}, we define the $1$-dimensional-input skein lasagna module (Definition~\ref{def:S02_bar}). In Section~\ref{sec:upside_down}, we give an overview of the proof of Theorem~\ref{thm:KhR_2-}. We refer readers to Section~\ref{sec:rest} for a discussion of the remaining sections of the paper. To define a Lee version of $1$-dimensional-input skein lasagna modules, some adjustments to our current proof are required. We hope to investigate the Lee version, as well as computational aspects of the $1$-dimensional-input Khovanov skein lasagna modules, in future work.

Throughout the main text of this paper, we use the simpler Bar-Natan formalism for Khovanov homology as opposed to the $\gl_2$ webs and foams formalism. Consequently, some constructions, definitions, and arguments will carry sign ambiguities at various stages. We resolve the sign issues in Appendix~\ref{sec:sign}.

\subsection*{TQFT context} 
Skein lasagna modules appear in \cite{morrison2022invariants} as the $4$-dimensional layer of an extended topological quantum field theory (TQFT) that is determined locally, i.e. based on 0-dimensional inputs, by a link homology theory, e.g. Khovanov homology, see \cite{wedrich2025link} for a recent survey. In this somewhat speculative section we aim to situate our Khovanov skein lasagna modules with $1$-dimensional inputs in the TQFT landscape by comparing it with related constructions.

The \emph{blob complex} was developed by Morrison--Walker \cite{morrison2012blob} as one possible extension of skein lasagna modules to an invariant that, in principle, supports computations via skein exact triangles. The underlying idea is to replace the skein module, a quotient of a space of (decorated) skeins modulo a subspace of relations, by a resolution: the space of skeins in degree zero, linear combinations of basic relations between skeins in degree one, relations between basic relations in degree two, and so on. Taking the zeroth homology of this blob complex recovers the skein module. In the context of Khovanov homology, we note that the blob complex is $\Z^3$-graded by blob degree and the homological and quantum gradings of Khovanov homology.

It is expected that a \emph{chain-level refinement} of Khovanov skein lasagna modules could be constructed from a conjectural fully homotopy-coherent version of Khovanov chain complexes \cite[Conjecture~4.1]{wedrich2025link}. In this case, the blob complex is simply a homotopy colimit \cite[Section~7]{morrison2012blob}, the blob degree and the internal homological degree get collapsed into a single grading, and to compute the invariant of a $4$-manifold, one only takes homology once. Although this invariant has not yet been constructed, it is possible to predict structural properties \cite[Section~4.7]{manolescu2023skein}: the resulting homology should appear on the $E_\infty$ page of a spectral sequence approximated by the blob homology of Khovanov homology on the $E_2$ page. On $4$-dimensional $0$-handlebodies it should recover Khovanov homology, and on $1$-handlebodies Rozansky-Willis homology.

Our \emph{Khovanov skein lasagna modules with $1$-dimensional inputs} interpolate between Khovanov skein lasagna modules and the homology of the desired chain level refinement in the sense that they agree with the latter on $4$-dimensional $1$-handlebodies by Theorem~\ref{thm:main}, but treat handles of index $\geq 2$ skein-theoretically. In particular, they are locally finite-dimensional and algorithmically computable in any finite range of degrees on $1$-handlebodies.

As a special feature of our construction in the setting of $\gl_2$ link homology, we can allow as boundary conditions for $4$-manifolds framed oriented links with $2$-divisible fundamental class, i.e. not necessarily null-homologous. We speculate that this is possible due to the nontriviality of the sylleptic center of the underlying braided monoidal $2$-category; see Section~\ref{sec:sylleptic}. On the other hand, we also define a version of skein lasagna modules with 1-dimensional inputs which requires null-homologous boundary conditions and allows integral gradings; see Remark~\ref{rmk:bounding}.

\section*{Acknowledgements}
We thank Ian Agol, William Ballinger, Richard Bamler, Anna Beliakova, Christian Blanchet, Eugene Gorsky, Matthew Hogancamp, Danica Kosanovi\'c, Robert Lipshitz, Patrick Orson, Hoel Queffelec, Daniel Ruberman, and Kevin Walker for helpful discussions. Special thanks to Seungwon Kim for many topological inputs.

Parts of this work were completed during 
the \textit{Annual Meeting of the Simons Collaboration on New Structures in Low-Dimensional Topology} in New York City, USA;
the \textit{Summer School on Modern Tools in Low-Dimensional Topology} and \textit{Conference on Modern Developments in Low-Dimensional Topology} at ICTP in Trieste, Italy;
the \textit{Categorification in Low Dimensional Topology} workshop and conference at Ruhr University Bochum in Bochum, Germany;
and the \textit{New Perspectives on Skein Modules} workshop at CIRM in Marseille, France.
We are grateful for the hospitality of these institutions and for the opportunity for us to collaborate in-person at these venues.

\section*{Funding}
QR was partially supported by the Simons Investigator Award 376200. PW acknowledges support from the Deutsche Forschungsgemeinschaft (DFG, German Research Foundation) under Germany's Excellence Strategy - EXC 2121 ``Quantum Universe'' - 390833306 and the Collaborative Research Center - SFB 1624 ``Higher structures, moduli spaces and integrability'' - 506632645.

\section{Preliminaries}\label{sec:prelim}
Throughout this paper, unless stated otherwise, we work over the base ring $\Q$ and suppress it from the notation. Homology groups of spaces are always taken to have integral coefficients unless otherwise indicated. The Khovanov(-Rozansky) $\gl_2$ homology of the unknot is standardly identified as a unital algebra with $\Q[\Khdot]/(\Khdot^2)$ and the Lee homology of the unknot with $\Q[\Khdot]/(\Khdot^2-1)$.

\subsection{Khovanov skein lasagna modules}\label{sec:S02}
We recall the definition of Khovanov skein lasagna modules constructed in \cite{morrison2022invariants} following the concise account in \cite[Section~2]{ren2024khovanov}. We do not attempt to be comprehensive; see \cite{wedrich2025link} for a recent survey. The purpose of this section is to parallel the upcoming definition for the $1$-dimensional-input skein lasagna modules in Section~\ref{sec:def_S02_bar}.

Let $X$ be a compact oriented $4$-manifold and $L\subset\partial X$ be a framed oriented link. A \textit{skein} in $X$ rel $L$ is a properly embedded framed oriented surface $\Sigma\subset X\backslash int(B)$ with $\partial\Sigma\cap\partial X=L$ (with framing and orientation), where $B\subset int(X)$ is a finite disjoint union of unparametrized $4$-balls, called the \textit{input balls} of $\Sigma$. The \textit{input links} of $\Sigma$ are the framed oriented links $(-\partial\Sigma)\cap\partial B_i$ in $\partial B_i$, where $B_i$ runs over the connected components of $B$, and the negative sign denotes orientation-reversal. A \textit{lasagna filling} of $(X,L)$ is a pair $(\Sigma,v)$ where $\Sigma$ is a skein with some input balls $B=\sqcup_iB_i$, and $v\in\otimes_iKhR_2((-\partial\Sigma)\cap\partial B_i)$, where $KhR_2$ is the Khovanov-Rozansky $\gl_2$ homology, defined for framed oriented links in $S^3$.

\begin{Def}\label{def:S02}
The \textit{Khovanov skein lasagna module} of $(X,L)$ is the $\Q$-vector space
$$\mathcal S_0^2(X;L):=\Q\{\text{lasagna fillings of }(X,L)\}/\sim,$$ where $\sim$ is the equivalence relation generated by
\begin{itemize}
\item Isotopy of the skein rel boundary;
\item Linearity in the decoration: $(\Sigma,v)+\lambda(\Sigma,w)\sim(\Sigma,v+\lambda w)$, $\lambda\in\Q$;
\item Enclosement relation: Let $(\Sigma,v)$ be a lasagna filling with input balls $B$, and let $B'\subset int(X)$ be a finite disjoint union of unparametrized $4$-balls in $int(X)$ that contains $B$ in its interior. Then $(\Sigma,v)\sim(\Sigma\backslash int(B'),KhR_2(\Sigma\cap(B'\backslash int(B)))(v))$, where $\Sigma\backslash int(B')$ is regarded as a skein with input balls $B'$.
\end{itemize}
The \textit{Khovanov skein lasagna module} of $X$ is $\mathcal S_0^2(X):=\mathcal S_0^2(X;\emptyset)$.
\end{Def}
In the enclosement relation, $B'\backslash int(B)$ is a disjoint union of $4$-balls with some finite number of input holes, and $\Sigma\cap(B'\backslash int(B))$ is a cobordism in $B'\backslash int(B)$ between the input links of $\Sigma$ and the input links of 
$\Sigma\backslash int(B')$. 
The induced map on $KhR_2$ is defined on each connected component of $B'$ by first tubing the input holes together along paths disjoint from $\Sigma$, and then using the usual induced maps for cobordisms in $I\times S^3$. The induced map is independent of the choice of the tubing paths.

Although Definition~\ref{def:S02} does not include a relation allowing input balls to move, this is implied by the other relations.

If $(\Sigma,v)$ is a lasagna filling of $(X,L)$ where $v$ is homogeneous of degree $(h,q)$, then the \textit{tridegree} of $(\Sigma,v)$ is defined to be $(h,q-\chi(\Sigma),[\Sigma])\in\Z^2\times H_2^L(X)$, where $H_2^L(X)$ is the preimage of $[L]$ under the connecting homomorphism $H_2(X,L)\to H_1(L)$, which is an $H_2(X)$-torsor. This descends to a trigrading on $\mathcal S_0^2(X;L)$. The three gradings, usually denoted by $(h,q,\alpha)$, are called \textit{homological grading}, \textit{quantum grading}, \textit{skein grading}, respectively.

In the case where $X=B^4$, for every $L\subset S^3$, we have a bigrading-preserving isomorphism $$KhR_2(L)\xrightarrow{\cong}\mathcal S_0^2(B^4;L),\ v\mapsto[(I\times L,v)],$$ where $I\times L$ denotes a standard product skein with an input ball $\tfrac12B^4$.
Note that the skein grading on $\mathcal S_0^2(B^4;L)$ is trivial.

We remark that the classical Khovanov-Rozansky $\gl_2$ homology is defined for links in concretely parametrized $S^3$ and cobordisms in concrete $I\times S^3$. In the definitions above, however, we have not distinguished between unparametrized $S^3$, $B^4$, $I\times S^3$, etc., with parametrized ones. It is a nontrivial fact, proved carefully in \cite[Section~4.2]{morrison2022invariants}, that one may be ambiguous in the statements and forget the parametrization issue. Later in this paper, we will devote considerable effort to removing the reparametrization ambiguity in our setup, where the topology of the inputs is significantly more nontrivial.

\subsubsection{Khovanov skein lasagna modules of \texorpdfstring{$\#\natural(D^2\times S^2)$}{D2*S2} and \texorpdfstring{$I\times\sqcup(\#(S^1\times S^2))$}{I*S1*S2}}\label{sec:D2S2}
We study the Khovanov skein lasagna module for the following two examples, which will be useful for our construction. This section is not required for reading Section~\ref{sec:outline}.

Throughout this section, fix $k\ge0$, $m_1,\cdots,m_k\ge0$, and write $D_{std}:=\#_{i=1}^k\natural^{m_i}(D^2\times S^2)$.

\begin{Ex}\label{ex:D2S2}
The skein lasagna module of $D^2\times S^2$ was studied in \cite[Theorem~1.2]{manolescu2022skein}. Using the connect/boundary sum formula \cite[Theorem~1.4,Corollary~7.3]{manolescu2022skein}, we see that $$\mathcal S_0^2(D_{std})=\Q[A_{i,j,0}^\pm,A_{i,j,1}\colon1\le i\le k,1\le j\le m_i]$$ where $A_{i,j,0}$ has tridegree $(0,0,e_{i,j})$ and $A_{i,j,1}$ has tridegree $(0,-2,e_{i,j})$. Here $e_{i,j}$ is the second homology class of $D_{std}$ represented by the $j$-th core sphere of the $i$-th connected summand. The generator $A_{i,j,0}$ (resp. $A_{i,j,1}$) is represented by the (positively oriented, framed) $j$-th core sphere in the $i$-th connected summand with a dot (resp. without dots). By neck-cutting, one sees that the negatively oriented $j$-th core sphere in the $i$-th summand with a dot (resp. without dots) represents the element $A_{i,j,0}^{-1}$ (resp. $-A_{i,j,0}^{-2}A_{i,j,1}$) in $\mathcal S_0^2(D_{std})$.
\end{Ex}

\begin{Ex}\label{ex:S1S2I}
We claim that the inclusion map $i\colon I\times\partial D_{std}\hookrightarrow D_{std}$ as a collar neighborhood of the boundary induces an isomorphism $$\mathcal S_0^2(i)\colon\mathcal S_0^2(I\times\partial D_{std})\xrightarrow{\cong}\mathcal S_0^2(D_{std}).$$

Write $(\#^{m_i}(S^1\times S^2))^\circ=\#^{m_i}(S^1\times S^2)\backslash int(B^3)$. Then $I\times(\#^{m_i}(S^1\times S^2))^\circ\cong(\natural^{m_i}(S^1\times B^3))\natural(\natural^{m_i}(D^2\times S^2))$. Since $\mathcal S_0^2(S^1\times B^3)=\Q$ by general position and neck-cutting (see \cite[Theorem~1.5(a)]{manolescu2023skein}), we see by the boundary connected sum formula and the explicit description of generators of $\mathcal S_0^2(D_{std})$ in Example~\ref{ex:D2S2} that the composition of inclusions $I\times\sqcup_{i=1}^k(\#^{m_i}(S^1\times S^2))^\circ\hookrightarrow I\times\partial D_{std}\xhookrightarrow{i}D_{std}$ induces an isomorphism on $\mathcal S_0^2$. Since the first inclusion is given by attaching $3$-handles, its induced map on $\mathcal S_0^2$ is surjective by general position. It follows that $\mathcal S_0^2(i)$ is an isomorphism.
\end{Ex}

Thus, $\mathcal S_0^2(D_{std})\cong\mathcal S_0^2(I\times\partial D_{std})$ is an algebra under stacking along the $I$-direction.

\begin{Def}\label{def:shift_auto}
The \textit{shifting automorphism} by $\alpha\in H_2(D_{std})$ is the operator $$\mathrm{id}_\alpha\colon\mathcal S_0^2(D_{std})\to\mathcal S_0^2(D_{std})$$ defined by multiplication by $\prod_{i,j}A_{i,j,0}^{\alpha_{i,j}}$, where $\alpha=\sum_{i,j}\alpha_{i,j}e_{i,j}$.
\end{Def}

The element $\prod_{i,j}A_{i,j,0}^{\alpha_{i,j}}\in\mathcal S_0^2(D_{std})$ is characterized by being the unique nonzero element in trigrading $(0,0,\alpha)$ whose image under $\mathcal S_0^2(D_{std})\to\mathcal S_0^2(S^4)$ is the class represented by the empty skein, for any embedding $D_{std}\subset S^4$. Therefore, $\mathrm{id}_\alpha$ is independent of the parametrization of $D_{std}$. The shifting automorphisms assemble to a group action $H_2(D_{std})\to\mathrm{Aut}(\mathcal S_0^2(D_{std}))$.

\subsection{Rozansky-Willis homology for framed links}\label{sec:RW}
Fix an integer $m\ge0$. Rozansky \cite{rozansky2010categorification} and Willis \cite{willis2021khovanov} defined Khovanov homology for links in $\#^m(S^1\times S^2)$. For our purposes, we describe carefully the topological setup as follows.

Fix once and for all a surgery diagram for $\#^m(S^1\times S^2)$ as the $0$-surgery on an $m$-component unlink, drawn in a standard way on the plane. Explicitly, this means that we have the following data:
\begin{itemize}
\item a decomposition $\#^m(S^1\times S^2)=M\cup N$, where $N$ is a disjoint union of $m$ solid tori;
\item a diffeomorphism $M=S^3\backslash\nu(U_m)$ for an $m$-component unlink $U_m$;
\item a point $\infty\in int(M)\subset S^3$;
\item a diffeomorphism $S^3\backslash\{\infty\}=\R^3$ that sends $\nu(U_m)$ to the set of points within distance $0.01$ to $\sqcup_{i=1}^m\{(x,0,z)\colon(x-i)^2+z^2=0.01\}$, and sends $U_m$ to $\sqcup_{i=1}^m\{(x,-0.01z,z)\colon(x-i)^2+z^2=0.01\}$.
\end{itemize}

A framed oriented link $L\subset\#^m(S^1\times S^2)$ with $2$-divisible homology class is \textit{admissible} if $L\subset int(M)\backslash\{\infty\}$ and that the orthogonal projection of $L\cup U_m$ onto $\R^2\times\{0\}$ is generic, and standard near the (projection of the) surgery regions (say $\sqcup_{i=1}^m(i-0.2,i+0.2)\times(-0.02,0.02)\subset\R^2$) in the sense shown on the left of Figure~\ref{fig:admissibility}. The \textit{writhe} of an admissible link $L$, denoted $w(L)$, is the writhe of the framed oriented link $L$ regarded as a link in $S^3$ via $M\subset S^3$.

\begin{figure}
\centering
\includegraphics[width=0.9\linewidth]{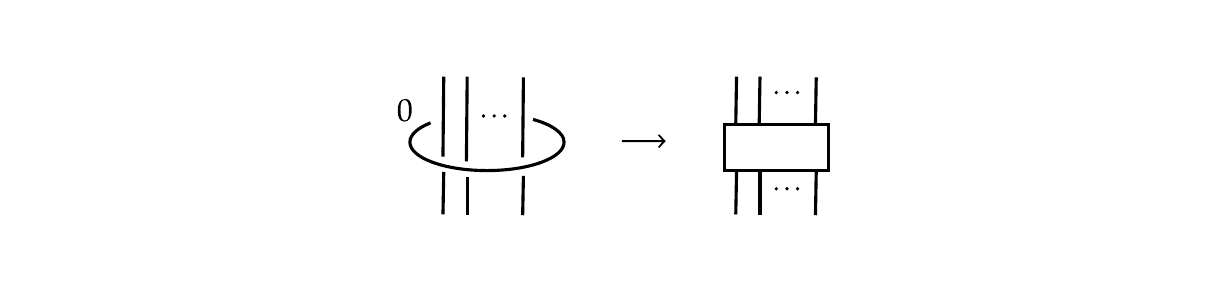}
\caption{Left: Diagram of an admissible link near a surgery region. Right: A Rozansky projector.}
\label{fig:admissibility}
\end{figure}

The \textit{unrenormalized Rozansky-Willis homology} associates to each admissible link $L\subset\#^m(S^1\times S^2)$ a bigraded vector space $KhR_2^+(L)$, which is defined by inserting Rozansky projectors in the link diagram of $L$ at each surgery region (shown pictorially in Figure~\ref{fig:admissibility}), and evaluating using the Khovanov-Rozansky $\gl_2$ homology functor. See Section~\ref{sec:projectors} for a discussion of Rozansky projectors. By our $\gl_2$ convention, $KhR_2^+(L)$ is related to $Kh(L)$ in \cite{willis2021khovanov} by
$$KhR_2^{+,h,q}(L)=(Kh^{-h,q+w(L)}(L))^*.$$
When $m=0$, this recovers the usual Khovanov-Rozansky $\gl_2$ homology $KhR_2$ for links in $S^3$.

In the rest of this paper, we will work with the \textit{renormalized Rozansky-Willis homology}, defined as $$\widetilde{KhR}_2^+(L)=(tq^{-1})^{w(L)/2}KhR_2^+(L),$$ where $t,q$ denote homological, quantum degree shifts, respectively. Note that its gradings take values in half-integers when $w(L)$ is odd.\footnote{Technically, an integral, or at least a mod $2$ homological grading is required for applying the Koszul sign convention in homological algebra. For this purpose, it is better to regard $\widetilde{KhR}_2^+(L)$ as $\tfrac12\Z_h\oplus\tfrac12\Z_q\oplus(\Z/2)_k$-graded, where $h,q$ are homological, quantum gradings taking values in half integers as defined above, and $k$ is the Koszul grading, defined as the mod $2$ homological grading of $KhR_2(L)$, which controls the Koszul sign convention.} In \cite{rozansky2010categorification,willis2021khovanov}, the isomorphism type of $\widetilde{KhR}_2^+(L)$ is shown to be an invariant of $L$ up to isotopy in $\#^m(S^1\times S^2)$. Since every framed oriented link with $2$-divisible homology class can be isotoped to be admissible, this gives an invariant up to isomorphism of isotopy classes of framed oriented links in $\#^m(S^1\times S^2)$ with $2$-divisible homology class.
For $m>0$, $\widetilde{KhR}_2^+$ is not known to be functorial with respect to link cobordisms in $I\times\#^m(S^1\times S^2)$. We will address its functoriality in Section~\ref{sec:concrete_functoriality}.

\begin{Rmk}\label{rmk:2_divisible}
\begin{enumerate}
\item The unrenormalized Rozansky-Willis homology $KhR_2^+(L)$ is only an invariant of $L$ up to overall grading shifts by multiples of $t^2q^{-2}$, unless $L$ is null-homologous.
\item As in \cite[Remark~5.5]{willis2021khovanov}, we could have removed the $2$-divisibility condition on links by declaring the Rozansky projector $P_{\ell,0}^\vee$ to be zero when $\ell$ is odd, so that $\widetilde{KhR}_2^+(L)=0$ for links that violate the $2$-divisibility condition. This is the correct definition in view of Theorem~\ref{thm:SZ}; see also Section~\ref{sec:gl2_projectors} and \cite[Section~5.2]{willis2021khovanov}. We impose the $2$-divisibility condition only for notational ease.
\end{enumerate}
\end{Rmk}

The homology group $\widetilde{KhR}_2^+(L)$ is usually infinite dimensional when $m>0$, but its homological degree is bounded from below. In fact, the space $\widetilde{KhR}_2^{+,\le h,*}(L)$ is finite-dimensional for each finite $h$. We formally define $$\widetilde{KhR}_2^-(L):=(\widetilde{KhR}_2^+(\bar L))^*,$$ where $\bar L$ denotes the mirror image of $L$, defined as the image of $L$ under an orientation-reversing involution $\iota$ on $\#^m(S^1\times S^2)$ that preserves the decomposition $\#^m(S^1\times S^2)=M\cup N$ and acts on $M$ by inverting the $z$-coordinate in $\R^3$ (note that after readjusting the position of $U_m$, $\iota$ respects admissibility of links). The star $*$ denotes the dual as graded vector spaces. Thus $\widetilde{KhR}_2^-(L)$ has homological degree bounded from above. When $m=0$, namely for admissible link in $S^3$, $\widetilde{KhR}_2^\pm$ agree.

Finally, a finite family of framed oriented links $L_i\subset\#^{m_i}(S^1\times S^2)$ with $2$-divisible homology classes ($i=1,\cdots,k$) determines a framed oriented link $L\subset\sqcup_{i=1}^k\#^{m_i}(S^1\times S^2)$ with $2$-divisible homology class. The link $L$ is \textit{admissible} if each $L_i$ is admissible, in which case we formally define $$\widetilde{KhR}_2^\pm(L):=\otimes_{i=1}^k\widetilde{KhR}_2^\pm(L_i).$$

\subsubsection{Properties of the Rozansky projector}\label{sec:projectors}
We collect some properties enjoyed by the Rozansky projectors that will be useful for us. This section is not required for reading Section~\ref{sec:outline}. We work with the Bar-Natan formalism, and this is the source of many sign ambiguities we will encounter later. We refer to Appendix~\ref{sec:gl2_projectors} for a construction of Rozansky projectors with $\gl_2$ webs and foams to remove the sign ambiguity.

Fix a nonnegative even integer $\ell$. The Rozansky projector on $\ell$ strands, denoted $P_{\ell,0}^\vee$, is a certain infinite complex in the Bar-Natan category of the disk with $2\ell$ endpoints, bounded from below. One way to construct $P_{\ell,0}^\vee$ is to take the family of maps $$\{1_\ell\to q^{\ell/2}\delta\otimes\delta^t\},$$ apply the cobar construction of Hogancamp \cite[Section~3]{hogancamp2020constructing}, and simplify by delooping the circle components. 
Here, $1_\ell$ is the identity tangle on $\ell$ strands, $\delta$ runs over all crossingless matchings of the $\ell$ points on the top, $\delta^t$ is the vertical reflection of $\delta$, $\otimes$ denotes vertical composition of tangles going from top to bottom, and each map $1_\ell\to q^{\ell/2}\delta\otimes\delta^t$ is given by the $\ell/2$ natural saddles. See Figure~\ref{fig:TL_family} for an illustration for $\ell=4$.

\begin{figure}
	\centering
    \includegraphics[width=0.7\linewidth]{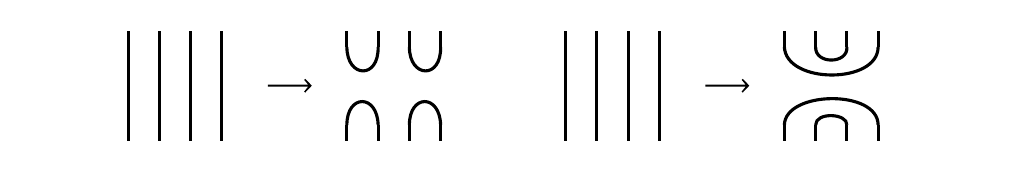}
	\caption{The family of maps that constructs $P_{4,0}^\vee$ via the cobar construction in \cite{hogancamp2020constructing} (grading shifts suppressed).}
	\label{fig:TL_family}
\end{figure}

The precise definition will not play a role in our paper. Instead, we make a list of properties enjoyed by the Rozansky projectors $P_{\ell,0}^\vee$. These will be justified in Section~\ref{sec:gl2_projectors} in the $\gl_2$ webs and foams setup.

\begin{Prop}\label{prop:projector_properties}
The Rozansky projectors $P_{\ell,0}^\vee$ satisfy the following properties.
\begin{enumerate}
\item\label{item:2.6.1} Each $P_{\ell,0}^\vee$ is a chain complex, where each term is a direct sum of $(\ell,\ell)$ Temperley-Lieb diagrams of through-degree zero, i.e. composites of the form , $\alpha\otimes\beta^t$, where $\alpha$ is an $(\ell,0)$ diagram and $\beta^t$ a $(0,\ell)$ diagram. Differentials between different terms are linear combinations of dotted cobordisms of through-degree zero, i.e. of the form $\sum(f\otimes g^t\colon\alpha\otimes\beta^t\to\alpha'\otimes\beta'^t)$ for some dotted cobordisms $f\colon\alpha\to\alpha',g\colon\beta\to\beta'$. Informally, this means that the projectors have an ``empty region in the middle.''
\item\label{item:2.6.2} $P_{\ell,0}^\vee$ is unital: it comes with a chain map $\iota_\ell\colon1_\ell\to P_{\ell,0}^\vee$, called the unit map.
\item\label{item:2.6.3} The Rozansky projector on $0$ strands is the empty diagram. The unit map $\iota_0\colon1_0\to P_{0,0}^\vee$ is the identity map.
\item\label{item:2.6.4} The maps $\iota_\ell\otimes\mathrm{id},\mathrm{id}\otimes\iota_\ell\colon P_{\ell,0}^\vee\to P_{\ell,0}^\vee\otimes P_{\ell,0}^\vee$ are chain homotopic.
\item\label{item:2.6.5} If $T$ is an $(\ell_0,\ell_1)$-tangle, then $P_{\ell_0,0}^\vee\otimes T\xrightarrow{\mathrm{id}\otimes\mathrm{id}\otimes\iota_{\ell_1}}P_{\ell_0,0}^\vee\otimes T\otimes P_{\ell_1,0}^\vee$ and $T\otimes P_{\ell_1,0}^\vee\xrightarrow{\iota_{\ell_0,0}\otimes\mathrm{id}\otimes\mathrm{id}}P_{\ell_0,0}^\vee\otimes T\otimes P_{\ell_1,0}^\vee$ are chain homotopy equivalences. In particular, $\iota_\ell\otimes\mathrm{id}$ and $\mathrm{id}\otimes\iota_\ell$ in (4) are chain homotopy equivalences.
\item\label{item:2.6.6} The unit map \raisebox{-25pt}{\includegraphics[width=0.23\linewidth]{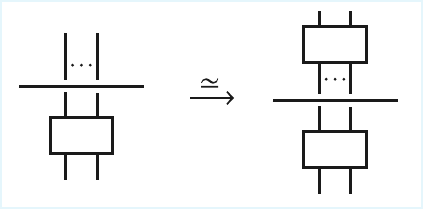}} (as well as its mirrored version) is a chain homotopy equivalence. 

\item\label{item:2.6.7} The unit maps in \raisebox{-15pt}{\includegraphics[width=0.45\linewidth]{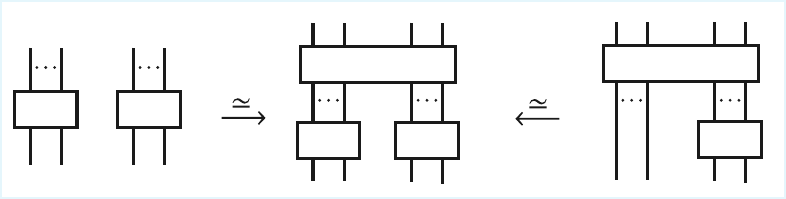}} are chain homotopy equivalences.
\item\label{item:2.6.8} $P_{\ell,0}^\vee$ is symmetric: there is a canonical chain homotopy equivalence between $P_{\ell,0}^\vee$ and the planar rotation of $P_{\ell,0}^\vee$ by $\pi$ that makes \raisebox{-30pt}{\includegraphics[width=0.45\linewidth]{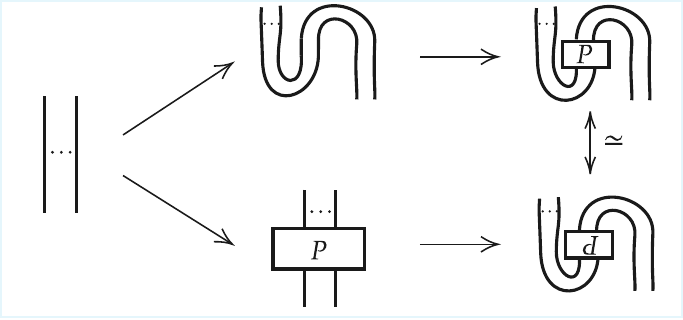}} commute up to homotopy.
\end{enumerate}
\end{Prop}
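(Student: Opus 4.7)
The plan is to work throughout in the $\gl_2$ webs and foams formalism (as the authors do in Appendix~\ref{sec:sign} and Section~\ref{sec:gl2_projectors}), which removes the sign ambiguities inherent to the Bar-Natan formalism, and to base the argument on a universal characterization of $P_{\ell,0}^\vee$ extracted from Hogancamp's cobar construction. Once this universal property is in hand, the eight properties can be read off either by direct inspection of the construction or by comparison of universal properties on both sides.

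First, I would verify Properties (1)--(3) by unpacking the cobar construction. Writing $Y := \bigoplus_\delta q^{\ell/2}\delta\otimes\delta^t$, the cobar complex of the family of saddles $f\colon 1_\ell \to Y$ has terms built from $Y^{\otimes n}$ (with appropriate shifts) in cobar degree $n \ge 0$; every such term is through-degree zero since $Y$ is, and the intermediate compositions $\delta_i^t\otimes\delta_{i+1}$ can be delooped to express each summand in the prescribed form $\alpha\otimes\beta^t$, establishing (1). The unit $\iota_\ell$ of (2) is the structure map $1_\ell\to Y\hookrightarrow P_{\ell,0}^\vee$ built into the cobar construction, and (3) is trivial since for $\ell=0$ the family is empty and the cobar construction returns the empty diagram with identity unit.

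Properties (4)--(7) will all flow from a single \emph{key absorption lemma}: tensoring the mapping cone of $\iota_\ell$ with any through-degree-zero diagram yields a contractible complex. This is the categorified analog of the Jones-Wenzl annihilation property and characterizes $P_{\ell,0}^\vee$ up to canonical chain homotopy equivalence among bounded-below complexes of through-degree zero diagrams equipped with a compatible unit from $1_\ell$. From the lemma, Property~(5) follows because the cone of $\mathrm{id}_{P_{\ell_0,0}^\vee}\otimes\mathrm{id}_T\otimes\iota_{\ell_1}$ is supported on diagrams whose top portion (a through-degree-zero subdiagram coming from the cone of $\iota_{\ell_1}$) is absorbed by $P_{\ell_0,0}^\vee$ on the opposite side after sliding through $T$; the mirrored version is symmetric. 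Property~(4) then follows because both $\iota\otimes\mathrm{id}$ and $\mathrm{id}\otimes\iota$ are chain homotopy equivalences by (5) and both extend $\iota_\ell\otimes\iota_\ell\colon 1_\ell\to P_{\ell,0}^\vee\otimes P_{\ell,0}^\vee$ along $\iota_\ell$; the universal property then forces such extensions to be unique up to homotopy. Properties~(6) and~(7) are specific geometric instances of absorption: (6) kills a half-turnback through a projector, and (7) realizes the passage from $P_{2,0}^\vee$ to $P_{4,0}^\vee$ by parallel-strand stabilization, both deducible by recognizing that both sides solve the same universal property. Property~(8) follows from the manifest $\pi$-rotational symmetry of the defining family, which swaps $\delta$ and $\delta^t$ and preserves the saddles; this induces a natural chain equivalence between $P_{\ell,0}^\vee$ and its $\pi$-rotation, with the homotopy-commutativity of the diagram involving unit maps following from (4).

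The main obstacle is the key absorption lemma, which in the Bar-Natan formalism requires careful sign-tracking through iterated saddle compositions in the cobar construction and comparison with signs from the turnback-absorbing cobordisms. This is presumably why the authors defer the rigorous treatment to Appendix~\ref{sec:sign}, where the $\gl_2$ foam formalism pins down the relevant signs intrinsically. Conceptually, the lemma parallels Hogancamp's analysis of categorified Jones-Wenzl projectors in \cite{hogancamp2020constructing}, here adapted to the ``zero-framing'' variant on $\ell$ strands implicit in the notation $P_{\ell,0}^\vee$.
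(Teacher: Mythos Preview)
Your proposal is essentially correct and follows the same approach as the paper: both work in the $\gl_2$ webs and foams setting, construct $P_{\ell,0}^\vee$ via Hogancamp's cobar construction on the unital object $C=\bigoplus_\delta q^{\ell/2}\delta\otimes\delta^t$, and then derive (1)--(8) from the characterizing properties in \cite[Theorem~3.12]{hogancamp2020constructing}. Your ``key absorption lemma'' is exactly the content of that theorem specialized here (with $C$-injective $=$ through-degree-zero $=$ factoring through a purely $2$-labeled object).

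One small simplification: in your argument for (5), the ``sliding through $T$'' step is unnecessary. The through-degree-zero condition is a two-sided ideal under composition, so $P_{\ell_0,0}^\vee\otimes T$ (resp.\ $T\otimes P_{\ell_1,0}^\vee$) is already a complex of through-degree-zero objects, and the absorption lemma applies directly. This is how the paper phrases it, invoking \cite[Theorem~3.12]{hogancamp2020constructing} on the ideal $\mathbf{Foams}_{I^2,-}^{(1.5)}$; (6) is then obtained by bending down the ends of the over/understrand, and the second half of (7) by bending up.
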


\subsection{A lasagna interpretation of Rozansky-Willis homology}\label{sec:SZ}
For our purposes, it will be most convenient to consider links in a disjoint union of connected sums of $S^1\times S^2$'s, although this might make the notation a bit awkward. We will start doing this in this section. The readers are welcome to take $k=1$ in the rest of the section.

For $k\ge0$, $m_1,\cdots,m_k\ge0$, the manifold $\sqcup_{i=1}^k\#^{m_i}(S^1\times S^2)$ is naturally the boundary of $D_{std}:=\#_{i=1}^k\natural^{m_i}(D^2\times S^2)$. Here, $D_{std}$ is thought of as equipped with a standard handle decomposition with one $0$-handle, $\sum_{i=1}^km_i$ $0$-framed $2$-handles, $k$ $3$-handles, and one $4$-handle, so that the $3$-handles are attached along $S^2(1)+(10i,0,0)\subset\R^3\subset S^3=\partial B^4$, $i=1,2,\cdots,k$, the $2$-handles in the $i$-th connected summand are attached inside $B^3(1/2)+(10i,0,0)$, along an unlink $U_{m_i}\subset S^3_i$ in standard position. Here, $S_i^3$ denotes the $S^3$ boundary component of $B^4\cup\text{($3$-handles)}$ containing $(10i,0,0)$, whose $\infty$ point is the inner belt point of the corresponding $3$-handle. In this description, $S^2(R)$ and $B^3(R)$ are the $2$-sphere and $3$-ball of radius $R$ centered at $0\in\R^3$, respectively. See Figure~\ref{fig:D_std}.

\begin{figure}
\centering
\includegraphics[width=0.8\linewidth]{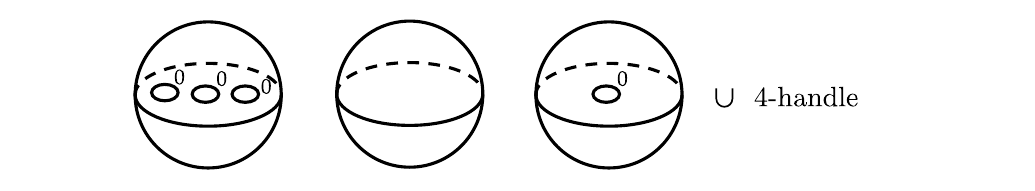}
\caption{The Kirby diagram for the concrete $\#_{i=1}^3\natural^{m_i}(D^2\times S^2)$ with $3$-handles drawn, $(m_1,m_2,m_3)=(3,0,1)$. The $4$-handle is the one touching the outer belt point of each $3$-handle.}
\label{fig:D_std}
\end{figure}

An admissible link $L\subset\partial D_{std}$ determines a canonical class $\alpha_L\in H_2^L(D_{std})$, characterized by having trivial algebraic intersections with the $2$-cocores of $D_{std}$. In \cite{sullivan2024kirby}, Sullivan--Zhang showed that one can recover Rozansky-Willis homology from Khovanov skein lasagna modules in the following sense.

\begin{Thm}[{\cite[Remark~1.6]{sullivan2024kirby}}]\label{thm:SZ}
For every admissible link $L\subset\partial D_{std}=\sqcup_{i=1}^k\#^{m_i}(S^1\times S^2)$, there is a canonical isomorphism of vector spaces
\begin{equation}\label{eq:SZ}
\mathcal S_0^2(D_{std};L)\cong\widetilde{KhR}_2^+(L)\otimes\mathcal S_0^2(D_{std})\tag{SZ}
\end{equation}
Moreover, in each skein grading $\alpha\in H_2^L(D_{std})$, the isomorphism in the forward direction is homogeneous with tridegree shift $(\alpha^2/2,-\alpha^2/2,-\alpha_L)$.

If $L\subset\partial D_{std}$ does not have $2$-divisible homology class, then $$\mathcal S_0^2(D_{std};L)=0.$$
\end{Thm}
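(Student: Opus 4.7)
The natural plan is to compute $\mathcal{S}_0^2(D_{std};L)$ by exploiting the handle decomposition of $D_{std}$ described in Figure~\ref{fig:D_std}, invoking the Manolescu--Neithalath $2$-handle attachment formula of \cite{manolescu2022skein,manolescu2023skein}, and then matching the resulting colimit against the Rozansky projector insertion that defines $\widetilde{KhR}_2^+(L)$ in Section~\ref{sec:RW}.

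First I would discard the $k$ three-handles and single four-handle of $D_{std}$. For the empty-link case this is precisely Example~\ref{ex:S1S2I}, and for general admissible $L$ the same general-position plus neck-cutting argument applies: cocores of these higher handles can be made disjoint from any lasagna skein, yielding $\mathcal{S}_0^2(D_{std};L)\cong\mathcal{S}_0^2(\natural^m(D^2\times S^2);L)$ compatibly with the $\mathcal{S}_0^2(D_{std})$-module structure coming from stacking in $I\times\partial D_{std}$. Applying the $2$-handle formula to $\natural^m(D^2\times S^2)=B^4\cup m$ $0$-framed $2$-handles along the unlink $U_m$ then gives a presentation
\[
\mathcal{S}_0^2(\natural^m(D^2\times S^2);L)\;\cong\;\operatorname{colim}_{\mathbf{n}\in\mathbb{Z}_{\ge 0}^m}\, KhR_2\bigl(L\cup U_m^{(\mathbf{n})}\bigr),
\]
where $U_m^{(\mathbf{n})}$ denotes the $0$-framed $\mathbf{n}$-cable of $U_m$ near the $i$-th surgery region with multiplicity $n_i$, and the colimit maps are induced by saddle cobordisms of the form $1_\ell\to\delta\otimes\delta^t$, i.e.\ exactly those in Figure~\ref{fig:TL_family}.

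The key recognition is that the same family of saddles underlies the cobar construction producing the Rozansky projector $P_{\ell,0}^\vee$ reviewed in Section~\ref{sec:projectors}. Comparing term by term, the colimit above is canonically identified with the Bar-Natan evaluation of the diagram of $L$ with $P_{\ell_i,0}^\vee$ inserted at the $i$-th surgery region. After the writhe renormalization $(tq^{-1})^{w(L)/2}$ of Section~\ref{sec:RW}, this is precisely $\widetilde{KhR}_2^+(L)$ tensored with the algebra of dotted closed curves living in the ``empty middle'' of each projector (Property~(\ref{item:2.6.1}) of Proposition~\ref{prop:projector_properties}). Those closed decorations are freely generated by dot/no-dot labels on the $2$-handle core spheres, which are exactly the generators $A_{i,j,0}^{\pm 1},A_{i,j,1}$ of $\mathcal{S}_0^2(D_{std})$ from Example~\ref{ex:D2S2}. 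For the tridegree shift, a skein class $\alpha$ differs from $\alpha_L$ by a combination $\sum \alpha_{i,j}e_{i,j}$, corresponding on the right-hand side to multiplication by $\prod A_{i,j,0}^{\alpha_{i,j}}$, whose tridegree contribution is exactly $(\alpha^2/2,-\alpha^2/2,-\alpha_L)$. If $[L]$ fails to be $2$-divisible then some $\ell_i$ is odd, so $P_{\ell_i,0}^\vee=0$ by Remark~\ref{rmk:2_divisible}(2), forcing $\mathcal{S}_0^2(D_{std};L)=0$.

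The main obstacle I anticipate is establishing the precise chain-level match between the structure maps of the Manolescu--Neithalath colimit and the cobar differentials of $P_{\ell,0}^\vee$, including a careful sign bookkeeping in the Bar-Natan formalism -- exactly the kind of issue the authors defer to Appendix~\ref{sec:sign}. A secondary subtlety is the canonicity of the isomorphism: one must check that it is independent of the isotopy used to make $L$ admissible, of the choice of cabling representatives, and of the cancelling $(3,4)$-handle pair used to present $D_{std}$, so that the resulting identification is indeed a well-defined invariant of $(D_{std},L)$ compatible with the $\mathcal{S}_0^2(D_{std})$-action.
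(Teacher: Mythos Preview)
Your overall architecture---reduce to $\natural^m(D^2\times S^2)$ via the $3$- and $4$-handles, apply the Manolescu--Neithalath $2$-handle formula, and then connect to the Rozansky projector---matches the paper's (SZ1)--(SZ5). But the bridge you propose between the colimit and the projector is wrong. The structure maps in the Manolescu--Neithalath colimit are \emph{not} the saddles $1_\ell\to\delta\otimes\delta^t$ of Figure~\ref{fig:TL_family}; they are symmetrized dotted annulus births that add a new pair of oppositely-oriented belt circles (and there is a symmetric-group action on the belts that must be divided out). These maps bear no direct relation to the cobar differentials defining $P_{\ell,0}^\vee$, so your ``term-by-term comparison'' does not go through. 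What the paper (following \cite{sullivan2024kirby}) actually does is keep both structures present: first show that the unit map inserting $P_{\ell_j,0}^\vee$ at each surgery region is an isomorphism on the colimit (this is the nontrivial content of \cite{sullivan2024kirby} and is step (SZ3)), and then exploit the through-degree-zero shape of the projector to slide the belts off into a split unlink (step (SZ4)), yielding the tensor factorization.

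Two smaller issues. Your tridegree computation is off: multiplication by $A_{i,j,0}$ has tridegree $(0,0,e_{i,j})$ by Example~\ref{ex:D2S2}, so the shift $(\alpha^2/2,-\alpha^2/2)$ cannot come from there; in the paper it arises from the writhe renormalization $KhR_2^+\to\widetilde{KhR}_2^+$ in step (SZ2), since the writhe of $L\cup(n_++r,n_-+r)\text{ belts}$ equals $\alpha^2$. And your colimit is indexed only by $\mathbf n\in\Z_{\ge0}^m$, whereas the correct model tracks both orientations of belts together with the invariants under $S_{|n|+2|r|}$.
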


The intersection pairing on $H_2^L(D_{std})$ appearing in the grading shifts above is defined by using the framing of $L$ as the boundary condition. The term $\widetilde{KhR}_2^+(L)$ in \eqref{eq:SZ} is set to concentrate in skein grading $0$.

As it will be of importance, in the rest of this section, we examine the isomorphism \eqref{eq:SZ} in more detail.

Write $m=\sum_{i=1}^km_i$. For $n\in\Z^m$, let $n_+,n_-,|n|\in\Z^m$ be defined by $(n_+)_j=\max(n_j,0)$, $(n_-)_j=\max(-n_j,0)$, $|n|_j=|n_j|$. Write $||n||=\sum_{j=1}^m|n|_j$.

Fix a homology class $\alpha\in H_2^L(D_{std})$. Write $\alpha=\alpha_L+n$ for some $n\in\Z^m\cong H_2(D_{std})$. At the skein class $\alpha$, the isomorphism \eqref{eq:SZ} is the composition of the following sequence of isomorphisms.

\begin{align}
&\mathcal S_0^2(D_{std};L;\alpha)\nonumber\\
\xleftarrow{\cong}&\,\mathrm{colim}_{r\to\infty}q^{-||n||-2||r||}KhR_2(L\cup(n_++r,n_-+r)\text{ belts})^{S_{|n|+2|r|}}\label{eq:SZ_MN}\tag{SZ1}\\
=&\,\mathrm{colim}_{r\to\infty}q^{-||n||-2||r||}(t^{-1}q)^{\alpha^2/2}\widetilde{KhR}_2^+(L\cup(n_++r,n_-+r)\text{ belts})^{S_{|n|+2|r|}}\label{eq:SZ_renom}\tag{SZ2}\\
\xrightarrow{\cong}&\,(t^{-1}q)^{\alpha^2/2}\mathrm{colim}_{r\to\infty}q^{-||n||-2||r||}\widetilde{KhR}_2^+(L^\circ\otimes(\otimes_{j=1}^mP_{\ell_j,0}^\vee)\cup(n_++r,n_-+r)\text{ belts})^{S_{|n|+2|r|}}\label{eq:SZ_res_1}\tag{SZ3}\\
\xrightarrow{\cong}&\,(t^{-1}q)^{\alpha^2/2}\mathrm{colim}_{r\to\infty}q^{-||n||-2||r||}\widetilde{KhR}_2^+(L^\circ\otimes(\otimes_{j=1}^mP_{\ell_j,0}^\vee)\sqcup U^{n_++r,n_-+r})^{S_{|n|+2|r|}}\label{eq:SZ_slide}\tag{SZ4}\\
\xrightarrow{\cong}&\,(t^{-1}q)^{\alpha^2/2}\widetilde{KhR}_2^+(L)\otimes\mathcal S_0^2(D_{std};\alpha-\alpha_L).\label{eq:SZ_simp}\tag{SZ5}
\end{align}

The individual steps are explained as follows.

\eqref{eq:SZ_MN}: This follows from the $2$-handlebody formula and the neck-cutting operation of Manolescu--Neithalath \cite[Proposition~3.8,Lemma~7.2]{manolescu2022skein}\footnote{The $Kh$ in Proposition~3.8 of \cite{manolescu2022skein} should be $KhR_2$ instead.}, formulated in terms of a filtered colimit as in \cite{hogancamp2025kirby}. In this formula, $L$ is regarded as a link in $\sqcup_{i=1}^kS^3$, and the belts consist of $(n_+)_j+r_j$ parallel copies of positively oriented $j$-th component of $\sqcup_{i=1}^kU_{m_i}$, and $(n_-)_j+r_j$ negatively oriented ones. The product symmetric group $S_{|n|+2|r|}=\prod_jS_{|n|_j+2|r|_j}$ acts on $KhR_2(L\cup(n_++r,n_-+r)\text{ belts})$ by permuting the belt circles (this is well-defined by Grigsby--Licata--Wehrli \cite{grigsby2018annular}). The colimit is taken over $r\in\Z_{\ge0}^m$ along $r\to r+e_j$, $j=1,\cdots,m$, where $e_j\in\Z^m$ is the $j$-th coordinate vector, and the corresponding maps on $KhR_2$ are given by symmetrized dotted annulus cobordism maps. Here, $KhR_2$ of a link in $\sqcup_{i=1}^kS^3$ is defined as the tensor product of $KhR_2$ of its components.

Explicitly, the isomorphism is given as follows. Let $v\in KhR_2(L\cup(n_++r,n_-+r)\text{ belts})^{S_{|n|+2|r|}}$. Then the class in the right hand side represented by $v$ is sent to the class in the left hand side represented by the lasagna filling $((I\times L) \cup(n_++r,n_-+r)\text{ cores},v)$, 
where the skein has $k$ input balls, the $i$-th of which is given by a slight shrunken collar neighborhood of $B^3(1)+(10i,0,0)\subset S^3=\partial B^4$ in the $0$-handle $B^4$, and the cores refer to parallel copies of cores of $2$-handles, slightly extended to the interior of $B^4$. The inverse of this isomorphism is given as follows. Any lasagna filling representing an element in the left hand side is equivalent to one of the form $((I\times L)\cup(n_++r,n_-+r)\text{ cores},v)$ by neck-cutting \cite[Lemma~7.2]{manolescu2022skein} along $3$-spheres given by cores of $3$-handles union boundary-parallel copies of $B^3(1)+(10i,0,0)\subset\partial B^4\subset B^4$, evaluating in the $B^4$ region containing the $4$-handle, and applying general position as in \cite{manolescu2022skein}. The class represented by such a lasagna filling is sent to the class on the right hand side represented by $\mathrm{Sym}(v)\in KhR_2(L\cup(n_++r,n_-+r)\text{ belts})^{S_{|n|+2|r|}}$, the symmetrization of $v$.

\eqref{eq:SZ_renom}: This is the renormalization by the writhe of $L\cup(n_++r,n_-+r)\text{ belts}$, which is equal to $\alpha^2$.

\eqref{eq:SZ_res_1}: Here, $L^\circ$ denotes $L$ regarded in $\sqcup_{i=1}^kS^3$, but with a neighborhood of the surgery regions removed, and $\ell_j$ denotes the geometric intersection number between $L$ and the disk in $\R^2\times\{0\}$ bounded by the $j$-th component of $\sqcup_{i=1}^kU_{m_i}$. The second $\otimes$ symbol indicates disjoint union, while the first represents insertion of each Rozansky projector $P_{\ell_j,0}^\vee$ at the deleted neighborhood of the $j$-th surgery region, placed immediately below the $j$-th collection of belts, as shown on the left of Figure~\ref{fig:SZ_slide}. The map is induced by the unit maps $1_{\ell_j}\to P_{\ell_j,0}^\vee$, and is shown to be an isomorphism by Sullivan--Zhang \cite{sullivan2024kirby}.

\begin{figure}
\centering
\includegraphics[width=0.7\linewidth]{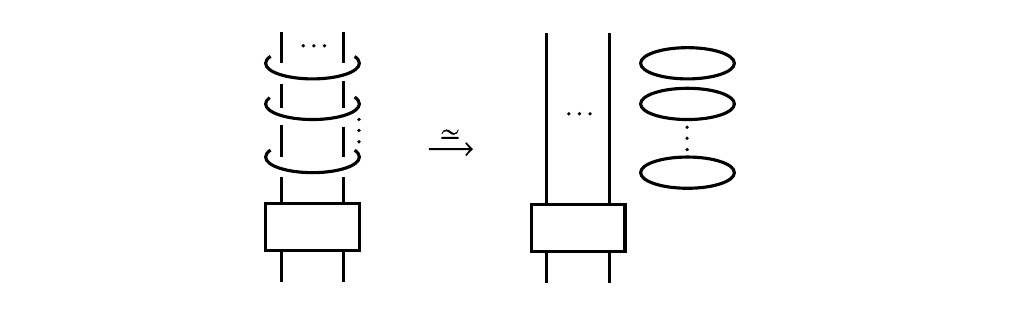}
\caption{The belt-slide isomorphism \eqref{eq:SZ_slide}, shown near one surgery region.}
\label{fig:SZ_slide}
\end{figure}

\eqref{eq:SZ_slide}: This follows from \cite[Lemma~3.12]{willis2021khovanov}. We sketch the proof. Each $P_{\ell_j,0}^\vee$ is a chain complex whose terms are direct sums of through-degree $0$ Temperley-Lieb diagrams with some quantum degree shifts. One can use simplifying Reidemeister II moves to slide the belts off each term in the complex through the ``empty region.'' These sliding-off isomorphisms (each well-defined up to sign) patch together to an isomorphism that slides the belts off the projectors, say to the right of the strands as shown in Figure~\ref{fig:SZ_slide}, by some compatible sign choices. Here $U^{a,b}$ denotes the $(a+b,0)$-cable of (a slightly shifted copy of) $U_m=\sqcup_{i=1}^kU_{m_i}$, with orientation on $b$ strands reversed, $a,b\in\Z_{\ge0}^m$. This isomorphism has an overall sign indeterminacy. To fix the sign, see Appendix~\ref{sec:sign_slide}.

\eqref{eq:SZ_simp}: By monoidality of the link homology, split disjoint unions give rise to tensor products. By definition, the first tensorial factor becomes $\widetilde{KhR}_2^+(L)$, the Rozansky-Willis homology of the admissible link $L\subset\partial D_{std}$. By the $2$-handlebody formula \cite[Proposition~3.8]{manolescu2022skein}, the second tensorial factor (under the colimit) becomes $\mathcal S_0^2(\natural^m(D^2\times S^2);\alpha-\alpha_L)$, which is isomorphic to $\mathcal S_0^2(D_{std};\alpha-\alpha_L)$ via the map induced by $3,4$-handle attachments.

\section{Outline}\label{sec:outline}
\subsection{The input homology}\label{sec:input_homology}
A \textit{$4$-dimensional $1$-handlebody} is an oriented $4$-manifold admitting a handle decomposition with only $0,1$-handles. A \textit{$4$-dimensional relative $1$-handlebody complement} $W$ is an oriented $4$-dimensional cobordism, i.e. an oriented $4$-manifold whose boundary comes with a partition $\partial W=(-\partial_-W)\sqcup\partial_+W$, such that $W\cong_\varphi X_1\backslash int(X_0)$ for some $4$-dimensional $1$-handlebodies $X_0,X_1$ with $X_0\subset int(X_1)$, so that the diffeomorphism $\varphi$ (usually dropped from the notation) restricts to $\partial_-W\cong\partial X_0$, $\partial_+W\cong\partial X_1$.

If $W$ is a $4$-dimensional relative $1$-handlebody complement, then $X_0,X_1$ are uniquely determined by $W$ up to diffeomorphisms. If $W_0,W_1$ are $4$-dimensional relative $1$-handlebody complements and $\phi\colon\partial_+W_0\to\partial_-W_1$ is an orientation-preserving diffeomorphism, then $W_0\cup_\phi W_1$ is also a $4$-dimensional relative $1$-handlebody complement; moreover, $W_i\cong X_{i+1}\backslash int(X_i)$, $i=0,1$, and $W_0\cup_\phi W_1\cong X_2\backslash int(X_0)$, for some $4$-dimensional $1$-handlebodies $X_0\subset X_1\subset X_2$.

If $M$ is an oriented manifold, an \textit{abstract $M$} is an oriented manifold $X$ that is diffeomorphic to $M$ with orientation. Thus, a $4$-dimensional $1$-handlebody is the same as an abstract $\sqcup_{i=1}^k\natural^{m_i}(S^1\times B^3)$ for some $k\ge0$, $m_1,\cdots,m_k\ge0$. The boundary of a $4$-dimensional $1$-handlebody, as well as the $\pm$-boundary of a $4$-dimensional relative $1$-handlebody complement, is some abstract $\sqcup_{i=1}^k\natural^{m_i}(S^1\times S^2)$.

\begin{Def}\label{def:Links_1}
Let $\mathbf{Links}_1$ denote the following category:
\begin{itemize}
\item Objects: $(S,L)$, where $S$ is an abstract $\sqcup_{i=1}^k\#^{m_i}(S^1\times S^2)$ for some $k\ge0$, $m_1,\cdots,m_k\ge0$, and $L\subset S$ is a framed oriented link with $2$-divisible homology class.
\item Morphisms\footnote{As usual, cobordisms are assumed to have standard collars near the boundaries, so that gluings are well-defined.}: $\mathrm{Hom}((S_0,L_0),(S_1,L_1))=\{(W,\Sigma)\}/\sim$, where $W$ is a $4$-dimensional relative $1$-handlebody complement with $\partial_-W=S_0$, $\partial_+W=S_1$, $\Sigma$ is a cobordism between $L_0,L_1$ in $W$, i.e. a properly embedded framed oriented surface with $(-\partial\Sigma)\cap S_0=L_0$, $\partial\Sigma\cap S_1=L_1$ (with framings and orientations). The equivalence relation $\sim$ is given by $(W_1,\Sigma_1)\sim(W_2,\Sigma_2)$ if there exists a diffeomorphism $W_1\cong W_2$ rel boundary that sends $\Sigma_1$ to $\Sigma_2$.
\item Identity morphism at $(S,L)$: The morphism represented by $(I\times S,I\times L)$, where $\partial_+(I\times S)=\{1\}\times S=S$ and $\partial_-(I\times S)=\{0\}\times S=S$ are the natural identifications.
\item Composition of morphisms: Taking union along the common boundary.
\end{itemize}
\end{Def}

The category $\mathbf{Links}_1$ is symmetric monoidal under disjoint union of ambient manifolds.

Let $f\mathbf{Vect}_\Q^{\Z\times\Z}$ be the category of bigraded $\Q$-vector spaces, finite-dimensional in every bidegree, and homogeneous maps between them. The input homology for our skein lasagna modules will be supplied by the next theorem, which is the main theorem of this paper.

\begin{Thm}\label{thm:KhR_2-}
There is a symmetric monoidal functor $\widetilde{KhR}_2^-\colon\mathbf{Links}_1\to f\mathbf{Vect}_\Q^{\Z\times\Z}$. If $L\subset\sqcup_{i=1}^k\#^{m_i}(S^1\times S^2)$ is an admissible link, then $\widetilde{KhR}_2^-(\sqcup_{i=1}^k\#^{m_i}(S^1\times S^2),L)$ is canonically isomorphic to $\widetilde{KhR}_2^-(L)$, the Rozansky-Willis homology of $L$. For a morphism $(W,\Sigma)$, the linear map $\widetilde{KhR}_2^-(W,\Sigma)$ is homogeneous of degree $(0,-\chi(\Sigma))$.
\end{Thm}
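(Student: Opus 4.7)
The plan is to leverage Sullivan--Zhang's isomorphism \eqref{eq:SZ} as a bypass that converts functoriality statements for $\widetilde{KhR}_2^-$ into functoriality statements for the ordinary skein lasagna module $\mathcal{S}_0^2$, which is manifestly functorial under cobordism gluing by construction. The key idea is: instead of verifying movie moves relating different decompositions of a morphism into elementary pieces, it suffices to check local compatibility of each elementary map with \eqref{eq:SZ}, since SZ reduces the global consistency problem to the lasagna module, where it is built in for free.

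First I would construct $\widetilde{KhR}_2^-$ on objects. Given $(S, L)$, choose a diffeomorphism $\phi\colon S \xrightarrow{\cong} \sqcup_i \#^{m_i}(S^1 \times S^2)_{std}$, transport $L$ to an admissible link, and apply the standard Rozansky--Willis construction to define $\widetilde{KhR}_2^-(L)$. Independence from $\phi$ up to canonical isomorphism reduces, via Theorem~\ref{thm:diff_D2S2_rel_boundary_intro}, to showing that sphere Dehn twists, implanted barbell diffeomorphisms, and locally supported diffeomorphisms all act canonically: the last follows from ordinary $\gl_2$ foam functoriality (Theorem~\ref{thm:gl2_webs_functorial_intro}) together with the cobar construction defining $P_{\ell,0}^\vee$, while the first two are handled by presenting them as cobordisms and invoking Theorem~\ref{thm:gluck_intro}. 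For each type of elementary morphism $(W, \Sigma)$ -- local Morse cobordism of $\Sigma$ (birth, saddle, death), $1$-handle attachment to $W$, local isotopy, and ambient diffeomorphism -- define the induced map directly using local models applied inside each Rozansky projector region (Section~\ref{sec:projectors}).

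To show the induced maps glue into a well-defined functor, apply \eqref{eq:SZ}: any composite of elementary $\widetilde{KhR}_2^-$-maps, after tensoring with the appropriate shifting automorphism on $\mathcal{S}_0^2(D_{std})$ (Definition~\ref{def:shift_auto}), should agree with the lasagna gluing map induced by $(W, \Sigma)$ under the SZ identification. Since lasagna gluing depends only on the diffeomorphism class of $(W, \Sigma)$ rel boundary and is manifestly associative, the same transfers to $\widetilde{KhR}_2^-(W, \Sigma)$. Functoriality on $\mathbf{Links}_1$ then follows, and the symmetric monoidal structure is inherited from monoidality of link homology under disjoint union. The \textbf{main obstacle} is establishing the SZ-compatibility for each elementary move: one must verify that the explicit formula for the elementary $\widetilde{KhR}_2^-$-map, expressed via Rozansky projector manipulations, matches the corresponding lasagna cobordism map after the sequence of isomorphisms \eqref{eq:SZ_MN}--\eqref{eq:SZ_simp}. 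This involves careful bookkeeping of the tridegree shifts $(\alpha^2/2, -\alpha^2/2, -\alpha_L)$ from Theorem~\ref{thm:SZ}, of the belt-slide signs noted in \eqref{eq:SZ_slide}, and of the overall signs intrinsic to the Bar-Natan formalism, for which Appendix~\ref{sec:sign} provides the $\gl_2$ foam framework needed to remove ambiguity. Once compatibility is checked for each elementary type, Theorem~\ref{thm:KhR_2-} follows.
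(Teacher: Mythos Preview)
Your high-level strategy---use Sullivan--Zhang's isomorphism \eqref{eq:SZ} to transfer functoriality questions to the manifestly functorial lasagna gluing, checking compatibility only for elementary moves---is exactly the paper's strategy. However, there are two genuine gaps in your implementation.

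First, you omit the ``turning inside out'' step (Section~\ref{sec:upside_down}) that makes the SZ bypass applicable at all. The isomorphism \eqref{eq:SZ} concerns $\mathcal S_0^2(D_{std};L)$ with $D_{std}=\#\natural(D^2\times S^2)$, but an object $(S,L)$ of $\mathbf{Links}_1$ has $S$ bounding a $4$-dimensional $1$-handlebody, not a $D_{std}$. The paper resolves this by writing $W=X_1\backslash int(X_0)$ for $1$-handlebodies, embedding $X_1\hookrightarrow S^4$, and passing to the complements $-(S^4\backslash int(X_j))$, which \emph{are} abstract $D_{std}$'s; the transposed cobordism $(W^t,\Sigma^t)$ then yields a lasagna gluing map \eqref{eq:upside_down_las}. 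One extracts a contravariant functor $\widetilde{KhR}_2^+$ from the first tensor factor and sets $\widetilde{KhR}_2^-:=(*)\circ\widetilde{KhR}_2^+\circ(-)$. Without this dualization your phrase ``the lasagna gluing map induced by $(W,\Sigma)$'' has no referent. Moreover, the embedding $X_1\hookrightarrow S^4$ depends on a choice of spin structure on $X_1$; the paper handles this in two stages, first restricting to spin objects and morphisms (Sections~\ref{sec:abstract_spin_homology}--\ref{sec:abstract_spin_functoriality}), then removing the spin assumption via Gluck twists (Section~\ref{sec:nonspin}). Your proposal does not address this dependence.

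Second, your invocation of Theorem~\ref{thm:gluck_intro} for sphere Dehn twists and barbell diffeomorphisms is a misattribution. Those are elements of $\Diff_\partial(D_{std})$, not Gluck twists; the paper treats them by direct analysis (neck-cutting for $3$-sphere twists; Lemma~\ref{lem:barbell_las} for the barbell, which is the technical heart of Section~\ref{sec:diff_rel_bdy}). Theorem~\ref{thm:gluck_intro} enters only in Section~\ref{sec:nonspin}, to show the construction is independent of the spin embedding $X_1\hookrightarrow S^4$. Relatedly, your list of elementary morphisms omits canceling $2$-handle attachments (Proposition~\ref{prop:abstract_decomposition}\eqref{item:2_handle_cancel}), which are needed to decompose arbitrary morphisms in $\mathbf{Links}_1$.
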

For $S$ an abstract $\sqcup_{i=1}^k\#^{m_i}(S^1\times S^2)$ and $L\subset S$ a framed oriented link with $2$-divisible homology class, $\widetilde{KhR}_2^-(S,L)$ will be called the \textit{Rozansky-Willis homology} of $(S,L)$, or more succinctly, of $L$.

\begin{Rmk}
Paralleling \cite[Section~5.1]{morrison2022invariants}, one could formulate Theorem~\ref{thm:KhR_2-} by saying that $\widetilde{KhR}_2^-$ defines an algebra over a certain $(4,1)$-lasagna colored operad.
\end{Rmk}

\subsection{Definition of Khovanov skein lasagna modules with \texorpdfstring{$1$}{1}-dimensional inputs}\label{sec:def_S02_bar}
In this section, assuming Theorem~\ref{thm:KhR_2-}, we construct the $1$-dimensional-input Khovanov skein lasagna modules and prove Theorem~\ref{thm:main}.

As in the setup of Section~\ref{sec:S02}, let $X$ be a compact oriented $4$-manifold and $L\subset\partial X$ be a framed oriented link.

In the context of the Khovanov skein lasagna module with $1$-dimensional inputs, a \textit{skein (with $1$-dimensional inputs)} in $X$ rel $L$ is a properly embedded framed oriented surface $\Sigma\subset X\backslash int(B)$ with $\partial\Sigma\cap\partial X=L$ (with framing and orientation), where $B\subset int(X)$ is a $4$-dimensional $1$-handlebody, called the \textit{input manifold} of $\Sigma$. The \textit{input link} of $\Sigma$ is the framed oriented link $(-\partial\Sigma)\cap\partial B$ in $\partial B$, which is required to have a $2$-divisible homology class. A \textit{lasagna filling (with $1$-dimensional inputs)} of $(X,L)$ is a pair $(\Sigma,v)$ where $\Sigma$ is a skein with some input manifold $B$, and $v\in\widetilde{KhR}_2^-(\partial B,(-\partial\Sigma)\cap\partial B)$ is an element in the Rozansky-Willis homology of the input link $(-\partial\Sigma)\cap\partial B$.

\begin{Def}\label{def:S02_bar}
The \textit{$1$-dimensional-input Khovanov skein lasagna module} of $(X,L)$ is the $\Q$-vector space (graded as indicated below)
$$\bar{\mathcal S}_0^2(X;L):=\Q\{\text{lasagna fillings (with $1$-dimensional inputs) of }(X,L)\}/\sim,$$ where $\sim$ is the equivalence relation generated by
\begin{itemize}
	\item Isotopy of the skein rel boundary;
	\item Linearity in the decoration: $(\Sigma,v)+\lambda(\Sigma,w)\sim(\Sigma,v+\lambda w)$, $\lambda\in\Q$;
	\item Enclosement relation: Let $(\Sigma,v)$ be a lasagna filling with input manifold $B$, and let $B'\subset int(X)$ be a $4$-dimensional $1$-handlebody that contains $B$ in its interior. Then $(\Sigma,v)\sim(\Sigma\backslash int(B'),\widetilde{KhR}_2^-(B'\backslash int(B),\Sigma\cap(B'\backslash int(B)))(v))$, where $\Sigma\backslash int(B')$ is regarded as a skein with input manifold $B'$.
\end{itemize}
The \textit{$1$-dimensional-input Khovanov skein lasagna module} of $X$ is $\bar{\mathcal S}_0^2(X):=\bar{\mathcal S}_0^2(X;\emptyset)$.

If $(\Sigma,v)$ is a lasagna filling of $(X,L)$ where $v$ is homogeneous of degree $(h,q)$, then the \textit{tridegree} of $(\Sigma,v)$ is defined to be $(h,q-\chi(\Sigma),[\Sigma])\in(\tfrac12\Z)^2\times H_2^L(X;\Z/2)$, where $H_2^L(X;\Z/2)$ is the preimage of $[L]$ under the connecting homomorphism $H_2(X,L;\Z/2)\to H_1(L;\Z/2)$, which is an $H_2(X;\Z/2)$-torsor. This descends to a trigrading on $\bar{\mathcal S}_0^2(X;L)$. The three gradings, usually denoted by $(h,q,\alpha)$, are called \textit{homological grading}, \textit{quantum grading}, \textit{skein grading}, respectively.
\end{Def}

In the case where $X$ is a $4$-dimensional $1$-handlebody, let $X_0$ be the complement of a collar neighborhood of $\partial X$. For every $L\subset\partial X$ with $2$-divisible homology class, we have an isomorphism of bigraded vector spaces $$\widetilde{KhR}_2^-(\partial X,L)\xrightarrow{\cong}\bar{\mathcal S}_0^2(X;L),\ v\mapsto[(I\times L,v)],$$ where $I\times L$ denotes a standard product skein with input manifold $X_0$. Note that in this case the skein grading on $\bar{\mathcal S}_0^2(X;L)$ is trivial.

\begin{Rmk}\label{rmk:bounding}
\begin{enumerate}
\item In the definition of the $1$-dimensional-input Khovanov skein lasagna modules, we could demand that all input links are null-homologous and take the input homology to be the restriction of $\widetilde{KhR}_2^-$ to the full subcategory of null-homologous links in $\mathbf{Links}_1$. We remark that it would be more natural to use the unrenormalized Rozansky-Willis homology $KhR_2^-$ as input in this case. One obtains a different theory of $1$-dimensional-input Khovanov skein lasagna modules, denoted $\bar{\mathcal S}_0^{2,O}$, for which the grading is by $\Z^2\times H_2^L(X)$. This admits a grading-preserving forgetful map $\mathcal S_0^2(X;L)\to\bar{\mathcal S}_0^{2,O}(X;L)$ for any pair $(X,L)$, which is surjective when $X$ is simply-connected. See also Remark~\ref{rmk:2_divisible}(1).
\item On the other end, we could have removed the $2$-divisibility constraint on input links by declaring $\widetilde{KhR}_2^-(S,L)=0$ for $L$ violating this condition. This will lead to exactly the same theory of $1$-dimensional-input skein lasagna modules. See also Remark~\ref{rmk:2_divisible}(2).
\end{enumerate}
\end{Rmk}

\subsection{Turning cobordisms inside out}\label{sec:upside_down}
The main idea of supplying the cobordism data for Theorem~\ref{thm:KhR_2-} is to turn the $1$-handlebodies inside out and examine Theorem~\ref{thm:SZ}. We will prove the following dual result, which is the dual main theorem of this paper.

\begin{Thm}\label{thm:KhR_2+}
There is a symmetric monoidal functor $\widetilde{KhR}_2^+\colon\mathbf{Links}_1^{op}\to f\mathbf{Vect}_\Q^{\Z\times\Z}$. For an admissible link $L\subset\#^m(S^1\times S^2)$ the vector space $\widetilde{KhR}_2^+(\#^m(S^1\times S^2),L)$ is canonically isomorphic to $\widetilde{KhR}_2^+(L)$. For a morphism $(W,\Sigma)$, the linear map $\widetilde{KhR}_2^+(W,\Sigma)$ is homogeneous of degree $(0,-\chi(\Sigma))$.
\end{Thm}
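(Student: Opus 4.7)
The plan is to leverage Theorem~\ref{thm:SZ} to realize $\widetilde{KhR}_2^+(L)$ as a canonical piece of a classical Khovanov skein lasagna module of a $D_{std}$-type filling, and then transport the manifest functoriality of lasagna gluing into the cobordism structure required by the theorem. On an object $(S,L)\in\mathbf{Links}_1$, I would choose any $D_{std}$-type filling $D$ of $S$ (which exists because $S\cong\sqcup_{i=1}^k\#^{m_i}(S^1\times S^2)=\partial D_{std}$) and define $\widetilde{KhR}_2^+(S,L)$ as the summand of $\mathcal S_0^2(D;L)$ cut out by the Sullivan--Zhang splitting $\mathcal S_0^2(D;L)\cong\widetilde{KhR}_2^+(L)\otimes\mathcal S_0^2(D)$: concretely, the image of $\widetilde{KhR}_2^+(L)\otimes 1$, where $1\in\mathcal S_0^2(D)$ is the class of the empty skein, living in skein grading $\alpha_L$ and shifted by $(t^{-1}q)^{\alpha_L^2/2}$. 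Independence of the choice of filling would follow by comparing two fillings $D,D'$ via an orientation-preserving diffeomorphism extending the identity on $\partial D=S=\partial D'$, and invoking Theorem~\ref{thm:diff_D2S2_rel_boundary_intro} to see that the resulting ambiguity in the identification acts trivially on the Sullivan--Zhang-extracted piece (the nontrivial cokernel is generated by $3$-sphere Dehn twists and barbell diffeomorphisms, both of which preserve the generators $A_{i,j,*}$ of Example~\ref{ex:D2S2}).

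For a morphism $(W,\Sigma)\colon(S_0,L_0)\to(S_1,L_1)$ with $W=X_1\setminus int(X_0)$, the key topological observation is that any embedding $X_0\subset X_1\hookrightarrow S^4$ (which exists because $1$-handlebodies embed in $S^4$) produces $D_{std}$-type fillings $D_1:=S^4\setminus int(X_1)$ and $D_0':=S^4\setminus int(X_0)$ of $S_1$ and $S_0$, satisfying $D_1\subset D_0'$ and $W=D_0'\setminus int(D_1)$. Then the surface $\Sigma\subset W$ can be appended to any lasagna filling of $(D_1,L_1)$ with $4$-ball inputs to produce a lasagna filling of $(D_0',L_0)$, yielding the manifestly well-defined linear map
\[
\Sigma_*\colon \mathcal S_0^2(D_1;L_1)\longrightarrow \mathcal S_0^2(D_0';L_0),\quad (\Sigma_1,v)\mapsto(\Sigma\cup\Sigma_1,v).
\]
The desired map $\widetilde{KhR}_2^+(W,\Sigma)\colon\widetilde{KhR}_2^+(S_1,L_1)\to\widetilde{KhR}_2^+(S_0,L_0)$ is then the composition of the Sullivan--Zhang inclusion $v\mapsto v\otimes 1$ into $\mathcal S_0^2(D_1;L_1)$, the lasagna gluing $\Sigma_*$, and the Sullivan--Zhang projection onto the $\widetilde{KhR}_2^+(L_0)\otimes 1$ summand of $\mathcal S_0^2(D_0';L_0)$. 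The bidegree shift $(0,-\chi(\Sigma))$ is immediate from the grading of the lasagna enclosement map.

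The hard part will be verifying that this assignment is well-defined and functorial, independent of (i) the embedding $X_0\subset X_1\hookrightarrow S^4$, (ii) the identifications of $D_0'$ and $D_1$ with standard $D_{std}$-types, (iii) the presentation $W=X_1\setminus int(X_0)$, and (iv) composition of morphisms. Here the other ingredients of the paper come into play: Theorem~\ref{thm:diff_D2S2_rel_boundary_intro} absorbs the ambiguity in parametrizations through its classification of diffeomorphisms of $D_{std}$ rel boundary; Theorem~\ref{thm:gluck_intro} guarantees insensitivity to the Gluck twists that can arise when comparing two $S^4$-embeddings of the same $X_i$; and functoriality under composition reduces to the associativity of lasagna gluing on stacked cobordisms, together with the compatibility of the Sullivan--Zhang projection with $\Sigma_*$ up to a canonical correction computed inside $\mathcal S_0^2(D)$ (isolating the $A_{i,j,0}$-shifts from genuine $\widetilde{KhR}_2^+$-content). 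Once Theorem~\ref{thm:KhR_2+} is established, Theorem~\ref{thm:KhR_2-} follows by the dualization $\widetilde{KhR}_2^-(S,L):=\widetilde{KhR}_2^+(-S,\iota L)^*$ using the orientation-reversing involution $\iota$ of Section~\ref{sec:RW}, and the canonical identification with classical Rozansky--Willis homology for admissible links is a direct consequence of Theorem~\ref{thm:SZ}.
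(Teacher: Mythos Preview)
Your high-level strategy matches the paper's: realize $\widetilde{KhR}_2^+$ inside a lasagna module via Theorem~\ref{thm:SZ}, transport the manifest functoriality of lasagna gluing, and handle parametrization ambiguities with Theorems~\ref{thm:diff_D2S2_rel_boundary_intro} and~\ref{thm:gluck_intro}. However, the specific mechanism you propose---embed $\widetilde{KhR}_2^+(L_1)$ as the $(\text{--})\otimes 1$ summand and then \emph{project} back to the $(\text{--})\otimes 1$ summand after gluing---does not work as stated, and fixing it is exactly where the technical content of the proof lies.

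The gluing map $\Sigma_*$ sends skein grading $\alpha_{L_1}$ to skein grading $\alpha_{L_1}*[\Sigma^t]$, which differs from $\alpha_{L_0}$ whenever $\Sigma$ crosses a $2$-cocore (e.g.\ for a handleslide, Section~\ref{sec:concrete_handleslide}); your projection would then be identically zero. Even after correcting for this by a shifting automorphism $\mathrm{id}_\alpha$, the map $\Sigma_*$ does \emph{not} preserve the $(\text{--})\otimes 1$ summand: it is only \emph{nonincreasing in the lasagna quantum grading} (the quantum-grading contribution from the second tensor factor of \eqref{eq:SZ}); see Theorem~\ref{thm:concrete_functoriality_las} and the remark immediately following its proof, which gives an explicit cobordism for which the ungraded statement fails. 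The paper therefore extracts $\widetilde{KhR}_2^+(\Sigma)$ from the \emph{associated graded} map $gr_0\mathcal S_0^2(W^t;\Sigma^t)$, not from a direct-summand projection. The same issue undermines your argument that barbell diffeomorphisms act trivially because they ``preserve the generators $A_{i,j,*}$'': they do act trivially on $\mathcal S_0^2(D_{std})$, but on $\mathcal S_0^2(D_{std};L)$ for nonempty $L$ they act as $\mathrm{id}\otimes gr_0(\mathrm{id}_\alpha)$ only at the associated graded level (Proposition~\ref{prop:D2S2_repara}, Lemma~\ref{lem:barbell_las}), with nontrivial lower-order terms in general. Replacing ``projection'' by ``$gr_0$ of a filtered map'' throughout is the missing idea; verifying that all the relevant lasagna maps are indeed filtered, with associated graded of the required tensor form $\widetilde{KhR}_2^+(\Sigma)\otimes gr_0(\mathrm{id}_\alpha\circ i_*)$, is the substance of Sections~\ref{sec:concrete_functoriality}--\ref{sec:nonspin} (Theorems~\ref{thm:concrete_functoriality_las},~\ref{thm:spin_repara_las},~\ref{thm:spin_functoriality_las},~\ref{thm:nonspin_repara_las}).
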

Theorem~\ref{thm:KhR_2-} is recovered from Theorem~\ref{thm:KhR_2+} by setting $\widetilde{KhR}_2^-:=(*)\circ\widetilde{KhR}_2^+\circ(-)$, where $(-)$ is the autoequivalence of $\mathbf{Links}_1$ defined by reversing the ambient orientation, and $(*)\colon (f\mathbf{Vect}_\Q^{\Z\times\Z})^{op}\to f\mathbf{Vect}_\Q^{\Z\times\Z}$ is the dualization functor. Explicitly, this means that $\widetilde{KhR}_2^-(S,L)=\widetilde{KhR}_2^+(-S,L)^*$ for objects, and $\widetilde{KhR}_2^-(W,\Sigma)=\widetilde{KhR}_2^+(-W,\Sigma)^*$ for morphisms.

The rest of this paper is devoted to proving the dual main Theorem~\ref{thm:KhR_2+}.\medskip

The classical Rozansky-Willis homology, introduced in Section~\ref{sec:RW}, is only defined for admissible links in concretely parametrized $\#^m(S^1\times S^2)$. Thus, the functor $\widetilde{KhR}_2^+$ described in Theorem~\ref{thm:KhR_2+} is not even defined on objects. Nevertheless, in the rest of the section, we ignore parametrization issues, and sketch how the cobordism maps for $\widetilde{KhR}_2^+$ are defined.

Suppose $(W,\Sigma)\colon(S_0,L_0)\to(S_1,L_1)$ is a morphism in $\mathbf{Links}_1$. Turning $(W,\Sigma)$ upside down and reversing the orientations, we obtain the transpose cobordism $(W^t,\Sigma^t)\colon(S_1,L_1)\to(S_0,L_0)$.

Write $W=X_1\backslash int(X_0)$ for $4$-dimensional $1$-handlebodies $X_0,X_1$. Choose an orientation-preserving embedding $X_1\hookrightarrow S^4$. Taking the complement and reversing the orientation, we get an embedding $-(S^4\backslash int(X_1))\hookrightarrow-(S^4\backslash int(X_0))$, where each $-(S^4\backslash int(X_j))$ is some abstract $\#_{i=1}^k\natural^{m_i}(D^2\times S^2)$.

By construction of the skein lasagna modules, we have a gluing map
\begin{equation}\label{eq:upside_down_las}
\mathcal S_0^2(W^t;\Sigma^t)\colon\mathcal S_0^2(-(S^4\backslash int(X_1));L_1)\to\mathcal S_0^2(-(S^4\backslash int(X_0));L_0).
\end{equation}
Upon fixing parametrizations $-(S^4\backslash int(X_j))\cong\#_{i=1}^{k^{(j)}}\natural^{m_i^{(j)}}(D^2\times S^2)$ making $L_j$ admissible on their boundaries, $j=0,1$, and using the isomorphism \eqref{eq:SZ}, \eqref{eq:upside_down_las} can be regarded as a map
\begin{equation}\label{eq:upside_down_RW}
\widetilde{KhR}_2^+(L_1)\otimes\mathcal S_0^2(\#_{i=1}^{k^{(1)}}\natural^{m_i^{(1)}}(D^2\times S^2))\to\widetilde{KhR}_2^+(L_0)\otimes\mathcal S_0^2(\#_{i=1}^{k^{(0)}}\natural^{m_i^{(0)}}(D^2\times S^2)).
\end{equation}
We will analyze the map \eqref{eq:upside_down_RW} and extract the desired morphism $\widetilde{KhR}_2^+(W,\Sigma)\colon\widetilde{KhR}_2^+(S_1,L_1)\to\widetilde{KhR}_2^+(S_0,L_0)$ as the ``first tensorial factor'' of \eqref{eq:upside_down_RW}, in an appropriate sense. As the gluing map \eqref{eq:upside_down_las} is manifestly functorial under composition, the functoriality of $\widetilde{KhR}_2^+(W,\Sigma)$ will be a formal consequence of our construction.

\subsection{Organization of the remaining sections}\label{sec:rest}
The sketch proof for Theorem~\ref{thm:KhR_2+} in Section~\ref{sec:upside_down} is rather imprecise, with two major omissions.

\begin{enumerate}[(A)]
\item We have not addressed the distinction between ``links in an abstract $\sqcup\#(S^1\times S^2)$'' and ``admissible links in the concrete $\sqcup\#(S^1\times S^2)$.''
\item The choice of the embedding $X_1\hookrightarrow S^4$ up to isotopy corresponds to a spin structure on $X_1$, which is in noncanonical one-to-one correspondence with $H^1(X_1;\Z/2)$. We have to remove this choice.
\end{enumerate}

In order to address item (A), we first need to prove the functoriality of Rozansky-Willis homology for cobordisms in concrete $\#(S^1\times S^2)$, namely Theorem~\ref{thm:concrete_functoriality_intro}. This will be carried out in Section~\ref{sec:concrete_functoriality}. For a given cobordism between admissible links, we examine the map \eqref{eq:upside_down_RW}, whose definition entails no choice in this setup. It will be sufficient to examine \eqref{eq:upside_down_RW} for a set of elementary cobordisms that generate all cobordisms.

In Section~\ref{sec:abstract_spin_homology}, we address (A), with the extra assumption that our abstract $\sqcup\#(S^1\times S^2)$ is equipped with a spin structure. This entails a study of the diffeomorphism group of $\#\natural(D^2\times S^2)$ rel boundary. After proving Theorem~\ref{thm:diff_D2S2_rel_boundary_intro} using Gabai's $4$-dimensional lightbulb theorem \cite{gabai20204}, we reduce the problem mainly to understanding the action on $\mathcal S_0^2(\#\natural(D^2\times S^2);L)$ by barbell diffeomorphisms defined by Budney--Gabai \cite{budney2019knotted}, for admissible links $L\subset\sqcup\#(S^1\times S^2)$.

In Section~\ref{sec:abstract_spin_functoriality}, we carry out the heuristics in Section~\ref{sec:upside_down} carefully for morphisms that come with ambient spin structures, by decomposing an arbitrary such morphism into elementary ones, and examining \eqref{eq:upside_down_RW} for each elementary morphism.

In Section~\ref{sec:nonspin}, we remove the spin assumption and address (B). This requires a construction of induced maps on $\mathcal S_0^2$ by Gluck twists on framed embedded $2$-spheres in $4$-manifolds. Such induced maps will be isomorphisms that transform the grading in a specified way, proving Theorem~\ref{thm:gluck_intro}.

Sign ambiguities will come up at various stages of our proofs. Appendix~\ref{sec:sign} employs the $\gl_2$ webs and foams formalism to fix the signs.

\section{Functoriality in concrete \texorpdfstring{$\#(S^1\times S^2)$}{\#S1*S2}}\label{sec:concrete_functoriality}
Fix $k\ge0$, $m_1,\cdots,m_k\ge0$. Write for short $S_{std}:=\sqcup_{i=1}^k\#^{m_i}(S^1\times S^2)$.
\subsection{The statements}
Let $\mathbf{Links}_{S_{std}}$ denote the category whose objects are admissible links in $S_{std}$, and whose morphisms are link cobordisms in $I\times S_{std}$ up to isotopy rel boundary. The identity morphisms and compositions are defined in the natural way. The goal of this section is to extend Rozansky-Willis homology to a functor on $\mathbf{Links}_{S_{std}}$.

\begin{Thm}\label{thm:concrete_functoriality}
There is a functor $$\widetilde{KhR}_2^+\colon\mathbf{Links}_{S_{std}}^{op}\to f\mathbf{Vect}_\Q^{\Z\times\Z}$$ that extends the definition of $\widetilde{KhR}_2^+$ on objects as in Section~\ref{sec:RW}. For a morphism $\Sigma$, $\widetilde{KhR}_2^+(\Sigma)$ is homogeneous of degree $(0,-\chi(\Sigma))$.
\end{Thm}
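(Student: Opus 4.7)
My plan is to follow the strategy outlined in Section~\ref{sec:upside_down}, specialized to the concrete setting here. Since $S_{std}$ carries a fixed parametrization and $I\times S_{std}$ sits naturally as a collar neighborhood of $\partial D_{std}$, no auxiliary embedding into $S^4$ is necessary: each cobordism can be interpreted directly as a skein sitting just outside a thickened copy of $D_{std}$.

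Given a cobordism $\Sigma\colon L_0\to L_1$ in $I\times S_{std}$, I would first turn it upside down to obtain $\Sigma^t\colon L_1\to L_0$, placed in the collar of $\partial D_{std}$. Stacking $\Sigma^t$ on top of any lasagna filling of $(D_{std},L_1)$ produces a lasagna filling of $(D_{std},L_0)$, yielding a well-defined gluing map
$$\Sigma^t_*\colon\mathcal S_0^2(D_{std};L_1)\to\mathcal S_0^2(D_{std};L_0).$$
This map requires no parametrization choices in the present concrete setting, and is manifestly functorial under composition of cobordisms, so the task reduces to extracting from $\Sigma^t_*$ a map on Rozansky--Willis homology via Theorem~\ref{thm:SZ} and verifying the bidegree.

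To extract the map, I would use the SZ isomorphism $\mathcal S_0^2(D_{std};L_i)\cong\widetilde{KhR}_2^+(L_i)\otimes\mathcal S_0^2(D_{std})$ and note that both sides carry a natural $\mathcal S_0^2(D_{std})$-module structure, coming on the left from gluing with $\mathcal S_0^2(I\times\partial D_{std})\cong\mathcal S_0^2(D_{std})$ (Example~\ref{ex:S1S2I}). Because $\Sigma^t$ lies entirely in the collar, commuting the order of outer stackings in the $I$-direction shows that $\Sigma^t_*$ is $\mathcal S_0^2(D_{std})$-linear. Tracing the explicit chain of isomorphisms \eqref{eq:SZ_MN}--\eqref{eq:SZ_simp} constructing SZ, one then checks that, restricted to the skein grading supporting the $\widetilde{KhR}_2^+(L_1)\otimes 1$ summand, $\Sigma^t_*$ corresponds to a map of the form $f\otimes\mathrm{id}$, and I would \emph{define} $\widetilde{KhR}_2^+(\Sigma):=f$. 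Functoriality then follows formally from functoriality of lasagna gluing, and the bidegree $(0,-\chi(\Sigma))$ statement reduces to an Euler-characteristic tracking combined with the explicit grading shift in SZ, noting that the shift $(\alpha^2/2,-\alpha^2/2,-\alpha_L)$ is computed on a definite skein grading $\alpha_{L_i}$ which the collar cobordism preserves.

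The main obstacle will be the verification that $\Sigma^t_*$ has the expected block-diagonal form under SZ, and in particular respects the skein grading decomposition and the $\mathcal S_0^2(D_{std})$-module structure. Concretely, this requires checking that gluing the collar cobordism $\Sigma^t$ commutes with each of the intermediate maps in \eqref{eq:SZ_MN}--\eqref{eq:SZ_simp}: the Manolescu--Neithalath colimit isomorphism with its symmetrization over belt circles, the Rozansky-projector insertion via Proposition~\ref{prop:projector_properties}, and the belt-slide isomorphism of Figure~\ref{fig:SZ_slide}. Since $\Sigma^t$ is disjoint from the $2$-handle cores and belts of $D_{std}$, general position suggests that any obstruction can be isotoped away, but turning this intuition into a rigorous chain-level comparison---in particular controlling Koszul signs and the compatibility with symmetrization---is where the technical work lies. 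The point of invoking the SZ bypass is precisely to avoid checking movie moves directly: well-definedness of $\widetilde{KhR}_2^+(\Sigma)$ is deduced from the manifest rel-boundary isotopy invariance of $\Sigma^t_*$ on lasagna fillings.
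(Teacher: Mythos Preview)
Your overall strategy is the paper's strategy: push $\Sigma^t$ through the Sullivan--Zhang isomorphism and extract the first tensor factor. The gap is in the claim that $\Sigma^t_*$ takes the form $f\otimes\mathrm{id}$ under \eqref{eq:SZ}. This is false, and the paper remarks on it explicitly right after its proof of Theorem~\ref{thm:concrete_functoriality}: if $\Sigma\colon\emptyset\to\emptyset$ is the core $2$-sphere in $\{1/2\}\times S_{std}$, then $\Sigma^t_*\colon\mathcal S_0^2(D_{std})\to\mathcal S_0^2(D_{std})$ is multiplication by $A_{i,j,1}$, which is not of the form $f\otimes\mathrm{id}$ (and any candidate $f\colon\Q\to\Q$ of degree $(0,-2)$ must vanish). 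More seriously, for the handleslide move the cobordism $\Sigma^t$ unavoidably intersects the $2$-cocores of $D_{std}$, and the induced map, traced through \eqref{eq:SZ_MN}--\eqref{eq:SZ_simp}, is a genuine sum $(\text{birth})\otimes(\text{undotted core})+(\text{dotted birth})\otimes(\text{dotted core})$; no general-position argument removes this.

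What the paper actually proves (Theorem~\ref{thm:concrete_functoriality_las}) is that $\Sigma^t_*$ is \emph{nonincreasing in the lasagna quantum grading}, and that the \emph{associated graded} map has the form $\widetilde{KhR}_2^+(\Sigma)\otimes gr(\mathrm{id}_{\alpha})$ for a shifting automorphism $\mathrm{id}_\alpha$ with $\alpha=\alpha_{L_1}*[\Sigma^t]-\alpha_{L_0}$. Both the passage to associated graded and the nontrivial shift are forced by the handleslide case. Establishing this still requires decomposing $\Sigma$ into elementary cobordisms (Proposition~\ref{prop:concrete_decomposition}) and checking each one against \eqref{eq:SZ_MN}--\eqref{eq:SZ_simp}; the SZ bypass spares you movie moves, but not the first-order check. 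Your $\mathcal S_0^2(D_{std})$-linearity observation is correct and useful, but by itself only says the map is determined by $\widetilde{KhR}_2^+(L_1)\to\widetilde{KhR}_2^+(L_0)\otimes\mathcal S_0^2(D_{std})$, not that the target component is scalar.
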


For a link cobordism $\Sigma\colon L_0\to L_1$ between admissible links in $S_{std}$, let $\Sigma^t\colon L_1\to L_0$ denote its transpose, defined by turning $\Sigma$ upside down, reversing its orientation and the ambient orientation. Of course, one can obtain a covariant functor extending $\widetilde{KhR}_2^-$ relating to our contravariant functor via the transpose functor $(\cdot)^t\colon\mathbf{Links}_{S_{std}}^{op}\to\mathbf{Links}_{S_{std}}$ (which is the identity map on objects), recovering Theorem~\ref{thm:concrete_functoriality_intro}. We state everything in the contravariant way, to match Theorem~\ref{thm:KhR_2+}. This difference will only be essential from Section~\ref{sec:abstract_spin_functoriality} onward.

Write $D_{std}:=\#_{i=1}^k\natural^{m_i}(D^2\times S^2)$. Thus $\partial D_{std}=S_{std}$. The following concept plays an important role in our paper.
\begin{Def}
For an admissible link $L\subset S_{std}$ and any $\alpha\in H_2^L(D_{std})$, the \textit{lasagna quantum grading} on $\mathcal S_0^2(D_{std};L;\alpha)$ is the contribution to the quantum grading coming from the second tensorial factor in the right hand side of the bigrading-preserving isomorphism $$\mathcal S_0^2(D_{std};L;\alpha)\cong((t^{-1}q)^{\alpha^2/2}\widetilde{KhR}_2^+(L))\otimes\mathcal S_0^2(D_{std};\alpha-\alpha_L)$$ from Theorem~\ref{thm:SZ}.
\end{Def}

\begin{Thm}\label{thm:concrete_functoriality_las}
For a link cobordism $\Sigma\colon L_0\to L_1$ between admissible links in $S_{std}$, the gluing map $$\mathcal S_0^2(I\times S_{std};\Sigma^t)\colon\mathcal S_0^2(D_{std};L_1)\to\mathcal S_0^2(D_{std};L_0)$$ is nonincreasing in the lasagna quantum grading. The corresponding associated graded map $gr\mathcal S_0^2(I\times S_{std};\Sigma^t)$, under the isomorphism \eqref{eq:SZ}, is of the form 
\begin{equation}\label{eq:concrete_functoriality_las}
\widetilde{KhR}_2^+(\Sigma)\otimes gr(\mathrm{id}_{\alpha_{L_1}*[\Sigma^t]-\alpha_{L_0}})\colon\widetilde{KhR}_2^+(L_1)\otimes gr\mathcal S_0^2(D_{std})\to\widetilde{KhR}_2^+(L_0)\otimes gr\mathcal S_0^2(D_{std})
\end{equation}
for some map $\widetilde{KhR}_2^+(\Sigma)\colon\widetilde{KhR}_2^+(L_1)\to\widetilde{KhR}_2^+(L_0)$.
\end{Thm}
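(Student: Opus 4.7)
The strategy is to decompose an arbitrary cobordism $\Sigma^t$ into elementary moves between admissible links and to compute the gluing map step by step under \eqref{eq:SZ}. Since the lasagna gluing $\mathcal{S}_0^2(I\times S_{std};-)$ is tautologically functorial, the factorization \eqref{eq:concrete_functoriality_las} only needs to be verified on elementary pieces and then composed. First I would arrange, by ambient isotopy of $\Sigma^t$ rel boundary, that every Morse critical point of a generic projection $\Sigma^t\to I$ lies inside a small $3$-ball in $M\setminus\nu(U_m)$ and that every generic level is admissible. This reduces the task to two families of moves: (i) births, deaths, and saddles supported in a ball disjoint from the surgery regions; (ii) isotopies between admissible diagrams representing the same link in $S_{std}$, including Reidemeister-type moves and belt-slides that push a strand across a surgery disk, thereby changing $\ell_j$ and $\alpha_L$.

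For type (i) moves, I would use the colimit description \eqref{eq:SZ_MN}: on each term $q^{-\|n\|-2\|r\|}KhR_2(L\cup(n_++r,n_-+r)\text{ belts})^{S_{|n|+2|r|}}$ the induced map is the standard Khovanov map for the elementary cobordism on the cable link. Because the cobordism is supported away from the belts and the projector region, it commutes with the symmetric group action, with the colimit structure maps (symmetrized dotted annulus cobordisms), with the writhe-renormalization \eqref{eq:SZ_renom}, with the insertion of Rozansky projectors \eqref{eq:SZ_res_1} (using the ``empty middle region'' of Proposition~\ref{prop:projector_properties}), with the belt-slide \eqref{eq:SZ_slide}, and with the splitting \eqref{eq:SZ_simp}. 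The resulting map on the right-hand side of \eqref{eq:SZ} takes the form $(\text{standard Khovanov map})\otimes\mathrm{id}$, matching \eqref{eq:concrete_functoriality_las} with $[\Sigma^t]=0$, and its quantum degree equals $-\chi(\Sigma^t_{\text{elem}})$, confirming the nonincreasing behavior.

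For type (ii) moves, I would separate two subcases. Reidemeister-type moves inside $M\setminus\nu(U_m)$ or inside the projector regions are handled by the standard Khovanov invariance together with the chain homotopy equivalences listed in Proposition~\ref{prop:projector_properties}, yielding a reparametrization isomorphism on $\widetilde{KhR}_2^+(L)$ and the identity on $\mathcal{S}_0^2(D_{std})$. The essential new case is the belt-slide: pushing a strand across the $j$-th surgery disk changes $\ell_j$ by $\pm2$ and shifts $\alpha_L$ by an element of $H_2(D_{std};\Z)$. Tracing through \eqref{eq:SZ_MN}--\eqref{eq:SZ_simp}, such a move corresponds to a compatible increment of the colimit indices which, on the tensorial factor $\mathcal{S}_0^2(D_{std})$, acts by multiplication by the appropriate monomial in the generators $A_{i,j,0}^{\pm1}$ from Example~\ref{ex:D2S2}, i.e.\ exactly the shift automorphism $\mathrm{id}_{\alpha_{L_1}*[\Sigma^t]-\alpha_{L_0}}$ by the characterization given after Definition~\ref{def:shift_auto}; simultaneously the first tensor factor receives the reparametrization isomorphism implicit in the admissibility-invariance of Rozansky-Willis homology of \cite{rozansky2010categorification,willis2021khovanov}.

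The main obstacle is the bookkeeping for belt-slides: one must confirm at the level of associated graded pieces that the induced operator on the $\mathcal{S}_0^2(D_{std})$ factor is precisely the shift automorphism, with no ``error term'' leaking between the two tensor factors of \eqref{eq:SZ}. This is where separating out the lasagna quantum grading (so that the relevant map is only the associated graded) becomes essential, and where the characterization of $\mathrm{id}_\alpha$ via its image in $\mathcal{S}_0^2(S^4)$ under any embedding $D_{std}\subset S^4$ does the work. Sign ambiguities arising from reparametrizations and from the sliding-off isomorphism in \eqref{eq:SZ_slide} are deferred to Appendix~\ref{sec:sign}.
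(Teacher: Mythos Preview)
Your overall strategy---reduce to elementary moves and trace through the steps \eqref{eq:SZ_MN}--\eqref{eq:SZ_simp}---is the same as the paper's, and your treatment of Morse moves away from the surgery region is essentially correct. However, your analysis of the moves near the surgery region conflates two genuinely different phenomena and misses the mechanism that forces the passage to associated graded.

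The paper's decomposition (Proposition~\ref{prop:concrete_decomposition}) separates, in addition to moves away from the surgery regions, \emph{finger moves}, \emph{crossing moves}, \emph{over/underpass moves}, and \emph{handleslides}. A finger move pokes a cup through the surgery region and does change $\ell_j$ by $\pm2$, but $\Sigma^t$ is disjoint from the $2$-cocores, so the shift class $\alpha_{L_1}*[\Sigma^t]-\alpha_{L_0}$ vanishes; the induced map is exactly $\widetilde{KhR}_2^+(\Sigma)\otimes\mathrm{id}$ with no filtration drop (Section~\ref{sec:concrete_finger}). A handleslide, by contrast, does \emph{not} change $\ell_j$; instead $\Sigma^t$ hits the $j$-th cocore once, and on the colimit level the map is a \emph{symmetrized saddle} that births an extra belt circle (Section~\ref{sec:concrete_handleslide}). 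Neck-cutting that saddle yields two terms, $(\text{birth})\otimes(\text{dot}) + (\text{dotted birth})\otimes 1$; the first strictly lowers lasagna quantum degree and only the second survives to give $\mathrm{id}_{e_j}$ on $gr\,\mathcal S_0^2(D_{std})$. Your single ``belt-slide'' tries to carry both the $\ell_j$-change and the $\alpha$-shift, and your ``compatible increment of the colimit indices'' does not account for the saddle's lower-order term---this is precisely the content the associated graded is designed to discard. Also note that you cannot invoke ``the reparametrization isomorphism implicit in the admissibility-invariance of Rozansky--Willis homology'' as an input: that invariance is only up to \emph{non-canonical} isomorphism in \cite{rozansky2010categorification,willis2021khovanov}, and producing the canonical map $\widetilde{KhR}_2^+(\Sigma)$ is exactly what this theorem is meant to do. Finally, even for the finger/crossing/pass moves, commuting the elementary cobordism past the projector insertion \eqref{eq:SZ_res_1} and the belt pull-off \eqref{eq:SZ_slide} is not a formal consequence of Proposition~\ref{prop:projector_properties}; each requires a termwise diagram chase (the regions $A$ through $H$ in Sections~\ref{sec:concrete_finger}--\ref{sec:concrete_handleslide}).
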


We recall that $\mathrm{id}_\alpha$ in \eqref{eq:concrete_functoriality_las} denotes the shifting automorphism on $\mathcal S_0^2(D_{std})$ by $\alpha$, defined in Definition~\ref{def:shift_auto}. The star $*$ denotes the natural concatenation map on homology.

The proof of Theorem~\ref{thm:concrete_functoriality_las} takes up the bulk of Section~\ref{sec:concrete_functoriality}. In Section~\ref{sec:concrete_handleslide} we decompose it into various cases, which are treated individually in Sections~\ref{sec:concrete_far}--\ref{sec:concrete_handleslide}. Before going there, we deduce Theorem~\ref{thm:concrete_functoriality} as a consequence of Theorem~\ref{thm:concrete_functoriality_las}.

\begin{proof}[Proof of Theorem~\ref{thm:concrete_functoriality} assuming Theorem~\ref{thm:concrete_functoriality_las}]
The functor $\widetilde{KhR}_2^+$ is defined on objects in Section~\ref{sec:RW}. Let $\Sigma\colon L_0\to L_1$ be a morphism. If the map \eqref{eq:concrete_functoriality_las} is nonzero, $\widetilde{KhR}_2^+(\Sigma)$ is determined uniquely; if it is zero, define $\widetilde{KhR}_2^+(\Sigma)$ to be zero. Since $\mathcal S_0^2(I\times S_{std};\Sigma^t)$ is homogeneous with bidegree shift $(0,-\chi(\Sigma))$, the same is true for $\widetilde{KhR}_2^+(\Sigma)$.

It remains to check functoriality. If $\Sigma\colon L\to L$ is the identity morphism, then $\mathcal S_0^2(I\times S_{std};\Sigma^t)$, hence \eqref{eq:concrete_functoriality_las}, is the identity map. It follows that $\widetilde{KhR}_2^+(\Sigma)$ is the identity map. If $\Sigma_i\colon L_i\to L_{i+1}$, $i=0,1$, are two morphisms in $\mathbf{Links}_{S_{std}}$, then $\mathcal S_0^2(I\times S_{std};(\Sigma_1\circ\Sigma_0)^t)=\mathcal S_0^2(I\times S_{std};\Sigma_0^t)\circ\mathcal S_0^2(I\times S_{std};\Sigma_1^t)$. After taking the associated graded maps, we see again from \eqref{eq:concrete_functoriality_las} that $\widetilde{KhR}_2^+(\Sigma_1\circ\Sigma_0)=\widetilde{KhR}_2^+(\Sigma_0)\circ\widetilde{KhR}_2^+(\Sigma_1)$.
\end{proof}

We remark that taking associated graded maps is necessary for Theorem~\ref{thm:concrete_functoriality_las}. If $\Sigma\colon\emptyset\to\emptyset$ is the cobordism between two empty links given by the $j$-th core $2$-sphere in the $i$-th disjoint union summand in $\{1/2\}\times S$, the induced map $\mathcal S_0^2(D_{std})\to\mathcal S_0^2(D_{std})$ is nonzero (it is the multiplication map by $A_{i,j,1}$ in the notation of Example~\ref{ex:D2S2}). However, since $\widetilde{KhR}_2^+(\emptyset)=\Q$, any map of degree $(0,-\chi(\Sigma))=(0,-2)$ is necessarily zero. We will see that taking associated graded maps, as well as introducing nontrivial shifting automorphisms $\mathrm{id}_\alpha$, is necessary only for one elementary move (handleslide) described in Proposition~\ref{prop:concrete_decomposition}.

\subsection{Decomposition into elementary cobordisms}
If Theorem~\ref{thm:concrete_functoriality_las} holds for two composable morphisms, then it holds for their composition as well. Therefore, it suffices to decompose any cobordism into a composition of some elementary cobordisms, and check Theorem~\ref{thm:concrete_functoriality_las} for these elementary ones.

\begin{Prop}\label{prop:concrete_decomposition}
Every morphism in $\mathbf{Links}_{S_{std}}$ is a composition of some elementary morphisms of the following forms. See Figure~\ref{fig:concrete_decomposition}.
\begin{enumerate}[(i)]
\item Isotopies via admissible links;\label{item:concrete_adm_isotopy}
\item Reidemeister moves or a Morse move (birth, death, saddle) away from the surgery regions;\label{item:concrete_R_M}
\item Finger moves, also in reverse;\label{item:concrete_finger}
\item Crossing moves, also in reverse;\label{item:concrete_push}
\item Overpass/underpass moves;\label{item:concrete_pass}
\item Handleslides.\label{item:concrete_slide}
\end{enumerate}
\end{Prop}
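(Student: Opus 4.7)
The plan is to run a Morse-theoretic general position argument on the cobordism $\Sigma \subset I \times S_{std}$. First, I would isotope $\Sigma$ rel boundary so that projection to the $I$-coordinate is Morse on $\Sigma$ with distinct critical values, separating the cobordism into product pieces interrupted by isolated birth, death, and saddle events. By a further small perturbation away from the surgery regions, these Morse critical points can be confined to the complement of $I\times(\text{surgery regions})$, so they contribute only to moves of type~(\ref{item:concrete_R_M}).

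Next, I would arrange that at all but finitely many times $t$, the cross-section $\Sigma \cap (\{t\} \times S_{std})$ is admissible. Admissibility demands two things: (a) genericity of the planar projection of $L_t \cup U_m$, and (b) that near each surgery region, $L_t$ looks exactly like the standard picture on the left of Figure~\ref{fig:admissibility}. Condition~(a) fails in codimension one exactly at a Reidemeister move or a planar Morse event, contributing to move (\ref{item:concrete_R_M}), while failure of (b) occurs at a separate, isolated set of times that will correspond to moves (\ref{item:concrete_finger})--(\ref{item:concrete_slide}). In between these codimension-one events and away from Morse critical points, the cobordism is a product over an admissible link, giving admissible isotopies of type (\ref{item:concrete_adm_isotopy}).

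The key step is to classify the codimension-one failures of (b) near a surgery region. Locally, in a small neighborhood of a surgery region we are looking at one-parameter families of tangles in a solid cylinder whose boundary condition is the standard one. A stratification of the jet space at each surgery region yields essentially four codimension-one degenerations: an arc develops a tangency with the disk bounded by a component of $U_m$ and pushes through it (finger move, (\ref{item:concrete_finger})); a crossing crosses the vertical boundary of the standard region so that it migrates in or out of it (crossing move, (\ref{item:concrete_push})); a strand becomes tangent to and passes over one of the two small arcs of $U_m$ visible in the standard region, creating or destroying a pair of crossings with that arc (overpass/underpass, (\ref{item:concrete_pass})); and an entire arc of $L_t$ sweeps once across a belt sphere of a $2$-handle (handleslide, (\ref{item:concrete_slide})). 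Standard multijet transversality lets us ensure that no two such events occur simultaneously, nor concurrently with a Morse event or a failure of (a).

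The hard part will be this local classification, together with the verification that handleslides really are unavoidable in our framework: passing a strand ``around the back'' of the $0$-surgery cannot always be reduced to the first three types, because the two arcs of $U_m$ visible in the surgery region are joined through the $3$-manifold only via the $S^1\times S^2$ neck, and sliding a strand over this neck is what we are forced to record as a handleslide. Once the local models are pinned down and a stratification of the space of cobordisms by codimension is established, the global statement of the proposition follows by concatenating the product admissible-isotopy pieces with the elementary events encountered in their Morse order.
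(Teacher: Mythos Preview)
Your proposal is correct and takes essentially the same approach as the paper: the paper's proof is the single sentence ``This follows from a general position argument, see e.g.\ \cite[Proposition~3.2]{willis2021khovanov},'' and what you have written is precisely a sketch of that general position argument. Your elaboration of the codimension-one strata near the surgery regions is more detailed than anything the paper provides, but it is the intended content behind that citation.
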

\begin{figure}
\centering
\includegraphics[width=0.6\linewidth]{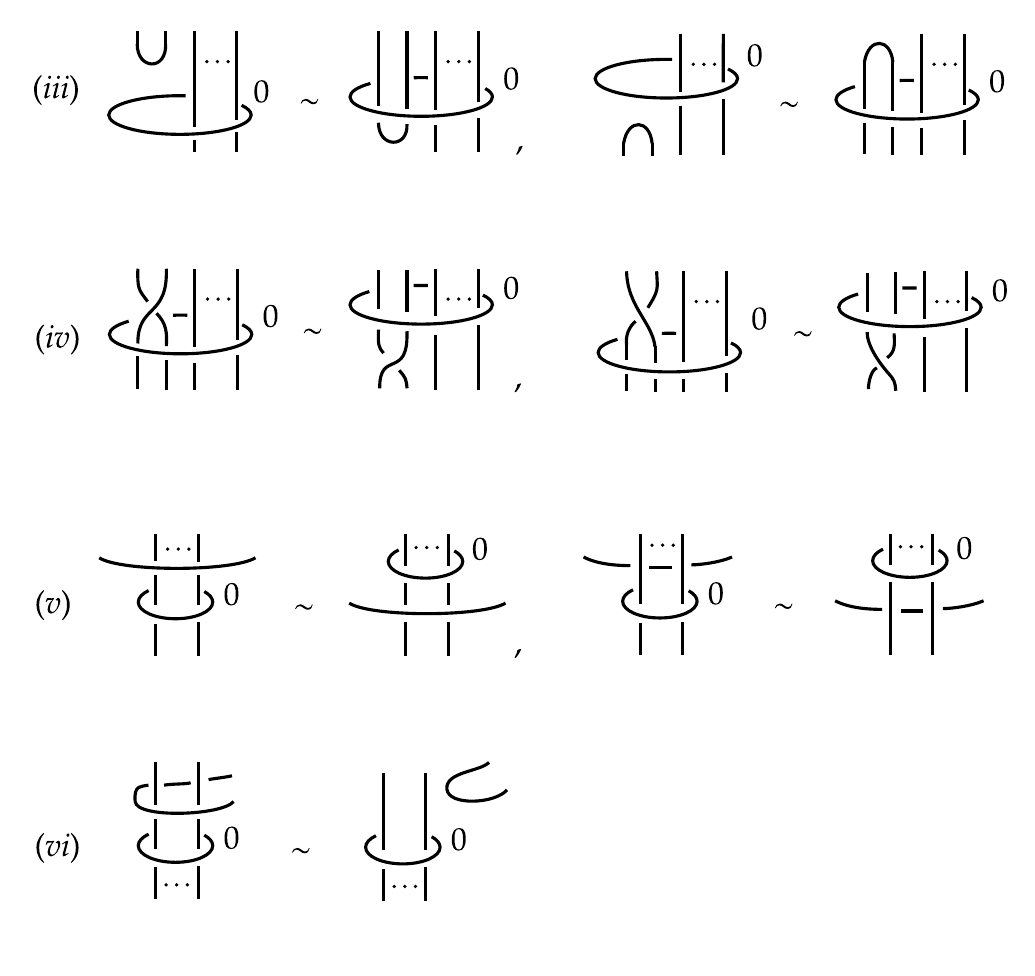}
\caption{Type \eqref{item:concrete_finger}\listsymbol\eqref{item:concrete_slide} moves in Proposition~\ref{prop:concrete_decomposition}. Here, for (iv), we have only drawn, for simplicity, the case of pushing a crossing between the first two strands, but it is understood that the similar pushing is allowed for crossings between any two adjacent strands.}
\label{fig:concrete_decomposition}
\end{figure}
\begin{proof}
This follows from a general position argument, see e.g. \cite[Proposition~3.2]{willis2021khovanov}.
\end{proof}

\subsection{Moves away from surgery regions}\label{sec:concrete_far}
We prove Theorem~\ref{thm:concrete_functoriality_las} for moves \eqref{item:concrete_adm_isotopy} and \eqref{item:concrete_R_M} in Proposition~\ref{prop:concrete_decomposition}.

By the description of the isomorphism \eqref{eq:SZ_MN} in Section~\ref{sec:SZ}, at each $\alpha\in H_2^L(D_{std})$, the map $\mathcal S_0^2(I\times S_{std};\Sigma^t)$ in terms of row \eqref{eq:SZ_MN} is induced termwise in the colimit by the map induced by $\Sigma^t$, namely
\begin{align}\label{eq:colim_model_maps_local}
KhR_2(\Sigma^t\cup(I\times(n_++r,n_-+r)\text{ belts}))&\colon\nonumber\\ KhR_2(L_1\cup(n_++r,n_-+r)\text{ belts})^{S_{|n|+2|r|}}&\to KhR_2(L_0\cup(n_++r,n_-+r)\text{ belts})^{S_{|n|+2|r|}}
\end{align}
on each colimit summand. Here $n=\alpha-\alpha_{L_1}\in H_2(D_{std})=\Z^{\sum_{i=1}^km_i}$. The same description applies for the map in terms of row \eqref{eq:SZ_renom}.

The isomorphisms \eqref{eq:SZ_res_1} and \eqref{eq:SZ_slide} are local near the surgery regions, and thus intertwine with the maps on $\widetilde{KhR}_2^+$ induced by $\Sigma^t$ on rows \eqref{eq:SZ_renom}\listsymbol\eqref{eq:SZ_slide} summand-wise. It follows that $\mathcal S_0^2(I\times S_{std};\Sigma^t)=\widetilde{KhR}_2^+(\Sigma)\otimes\mathrm{id}$ in terms of row \eqref{eq:SZ_simp}, where $\widetilde{KhR}_2^+(\Sigma)\colon\widetilde{KhR}_2^+(L_1)\to\widetilde{KhR}_2^+(L_0)$ is induced by an isotopy of the link diagram (for move \eqref{item:concrete_adm_isotopy}) or a Reidemeister/Morse-induced map on the link diagram (for move \eqref{item:concrete_R_M}). In particular, it preserves the lasagna quantum grading, and the associated map is of the form \eqref{eq:concrete_functoriality_las}, as desired (note $\alpha_{L_1}*[\Sigma^t]-\alpha_{L_0}=0$ since $\Sigma^t$ is disjoint from the $2$-cocores of $D_{std}$).

\subsection{Finger moves}\label{sec:concrete_finger}
We prove Theorem~\ref{thm:concrete_functoriality_las} for move \eqref{item:concrete_finger} in Proposition~\ref{prop:concrete_decomposition}. We only consider the move pushing a downward-pointing finger up, as the reverse is similar. Thus, the reverse $\Sigma^t$ is the move pushing a downward-pointing finger down, shown as the left to right direction in Figure~\ref{fig:concrete_decomposition}(iii).

\begin{figure}
\centering
\includegraphics[width=0.6\linewidth]{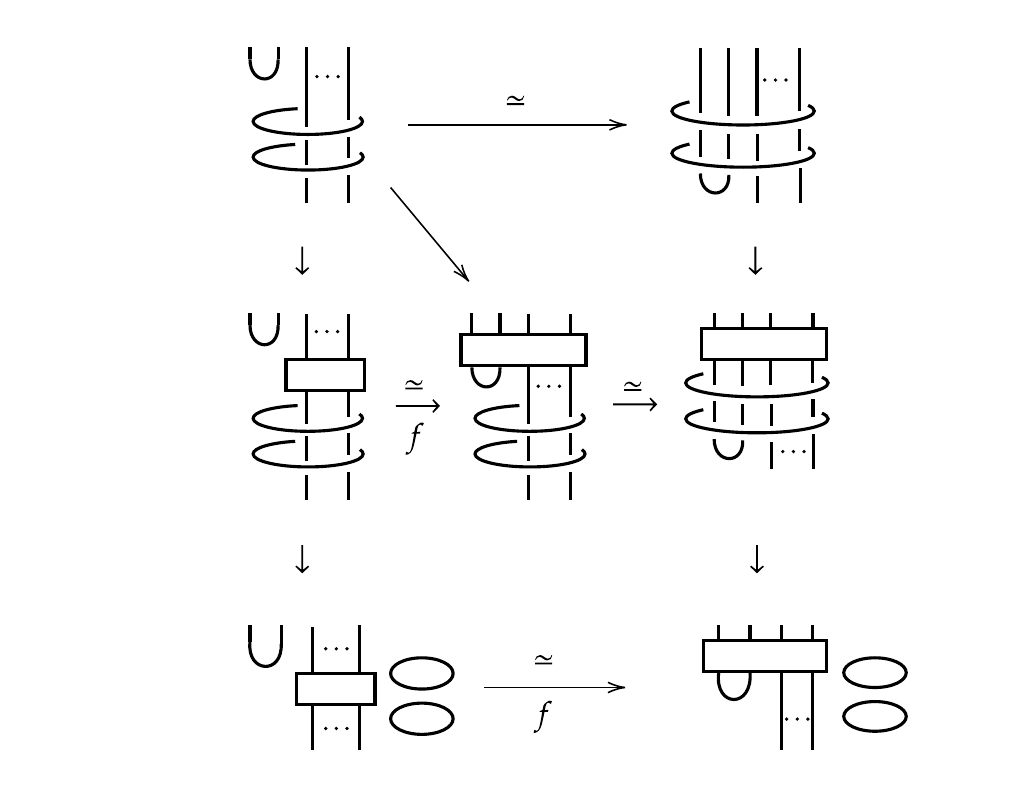}
\caption{Rows \eqref{eq:SZ_renom}\listsymbol\eqref{eq:SZ_slide} of a finger move.}
\label{fig:finger_diagram}
\end{figure}

\begin{figure}
\centering
\includegraphics[width=0.65\linewidth]{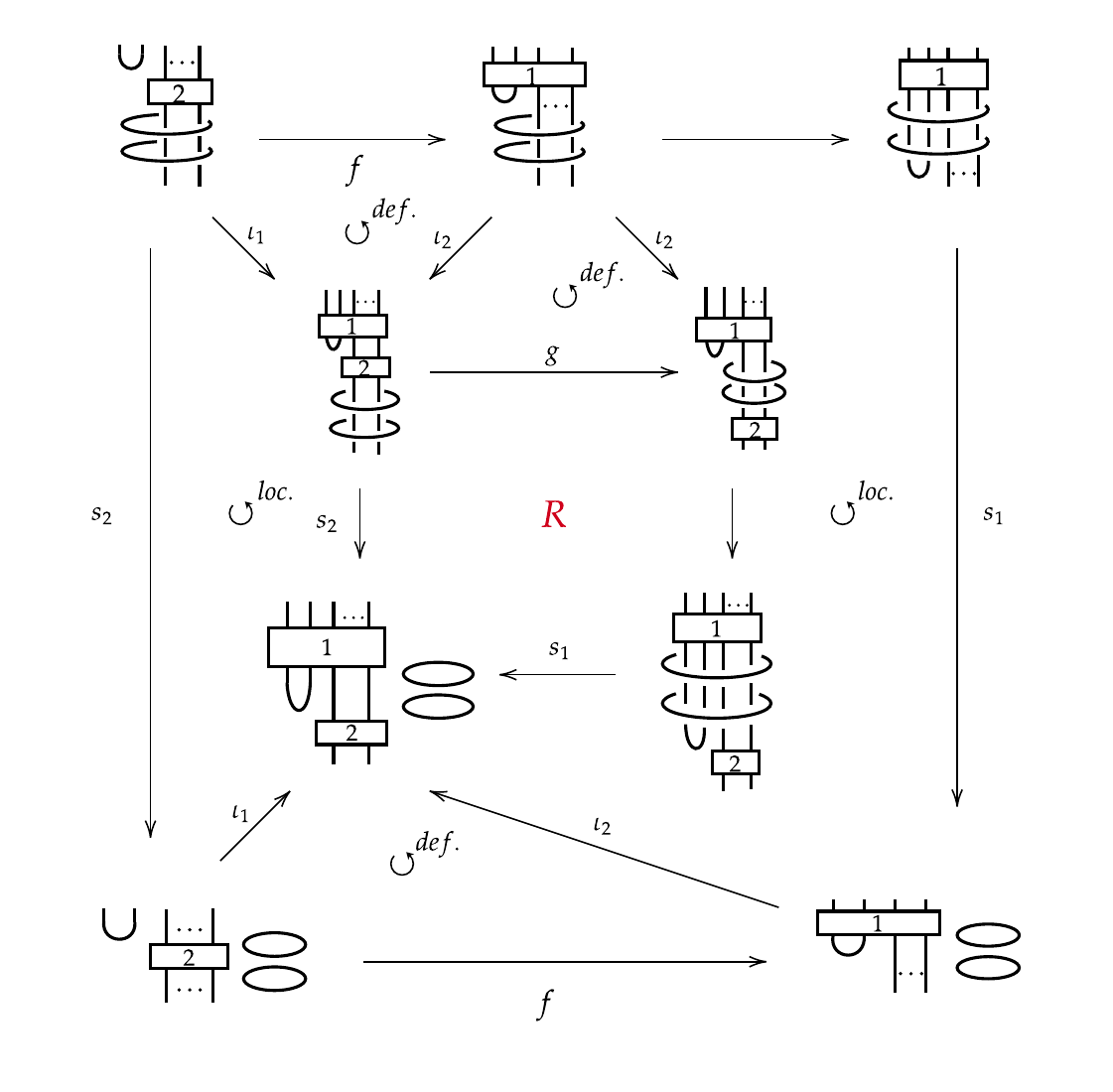}
\caption{The lower rectangle in Figure~\ref{fig:finger_diagram}.}
\label{fig:finger_diagram_low}
\end{figure}

By the description of the isomorphism \eqref{eq:SZ_MN} in Section~\ref{sec:SZ}, the map $\mathcal S_0^2(I\times S_{std};\Sigma^t)$ in terms of rows \eqref{eq:SZ_MN} and \eqref{eq:SZ_renom} is each induced by summand-wise cobordism maps pushing the finger down across the belts, as shown in the first row of Figure~\ref{fig:finger_diagram}. For simplicity, we only depict the case of $2$ belts at the surgery region with some orientations that are omitted from the diagram. Similar simplifications in figures will not be further remarked upon.

We claim that in terms of row \eqref{eq:SZ_res_1}, $\mathcal S_0^2(I\times S_{std};\Sigma^t)$ is equal to a map that is summand-wise given by pushing the finger first down the relevant projector $P_{\ell,0}^\vee$, then down across the belts, as shown in the second row of Figure~\ref{fig:finger_diagram}. Here, the push-down map across the projector, denoted $f$, is the ``sandwich'' composition
\begin{figure}[H]
\centering
\includegraphics[width=0.65\linewidth]{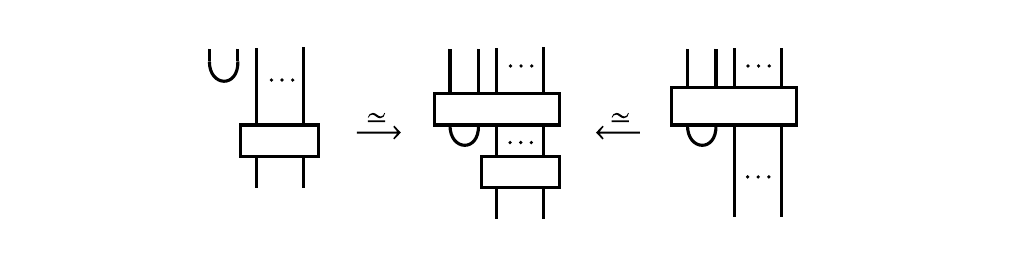}
\end{figure}
(the two maps are isomorphisms on homology by Proposition~\ref{prop:projector_properties}(5)). We need to prove the upper half of Figure~\ref{fig:finger_diagram} commutes. This is straightforward: the quadrilateral commutes by locality, and the triangle commutes by definition of $f$ and locality.

We claim that in terms of row \eqref{eq:SZ_slide}, $\mathcal S_0^2(I\times S_{std};\Sigma^t)$ is equal to a map that is summand-wise pushing the cup down across the belts, as shown in the third row of Figure~\ref{fig:finger_diagram}. Namely, we need to prove the lower half of Figure~\ref{fig:finger_diagram} commutes. We fill this lower rectangle into Figure~\ref{fig:finger_diagram_low}, where the middle map $g$ is the ``sandwich'' map defined similarly to $f$, namely as the composition
\begin{figure}[H]
\centering
\includegraphics[width=0.7\linewidth]{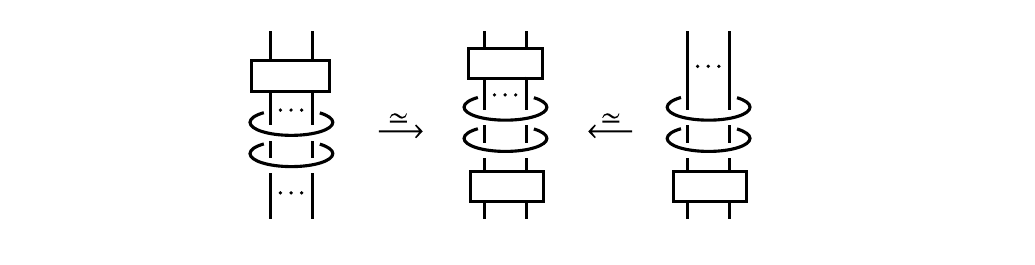}.
\end{figure}
For convenience, projectors in Figure~\ref{fig:finger_diagram_low} are labeled by $1$ or $2$, and the subscripts in the unit maps $\iota_\bullet$ and sliding maps $s_\bullet$ refer to the corresponding projectors. All regions in Figure~\ref{fig:finger_diagram_low} commute by definition or locality, except the middle rectangle labeled $R$. We fill in the rectangle $R$ into the diagram of isomorphisms
\begin{figure}[H]
\centering
\includegraphics[width=0.7\linewidth]{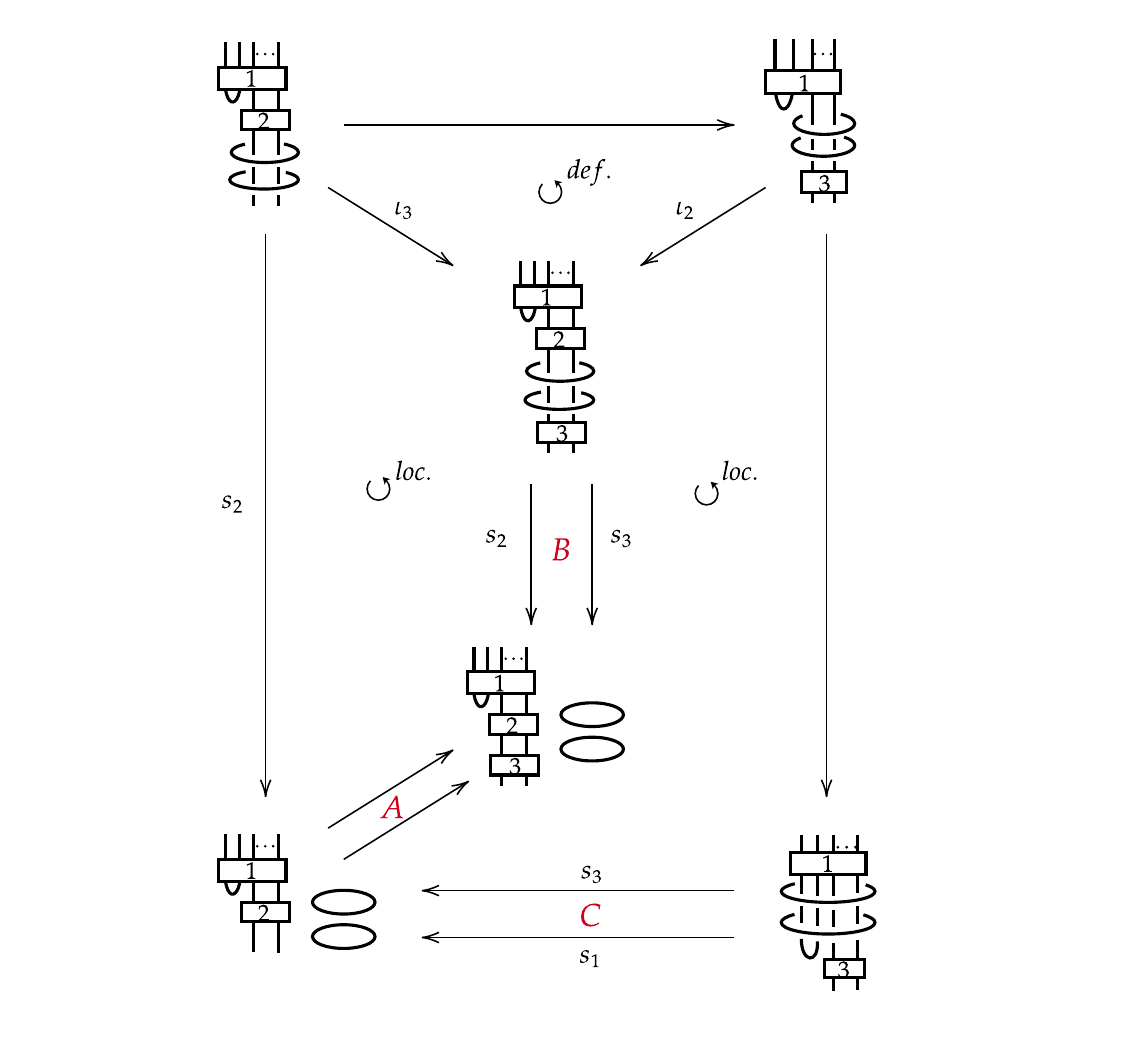},
\end{figure}
where all regions except $A,B,C$ commute by definition or locality. We redraw regions $A,B,C$:
\begin{figure}[H]
\centering
\includegraphics[width=0.7\linewidth]{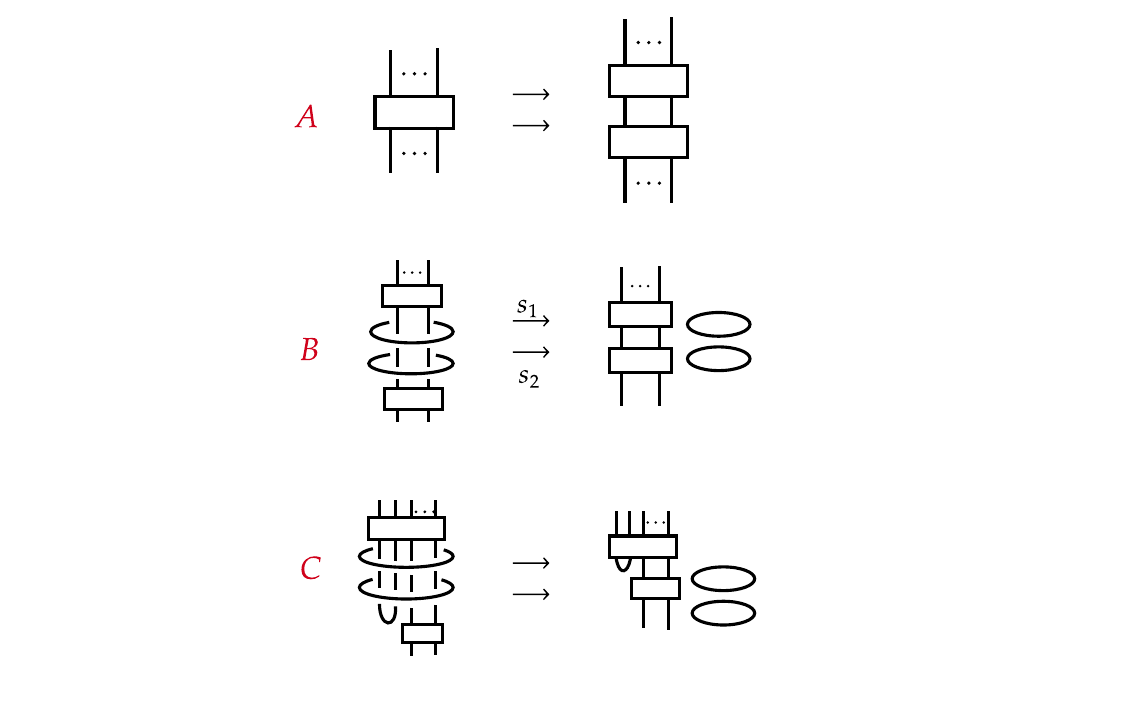}.
\end{figure}

The commutativity of region $A$ is exactly Proposition~\ref{prop:projector_properties}(4).

We consider region $B$. On the chain level, each projector is a chain complex of through-degree $0$ Temperley-Lieb diagrams. Each of the two chain maps $s_1,s_2$ is given termwise by sliding the belts down the Temperley-Lieb diagram from above/below using simplifying Reidemeister II moves, 
for which no cross terms exist.
Explicit chain homotopy inverses $s_1^{-1},s_2^{-1}$ can also be written down using \cite[Section~3]{gugenheim1972chain}; see also \cite[Proof of Proposition~2.10]{willis2021khovanov} for an account in our situation, the relevant property used here being that simplifying Reidemeister II moves are \textit{very strong deformation retracts} \cite[Definition~2.9]{willis2021khovanov}. As all terms in the complex of the chain model of the right hand side of region $B$ have internal homological degree $0$, the formulas in \cite{gugenheim1972chain} show that the composition $s_2\circ s_1^{-1}$ is given termwise by compositions of Reidemeister-induced maps, for which no cross terms exist.
Each such termwise composition takes the form
\begin{figure}[H]
\centering
\includegraphics[width=0.6\linewidth]{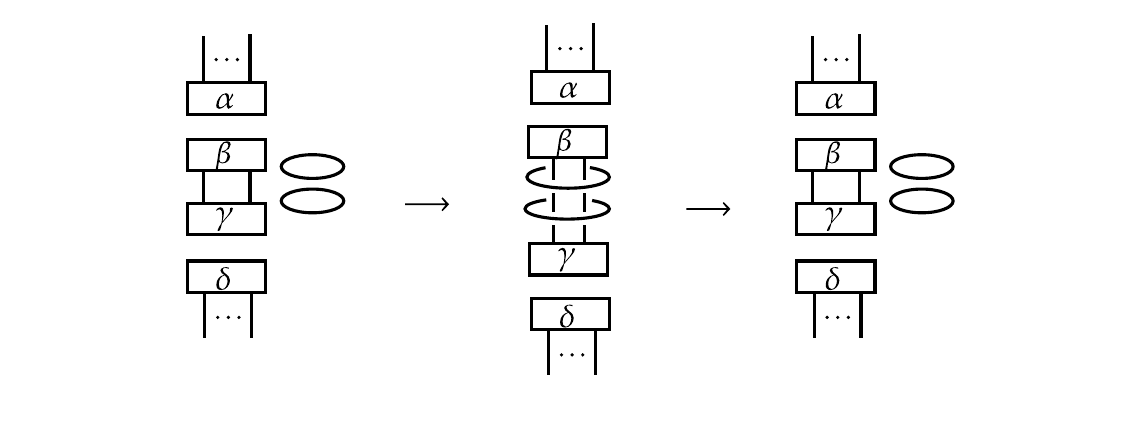},
\end{figure}
for some crossingless tangles $\alpha,\beta,\gamma,\delta$, where the first map slides the belts into the strands from the top, and the second map slides them out from the bottom. We claim this composition is the identity chain map up to sign. By locality, we may ignore the $\alpha,\delta$ boxes. The composition is identity on homology up to sign because the composite cobordism is isotopic to the identity cobordism in $I\times S^3$, and Khovanov homology is functorial in $S^3$ \cite[Theorem~1.1]{morrison2022invariants}. Since the diagram is crossingless, this implies that the composition is identity on the chain level up to sign. In Appendix~\ref{sec:sign_region_B_R1}, we show that under the appropriate setup using webs and foams, the corresponding composition map is equal to the identity chain map, rather than its negation, proving the commutativity of region $B$.

The commutativity of region $C$ is proved in exactly the same way.

Now, it follows that the map $\mathcal S_0^2(I\times S_{std};\Sigma^t)=\widetilde{KhR}_2^+(\Sigma)\otimes\mathrm{id}$ in terms of row \eqref{eq:SZ_simp}, where $\widetilde{KhR}_2^+(\Sigma)$ is the map on homology induced by pushing the finger down the projector. The statement follows.

\subsection{Crossing moves}
We prove Theorem~\ref{thm:concrete_functoriality_las} for move \eqref{item:concrete_push} in Proposition~\ref{prop:concrete_decomposition}. We only consider the case of pushing a positive crossing up the belts. The cases of pushing down or of a negative crossing are similar.

The proof is analogous to the case of finger moves. The only difference is that we need to prove the commutativity of the following analog of region $C$ in Section~\ref{sec:concrete_finger}
\begin{figure}[H]
\centering
\includegraphics[width=0.7\linewidth]{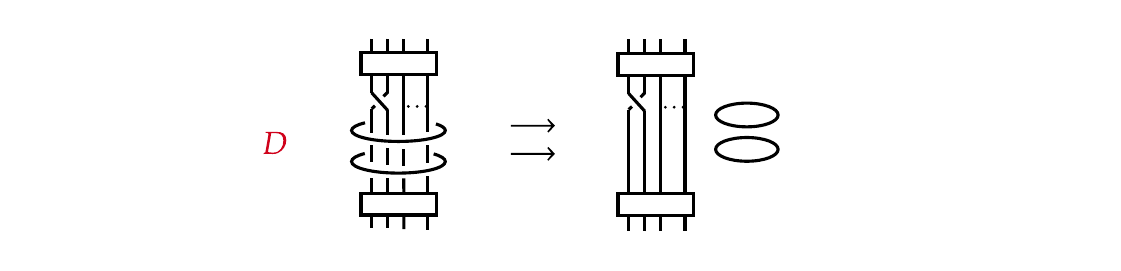}.
\end{figure}
Assuming the commutativity of region $D$, we can conclude that $\mathcal S_0^2(I\times S_{std};\Sigma^t)=\widetilde{KhR}_2^+(\Sigma)\otimes\mathrm{id}$ where $\widetilde{KhR}_2^+(\Sigma)$ is the map on homology induced by pushing the crossing down the projector, defined as the ``sandwich'' composition
\begin{figure}[H]
\centering
\includegraphics[width=0.75\linewidth]{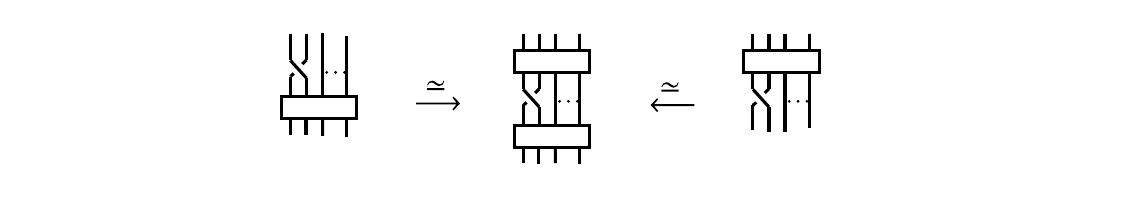}.
\end{figure}

We prove the commutativity of region $D$. The same argument for region $B$ in Section~\ref{sec:concrete_finger} no longer applies, because the terms in the twisted complex of the chain model of the right hand side of region $D$ have nontrivial internal homological degrees, hence cross terms may exist when applying \cite{gugenheim1972chain}. Nevertheless, \cite{gugenheim1972chain} implies that if $\Phi$ denotes the map going from the right hand side to itself by sliding the belts into the strands from the top and sliding them off from the bottom, then $\Phi$ is nondecreasing in the external homological degree. By judiciously choosing Reidemeister III induced chain maps when passing the belts across the middle crossing (see \cite[Section~3.5]{morrison2022invariants}, in the $\gl_2$ webs and foams context), we may assume that the induced map $\Phi$ is nondecreasing\footnote{Note that \cite{morrison2022invariants} uses the homological convention instead of the cohomological convention that we adopt here. Therefore, the words ``increasing'' and ``decreasing'' in their Lemma~3.12 have the opposite meaning to ours.} in the internal homological degree, and its internal-degree-preserving part on the two resolutions of the middle crossing, within each term of the twisted complex, is each given by the Reidemeister-induced maps. Since $\Phi$ preserves the total homological degree, it must therefore preserve both the internal and external homological degrees. Consequently, by the same argument as before, $\Phi$ is given by a termwise chain map that is identity on each term of the twisted complex up to termwise signs. In Appendix~\ref{sec:sign_region_B_R1}, we remove the sign ambiguity and show that $\Phi$ is termwise the identity map, hence the identity map overall, proving the commutativity of region $D$.

\subsection{Overpass/underpass moves}\label{sec:concrete_pass}
We prove Theorem~\ref{thm:concrete_functoriality_las} for move \eqref{item:concrete_pass} in Proposition~\ref{prop:concrete_decomposition}. We only consider the case of an overpass move, as the underpass case is similar.

The proof is again analogous to the case of finger moves. The only difference is that we need to prove the commutativity of the following analog of region $C$ in Section~\ref{sec:concrete_finger}
\begin{figure}[H]
\centering
\includegraphics[width=0.7\linewidth]{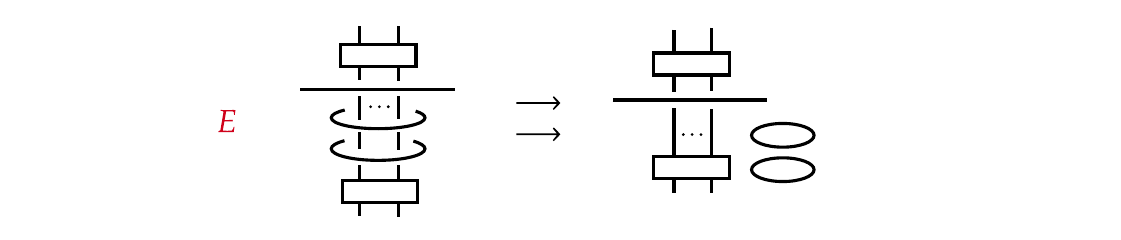}.
\end{figure}

Assuming the commutativity of region $E$, we can conclude that $\mathcal S_0^2(I\times S_{std};\Sigma^t)=\widetilde{KhR}_2^+(\Sigma)\otimes\mathrm{id}$ where $\widetilde{KhR}_2^+(\Sigma)$ is the map on homology induced by pushing the overpassing strand down the projector, defined as the ``sandwich'' composition (both maps are isomorphisms on homology by Proposition~\ref{prop:projector_properties}(6))
\begin{figure}[H]
\centering
\includegraphics[width=0.6\linewidth]{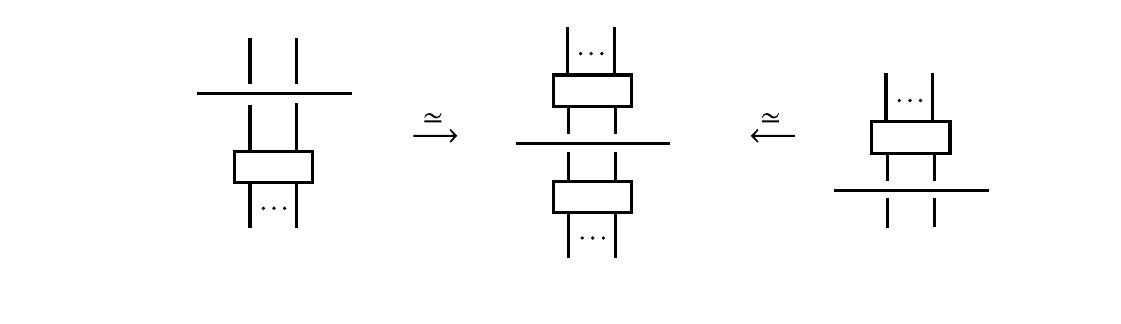}.
\end{figure}

We prove the commutativity of region $E$. Consider the diagram
\begin{figure}[H]
\centering
\includegraphics[width=0.6\linewidth]{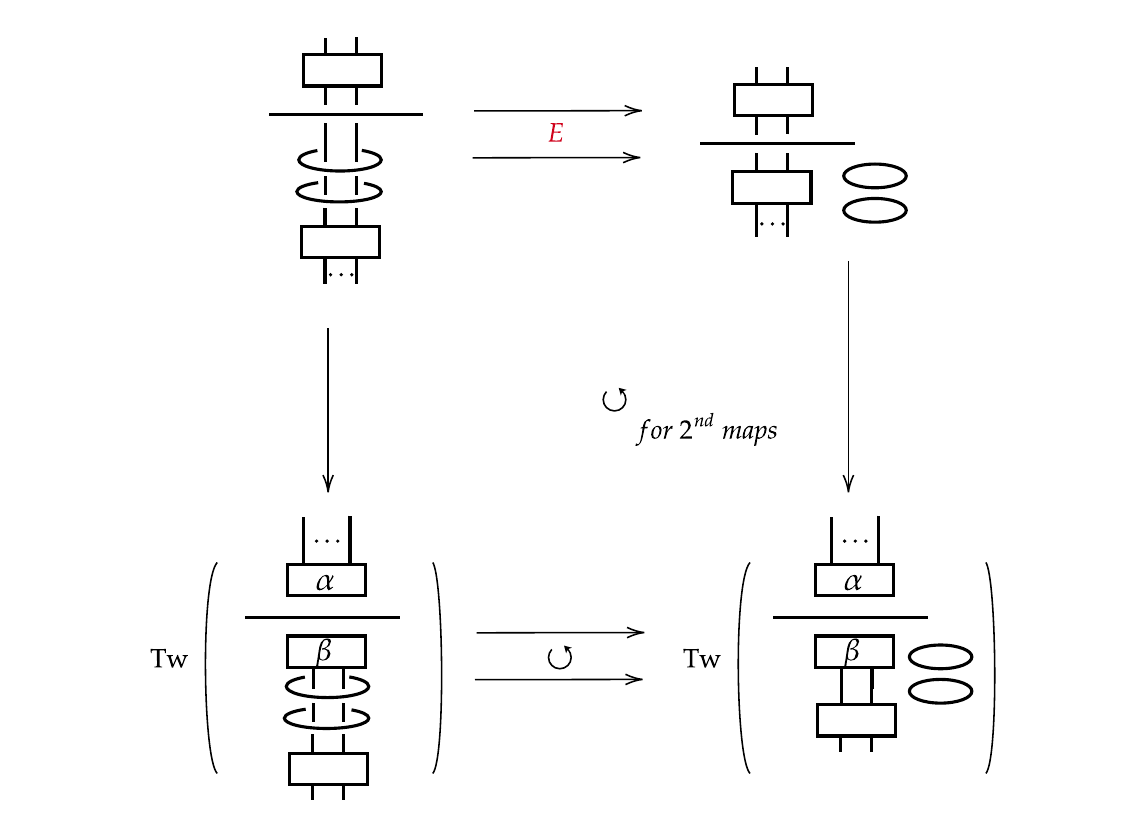}
\end{figure}
where we expand the first projector to rewrite the diagrams as twisted complexes, and the vertical maps slide the overstrand into the empty region by termwise Reidemeister II simplifying maps. The two horizontal maps in the second row are equal by the same argument as for region $B$ in Section~\ref{sec:concrete_finger}. If we take the pair of horizontal maps that slide the belts off the second projector, one for each row in the diagram, then the square commutes by locality. Thus, to prove the commutativity of region $E$, it suffices to show the square also commutes for the other pair of horizontal maps.

The proof is similar to the one for region $B$ in Section~\ref{sec:concrete_finger}. We repeat the argument again. Redraw the relevant parts of this square termwise:
\begin{figure}[H]
\centering
\includegraphics[width=0.6\linewidth]{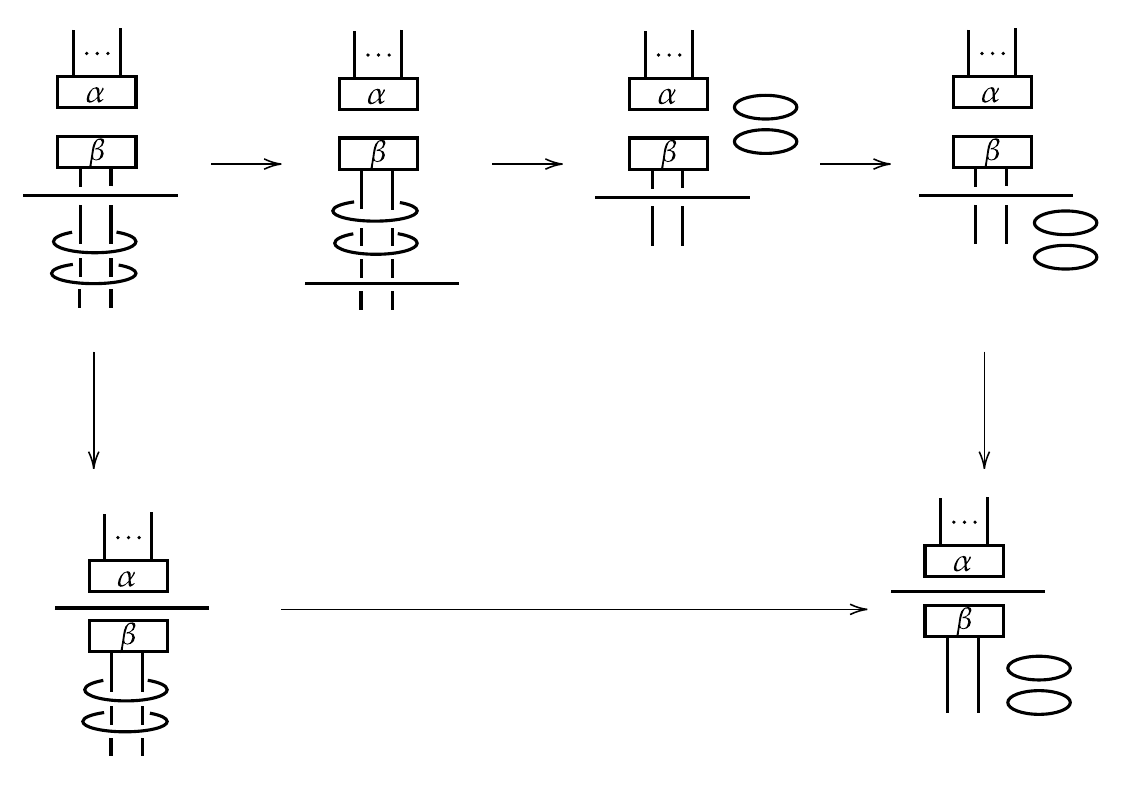}.
\end{figure}
The two total chain maps are assembled from these termwise maps, for which no cross terms exist. If we start from the lower-right corner and compose the chain maps or their explicit homotopy inverses (which are termwise Reidemeister-induced maps, since simplifying Reidemeister II give very strong deformation retracts) in a loop, we obtain the termwise identity chain map up to individual signs, because the termwise composite cobordisms are isotopic to identity and the source and target diagrams are crossingless. The sign ambiguity is removed in Appendix~\ref{sec:sign_region_B_R1}. This proves the commutativity of region $E$.

\subsection{Handleslides}\label{sec:concrete_handleslide}
We prove Theorem~\ref{thm:concrete_functoriality_las} for move \eqref{item:concrete_slide} in Proposition~\ref{prop:concrete_decomposition}. We only consider the case where the sign of the handleslide is negative, namely where $\Sigma^t\subset I\times S_{std}\subset D_{std}$ intersects the $2$-cocores positively at one point, see Figure~\ref{fig:concrete_positive_handleslide}. The positive case is similar. Suppose the strand is sliding over the $j$-th $2$-handle of $D_{std}$.

\begin{figure}
\centering
\includegraphics[width=0.75\linewidth]{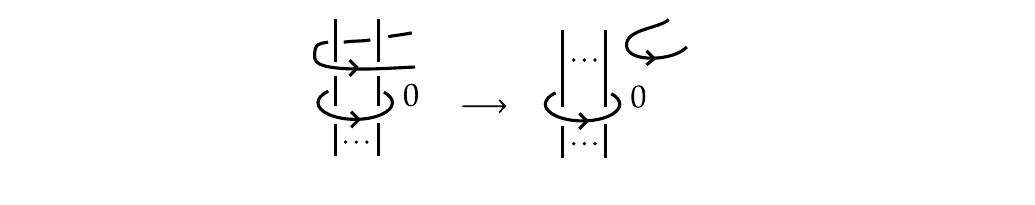}
\caption{The reverse of a negative handleslide, with orientations on the $2$-handle attaching curve and the sliding strand indicated.}
\label{fig:concrete_positive_handleslide}
\end{figure}

We claim that the map $\mathcal S_0^2(I\times S_{std};\Sigma^t)$ in terms of row \eqref{eq:SZ_MN} is induced by the summand-wise symmetrized saddle map 
\begin{align*}
KhR_2(L_1\cup(n_++r,n_-+r)\text{ belts})^{S_{|n|+2|r|}}&\xrightarrow{\text{saddle}}KhR_2(L_0\cup(n_++e_j+r,n_-+r)\text{ belts})\\
&\xrightarrow{\mathrm{Sym}}KhR_2(L_0\cup(n_++e_j+r,n_-+r)\text{ belts})^{S_{|n|+e_j+2|r|}},
\end{align*}
or in local diagram, similar to the annular model situation in \cite[Section~4]{hogancamp2025kirby}, as 
\begin{figure}[H]
\centering
\includegraphics[width=0.7\linewidth]{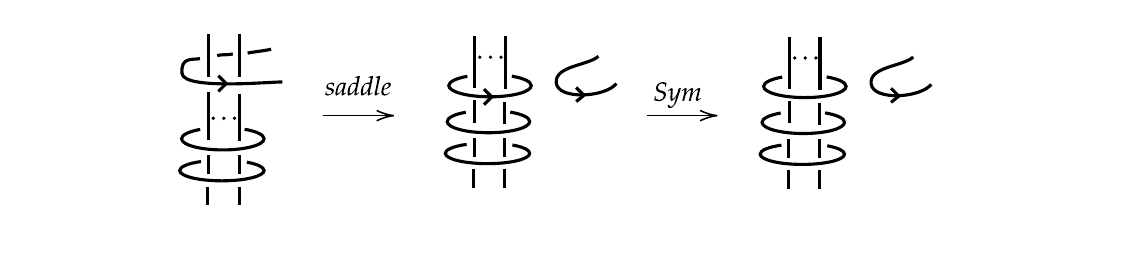}.
\end{figure}

On the lasagna filling level, the map $\mathcal S_0^2(I\times S_{std};\Sigma^t)$ attaches to the standard skein $(I\times L_1)\cup((n_++r,n_-+r)\text{ cores})$ the cobordism $\Sigma^t$. If one enlarges the input ball a bit to make the new skein $\Sigma^t\cup(I\times L_1)\cup((n_++r,n_-+r)\text{ cores})$ standard of the form $(I\times L_0)\cup((n_++e_j+r,n_-+r)\text{ cores})$, the Khovanov decoration changes according to the cobordism between the two balls, which is exactly given by the claimed saddle. This proves the claim.

\begin{figure}
\centering
\includegraphics[width=0.7\linewidth]{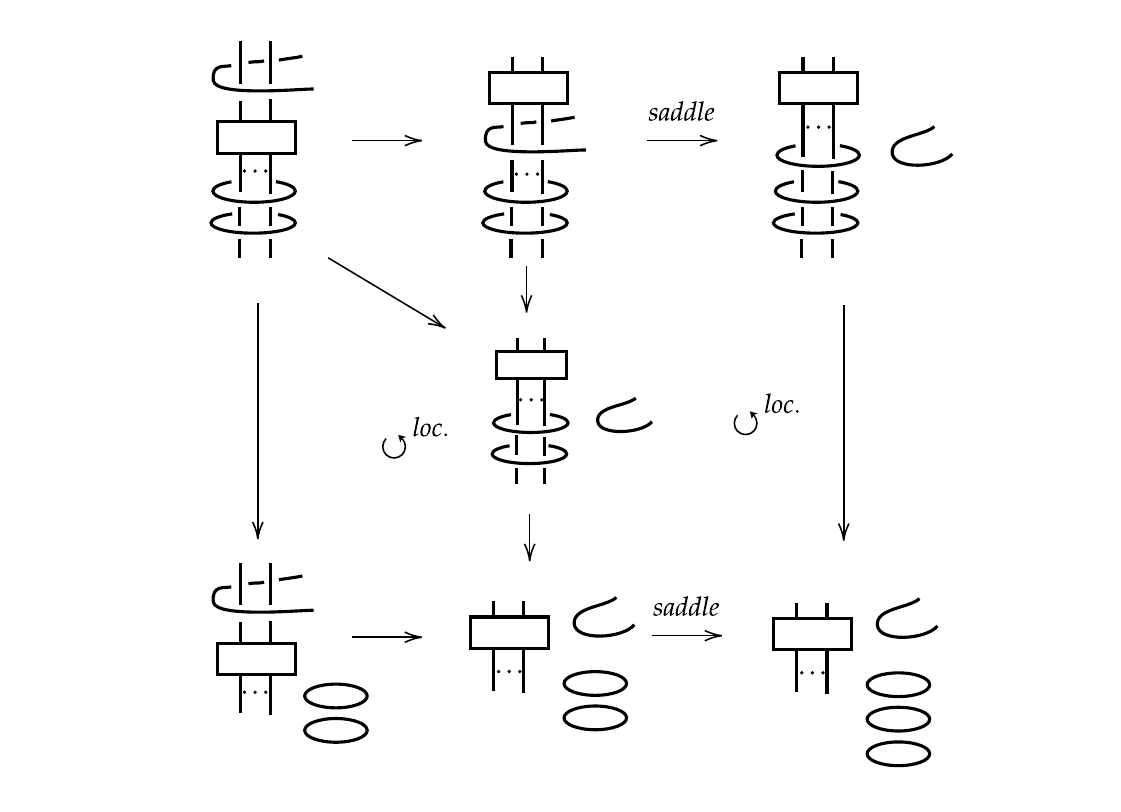}
\caption{Rows \eqref{eq:SZ_res_1} and \eqref{eq:SZ_slide} of a reverse handleslide move (before symmetrization).}
\label{fig:concrete_handleslide_diagram}
\end{figure}

The same description of $\mathcal S_0^2(I\times S_{std};\Sigma^t)$ applies to row \eqref{eq:SZ_renom}. By the same argument as in Section~\ref{sec:concrete_finger} using an analog of the upper half of Figure~\ref{fig:finger_diagram}, the map $\mathcal S_0^2(I\times S_{std};\Sigma^t)$ in terms of row \eqref{eq:SZ_res_1} is given by the first row of Figure~\ref{fig:concrete_handleslide_diagram}, postcomposed with symmetrization. Here the first map is the ``sandwich'' map as before.

We claim that Figure~\ref{fig:concrete_handleslide_diagram} commutes. As sliding off the belts commutes with symmetrization, this will imply $\mathcal S_0^2(I\times S_{std};\Sigma^t)$ in terms of row \eqref{eq:SZ_slide} is given by the second row of Figure~\ref{fig:concrete_handleslide_diagram}, postcomposed with symmetrization. The two rectangles commute by locality, while the commutativity of the triangle follows from the commutativity of region $A$ in Section~\ref{sec:concrete_finger} and region $F$:
\begin{figure}[H]
\centering
\includegraphics[width=0.75\linewidth]{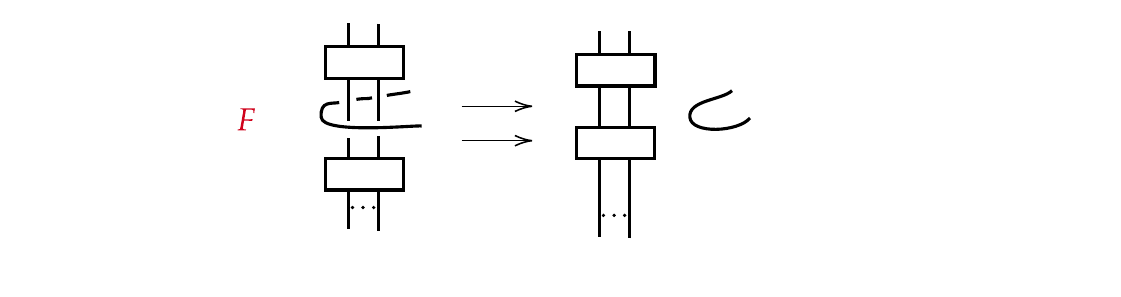}.
\end{figure}
The commutativity of region $F$ follows from the same argument for that of region $B$ in Section~\ref{sec:concrete_finger}. This proves the claim.

Now, the saddle map in the second row of Figure~\ref{fig:concrete_handleslide_diagram} is equal to $(birth)\otimes(dot)+(dotted\ birth)\otimes1$. In terms of row \eqref{eq:SZ_simp}, after symmetrization, the first term decreases the lasagna quantum grading while the second term preserves it. This proves that $\mathcal S_0^2(I\times S_{std};\Sigma^t)$ is nonincreasing in the lasagna quantum grading. The associated graded map is given by $\widetilde{KhR}_2^+(\Sigma)\otimes gr(u)$, where $\widetilde{KhR}_2^+(\Sigma)\colon\widetilde{KhR}_2^+(L_1)\to\widetilde{KhR}_2^+(L_0)$ is the slide-off map
\begin{figure}[H]
\centering
\includegraphics[width=0.75\linewidth]{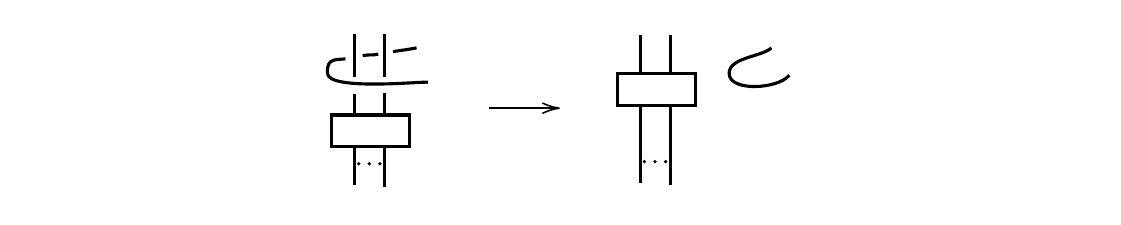},
\end{figure}
and $u\colon\mathcal S_0^2(D_{std})\to\mathcal S_0^2(D_{std})$ is the gluing map that attaches a collar of the boundary containing a copy of the dotted $j$-th core $S^2$ as skein. By the description of $\mathcal S_0^2(D_{std})$ in Section~\ref{sec:D2S2}, $u=\mathrm{id}_{e_j}$. Since $e_j=\alpha_{L_1}*[\Sigma^t]-\alpha_{L_0}$, this finishes the proof of Theorem~\ref{thm:concrete_functoriality_las}.

\section{Homology in abstract spin \texorpdfstring{$\#(S^1\times S^2)$}{\#S1*S2}}\label{sec:abstract_spin_homology}
As in the previous section, fix $k\ge0$, $m_1,\cdots,m_k\ge0$, and write for short $S_{std}:=\sqcup_{i=1}^k\#^{m_i}(S^1\times S^2)$, $D_{std}:=\#_{i=1}^k\natural^{m_i}(D^2\times S^2)$. Equip $D_{std}$ with its unique spin structure, and $S_{std}=\partial D_{std}$ with the induced boundary spin structure. When $k=0$, $S_{std}=\emptyset$, and statements in Section~\ref{sec:abstract_spin_homology_statements} are trivial. Assume from now on $k\ge1$.

\subsection{The statements}\label{sec:abstract_spin_homology_statements}
If $M$ is a spin manifold, an \textit{abstract spin} $M$ is a spin manifold that is spin diffeomorphic to $M$. The goal of this section is to construct the Rozansky-Willis homology for links in an abstract spin $S_{std}$, as stated in the following theorem.

\begin{Thm}\label{thm:abstract_spin_homology}
Let $S$ be an abstract spin $S_{std}$ and $L\subset S$ be a framed oriented link with $2$-divisible homology class. There is a bigraded vector space $\widetilde{KhR}_2^+(S,L)$, called the \textit{Rozansky-Willis homology} of $(S,L)$, which is an invariant of the pair $(S,L)$ up to spin diffeomorphisms. When $S=S_{std}$ and $L$ is admissible, $\widetilde{KhR}_2^+(S,L)$ is canonically isomorphic to $\widetilde{KhR}_2^+(L)$.
\end{Thm}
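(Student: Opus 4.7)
The plan is to define $\widetilde{KhR}_2^+(S,L)$ by transport: pick any spin diffeomorphism $\phi\colon S\xrightarrow{\cong}S_{std}$, isotope $\phi(L)$ into admissible position (possible by a general-position argument through the isotopy class of framed links in $S_{std}$), and set $\widetilde{KhR}_2^+(S,L):=\widetilde{KhR}_2^+(\phi(L))$ using the classical Rozansky-Willis homology of Section~\ref{sec:RW}. Independence of the particular choice of admissible representative follows from Theorem~\ref{thm:concrete_functoriality}, applied to the mapping cylinder of the admissibility isotopy in $I\times S_{std}$, which produces a canonical bigraded isomorphism. The entire content of Theorem~\ref{thm:abstract_spin_homology} is therefore to produce, for any two choices $\phi_0,\phi_1$, a canonical isomorphism $\widetilde{KhR}_2^+(\phi_0(L))\cong\widetilde{KhR}_2^+(\phi_1(L))$ that depends only on the spin-isotopy class of $\psi:=\phi_1\phi_0^{-1}$ as a self-spin-diffeomorphism of $S_{std}$, and is coherent under composition.

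The construction of this canonical action of the spin mapping class group of $S_{std}$ on $\widetilde{KhR}_2^+(L)$ goes through the four-dimensional filling $D_{std}$ and the Sullivan--Zhang isomorphism \eqref{eq:SZ}. The first step is to lift $\psi$ to a diffeomorphism $\tilde\psi$ of $D_{std}$ rel nothing; this is possible because $D_{std}$ carries a unique spin structure, so the only obstruction to extending a spin diffeomorphism of $\partial D_{std}$ lies in the mapping class group of $D_{std}$ and can be absorbed. The lift $\tilde\psi$ induces, by gluing, an isomorphism $\mathcal{S}_0^2(\tilde\psi)\colon\mathcal{S}_0^2(D_{std};\phi_0(L))\xrightarrow{\cong}\mathcal{S}_0^2(D_{std};\phi_1(L))$. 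I would then argue that this map is homogeneous in the lasagna quantum grading, and under the SZ splitting factorises as a tensor product, whose first tensorial factor is the sought-after map $\widetilde{KhR}_2^+(\phi_0(L))\to\widetilde{KhR}_2^+(\phi_1(L))$ (up to a canonical shifting automorphism $\mathrm{id}_\alpha$ on the second factor accounting for the homology class difference). Theorem~\ref{thm:concrete_functoriality_las} is the template for this kind of factorisation argument, and since the total map is an isomorphism and the second factor is easily seen to be an isomorphism, the first factor is forced to be one as well.

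The main obstacle, and the heart of the argument, is to show that this first tensorial factor depends only on $\psi$, not on the chosen extension $\tilde\psi$. Two extensions of $\psi$ differ by an element of $\pi_0(\Diff_\partial(D_{std}))$, whose structure is controlled by Theorem~\ref{thm:diff_D2S2_rel_boundary_intro}: modulo diffeomorphisms supported in a local $4$-ball, the group is generated by Dehn twists along embedded $3$-spheres and implanted barbell diffeomorphisms. Diffeomorphisms supported in a local ball act trivially on $\mathcal{S}_0^2(D_{std};L)$ by the enclosement relation, and hence pose no issue. For the remaining generators, one needs to verify that their induced maps on $\mathcal{S}_0^2(D_{std};L)$ respect the SZ tensor-product splitting and act as the identity on the $\widetilde{KhR}_2^+(L)$ factor, so that any extension ambiguity is hidden inside an automorphism of $\mathcal{S}_0^2(D_{std})$. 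I expect the Dehn-twist case to reduce to a direct check using Example~\ref{ex:D2S2} and the characterisation of shifting automorphisms in Definition~\ref{def:shift_auto}, while the barbell case is the main technical hurdle, likely requiring an explicit skein-theoretic description of the barbell diffeomorphism as indicated in Section~\ref{sec:intro} (the authors' alternative description used in the proof of Theorem~\ref{thm:diff_D2S2_rel_boundary_intro}). Once these actions are pinned down, coherence under composition follows functorially from the gluing-functoriality of $\mathcal{S}_0^2$ and the canonicity of SZ; the final statement that $\widetilde{KhR}_2^+(S_{std},L)\cong\widetilde{KhR}_2^+(L)$ for admissible $L$ is then tautological, taking $\phi$ to be the identity.
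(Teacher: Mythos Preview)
Your proposal is essentially correct and follows the paper's approach closely: extract the reparametrization isomorphism as the first tensor factor of $\mathcal{S}_0^2(\tilde\psi^{-1})$ under \eqref{eq:SZ}, prove independence of the lift $\tilde\psi$ via Theorem~\ref{thm:diff_D2S2_rel_boundary_intro} (local balls act trivially, Dehn twists along $3$-spheres act trivially via neck-cutting, barbells require the explicit check of Lemma~\ref{lem:barbell_las}), and then package the result as in Proposition~\ref{prop:spin_repara}. Two refinements are worth flagging. First, the map $\mathcal{S}_0^2(\tilde\psi^{-1})$ is \emph{not} homogeneous in the lasagna quantum grading (the barbell already produces lower-order terms); the paper only proves it is filtered and extracts the factorisation on the $0$-th associated graded piece (Theorem~\ref{thm:spin_repara_las}). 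Second, the factorisation itself is not automatic for a general spin diffeomorphism $\psi$: the paper decomposes $\tilde\psi$ into elementary diffeomorphisms of types \eqref{item:rel_boundary}--\eqref{item:handleslide} (Proposition~\ref{prop:diffeomorphism_decomposition}) and verifies the tensor splitting separately for isotopy insertions, summand exchanges, inversions, and handleslides, each via a diagram chase through rows \eqref{eq:SZ_MN}--\eqref{eq:SZ_simp}. Your outline treats only the rel-boundary ambiguity in detail, but the same template (your reference to Theorem~\ref{thm:concrete_functoriality_las}) does indeed handle the remaining generators.
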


Let $(S,L)$ be as in Theorem~\ref{thm:abstract_spin_homology}. A spin parametrization $\phi\colon S\xrightarrow{\cong}S_{std}$ is \textit{$L$-admissible} if $\phi(L)$ is an admissible link in $S_{std}$. If $\rho$ is such a parametrization, then $\widetilde{KhR}_2^+(\rho(L))$ is a natural candidate for $\widetilde{KhR}_2^+(S,L)$. For this idea to work, if $\rho'$ is another $L$-admissible parametrization of $S$, we need to identify $\widetilde{KhR}_2^+(\rho(L))$ with $\widetilde{KhR}_2^+(\rho'(L))$ in a canonical and functorial way. This is the content of the following proposition.

\begin{Prop}\label{prop:spin_repara}
Let $L\subset S_{std}$ be an admissible link and $\phi\in\Diff^{spin}(S_{std})$ be an $L$-admissible spin diffeomorphism of $S_{std}$. There is a canonical isomorphism
\begin{equation}\label{eq:spin_repara}
\widetilde{KhR}_2^+(\phi)\colon\widetilde{KhR}_2^+(\phi(L))\xrightarrow{\cong}\widetilde{KhR}_2^+(L)
\end{equation}
of graded vector spaces, which is the identity map when $\phi=\mathrm{id}$. If $\phi'\in\Diff^{spin}(S_{std})$ is $\phi(L)$-admissible, then $$\widetilde{KhR}_2^+(\phi)\circ\widetilde{KhR}_2^+(\phi')=\widetilde{KhR}_2^+(\phi'\circ\phi)\colon\widetilde{KhR}_2^+(\phi'(\phi(L)))\xrightarrow{\cong}\widetilde{KhR}_2^+(L).$$
\end{Prop}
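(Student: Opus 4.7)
The plan is to use the Sullivan--Zhang isomorphism (Theorem~\ref{thm:SZ}) as a bypass and leverage the manifestly functorial action of diffeomorphisms on skein lasagna modules. Given an $L$-admissible $\phi \in \Diff^{spin}(S_{std})$, a first step is to exhibit an extension of $\phi$ to a diffeomorphism $\Phi$ of $D_{std}$, which automatically preserves the unique spin structure since $D_{std}$ is simply connected. (If direct extension fails in some isotopy components, one modifies $\phi$ first via $L$-admissible isotopy and handleslide-type moves on the boundary; the combination of Theorem~\ref{thm:concrete_functoriality_las} and compensating shifting automorphisms $\mathrm{id}_\alpha$ from Definition~\ref{def:shift_auto} should absorb any discrepancy.) The pushforward
\[
(\Phi^{-1})_*\colon \mathcal S_0^2(D_{std};\phi(L)) \xrightarrow{\cong} \mathcal S_0^2(D_{std};L)
\]
composed with the SZ isomorphism on both sides yields an isomorphism $\widetilde{KhR}_2^+(\phi(L)) \otimes \mathcal S_0^2(D_{std}) \xrightarrow{\cong} \widetilde{KhR}_2^+(L) \otimes \mathcal S_0^2(D_{std})$ of bigraded vector spaces. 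After verifying that this isomorphism has a block form $\widetilde{KhR}_2^+(\phi) \otimes T_\Phi$ for some automorphism $T_\Phi$ of $\mathcal S_0^2(D_{std})$, we define $\widetilde{KhR}_2^+(\phi)$ as the first tensor factor.

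The main technical work is to show that $\widetilde{KhR}_2^+(\phi)$ is independent of the choice of extension $\Phi$. Two spin extensions differ by an element of $\Diff^{spin}_\partial(D_{std})$ which, by Theorem~\ref{thm:diff_D2S2_rel_boundary_intro}, is a product of a diffeomorphism in $\Diff_{\partial,loc}(D_{std})$, spin-preserving Dehn twists along embedded $3$-spheres, and (spin-preserving) implanted barbell diffeomorphisms. For $\Diff_{\partial,loc}(D_{std})$, the enclosement relation lets us enlarge an input manifold of any lasagna filling to contain the supporting $4$-ball; then the action factors through $\pi_0(\Diff_\partial(B^4))$ on a Khovanov decoration of the boundary link, which is trivial by functoriality of $KhR_2$ on cobordisms in $B^4$ \cite{morrison2022invariants}. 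For Dehn twists and barbells, we expect the action on $\mathcal S_0^2(D_{std};L)$ to be by a shifting automorphism $\mathrm{id}_\alpha$, so that the induced map on the $\widetilde{KhR}_2^+(L)$ factor is the identity.

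The conditions $\widetilde{KhR}_2^+(\mathrm{id}) = \mathrm{id}$ and functoriality $\widetilde{KhR}_2^+(\phi) \circ \widetilde{KhR}_2^+(\phi') = \widetilde{KhR}_2^+(\phi' \circ \phi)$ then follow formally: for the identity take $\Phi = \mathrm{id}$; for composition use $\Phi' \circ \Phi$ as an extension of $\phi' \circ \phi$, combined with strict functoriality of pushforward on $\mathcal S_0^2$ and naturality of the SZ isomorphism. The main obstacle is the analysis of Dehn twists and barbell diffeomorphisms in the previous paragraph: barbells in particular are geometrically intricate, and ruling out their effect on the $\widetilde{KhR}_2^+(L)$ factor will likely require explicit computation against the standard generators $A_{i,j,0}^{\pm}, A_{i,j,1}$ of $\mathcal S_0^2(D_{std})$ from Example~\ref{ex:D2S2}, together with a careful analysis of the action on admissible boundary link classes through the explicit formulas \eqref{eq:SZ_MN}--\eqref{eq:SZ_simp} realizing the SZ isomorphism.
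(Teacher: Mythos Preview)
Your overall strategy matches the paper's: lift $\phi$ to $\tilde\phi\in\Diff^+(D_{std})$, push forward on $\mathcal S_0^2$, apply the Sullivan--Zhang isomorphism \eqref{eq:SZ}, extract the first tensor factor, and verify independence of the lift via the structure of $\Diff_\partial(D_{std})$ from Theorem~\ref{thm:diff_D2S2_rel_boundary}. The formal deduction of identity and functoriality at the end is exactly as in the paper.

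However, there is a genuine gap at the ``block form'' step. The pushforward $\mathcal S_0^2(\tilde\phi^{-1})$, transported through \eqref{eq:SZ}, does \emph{not} in general decompose as $\widetilde{KhR}_2^+(\phi)\otimes T_\Phi$ on the nose. The paper introduces the \emph{lasagna quantum grading} on $\mathcal S_0^2(D_{std};L)$ (the quantum degree contribution from the $\mathcal S_0^2(D_{std})$ factor in \eqref{eq:SZ}), shows that $\mathcal S_0^2(\tilde\phi^{-1})$ is nonincreasing in this filtration, and only on the $0$-th associated graded piece does one obtain the tensor form $\widetilde{KhR}_2^+(\phi)\otimes gr_0(\mathrm{id}_\alpha\circ\phi_*^{-1})$ (this is Theorem~\ref{thm:spin_repara_las}). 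The remark after Theorem~\ref{thm:concrete_functoriality_las} explains why passing to associated graded is unavoidable already for handleslide cobordisms, and the same phenomenon recurs for barbells: Lemma~\ref{lem:barbell_las} shows the barbell sends $1\otimes 1$ to $1\otimes\mathrm{id}_\alpha(1)$ \emph{plus lower-order terms}, not to a pure tensor. So your expectation that barbells act by a shifting automorphism on $\mathcal S_0^2(D_{std};L)$ is false at the level of the full module; it holds only after filtering. Without this filtration/associated-graded step, the definition of $\widetilde{KhR}_2^+(\phi)$ as ``the first tensor factor'' is not well-posed.

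Two smaller points: every spin diffeomorphism of $S_{std}$ does extend directly to $D_{std}$ (this follows from Proposition~\ref{prop:diff_spin_S_std}), so your contingency about modifying $\phi$ first is unnecessary; and Dehn twists along $3$-spheres in $\Diff_\partial(D_{std})$ act as the \emph{identity} on $\mathcal S_0^2$ (by neck-cutting and the triviality of the $\pi_1(\Diff(S^3))$-action on $KhR_2$), so there is no shift to absorb there.
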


Again, one could state the theorem in the covariant way by inverting the diffeomorphisms.

\begin{proof}[Proof of Theorem~\ref{thm:abstract_spin_homology} assuming Proposition~\ref{prop:spin_repara}]
Let $P^{spin}(S,L)$ denote the set of $L$-admissible spin parametrizations of $S$. Then, $\widetilde{KhR}_2^+(S,L)$ can be defined as the ``cross-section'' subspace of $\prod_{\rho\in P^{spin}(S,L)}\widetilde{KhR}_2^+(\rho(L))$ consisting of elements $(v_\rho)_\rho$ with $v_{\rho'}=\widetilde{KhR}_2^+(\rho\circ(\rho')^{-1})(v_\rho)$ for all $\rho,\rho'\in P^{spin}(S,L)$.

For any $\rho\in P^{spin}(S,L)$, the projection map $\widetilde{KhR}_2^+(S,L)\xrightarrow{\cong}\widetilde{KhR}_2^+(\rho(L))$ is an isomorphism of graded vector spaces. The case $S=S_{std}$ and $\rho=\mathrm{id}$ gives the last statement of the theorem.
\end{proof}

Every spin diffeomorphism $\phi\in\Diff^{spin}(S_{std})$ admits a lift $\tilde\phi\in\Diff^+(D_{std})$ in the orientation-preserving diffeomorphism group of $D_{std}$, as can be seen in Section~\ref{sec:abstract_spin_homology_decomposition}. Proposition~\ref{prop:spin_repara} is therefore a formal consequence of the following theorem.

\begin{Thm}\label{thm:spin_repara_las}
Let $L,\phi$ be as in Proposition~\ref{prop:spin_repara}, and let $\tilde\phi\in\Diff^+(D_{std})$ be a lift of $\phi$. The pushforward map $$\mathcal S_0^2(\tilde\phi^{-1})\colon\mathcal S_0^2(D_{std};\phi(L))\to\mathcal S_0^2(D_{std};L)$$ induces a map $gr_0\mathcal S_0^2(D_{std};\phi(L))\to gr_0\mathcal S_0^2(D_{std};L)$ on the $0$-th associated graded spaces with respect to the lasagna quantum grading. Under the isomorphism \eqref{eq:SZ}, this induced map is uniquely of the form
\begin{equation}\label{eq:spin_repara_las}
\widetilde{KhR}_2^+(\phi)\otimes gr_0(\mathrm{id}_\alpha\circ\phi_*^{-1})\colon\widetilde{KhR}_2^+(\phi(L))\otimes gr_0\mathcal S_0^2(D_{std})\to\widetilde{KhR}_2^+(L)\otimes gr_0\mathcal S_0^2(D_{std})
\end{equation}
for some isomorphism $\widetilde{KhR}_2^+(\phi)$ independent of $\tilde\phi$, and some $\alpha\in H_2(D_{std})$ depending on $\tilde\phi$ and $[L]\in H_1(S_{std})$. Here, $\phi_*$ in \eqref{eq:spin_repara_las} is defined as the pushforward isomorphism $\mathcal S_0^2(\tilde\phi)\colon\mathcal S_0^2(D_{std})\xrightarrow{\cong}\mathcal S_0^2(D_{std})$.
\end{Thm}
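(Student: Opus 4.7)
The plan is to factor an arbitrary lift $\tilde\phi$ as a composition of a ``collar part'' that realizes $\phi$ via the concrete link moves of Proposition~\ref{prop:concrete_decomposition} and a ``rel-boundary part'' to which Theorem~\ref{thm:diff_D2S2_rel_boundary_intro} applies, then verify the conclusion on each piece separately. Concretely, I first want to show (presumably in Section~\ref{sec:abstract_spin_homology_decomposition}) that $\phi$ admits a preferred lift $\tilde\chi\in\Diff^+(D_{std})$ supported in a collar of $\partial D_{std}=S_{std}$ that realizes $\phi$ as a time-$1$ family of elementary moves (i)--(vi) from Proposition~\ref{prop:concrete_decomposition}; any lift $\tilde\phi$ then differs from $\tilde\chi$ by an element $\tilde\psi_\partial\in\Diff_\partial(D_{std})$.

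For the collar factor $\tilde\chi$, the pushforward $\mathcal S_0^2(\tilde\chi^{-1})$ equals the gluing map of the mapping-cylinder cobordism $\Sigma_\chi\subset I\times S_{std}$ whose slices record the moves. By Theorem~\ref{thm:concrete_functoriality_las}, this map is non-increasing in the lasagna quantum grading and its zeroth associated graded has the form $\widetilde{KhR}_2^+(\Sigma_\chi)\otimes gr_0(\mathrm{id}_{\alpha_{\phi(L)}*[\Sigma_\chi^t]-\alpha_L})$. One then defines $\widetilde{KhR}_2^+(\phi):=\widetilde{KhR}_2^+(\Sigma_\chi)$ and verifies that this is independent of the decomposition into elementary moves, which reduces to the coherence for cobordisms built into Theorem~\ref{thm:concrete_functoriality} (functoriality in $\mathbf{Links}_{S_{std}}$). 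For the rel-boundary factor $\tilde\psi_\partial$, I apply Theorem~\ref{thm:diff_D2S2_rel_boundary_intro} to write it, modulo $\Diff_{\partial,loc}(D_{std})$, as a product of Dehn twists along embedded $3$-spheres and implanted barbell diffeomorphisms. Elements of $\Diff_{\partial,loc}(D_{std})$ act as the identity on $\mathcal S_0^2(D_{std};\phi(L))$ by isotoping the local support off a representative skein. A Dehn twist along a separating $3$-sphere $S$ is handled via neck-cutting along $S$ (using \cite[Lemma~7.2]{manolescu2022skein}) and the explicit description of $\mathcal S_0^2(D_{std})$ in Example~\ref{ex:D2S2}: its action becomes multiplication by an element of pure skein degree, that is, a shifting automorphism $\mathrm{id}_\gamma$ on the second tensorial factor of the SZ decomposition. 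A barbell diffeomorphism is handled using the alternative description promised in the proof of Theorem~\ref{thm:diff_D2S2_rel_boundary_intro}, which exhibits it as a concrete implanted move whose action on a standard lasagna filling can be computed by an explicit cobordism, again yielding only a shifting automorphism.

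Assembling: $\mathcal S_0^2(\tilde\phi^{-1})=\mathcal S_0^2(\tilde\chi^{-1})\circ\mathcal S_0^2(\tilde\psi_\partial^{-1})$, and passing to $gr_0$, the factor from $\tilde\psi_\partial$ contributes only to the second tensor factor as $gr_0(\mathrm{id}_{\alpha'})$, while the factor from $\tilde\chi$ contributes $\widetilde{KhR}_2^+(\phi)\otimes gr_0(\mathrm{id}_{\beta})$. The factor $\phi_*^{-1}$ in \eqref{eq:spin_repara_las} arises naturally because $\tilde\phi^{-1}$ also reparametrizes the standard generators of $\mathcal S_0^2(D_{std})$ appearing in \eqref{eq:SZ_simp}; tracking these generators through the SZ isomorphism exhibits the combined second-factor map as $\mathrm{id}_\alpha\circ\phi_*^{-1}$ with $\alpha=\beta+\alpha'$. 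Independence of the choice of lift $\tilde\phi$ then follows: two lifts differ by a rel-boundary diffeomorphism, which by the previous step acts only by $\mathrm{id}_{\text{(shift)}}$ on the second factor, absorbing into $\alpha$ and leaving $\widetilde{KhR}_2^+(\phi)$ unchanged.

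I expect the barbell diffeomorphism to be the main obstacle: unlike local diffeomorphisms and Dehn twists, neither general position nor neck-cutting applies directly, and one needs the explicit alternative description, together with a careful computation of its pushforward action on a generating family of lasagna fillings to certify that the contribution is a pure shift $\mathrm{id}_\alpha$. A secondary technical point will be verifying that the collar lift $\tilde\chi$ exists for every $L$-admissible spin diffeomorphism $\phi$ without introducing spurious factors (this is why the hypothesis that $\phi$ is \emph{spin} is used: it ensures the lift exists and is compatible with the spin structure used to define $\widetilde{KhR}_2^+$), as well as ensuring that the assembly in the previous paragraph correctly identifies the shift contribution $\phi_*^{-1}$ with the pushforward automorphism of $\mathcal S_0^2(D_{std})$ rather than some twisted variant.
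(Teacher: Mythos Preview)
Your decomposition $\tilde\phi=\tilde\chi\circ\tilde\psi_\partial$ with $\tilde\chi$ supported in a collar of $\partial D_{std}$ cannot work in general. A diffeomorphism of $D_{std}$ supported in a collar $[-1,0]\times S_{std}$ restricts to the identity at $\{-1\}\times S_{std}$ and to $\phi$ at $\{0\}\times S_{std}$; its existence would force $\phi$ to be isotopic to the identity. But $\pi_0(\Diff^{spin}(S_{std}))$ is nontrivial (Proposition~\ref{prop:diff_spin_S_std}): summand exchanges, inversions, and handleslides represent nontrivial mapping classes. For such $\phi$ there is no collar lift $\tilde\chi$, and hence no mapping-cylinder cobordism $\Sigma_\chi\subset I\times S_{std}$ to which Theorem~\ref{thm:concrete_functoriality_las} could be applied. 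The spin hypothesis guarantees that $\phi$ lifts to $\Diff^+(D_{std})$, not that it lifts to a collar-supported diffeomorphism.

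The paper closes this gap via Proposition~\ref{prop:diffeomorphism_decomposition}, which decomposes any $\tilde\phi\in\Diff^+(D_{std})$ into elementary pieces of six types: rel-boundary diffeomorphisms and isotopy insertions (your types), \emph{plus} connected summand rearrangements, boundary summand rearrangements, inversions, and negative handleslides. Each of types (iii)--(vi) is a genuine diffeomorphism of $D_{std}$ not supported in a collar, and each requires its own analysis (Sections~\ref{sec:connect_summand_exchange}--\ref{sec:handleslides}) tracing the pushforward through the isomorphisms \eqref{eq:SZ_MN}--\eqref{eq:SZ_simp}. Your treatment of the rel-boundary part via Theorem~\ref{thm:diff_D2S2_rel_boundary_intro} is the right idea and matches the paper's Proposition~\ref{prop:D2S2_repara}; note however that Dehn twists along $3$-spheres act as the identity (not a nontrivial shift), and the barbell case is substantially harder than you suggest---the paper first proves the statement for standard belt links (Lemma~\ref{lem:barbell_las}) via an explicit computation in Figure~\ref{fig:barbell_check}, and only then bootstraps to arbitrary admissible $L$ by a localization argument.
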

Note that by the description of $\mathcal S_0^2(D_{std})$ in Section~\ref{sec:D2S2}, the map $\phi_*=\mathcal S_0^2(\tilde\phi)\colon\mathcal S_0^2(D_{std})\to\mathcal S_0^2(D_{std})$ only depends on the action of $\tilde\phi$ on $H_2(D_{std})$ (hence, in particular, is independent of the choice of $\tilde\phi$). Note also that the uniqueness statement in Theorem~\ref{thm:spin_repara_las} is trivial.

The proof of Theorem~\ref{thm:spin_repara_las} takes up the bulk of Section~\ref{sec:abstract_spin_homology}. In Section~\ref{sec:abstract_spin_homology_decomposition} we decompose it into various cases, which are treated individually in Sections~\ref{sec:diff_rel_bdy}--\ref{sec:handleslides}. Proposition~\ref{prop:spin_repara} is then a consequence.

\begin{proof}[Proof of Proposition~\ref{prop:spin_repara} assuming Theorem~\ref{thm:spin_repara_las}]
We check that the assignment $\phi\mapsto\widetilde{KhR}_2^+(\phi)$ given by Theorem~\ref{thm:spin_repara_las} is functorial. If $\phi=\mathrm{id}$, taking $\tilde\phi=\mathrm{id}$ in Theorem~\ref{thm:spin_repara_las} yields $\widetilde{KhR}_2^+(\phi)=\mathrm{id}$. If $\phi_j\in\Diff^{spin}(S_{std})$ with lifts $\tilde\phi_j\in\Diff^+(D_{std})$, $j=0,1$, where $\phi_0$ is $L$-admissible and $\phi_1$ is $\phi_0(L)$-admissible, then $\tilde\phi_1\circ\tilde\phi_0$ is a lift of $\phi_1\circ\phi_0$. Since $\mathcal S_0^2((\tilde\phi_1\circ\tilde\phi_0)^{-1})=\mathcal S_0^2(\tilde\phi_0^{-1})\circ\mathcal S_0^2(\tilde\phi_1^{-1})$, we know from Theorem~\ref{thm:spin_repara_las} that $$\widetilde{KhR}_2^+(\phi_1\circ\phi_0)\otimes gr_0(\mathrm{id}_\alpha\circ\mathcal S_0^2((\tilde\phi_1\circ\tilde\phi_0)^{-1}))=(\widetilde{KhR}_2^+(\phi_0)\circ\widetilde{KhR}_2^+(\phi_1))\otimes gr_0(\mathrm{id_{\alpha_0}\circ\mathcal S_0^2(\tilde\phi_0^{-1})}\circ\mathrm{id_{\alpha_1}\circ\mathcal S_0^2(\tilde\phi_1^{-1})})$$ for some $\alpha,\alpha_0,\alpha_1\in H_2(D_{std})$. It follows that $\alpha=\alpha_0+(\tilde\phi_0)_*^{-1}(\alpha_1)$ and $\widetilde{KhR}_2^+(\phi_1\circ\phi_0)=\widetilde{KhR}_2^+(\phi_0)\circ\widetilde{KhR}_2^+(\phi_1)$.
\end{proof}

An important special case of Theorem~\ref{thm:spin_repara_las} is when $\phi=\mathrm{id}$. We state this as a proposition by itself, which will in turn be an ingredient of the proof of Theorem~\ref{thm:spin_repara_las}.

\begin{Prop}\label{prop:D2S2_repara}
Let $L\subset S_{std}$ be an admissible link and $\psi\in\Diff_\partial(D_{std})$ be a diffeomorphism of $D_{std}$ rel boundary. The pushforward map $$\mathcal S_0^2(\psi)\colon\mathcal S_0^2(D_{std};L)\to\mathcal S_0^2(D_{std};L)$$ induces a map on $gr_0$, which under the isomorphism \eqref{eq:SZ} takes the form $$\mathrm{id}\otimes gr_0(\mathrm{id}_{\alpha})\colon\widetilde{KhR}_2^+(L)\otimes gr_0\mathcal S_0^2(D_{std})\to\widetilde{KhR}_2^+(L)\otimes gr_0\mathcal S_0^2(D_{std})$$ for some $\alpha\in H_2(D_{std})$ depending on $\psi$ and $[L]\in H_1(S_{std})$.
\end{Prop}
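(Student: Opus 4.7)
The plan is to reduce to a verification on a small set of generators of $\Diff_\partial(D_{std})$, using Theorem~\ref{thm:diff_D2S2_rel_boundary_intro}. Since the family of operators on $\widetilde{KhR}_2^+(L)\otimes gr_0\mathcal{S}_0^2(D_{std})$ of the form $\mathrm{id}\otimes gr_0(\mathrm{id}_\alpha)$ is closed under composition (the $\alpha$'s simply add), it suffices to verify the Proposition in three cases: (i) $\psi\in\Diff_{\partial,loc}(D_{std})$, isotopic rel boundary to a diffeomorphism supported in a local $4$-ball; (ii) $\psi$ is a Dehn twist along an embedded $3$-sphere separating two connected-sum components of $D_{std}$; (iii) $\psi$ is an implanted barbell diffeomorphism between two $2$-handles in a common summand.

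Case (i) is essentially immediate. Given any lasagna filling $(\Sigma,v)$ of $(D_{std};L)$, by general position we may isotope its input balls to be disjoint from the local support ball $B$; the enclosement relation allows us to absorb $B$ into a possibly enlarged input ball $B'$, and $\psi|_{B'}$ is a rel-boundary self-diffeomorphism of a $B^4$, which acts trivially on $KhR_2$ of the boundary link by the reparametrization invariance established in \cite[Section~4.2]{morrison2022invariants}. Hence $\mathcal{S}_0^2(\psi)=\mathrm{id}$, matching the claim with $\alpha=0$.

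For cases (ii) and (iii) I would exploit the naturality of the SZ construction \eqref{eq:SZ_MN}\listsymbol\eqref{eq:SZ_simp}. First isotope $\psi$ rel boundary so that it is the identity on a collar of $\partial D_{std}$. For a standard lasagna representative $((I\times L)\cup C,v)$ of a class in skein grading $\alpha_{\mathrm{sk}}$, with $C$ a union of slightly pushed-in $2$-handle cores representing $\alpha_{\mathrm{sk}}-\alpha_L$, the pushforward is $((I\times L)\cup\psi(C),v)$. Each step in the construction of \eqref{eq:SZ} is local in the collar of $\partial D_{std}$ and depends only on data that $\psi$ fixes; unpacking \eqref{eq:SZ_MN}\listsymbol\eqref{eq:SZ_simp}, these identifications commute with $\psi_*$ modulo the induced action on the $\mathcal{S}_0^2(D_{std})$-factor. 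Thus on $gr_0$ the pushforward takes the form $\mathrm{id}_{\widetilde{KhR}_2^+(L)}\otimes T_\psi$, where $T_\psi\colon\mathcal{S}_0^2(D_{std})\to\mathcal{S}_0^2(D_{std})$ sends the core-cable class $[C]$ to $[\psi(C)]$.

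The final step is to identify $T_\psi$ on $gr_0$ with $gr_0(\mathrm{id}_\alpha)$ for an appropriate $\alpha\in H_2(D_{std})$ (possibly depending on $[L]$ through how $\psi$ interacts with the specific $C$). The long exact sequence of the pair $(D_{std},\partial D_{std})$, combined with $H_3(D_{std},\partial)\cong H^1(D_{std})=0$ and the vanishing self-intersection pairing on $H_2(D_{std})$, exhibits $H_2(\partial D_{std})\xrightarrow{\cong}H_2(D_{std})$ as an isomorphism. Since $\psi$ fixes the boundary, $\psi_*=\mathrm{id}$ on $H_2(D_{std})$, and consequently $T_\psi$ preserves the trigrading on $\mathcal{S}_0^2(D_{std})=\Q[A_{i,j,0}^{\pm},A_{i,j,1}]$ (cf. Example~\ref{ex:D2S2}). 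The empty-skein characterization of $A_{i,j,0}$ via embeddings $D_{std}\hookrightarrow S^4$ then forces $T_\psi(A_{i,j,0})=A_{i,j,0}$. The main obstacle, and where the bulk of the work lies, is the corresponding computation for $A_{i,j,1}$ in case (iii): the $(0,-2,e_{i,j})$-component of $\mathcal{S}_0^2(D_{std})$ is not one-dimensional (it has basis $A_{i'',j'',1}A_{i,j,0}A_{i'',j'',0}^{-1}$ for varying $(i'',j'')$), so pinning down the expansion of $T_\psi(A_{i,j,1})$ requires a concrete geometric model of the barbell (as supplied in the proof of Theorem~\ref{thm:diff_D2S2_rel_boundary}) and a direct analysis of the skein class represented by a dotted core $2$-sphere under this model, from which the required $\alpha$ can be extracted.
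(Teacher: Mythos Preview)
Your reduction to generators via Theorem~\ref{thm:diff_D2S2_rel_boundary} and your treatment of case (i) are correct and match the paper. The paper also dispatches case (ii) separately and quickly: neck-cut along the twisting $S^3$, twist, and reglue, using that $\pi_1(\Diff(S^3))=\Z/2$ acts trivially on $KhR_2$. Your grouping of (ii) with (iii) obscures this easy case.

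The real gap is in case (iii). Your central claim---that on $gr_0$ the pushforward factorizes as $\mathrm{id}_{\widetilde{KhR}_2^+(L)}\otimes T_\psi$ with $T_\psi=\mathcal S_0^2(\psi)|_{\mathcal S_0^2(D_{std})}$---is precisely the content of the Proposition, and your justification (``each step of \eqref{eq:SZ} is local in the collar'') does not establish it. The isomorphism \eqref{eq:SZ_MN} requires the skein to be in the \emph{standard} form $(I\times L)\cup(\text{cores})$; after pushing forward, $\psi(C)$ is no longer a union of cores (for the barbell it winds through two $2$-handles as in Figure~\ref{fig:barbell}), and re-standardizing requires evaluating in an enlarged input ball. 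That evaluation produces a class in $KhR_2(L\cup\text{new belts})$ whose ``$L$-part'' is not a priori unchanged. Worse, your $T_\psi$ is independent of $L$ and preserves skein degree (since $\psi_*=\mathrm{id}$ on $H_2(D_{std})$), so on $gr_0$ it would force $\alpha=0$; but for a barbell $\beta_{i,j}$ the action of $\psi_*$ on $H_2^L(D_{std})$ is a nontrivial translation when $[L]\ne 0$ (this is exactly the homomorphism $h_1$ from the proof of Theorem~\ref{thm:diff_D2S2_rel_boundary}), so the required $\alpha$ is nonzero. Your factorization is therefore inconsistent with the target statement.

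The paper's route is different and does the missing work. It isotopes $\beta_{i,j}$ into a shrunken copy $D_{std}'\subset D_{std}$, arranges a given skein $\Sigma$ to meet $D_{std}'$ in a union of \emph{cocores} with boundary a standard belt link $U\subset\partial D_{std}'$, and then proves a key computational lemma (Lemma~\ref{lem:barbell_las}): under \eqref{eq:SZ} for $(D_{std}',U)$, the barbell sends $1\otimes 1$ to $1\otimes\mathrm{id}_\alpha(1)$ plus lower terms. This is established by evaluating $\beta_{i,j}(C)$ explicitly (it yields a coevaluation class for a Hopf-link cable), tracing through \eqref{eq:SZ_res_1}--\eqref{eq:SZ_slide}, and invoking St\v{o}si\'c's torus-link computation to pin down the constant. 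Finally, the commutative diagram \eqref{eq:localize_barbell_las} together with Theorem~\ref{thm:concrete_functoriality_las} transports this from $(D_{std}',U)$ to $(D_{std},L)$. Your sketch in the last paragraph gestures at a barbell computation, but on the wrong object: you are computing $T_\psi(A_{i,j,1})$ in $\mathcal S_0^2(D_{std})$ for the empty link, whereas what is needed is the action on $\mathcal S_0^2(D_{std}';U)$ for belt links $U$, coupled with the gluing argument.
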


\subsection{Decomposition into elementary diffeomorphisms}\label{sec:abstract_spin_homology_decomposition}
The independence of the map $\widetilde{KhR}_2^+(\phi)$ in Theorem~\ref{thm:spin_repara_las} on the choice of $\tilde\phi$ follows from Proposition~\ref{prop:D2S2_repara}. Thus, assuming Proposition~\ref{prop:D2S2_repara}, Theorem~\ref{thm:spin_repara_las} can be regarded as a statement for the pair $(L,\tilde\phi)$.

If Theorem~\ref{thm:spin_repara_las} holds for $(L,\tilde\phi)$ and $(\phi(L),\tilde\phi')$, then it holds for $(L,\tilde\phi'\circ\tilde\phi)$ as well. Therefore, it suffices to decompose any $\tilde\phi$ with $\phi$ being $L$-admissible into a composition of elementary diffeomorphisms $\tilde\phi_m\circ\cdots\circ\tilde\phi_1$ with each $\phi_i$ being $\phi_{i-1}\circ\cdots\circ\phi_1(L)$-admissible, and prove Theorem~\ref{thm:spin_repara_las} for such elementary diffeomorphisms.

\begin{Prop}\label{prop:diff_spin_S_std}
The spin mapping class group of $S_{std}$, $\pi_0(\Diff^{spin}(S_{std}))$, is generated by mapping classes represented by the following spin diffeomorphisms:
\begin{enumerate}[(i)]
\item Switching the $i$-th and $i'$-th connected component when $m_i=m_{i'}$, $1\le i<i'\le k$;
\item Switching the $j$-th and $(j+1)$-th connected summands in the $i$-th connected component, when $1\le j<j+1\le m_i$;
\item Inverting the first connected summand in the $i$-th connected component by reflecting both the $S^1$ factor and the $S^2$ factor, when $m_i>0$;
\item Sliding the first $0$-framed surgery circle negatively over the second $0$-framed surgery circle in the $i$-th connected component, when $m_i>1$.
\end{enumerate}
\end{Prop}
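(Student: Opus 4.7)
My approach is to lift spin diffeomorphisms of $S_{std}=\partial D_{std}$ to orientation-preserving diffeomorphisms of the spin filling $D_{std}$, and then decompose the lift using handle calculus. The key point is that $D_{std}$ admits a canonical handle decomposition with one $0$-handle, $m=\sum m_i$ $2$-handles (attached along an unlink), and $k-1$ $3$-handles (separating the boundary into its $k$ components). The elementary diffeomorphisms (i)--(iv) all preserve the standard spin structure (inverting both factors of $S^1\times S^2$ in (iii) is orientation-preserving and preserves the $S^1$ spin structure), so it suffices to show they generate.

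First I would argue that any $\phi\in\Diff^{spin}(S_{std})$ extends to a diffeomorphism $\tilde\phi\in\Diff^+(D_{std})$. One way is via spin cobordism theory: $\Omega^{spin}_3=0$, and the standard $D_{std}$ is, up to diffeomorphism, the unique spin filling built from handles of index $\leq 3$ with $b_1=0$ and $b_2=m$. Alternatively, one can bypass existence of a lift by working handle-theoretically from the start: two handle decompositions of $D_{std}$ with the standard handle numbers are related by handleslides, permutations of handles of equal index, isotopies, and diffeomorphisms rel boundary (this is the content of Cerf theory together with Laudenbach--Poénaru for the $2$-handles and the uniqueness of $3$-handle attachments up to sphere isotopy). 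Thus any orientation-preserving diffeomorphism of $D_{std}$ that induces $\phi$ on the boundary corresponds, modulo $\Diff_\partial(D_{std})$, to such a sequence of handle moves.

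Next I would decompose $\tilde\phi$ into its effect on the handle decomposition. A permutation of the $k$ $3$-handles (necessarily between $3$-handles with diffeomorphic bounded components, i.e., equal $m_i$) induces move (i) on the boundary. A permutation of two adjacent $2$-handles within a single component induces (ii). Reversing the orientation of a $2$-handle (equivalently, a $180^\circ$ rotation exchanging the two sides of the cocore) induces the inversion (iii) on the boundary summand. A handleslide of one $2$-handle over another induces (iv). All other moves --- isotopies of the attaching data and diffeomorphisms rel boundary --- leave the boundary diffeomorphism isotopically trivial. The permutations of $3$-handles together with the handle-theoretic description of connected sum justify the reduction to the $k=1$ case for moves (ii)--(iv). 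In the $k=1$ case, by Laudenbach--Poénaru the induced action on $\pi_1(\#^m(S^1\times S^2))=F_m$ generates all of $\mathrm{Aut}(F_m)$ via Nielsen transformations, and the kernel (sphere twists) is trivial in the smooth mapping class group of $\#^m(S^1\times S^2)$ (or at worst contained in the group generated by (iii), via a $\pi_1(SO(3))$-type argument).

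The hardest step will be the second, namely producing the decomposition into handle moves on the nose, including keeping track of the spin structure. The subtleties are: (a) establishing that a lift $\tilde\phi$ exists (which requires the spin filling argument sketched above), and (b) showing that ``sphere twist'' ambiguity --- Dehn twists along separating $S^2$'s, or barbell/Dehn twists from Theorem~\ref{thm:diff_D2S2_rel_boundary_intro} --- does not contribute anything new to the boundary mapping class group beyond (i)--(iv). Point (b) follows from Theorem~\ref{thm:diff_D2S2_rel_boundary_intro}: the cokernel of $\Diff_{\partial,loc}(D_{std})\to\Diff_\partial(D_{std})$ consists of diffeomorphisms rel boundary, hence restricts trivially to $S_{std}$ and does not contribute. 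Thus once the lift is produced and decomposed, the generating set (i)--(iv) suffices.
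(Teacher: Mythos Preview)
The paper's proof is a one-line citation: ``This is standard. See e.g.\ \cite[Theorem~III.4.3]{laudenbach1974topologie}.'' Laudenbach computed the full mapping class group of $\#^m(S^1\times S^2)$: it is generated by permutations of summands, factor inversions, handle slides, and Dehn twists along the core $2$-spheres. The spin mapping class group is the subgroup fixing the distinguished (bounding) spin structure; since each core-sphere twist flips exactly one $\Z/2$ summand of $H^1(\#^m(S^1\times S^2);\Z/2)$, this subgroup is generated by the first three families, which are your (ii)--(iv). Adding the component permutations (i) handles the disconnected case.

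Your plan takes a long detour through $4$-manifold topology that is both unnecessary and incomplete at the key step. The claim that every $\phi\in\Diff^{spin}(S_{std})$ extends over $D_{std}$ does not follow from $\Omega_3^{spin}=0$: vanishing of the bordism group tells you the closed spin $3$-manifold bounds, not that a given self-diffeomorphism of the boundary of a fixed filling extends. What actually proves the extension is Laudenbach--Po\'enaru (every diffeomorphism of $\#^m(S^1\times S^2)$ extends over $\natural^m(S^1\times B^3)$; taking complements in $S^4$ dualizes this to extension over $D_{std}$ for spin diffeomorphisms). So your ``bypass'' is really the same classical input the paper cites. Your Cerf-theory alternative has the same issue: to compare two handle decompositions you first need two handle decompositions of the \emph{same} manifold, i.e.\ you already need the lift $\tilde\phi$ to push forward the standard one.

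A second point: invoking Theorem~\ref{thm:diff_D2S2_rel_boundary_intro} for step (b) is overkill. Elements of $\Diff_\partial(D_{std})$ are by definition the identity on $S_{std}$, so they contribute nothing to $\pi_0(\Diff^{spin}(S_{std}))$ regardless of any structure theorem for that group. The last paragraph of your plan --- arguing via the surjection to $\mathrm{Aut}(F_m)$ through Nielsen generators and identifying the kernel --- is essentially the direct Laudenbach argument and is the correct route; the $4$-manifold lift adds nothing.
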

\begin{proof}
This is standard. See e.g. \cite[Theorem~III.4.3]{laudenbach1974topologie}.
\end{proof}

A \textit{strongly admissible link} in $S_{std}$ is an admissible link so that on each connected component of $S_{std}$ diffeomorphic to some $\#^m(S^1\times S^2)$ with $m>0$, the diagram of the link is standard near the (projection of the) collective surgery regions (say $(0.8,m+0.2)\times(-0.02,0.02)\subset\R^2$) in that no strand passes through the regions between adjacent surgery regions; see Figure~\ref{fig:strong_admissibility}. A diffeomorphism $\phi$ of $S_{std}$ is \textit{$L$-strong-admissible} if $\phi(L)$ is strongly admissible.

\begin{figure}
\centering
\includegraphics[width=0.6\linewidth]{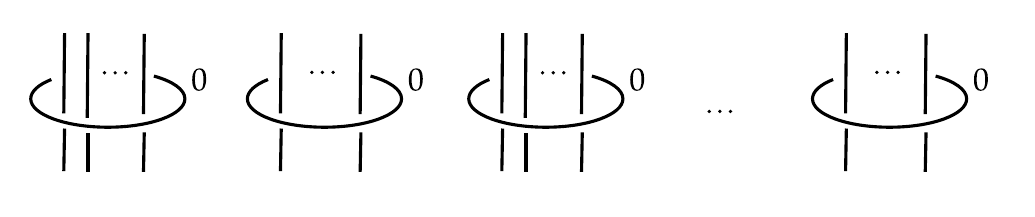}
\caption{Diagram of a strongly admissible link near the surgery regions.}
\label{fig:strong_admissibility}
\end{figure}

\begin{Prop}\label{prop:diffeomorphism_decomposition}
Every diffeomorphism $\tilde\phi\in\Diff^+(D_{std})$ can be decomposed into a composition of some elementary diffeomorphisms of the following forms.
\begin{enumerate}[(i)]
\item Diffeomorphisms rel boundary;\label{item:rel_boundary}
\item Isotopy insertions: a diffeomorphism that is trivial outside a collar neighborhood $[-1,0]\times S_{std}$ of the boundary, and is an isotopy starting from $\mathrm{id}_{S_{std}}$ in this parametrized collar neighborhood;\label{item:isotopy_insertion}
\item Connected summand rearrangements: exchanging the $i$-th and $i'$-th connected summands, when $m_i=m_{i'}$, $1\le i<i'\le k$;\label{item:exchange_1}
\item Boundary summand rearrangements: exchanging the $j$-th and $(j+1)$-th boundary summands in the $i$-th connected summand, when $1\le j<j+1\le m_i$;\label{item:exchange_2}
\item Inversions: inverting the first boundary summand in the $i$-th connected summand by reflecting both the $D^2$ factor and the $S^2$ factor, when $m_i>0$;\label{item:inversion}
\item Negative handleslides: sliding the first $0$-framed $2$-handle negatively over the second $0$-framed $2$-handle in the $i$-th connected summand, when $m_i>1$.\label{item:handleslide}
\end{enumerate}
Moreover, we can arrange the following:
\begin{itemize}
\item Type \eqref{item:exchange_1}\listsymbol\eqref{item:handleslide} diffeomorphisms can be taken to be of some standard forms: type \eqref{item:exchange_1} ones exchanges the $i$-th and $i'$-th $3$-handles together with the two corresponding collections of $2$-handles while preserving all other regions (cf. Figure~\ref{fig:D_std}); type \eqref{item:exchange_2}\listsymbol\eqref{item:handleslide} ones can be locally visualized on the boundary in the presence of a strongly admissible link as in Figure~\ref{fig:diffeo_moves}.
\item If $\phi=\tilde\phi|_{S_{std}}$ is $L$-admissible for an admissible link $L\subset S_{std}$, then the decomposition $\tilde\phi=\tilde\phi_m\circ\cdots\circ\tilde\phi_1$ can be chosen so that each $\phi_i$ is $\phi_{i-1}\circ\cdots\circ\phi_1(L)$-admissible, and $\phi_{i-1}\circ\cdots\circ\phi_1(L)$-strong-admissible if $\phi_{i+1}$ is of type \eqref{item:exchange_1}\listsymbol\eqref{item:handleslide}.
\end{itemize}
\end{Prop}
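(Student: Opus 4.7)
The plan is to first decompose the boundary diffeomorphism $\phi := \tilde\phi|_{S_{std}}$ using the generators of Proposition~\ref{prop:diff_spin_S_std}, then lift each generator to an explicit standard diffeomorphism of $D_{std}$, and finally absorb the resulting discrepancy near the boundary using an isotopy insertion of type (ii) followed by a diffeomorphism rel boundary of type (i). Since any orientation-preserving diffeomorphism of $D_{std}$ restricts to a spin diffeomorphism of $S_{std}$ (there being a unique spin structure on $D_{std}$), this reduces the problem to building standard bulk lifts of the four generator types and tracking the behavior near $\partial D_{std}$.

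First I would construct the standard lifts. The two kinds of summand exchanges (iii) and (iv) arise as rigid rearrangements of the Kirby picture in Figure~\ref{fig:D_std}, supported in a $4$-ball containing the relevant $3$-handles together with the corresponding clusters of $2$-handles. The inversion (v) is the product of the reflection of $D^2$ with the antipodal map on $S^2$, restricted to the chosen $D^2 \times S^2$ boundary summand; on $S_{std}$ this restricts to Proposition~\ref{prop:diff_spin_S_std}(iii). The negative handleslide (vi) is the classical handle-sliding diffeomorphism of a $4$-dimensional $2$-handlebody: a smooth ambient isotopy that drags the attaching region of the first $2$-handle along an arc to a position stacked on the second, then retracts back, with the correct sign and framing. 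Writing $\phi \simeq \phi_m' \circ \cdots \circ \phi_1'$ as a product of generators and letting $\tilde\phi_i'$ be the chosen standard lifts, the composition $\tilde\phi \circ (\tilde\phi_m' \circ \cdots \circ \tilde\phi_1')^{-1}$ restricts on $S_{std}$ to a diffeomorphism isotopic to the identity. Running that boundary isotopy in a collar $[-1,0]\times S_{std}$ is precisely an isotopy insertion of type (ii), after which we are reduced to a diffeomorphism rel boundary of type (i).

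To obtain the admissibility refinements, I would interleave additional isotopy insertions of type (ii) between the standard lifts $\tilde\phi_i'$ to control the image of $L$. At each stage, the requirement that $\phi_{i-1} \circ \cdots \circ \phi_1(L)$ be admissible can be met by performing, in a boundary collar, an isotopy of $S_{std}$ that moves the current image of $L$ into admissible position; such isotopies exist by the general position argument underlying Proposition~\ref{prop:concrete_decomposition}. Strong admissibility before a type (iii)-(vi) move requires in addition that no strand passes through the gaps between adjacent surgery circles; this can always be arranged by a finite sequence of finger moves and crossing moves on the link diagram, which again become isotopy insertions of the ambient manifold.

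The main obstacle will be the negative handleslide case (vi). Verifying that the chosen standard lift genuinely realizes Proposition~\ref{prop:diff_spin_S_std}(iv) on the boundary, and that under the strong-admissibility hypothesis its restriction to a boundary collar acts on the link diagram as the local move depicted in Figure~\ref{fig:diffeo_moves}, requires selecting the handle-slide arc and a tubular neighborhood with some care so that the support of the lift is disjoint from $L$ except where the sliding belt crosses the adjacent surgery region. Strong admissibility is exactly what guarantees such a support exists: the gaps between adjacent surgery circles must be free of link strands, since that is where the sliding handle has to pass through. The remaining elementary cases (iii)-(v) are handled by analogous but substantially simpler local model checks.
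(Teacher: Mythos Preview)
Your proposal is correct and follows essentially the same approach as the paper's proof: decompose $\phi$ in $\pi_0(\Diff^{spin}(S_{std}))$ via Proposition~\ref{prop:diff_spin_S_std}, choose standard lifts of the generators to $\Diff^+(D_{std})$, absorb the remaining boundary discrepancy by a type~\eqref{item:isotopy_insertion} isotopy insertion followed by a type~\eqref{item:rel_boundary} diffeomorphism, and then interleave further type~\eqref{item:isotopy_insertion} moves to achieve (strong) admissibility. The paper's own proof is more terse and omits your detailed discussion of the individual lifts, but the strategy is identical.
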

\begin{figure}
\centering
\includegraphics[width=0.6\linewidth]{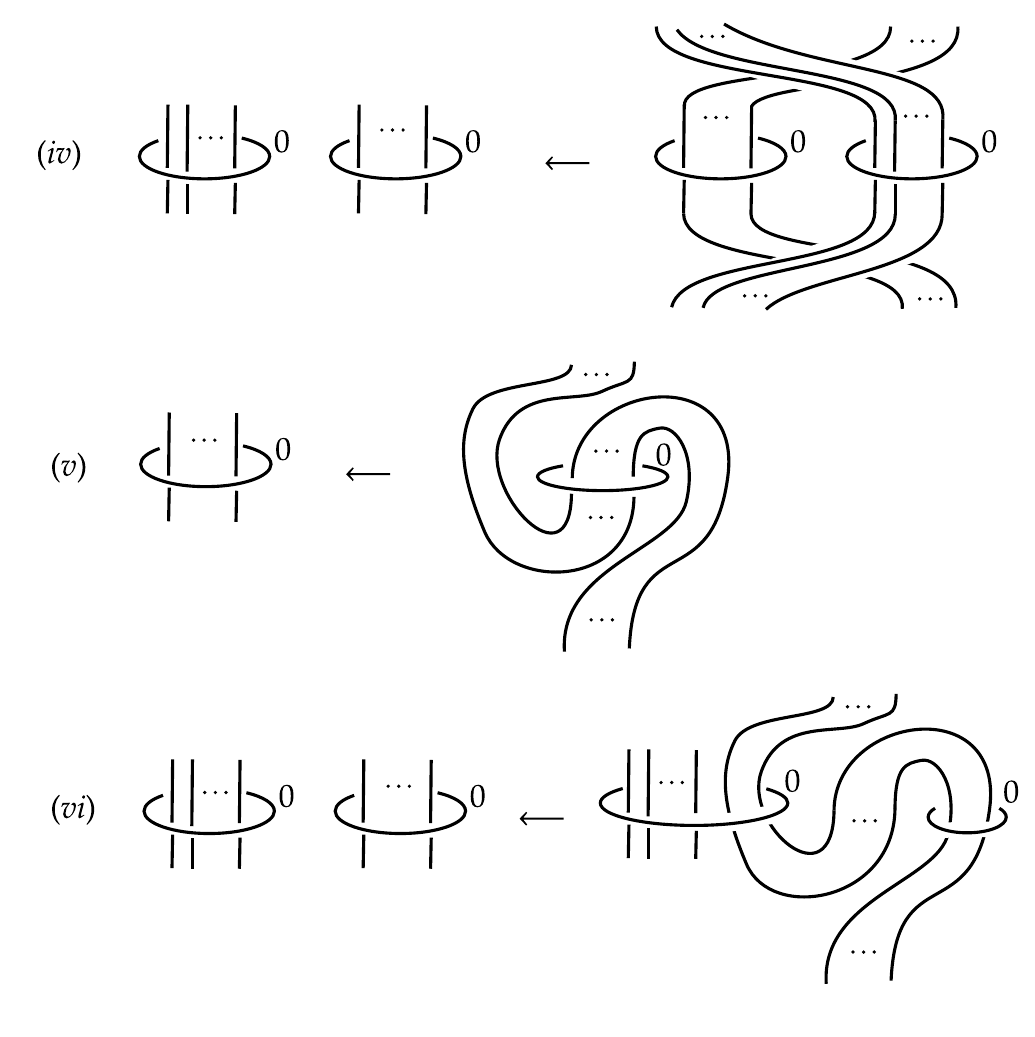}
\caption{Elementary diffeomorphisms of types \eqref{item:exchange_2}\listsymbol\eqref{item:handleslide} in Proposition~\ref{prop:diffeomorphism_decomposition} carrying the strongly admissible links on right hand sides to the admissible links on the left hand sides.}
\label{fig:diffeo_moves}
\end{figure}
\begin{proof}
Pick a sequence $\phi_1,\cdots,\phi_r\in\Diff^{spin}(S_{std})$, each of some standard form described in Proposition~\ref{prop:diff_spin_S_std}, taken to be compatible with Figure~\ref{fig:diffeo_moves}, such that $\phi_r\circ\cdots\circ\phi_1$ and $\phi$ represent the same mapping class. Pick standard lifts $\tilde\phi_1,\cdots,\tilde\phi_r\in\Diff^+(D_{std})$. Then $\tilde\phi=\tilde\phi_{r+2}\circ\cdots\circ\tilde\phi_1$ for some type \eqref{item:isotopy_insertion} diffeomorphism $\tilde\phi_{r+1}$ and some type \eqref{item:rel_boundary} diffeomorphism $\tilde\phi_{r+2}$. The (strong) admissibility conditions on the decomposition in the presence of an admissible link can be achieved by inserting extra type \eqref{item:isotopy_insertion} diffeomorphisms between each pair of adjacent $\tilde\phi_i$'s as well as before $\tilde\phi_1$.
\end{proof}

\subsection{The diffeomorphism group of \texorpdfstring{$\#\natural(D^2\times S^2)$}{D2*S2} rel boundary}
In order to prove Proposition~\ref{prop:D2S2_repara}, we need to understand $\Diff_\partial(D_{std})$, the diffeomorphism group of $D_{std}$ rel boundary.

Write $m=\sum_{i=1}^km_i$, the second Betti number of $D_{std}$. Let $S_j$ denote the sphere obtained from the $j$-th $2$-core capped off by a standard disk in $B^4$, and $C_j$ denote the $j$-th $2$-cocore of $D_{std}$, for $j=1,\cdots,m$. The boundary belt circles of the cocores, denoted $U_1,\cdots,U_m$, are equipped with the framings coming from the cocores. The intersection number can be defined between any two homology classes in the set $H_2(D_{std})\cup(\cup_{j=1}^mH_2^{U_j}(D_{std}))$, where $H_2^{U_j}(D_{std})\subset H_2(D_{std},U_j)$ denotes the preimage of $[U_j]\in H_1(U_j)$ under the connecting homomorphism $H_2(D_{std},U_j)\to H_1(U_j)$. When evaluating on the sequence $([S_1],\cdots,[S_m],[C_1],\cdots,[C_m])$, the intersection pairing takes the matrix form $\left(\begin{smallmatrix}0&I_m\\I_m&0\end{smallmatrix}\right)$, where $I_m$ is the identity $m\times m$ matrix.

If $\psi\in\Diff_\partial(D_{std})$, then $\psi_*=\mathrm{id}$ on $H_2(D_{std})$. However, this need not be the case on $H_2^{U_j}(D_{std})$. With respect to the sequence $[S_j],[C_j]$, $\psi_*$ takes a matrix form $\left(\begin{smallmatrix}I_m&X\\0&I_m\end{smallmatrix}\right)$ for some $m\times m$ integral matrix $X$. Since $\psi_*$ respects the intersection pairing, $X\in\mathfrak o(m;\Z)$ is an $m\times m$ integral skew-symmetric matrix. We have constructed a group homomorphism $$h_1\colon\Diff_\partial(D_{std})\to\mathfrak o(m;\Z).$$
On the other hand, the set of spin structures on $D_{std}$ rel the standard spin structure on $S_{std}$, denoted $Spin_\partial(D_{std})$, is affine over $H^1(D_{std},S_{std};\Z/2)\cong H_3(D_{std};\Z/2)\cong(\Z/2)^{k-1}$. Every element in $\Diff_\partial(D_{std})$ acts on $Spin_\partial(D_{std})$ as a translation by some class in $H_3(D_{std};\Z/2)$, giving rise to a map $$h_2\colon\Diff_\partial(D_{std})\to H_3(D_{std};\Z/2),$$ which is a group homomorphism since $\Diff_\partial(D_{std})$ acts trivially on $H_3(D_{std};\Z/2)$, noting that $H_3(D_{std};\Z/2)$ is generated from the boundary.

Recall from the introduction that $\Diff_{\partial,loc}(D_{std})$ denotes the subgroup of $\Diff_{\partial}(D_{std})$ consisting of diffeomorphisms that can be isotoped rel boundary to be supported in a local $4$-ball.

\begin{Thm}\label{thm:diff_D2S2_rel_boundary}
The group homomorphism $h=h_1\times h_2$ fits into the short exact sequence $$1\to\Diff_{\partial,loc}(D_{std})\to\Diff_\partial(D_{std})\xrightarrow{h}\mathfrak o(m;\Z)\times H_3(D_{std};\Z/2)\to1.$$
\end{Thm}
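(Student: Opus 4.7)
The plan is to establish the short exact sequence in four pieces: (i) $h = h_1 \times h_2$ is a well-defined homomorphism; (ii) $\Diff_{\partial,loc}(D_{std}) \subseteq \ker h$; (iii) $h$ is surjective; (iv) $\ker h \subseteq \Diff_{\partial,loc}(D_{std})$. Items (i)--(iii) are relatively formal; item (iv) is the main obstacle and is where Gabai's $4$-dimensional lightbulb theorem enters.

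\emph{Well-definedness and local containment.} Well-definedness of $h_1$ and $h_2$ as homomorphisms is already essentially done in the preamble: $\psi_*$ preserves the intersection pairing on $H_2(D_{std}) \cup \bigcup_j H_2^{U_j}(D_{std})$ and acts trivially on $H_2$, forcing the block form $\bigl(\begin{smallmatrix}I_m & X \\ 0 & I_m\end{smallmatrix}\bigr)$ with $X \in \mathfrak o(m;\Z)$, and the torsor action on $\mathrm{Spin}_\partial(D_{std})$ is automatically a translation. For (ii), suppose $\psi$ is isotopic rel boundary to one supported in an open $4$-ball $B \subset \mathrm{int}(D_{std})$. Each cocore $C_j$, being a properly embedded $2$-disk, can be isotoped rel $\partial$ to avoid the $4$-dimensional set $B$ by general position, giving $\psi_*[C_j] = [C_j]$ and $h_1(\psi) = 0$. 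The spin-structure obstruction factors through the restriction $H^1(D_{std}, S_{std}; \Z/2) \to H^1(B; \Z/2) = 0$, yielding $h_2(\psi) = 0$.

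\emph{Surjectivity.} I would exhibit explicit generators. The group $H_3(D_{std}; \Z/2) \cong (\Z/2)^{k-1}$ is generated by classes of the $k-1$ embedded $3$-spheres $\Sigma$ arising as connect-sum spheres between adjacent connected summands; the Dehn twist $\tau_\Sigma$ along such a $\Sigma$, defined via the generator of $\pi_1(SO(4)) \cong \Z/2$, has $h_1(\tau_\Sigma) = 0$ but shifts the spin structure by $[\Sigma]$. The group $\mathfrak o(m;\Z) \cong \Z^{m(m-1)/2}$ is generated by the elementary skew-symmetric matrices $E_{ij} - E_{ji}$ for $1 \le i < j \le m$; each is realized by an implanted Budney--Gabai barbell diffeomorphism $\beta_{ij}$ supported in a neighborhood of a barbell connecting $C_i$ and $C_j$. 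The diffeomorphism $\beta_{ij}$ sends $[C_i] \mapsto [C_i] + [S_j]$, $[C_j] \mapsto [C_j] - [S_i]$, and fixes $[C_\ell]$ for $\ell \notin \{i,j\}$, giving $h_1(\beta_{ij}) = E_{ij} - E_{ji}$ and $h_2(\beta_{ij}) = 0$. We will also present an alternative description of these barbell diffeomorphisms along the way.

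\emph{The main obstacle: $\ker h \subseteq \Diff_{\partial,loc}(D_{std})$.} Given $\psi \in \ker h$, the plan is to isotope $\psi$ rel boundary in three substeps until it is supported in a disjoint union of $4$-balls. First, since $h_1(\psi) = 0$, for each $j$ the cocore $\psi(C_j)$ is relatively homologous to $C_j$ and shares the same closed dual sphere $S_j$. Apply Gabai's $4$-dimensional lightbulb theorem \cite[Corollary~1.7]{gabai20204} in a relative, multi-surface form, inducting on $j$ while keeping the previously matched cocores $C_1, \ldots, C_{j-1}$ fixed, to find an ambient isotopy rel boundary after which $\psi(C_j) = C_j$ for every $j$, and then to arrange that $\psi$ restricts to the identity on a tubular neighborhood of each $C_j$. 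Second, $\psi$ then descends to a diffeomorphism of $D_{std}$ cut open along the cocores, which decomposes as $k$ pieces joined across the $k-1$ connect-sum $3$-spheres; each gluing ambiguity is a Dehn twist in $\pi_1(\Diff(S^3)) = \Z/2$, and $h_2(\psi) = 0$ allows us to absorb the total ambiguity by further isotopy rel boundary. Third, each resulting piece is diffeomorphic to $B^4$, so $\psi$ becomes supported in a disjoint union of $4$-balls; since $\Diff_{\partial,loc}(D_{std})$ is a subgroup closed under composition, $\psi \in \Diff_{\partial,loc}(D_{std})$. The hardest part is the simultaneous Gabai step: one must apply the lightbulb theorem to properly embedded $2$-disks rather than closed spheres (addressed either by the relative version or by capping off with disks pushed into a boundary collar) and ensure that matching $C_j$ can be achieved while preserving the already-matched $C_1, \ldots, C_{j-1}$ and their disjoint duals.
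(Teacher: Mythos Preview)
Your proposal is correct and follows essentially the same architecture as the paper's proof: Dehn twists along $3$-spheres and implanted barbell diffeomorphisms for surjectivity, then Gabai's lightbulb theorem to straighten the cocores followed by a spin-structure argument to reduce to a single $4$-ball. Two small technical points are worth flagging. First, the precise input from Gabai is \cite[Theorem~10.1]{gabai20204} (and \cite[Theorem~0.6]{gabai2021self}) for properly embedded disks with geometric duals, not Corollary~1.7; this is exactly the ``relative version'' you anticipate needing. Second, in your second substep the paper takes a slicker route: rather than isotoping $\psi$ to fix the $k-1$ connect-sum $3$-spheres (which requires a uniqueness statement for separating $3$-spheres in the holed $S^4$), it chooses $k-1$ disjoint arcs connecting the $k$ boundary components of the holed $S^4$, isotopes $\psi$ to fix these arcs (trivial since the complement is simply connected and arcs are $1$-dimensional), and then uses $h_2(\psi)=0$ to fix the arc \emph{framings}---after which the complement of the arc neighborhoods is already a single $4$-ball, so no final ``disjoint union of balls'' step is needed.
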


\begin{proof}
By choosing a local $4$-ball disjoint from all $S_j,C_j$, we see that $\Diff_{\partial,loc}(D_{std})\subset\ker(h)$.

We show the exactness at the last term. The map $h_2$ is surjective because we can insert Dehn twists along the $3$-core spheres of $D_{std}$. To prove the surjectivity of $h_1$, it suffices to exhibit for each $1\le i<j\le m$ a diffeomorphism $\beta_{i,j}\in\Diff_\partial(D_{std})$ that sends $[C_i]$ to $[C_i]+[S_j]$, $[C_j]$ to $[C_j]-[S_i]$, and each other $[C_k]$ to itself.

The union of the $0$-handle, the $i$-th $2$-handle and the $j$-th $2$-handle in $D_{std}$ form a standard $\natural^2(D^2\times S^2)$. Thus it suffices to exhibit a diffeomorphism $\beta=\beta_{1,2}\in\Diff_\partial(\natural^2(D^2\times S^2))$ with $\beta_*[C_1]=[C_1]+[S_2]$, $\beta_*[C_2]=[C_2]-[S_1]$. This is provided by the barbell diffeomorphism defined by Budney--Gabai \cite[Section~5]{budney2019knotted}. For our purposes, we give an alternative description of this diffeomorphism below.

See Figure~\ref{fig:barbell}. Consider the positive
Hopf link $U_1\cup U_2\subset B^3$. Let $D_i$ be a spanning disk for $U_i$ that intersects $U_{i+1}$ transversely at one point, $i=1,2$, index modulo $2$. The manifold $\natural^2(D^2\times S^2)$ is obtained from attaching $0$-framed $2$-handles along $\{i\}\times U_i$, $i=1,2$, in $\partial B^4$, where $B^4=[1,2]\times B^3$. Let $K_1,K_2$ denote the cores of the $2$-handles, with the usual orientation convention that $\partial K_i=-\{i\}\times U_i$, $i=1,2$. The cocores of $\natural^2(D^2\times S^2)$ are given by $C_1:=-\{1\}\times D_2$, $C_2:=\{2\}\times D_1$ (interior slightly pushed into the interior of $B^4$). Define disks $C_1':=(I\times U_2)\cup K_2$, $C_2':=(I\times U_1)\cup(-K_1)$. Then $[C_1']=[C_1]+[S_2]$, $[C_2']=[C_2]-[S_1]$. One can check, for example by cutting along $\partial[1,2]\times B^3$ and regluing, that the complement of $C_1'\cup C_2'$ in $\natural^2(D^2\times S^2)$ is diffeomorphic to $B^4$ via some diffeomorphism canonical up to isotopy rel boundary. Thus there is a diffeomorphism $\beta\in\Diff_\partial(\natural^2(D^2\times S^2))$ that carries $C_i$ to $C_i'$, as desired. The mapping class of $\beta$ is well-defined.

\begin{figure}
\centering
\includegraphics[width=0.7\linewidth]{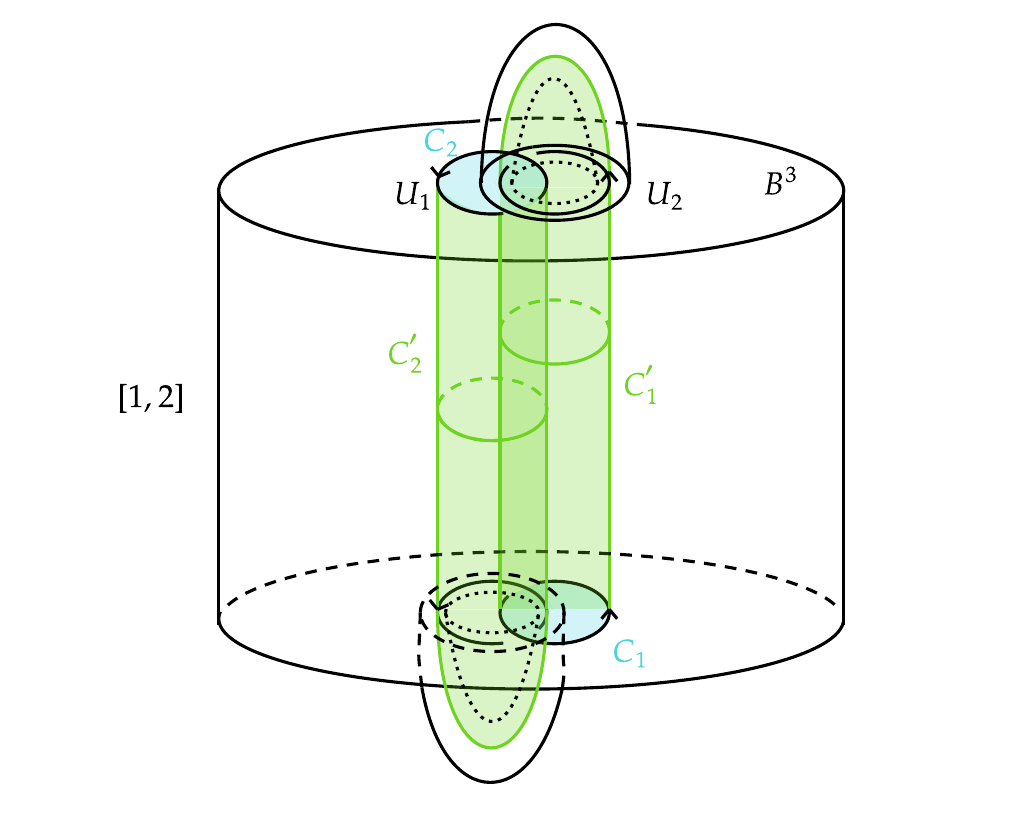}
\caption{The barbell diffeomorphism of $\natural^2(D^2\times S^2)$, determined by sending $C_j$ to $C_j'$, $j=1,2$.}
\label{fig:barbell}
\end{figure}

We show the exactness at the middle term. Supposing $\psi\in\ker(h)$, we show that $\psi$ can be isotoped to be supported in a local $4$-ball. By the relative Hurewicz theorem, each $\psi(C_j)$ is homotopic to $C_j$. Since the $C_j$'s admit disjoint dual $2$-spheres, and have trivial normal bundles rel boundary, Gabai's $4$-dimensional lightbulb theorem \cite[Theorem~10.1]{gabai20204} (see also \cite[Theorem~0.6]{gabai2021self}) implies that $\psi$ is isotopic rel boundary to some $\psi'$ that fixes each neighborhood of $C_j$ pointwisely. The complement of a neighborhood of $\partial D_{std}\cup(\cup_jC_j)$ is a $4$-sphere with $k$ open $4$-balls removed. Choose $k-1$ disjoint embedded arcs in this holed $S^4$ connecting the holes. Isotope $\psi'$ to some $\psi''$ rel the exterior of this holed $S^4$ that is identity on the $k-1$ arcs. Since $\psi$ acts trivially on $Spin_\partial(D_{std})$, we may further assume that $\psi''$ preserves the framings of the arcs, hence fixes a neighborhood of the arcs. It follows that $\psi''$ is supported in a local $4$-ball, as desired.
\end{proof}

\subsection{Diffeomorphisms rel boundary}\label{sec:diff_rel_bdy}
In this section we prove Proposition~\ref{prop:D2S2_repara}, and consequently Theorem~\ref{thm:spin_repara_las} for type \eqref{item:rel_boundary} diffeomorphisms in Proposition~\ref{prop:diffeomorphism_decomposition}. Since the statement respects compositions in $\psi$, it suffices to check on a set of generators of $\Diff_\partial(D_{std})$, which by the proof of Theorem~\ref{thm:diff_D2S2_rel_boundary} can be taken to be consisting of elements of $\Diff_{\partial,loc}(D_{std})$, Dehn twists along $3$-spheres, and barbell diffeomorphisms implanted standardly in $D_{std}$.

If $\psi\in\Diff_{\partial,loc}(D_{std})$, then $\mathcal S_0^2(\psi)=\mathrm{id}$ because we can localize $\psi$ to avoid any given skein. If $\psi$ is a $3$-sphere twist, by putting lasagna fillings in a general position with respect to the twisting sphere, neck-cutting, twisting and regluing, we know that $\mathcal S_0^2(\psi)=\mathrm{id}$ (here, we used the fact that $\pi_1(\Diff(S^3))=\Z/2$ acts trivially on Khovanov-Rozansky $\gl_2$ homology).

Let $\beta_{i,j}\in\Diff_\partial(D_{std})$ denote the barbell diffeomorphism implanted to the union of the $0$-handle and the $i,j$-th $2$-handles of $D_{std}$, $i<j$. It remains to check Proposition~\ref{prop:D2S2_repara} for $\beta_{i,j}$.

We formulate the following special case of Proposition~\ref{prop:D2S2_repara}. A \textit{standard belt link} is an admissible link in $S_{std}$ which is some parallel cable of the union of the belt circles of the $2$-cocores in $D_{std}$, with various orientations, that takes a standard form as indicated locally on the left of Figure~\ref{fig:belt_link}. A \textit{belt link} is a framed oriented link in $S_{std}$ isotopic to a standard belt link. For a standard belt link $U$, define $1\in\widetilde{KhR}_2^+(U)$ to be the image of the class $1$ ($=1\otimes\cdots\otimes1$) in $KhR_2$ of $U$ viewed as an unlink in $\sqcup_{i=1}^kS^3$, under the unit maps that create Rozansky projectors at the surgery regions (see the right of Figure~\ref{fig:belt_link}).

\begin{figure}
\centering
\includegraphics[width=0.7\linewidth]{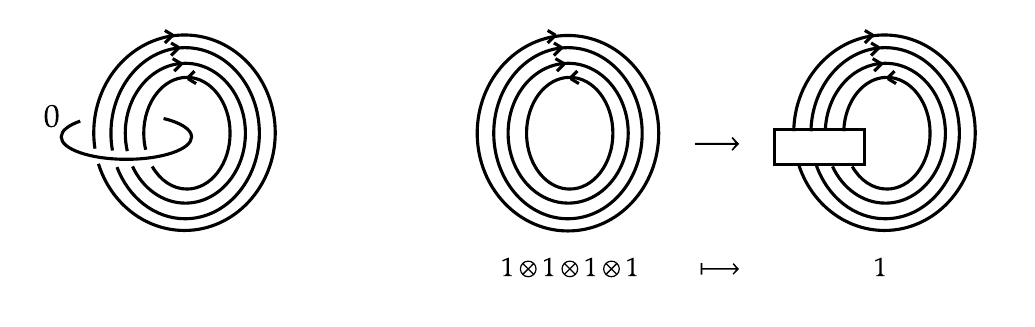}
\caption{Left: Diagram of a standard belt link near a surgery region. Right: The element $1$ in the Rozansky-Willis homology of a standard belt link.}
\label{fig:belt_link}
\end{figure}

\begin{Lem}\label{lem:barbell_las}
Let $U\subset S_{std}$ be a standard belt link. The pushforward map $\mathcal S_0^2(\beta_{i,j})\colon\mathcal S_0^2(D_{std};U)\to\mathcal S_0^2(D_{std};U)$, under the isomorphism \eqref{eq:SZ}, sends $1\otimes1$ to $1\otimes\mathrm{id}_\alpha(1)$ plus terms with lower lasagna quantum gradings, for some $\alpha$ depending on $i,j$, and $[U]\in H_1(S_{std})$.
\end{Lem}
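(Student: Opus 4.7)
The plan is to compute $\mathcal S_0^2(\beta_{i,j})(1\otimes 1)$ by representing $1\otimes 1$ by a concrete lasagna filling and applying the barbell geometrically. Unwinding \eqref{eq:SZ_MN}--\eqref{eq:SZ_simp} at the $r=0$ level, the class $1\otimes 1$ is represented by the lasagna filling $(I\times U,1)$, where $1\in KhR_2(U\text{ viewed as an unlink in }\sqcup_{i=1}^k S^3)$ is the tensor product of unknot units and the SZ-style input balls sit in the $0$-handle. Each parallel strand of $I\times U_k$ then runs along the cocore $C_k$ through the $k$-th $2$-handle.

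Using the explicit Budney--Gabai description of the barbell from the proof of Theorem~\ref{thm:diff_D2S2_rel_boundary}, I would next analyze $\beta_{i,j}(I\times U)$. The key geometric claim is that $\beta_{i,j}$ fixes each strand of $I\times U_k$ for $k\notin\{i,j\}$, sends each strand of $I\times U_i$ isotopically rel boundary to the tube-sum $(I\times U_i)\#_{\text{tube}}(\pm S_j)$ (with sign matching the strand orientation), and symmetrically tube-sums each strand of $I\times U_j$ with $\mp S_i$; the added sphere is a parallel copy of the corresponding core sphere, placed in a small disjoint ball and joined by a thin tube. This upgrades the homology-level action $\beta_{i,j*}[C_i]=[C_i]+[S_j]$, $\beta_{i,j*}[C_j]=[C_j]-[S_i]$ to an isotopy of skeins, using Gabai's $4$-dimensional lightbulb theorem in the same style as in the proof of Theorem~\ref{thm:diff_D2S2_rel_boundary} (cocores admit dual spheres, so properly embedded surfaces with the same boundary and homology class are isotopic).

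Given this description, apply the Bar-Natan neck-cutting relation to each of the $|\epsilon_i|+|\epsilon_j|$ tubes, where $\epsilon_k\in\Z$ is the signed multiplicity of $U_k$ in $U$ (so $[U]=\sum_k\epsilon_k[U_k]$). Each cut produces a sum of two terms: a ``top branch'' with the dot on the sphere cap (no dot on the annulus cap), and a ``lower branch'' with the dots swapped. By Example~\ref{ex:D2S2}, a dotted sphere cap evaluates in lasagna quantum grading $0$, while an undotted one evaluates in lasagna quantum grading $-2$. Expanding over all $2^{|\epsilon_i|+|\epsilon_j|}$ branches, only the all-top-branch term lies in lasagna quantum grading $0$; collecting its factors yields
\[
\mathcal S_0^2(\beta_{i,j})(1\otimes 1)=1\otimes\mathrm{id}_\alpha(1)+(\text{terms of strictly lower lasagna quantum grading}),
\]
with $\alpha=\epsilon_i[S_j]-\epsilon_j[S_i]\in H_2(D_{std})$ depending on $i,j,[U]$, proving the lemma.

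The main obstacle is the geometric claim in the second paragraph, i.e., promoting the barbell's homology-level action on cocores to an explicit isotopy of skeins realizing the stated tube-sum description; this combines the Budney--Gabai model of $\beta_{i,j}$ with Gabai's $4$-dimensional lightbulb theorem. Sign ambiguities in the individual branches are deferred to Appendix~\ref{sec:sign}; the top-lasagna-quantum-grading identification (and hence the value of $\alpha$) is not sign-sensitive.
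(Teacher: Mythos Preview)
Your approach differs substantially from the paper's, and the ``key geometric claim'' is where it breaks down.

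\textbf{The disjointness problem.} The tube-sum target you describe is not obviously an embedded skein. Take the simplest case $U=U_i\cup U_j$ (one strand each). You want $(C_i\#S_j')\cup(C_j\#(-S_i'))$ for parallel core spheres $S_i',S_j'$. But $[S_j']\cdot[C_j]=1$ and $[S_i']\cdot[C_i]=1$, so any parallel $S_j'$ meets $C_j$ and any $S_i'$ meets $C_i$; the two tube-sums therefore intersect unless the tube attachment points and sphere placements are chosen with great care. With many parallel strands of both signs at both positions $i$ and $j$, arranging all pieces to be pairwise disjoint is a genuine combinatorial-geometric task you have not addressed. (Relatedly, the number of tubes is the total strand count at positions $i,j$, not $|\epsilon_i|+|\epsilon_j|$; your final $\alpha$ is correct, but only because oppositely oriented dotted spheres cancel in homology.)

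\textbf{The isotopy problem.} Even once the tube-sum target is made embedded, you need an isotopy rel boundary from $\beta_{i,j}(C)$ to it. The lightbulb theorem as used in Theorem~\ref{thm:diff_D2S2_rel_boundary} isotopes a single disk with a geometric dual sphere; you need a simultaneous multi-disk version. One can try to proceed inductively (isotope one disk, then work in its complement), but the complement of $(C_i\#S_j')$ is not obviously another $\natural^k(D^2\times S^2)$, and the dual-sphere hypotheses for the remaining disks need to be verified at each stage. This is not the ``same style'' as Theorem~\ref{thm:diff_D2S2_rel_boundary}; it is a substantially harder application.

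\textbf{What the paper does instead.} The paper never attempts this isotopy. It evaluates $\beta_{i,j}(C)$ directly: deleting the $0$-handle exposes the input link as a cable of the positive Hopf link (reflecting exactly the linking that obstructs your naive tube-sum picture), and the element to track is a coevaluation class $\mathrm{coev}(1)\in KhR_2(H_+^{(m,n)}\sqcup\overline{H_+^{(m,n)}})$. The computation is then pushed through \eqref{eq:SZ_res_1}--\eqref{eq:SZ_slide}, and the leading term is identified using St\v{o}\v{s}i\'c's computation of extremal Khovanov homology of torus links, which pins the answer down to a scalar $c$; showing $c=1$ is a separate argument. If your isotopy claim were established, it would indeed bypass both the Hopf-cable analysis and the $c=\pm 1$ ambiguity---but as written, that claim is the entire content of the lemma, not a preliminary step.
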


Note that by the description of \eqref{eq:SZ} in Section~\ref{sec:SZ}, the element $1\otimes1\in\mathcal S_0^2(D_{std};U)$ is represented by the skein given by the collection of cocores that caps $U$ off.

\begin{proof}
Let $C$ denote the disjoint union of $2$-cocores in $D_{std}$ that cap $U$ off. As observed, $1\otimes1$ is represented by the skein $C$ with empty decoration. The diffeomorphism $\beta_{i,j}$ preserves each cocore except those parallel to the $i,j$-th $2$-cocores $C_i,C_j$ of $D_{std}$, which are sent to disks parallel to $C_i',C_j'$ shown as $C_1',C_2'$ in Figure~\ref{fig:barbell}.

The element $\mathcal S_0^2(\beta_{i,j})(1\otimes1)$ is represented by the skein $\beta_{i,j}(C)$ with empty decoration. We track this element in the sequence of isomorphisms relating $\mathcal S_0^2(D_{std};U)$ to $\widetilde{KhR}_2^+(U)\otimes\mathcal S_0^2(D_{std})$ in Section~\ref{sec:SZ}.

Let $n_+,n_-$ denote the number of positively, negatively oriented $i$-th belt circles in $U$ and $m_+,m_-$ denote similarly those numbers of $j$-th belt circles. Thus, the homology class of $\beta_{i,j}(C)$ is $\alpha_U+(n_+-n_-)[S_j]-(m_+-m_-)[S_i]$.

Delete a slightly shrunken $0$-handle from $D_{std}$ and evaluate the skein $\beta_{i,j}(C)$ inside this ball. We see that in terms of row \eqref{eq:SZ_MN}\footnote{Technically, in the description of the isomorphism \eqref{eq:SZ_MN}, the standard skein comes with $k$ input balls instead of a single input ball. This difference is insignificant.}, $\mathcal S_0^2(\beta_{i,j})(1\otimes1)\in S_0^2(D_{std};U;\alpha_U+(m_--m_+)[S_i]+(n_+-n_-)[S_j])$ is represented by symmetrized $1\otimes\mathrm{coev}(1)\in KhR_2(U')\otimes KhR_2(H_+^{(m,n)}\sqcup\overline{H_+^{(m,n)}})\cong KhR_2(U\cup(m_-e_i+n_+e_j,m_+e_i+n_-e_j)\text{ belts})$ in the colimit summand $r=\min(m_+,m_-)e_i+\min(n_+,n_-)e_j$. Here $U'\subset S^3$ are components of $U$ that are not parallel to the $i,j$-th belt circles, $H_+^{(m,n)}$ is the cable of the positive Hopf link defined as indicated in Figure~\ref{fig:hopf_cable}, and $\mathrm{coev}\colon\Q\to KhR_2(H_+^{(m,n)}\sqcup\overline{H_+^{(m,n)}})$ is the coevaluation map. The copy $H_+^{(m,n)}$ is thought of as the components of $U$ that are parallel to the $j$-th belt circle together with $(n_+,n_-)$ belts coming from components of $\beta_{i,j}(C)$ parallel to $C_i'$, and a similar description applies to the mirror copy $\overline{H_+^{(m,n)}}$. See Figure~\ref{fig:skein_beta_C} for a sketch of $\beta_{i,j}(C)$, especially the induced orientation on the $H_+^{(m,n)}\sqcup\overline{H_+^{(m,n)}}$ part of the input link.

\begin{figure}
\centering
\includegraphics[width=0.75\linewidth]{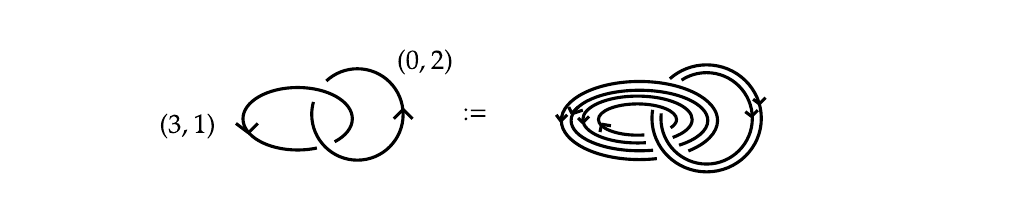}
\caption{The link $H_+^{(m,n)}\subset S^3$ for $(m_+,m_-)=(3,1)$, $(n_+,n_-)=(0,2)$.}
\label{fig:hopf_cable}
\end{figure}

\begin{figure}
\centering
\includegraphics[width=0.65\linewidth]{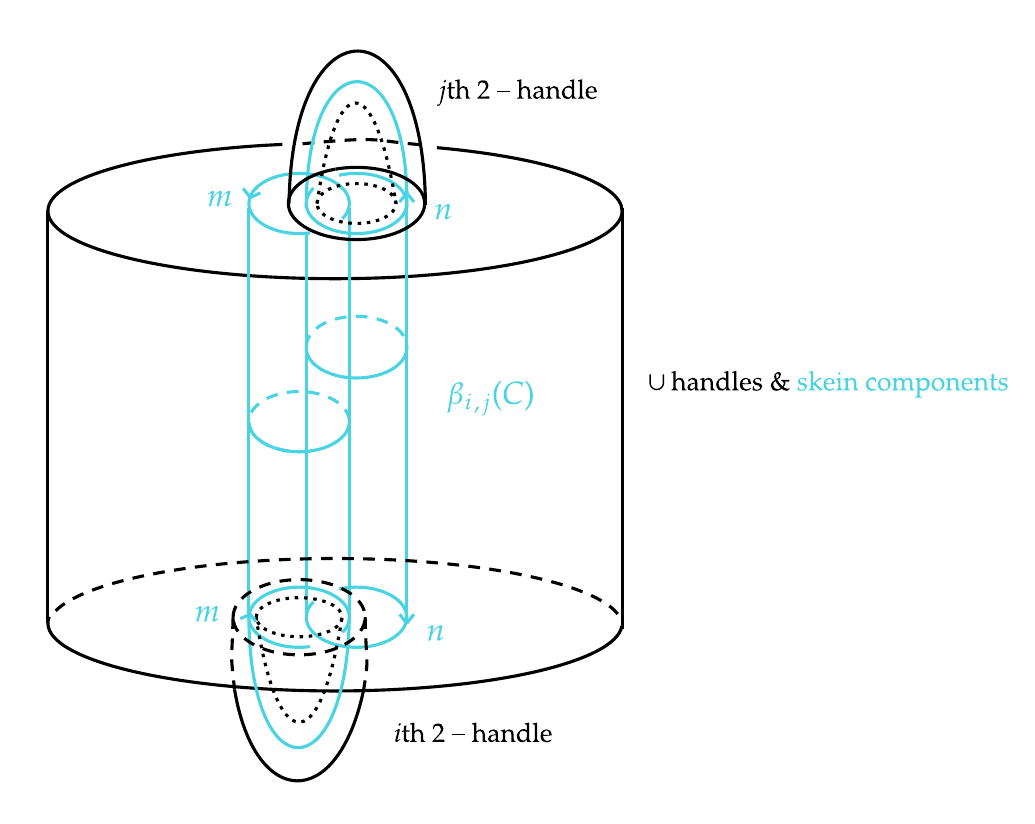}
\caption{The skein $\beta_{i,j}(C)$ is an $n$-cable on $C_i'$ union an $m$-cable on $C_j'$, union other cocore components.}
\label{fig:skein_beta_C}
\end{figure}

Tracing $\mathcal S_0^2(\beta_{i,j})(1\otimes1)$ further down \eqref{eq:SZ_renom}\listsymbol\eqref{eq:SZ_slide}, we see that the corresponding elements in these three rows (before symmetrization) are given by $1\in\widetilde{KhR}_2^+(U')$ tensor the image of $1\in\widetilde{KhR}_2^+(\emptyset)$ under the composition of maps shown in Figure~\ref{fig:barbell_check}.
\begin{figure}
\centering
\includegraphics[width=0.75\linewidth]{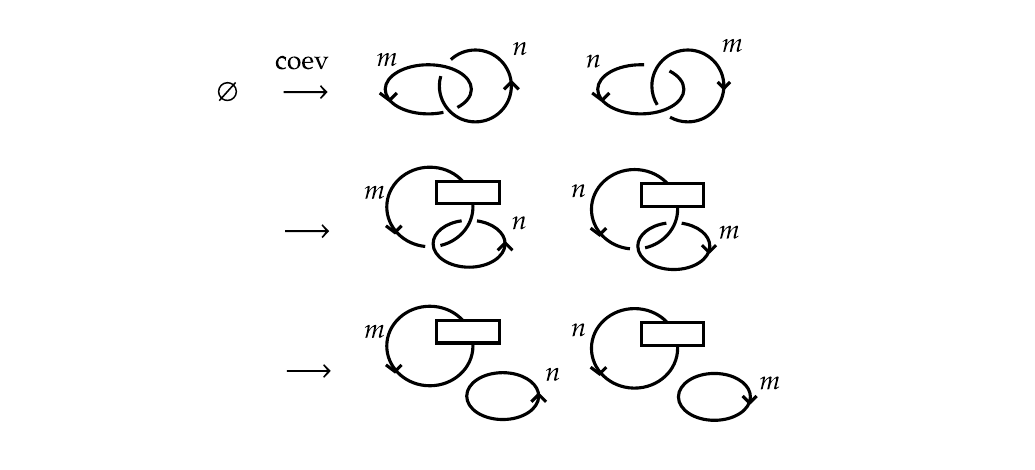}
\caption{The barbell move check.}
\label{fig:barbell_check}
\end{figure}

In order to prove the lemma, with $\alpha=(m_--m_+)e_i+(n_+-n_-)e_j$, in view of Example~\ref{ex:D2S2}, Definition~\ref{def:shift_auto}, and the proof of \eqref{eq:SZ}, it suffices to show that the image of $1$ in the last row of Figure~\ref{fig:barbell_check} is equal to $1\otimes1\otimes \Khdot\otimes \Khdot\in\widetilde{KhR}_2^+(1_m)\otimes\widetilde{KhR}_2^+(1_n)\otimes\widetilde{KhR}_2^+(U^{n_+,n_-})\otimes\widetilde{KhR}_2^+(U^{m_-,m_+})$ plus terms with lower quantum gradings in the $\widetilde{KhR}_2^+(U^{n_+,n_-})\otimes\widetilde{KhR}_2^+(U^{m_-,m_+})$ factor. Here $1_\ell$ denotes a standard belt link in $S^1\times S^2$ with $\ell_+$ strands positively oriented and $\ell_-$ ones negatively oriented, $U^{a,b}$ has the same meaning as in \eqref{eq:SZ_slide}, and $\Khdot\in\widetilde{KhR}_2^+(U^{a,b})$ denotes the element represented by $\Khdot\otimes\cdots\otimes \Khdot$.

Following \cite{rozansky2010categorification,willis2021khovanov}, Manolescu--Marengon--Sarkar--Willis \cite[Corollary~2.2]{manolescu2023generalization} showed that the Rozansky projector can be approximated by full twists in a quantitative sense. For us, this implies that $$\widetilde{KhR}_2^{+,0,-|\ell|}(1_\ell)\cong\widetilde{KhR}_2^{+,0,-|\ell|}(T(|\ell|,|\ell|)_{\ell_+,\ell_-})\cong(Kh^{|\ell|^2/2,3|\ell|^2/2-|\ell|}(T(|\ell|,|\ell|)))^*\cong\Q,$$ where $T(|\ell|,|\ell|)_{\ell_+,\ell_-}$ denotes the positive torus link $T(|\ell|,|\ell|)$ equipped with an orientation where $\ell_-$ of the strands have the orientation reversed. Similarly, $$\widetilde{KhR}_2^{+,0,<-|\ell|}(1_\ell)\cong(Kh^{|\ell|^2/2,<3|\ell|^2/2-|\ell|}(T(|\ell|,|\ell|)))^*=0,$$ $$\widetilde{KhR}_2^{+,<0,*}(1_\ell)\cong(Kh^{>|\ell|^2/2,*}(T(|\ell|,|\ell|)))^*=0.$$ Here, the equalities about torus links $T(|\ell|,|\ell|)$ follow from the calculation of St\v osi\'c \cite[Theorem~1, Theorem~3]{stovsic2009khovanov}, suitably renormalized. Consequently, by degree reason, the image of $1$ under the maps in Figure~\ref{fig:barbell_check} is a scalar $c$ times $1\otimes1\otimes \Khdot\otimes \Khdot$ plus lower terms. Note that we used the fact that $1\in\widetilde{KhR}_2^{+,0,-|\ell|}(1_\ell)$ is nonzero, which can be verified for example by tracking the element $1\otimes\cdots\otimes1$ in the $KhR_2$ of the $|\ell|$-component unlink in the diagram in Figure~\ref{fig:element_1_belt_link}, where the top right map is an equivalence by Proposition~\ref{prop:projector_properties}(3)(5).
\begin{figure}
\centering
\includegraphics[width=0.6\linewidth]{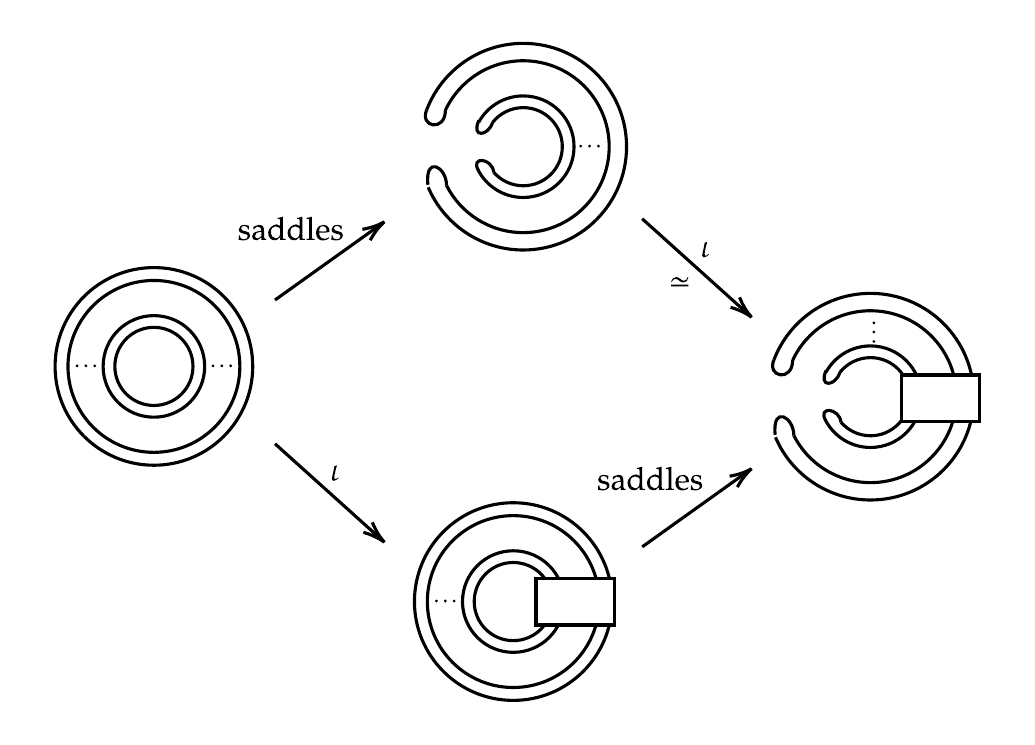}
\caption{Tracking the element $1\otimes\cdots\otimes1$.}
\label{fig:element_1_belt_link}
\end{figure}

Now we prove $c=1$. In each row of Figure~\ref{fig:barbell_check}, one may cap off the $m$ strands in the left half picture by $|m|/2$ dotted annuli, and the $n$ strands in the right half picture by $|n|/2$ dotted annuli. For the strands with Rozansky projectors, capping off means following the right half of Figure~\ref{fig:element_1_belt_link} to go from the bottom term to the top term, then capping circles off by dotted caps. This capping off procedure commutes (up to sign) with the maps of Figure~\ref{fig:barbell_check} (passing from the middle row to the last row of Figure~\ref{fig:barbell_check} requires a termwise check, and technically we have termwise sign ambiguities; this will be fixed in Appendix~\ref{sec:sign_barbell}), so that we can consider the image of $1\in\widetilde{KhR}_2^+(\emptyset)$ in two ways. If we perform all of the maps in Figure~\ref{fig:barbell_check} before capping off, we see  the element $c(1\otimes1\otimes \Khdot\otimes \Khdot)$ mapping to $\pm c(\Khdot\otimes \Khdot)\in\widetilde{KhR}_2^+(U^{n_+,n_-}\sqcup U^{m_-,m_+})$ after capping off. If instead we cap off immediately in the first row, the unit maps and belt pull-offs are identity maps and we see a cobordism $\emptyset\to H_+^{(m,n)}\sqcup\overline{H_+^{(m,n)}}\to U^{n_+,n_-}\sqcup U^{m_-,m_+}$ isotopic to a disjoint union of dotted annulus creations, so that $1\in\widetilde{KhR}_2^+(\emptyset)$ maps to $\pm \Khdot\otimes \Khdot$, proving that $c=\pm1$. We will fix the sign $c=1$ in Appendix~\ref{sec:sign_barbell}.
\end{proof}

We are now ready to deduce Proposition~\ref{prop:D2S2_repara} from Lemma~\ref{lem:barbell_las}.

\begin{proof}[Proof of Proposition~\ref{prop:D2S2_repara}]
As already explained, it suffices to prove the statement for $\psi=\beta_{i,j}$.

Let $x\in\mathcal S_0^2(D_{std};L)$ be represented by some lasagna filling $(\Sigma,v)$. By an isotopy rel boundary, we may assume that $\beta_{i,j}$ is supported by $\beta_{i,j}\in\Diff_\partial(D_{std}')$ where $D_{std}'$ is a shrunk copy of $D_{std}$. By an isotopy, we may assume that the input balls of $\Sigma$ are disjoint from $D_{std}'$. By neck-cutting \cite[Lemma~7.2]{manolescu2022skein}, we may assume that $\Sigma$ is disjoint from the $3$-handles of $D_{std}$. By general position and isotopy, we may assume that $\Sigma$ intersects $D_{std}'$ in a disjoint union of $2$-cocores with standard framings and various orientations, and that $U:=\Sigma\cap\partial D_{std}'$ is a standard belt link. By tubing and isotopy, we may assume that $\Sigma$ has a single input ball, which is contained in a collar neighborhood $[0,1]\times S_{std}$ of $\partial D_{std}'$ outside $D_{std}'$ in which $\Sigma$ takes the form $([0,1]\times U)\cup([1/2,1]\times L_0)$ where $L_0$ is the input link of $\Sigma$, identified with a local link in $S_{std}$ distant from $U$ and the surgery regions. Let $D_{std}''=D_{std}'\cup([0,1]\times S_{std})$ and let $\Sigma^\circ$ denote the part of $\Sigma$ in $D_{std}\backslash int(D_{std}'')=[1,2]\times S_{std}$. We have the commutative diagram
\begin{equation}\label{eq:localize_barbell_las}
\begin{tikzcd}
\mathcal S_0^2(D_{std}';U)\ar[rr,"\mathcal S_0^2(\beta_{i,j})"]\arrow[d,"v\otimes\bullet"']&&\mathcal S_0^2(D_{std}';U)\arrow[d,"v\otimes\bullet"]\\
\mathcal S_0^2(D_{std}'';U\cup L_0)\arrow[d,"\mathcal S_0^2({[1,2]}\times S_{std};\Sigma^\circ)"']&&\mathcal S_0^2(D_{std}'';U\cup L_0)\arrow[d,"\mathcal S_0^2({[1,2]}\times S_{std};\Sigma^\circ)"]\\
\mathcal S_0^2(D_{std};L)\ar[rr,"\mathcal S_0^2(\beta_{i,j})"]&&\mathcal S_0^2(D_{std};L).
\end{tikzcd}
\end{equation}

Tracing the element $1\otimes1$ in the top left corner of \eqref{eq:localize_barbell_las}, by Theorem~\ref{thm:concrete_functoriality_las} and Lemma~\ref{lem:barbell_las}, we get
$$\begin{tikzcd}
1\otimes1\ar[r,mapsto]\ar[d,mapsto]&1\otimes\mathrm{id}_\alpha(1)+\cdots\ar[d,mapsto]\\
(v\otimes1)\otimes1\ar[d,mapsto]&(v\otimes1)\otimes\mathrm{id}_\alpha(1)+\cdots\ar[d,mapsto]\\
x=\widetilde{KhR}_2^+(\Sigma^\circ)(v\otimes1)\otimes\mathrm{id}_{\alpha'}(1)+\cdots\ar[r,mapsto]&\widetilde{KhR}_2^+(\Sigma^\circ)(v\otimes1)\otimes\mathrm{id}_{\alpha+\alpha'}(1)+\cdots,
\end{tikzcd}$$
where $\cdots$ are terms with negative lasagna quantum gradings. The statement follows.
\end{proof}

\subsection{Isotopy insertions}
We prove Theorem~\ref{thm:spin_repara_las} for type \eqref{item:isotopy_insertion} diffeomorphisms in Proposition~\ref{prop:diffeomorphism_decomposition}.

By assumption, $\tilde\phi$ is supported in a collar neighborhood of the boundary, on which it takes the form $$\Phi\colon[-1,0]\times S_{std}\to[-1,0]\times S_{std},\ (t,x)\mapsto(t,\phi_t(x))$$ for some isotopy $\phi_t$ between $\mathrm{id}$ and $\phi$. By considering the action on lasagna fillings, we see that $\mathcal S_0^2(\tilde\phi^{-1})$ is equal to the gluing map $$\mathcal S_0^2(I\times S_{std};\Phi^{-1}([-1,0]\times\phi(L)))\colon\mathcal S_0^2(D_{std};\phi(L))\to\mathcal S_0^2(D_{std};L).$$ Now the statement follows from Theorem~\ref{thm:concrete_functoriality_las}, with $\widetilde{KhR}_2^+(\phi)=\widetilde{KhR}_2^+((\Phi^{-1}([-1,0]\times\phi(L)))^t)$.

\subsection{Connected summand exchanges}\label{sec:connect_summand_exchange}
We prove Theorem~\ref{thm:spin_repara_las} for type \eqref{item:exchange_1} diffeomorphisms in Proposition~\ref{prop:diffeomorphism_decomposition}.

A standard such diffeomorphism acts on a standard lasagna filling of $L$ in the sense of the explanation of \eqref{eq:SZ_MN} in Section~\ref{sec:SZ} by exchanging the $i$-th and $i'$-th input balls, carrying together the $i$-th and $i'$-th collections of $2,3$-handles and skeins within. 

Alternatively, in terms of row \eqref{eq:SZ_MN}, $\mathcal S_0^2(\tilde\phi^{-1})$ acts by exchanging the $i$-th and $i'$-th tensorial factors if one decomposes the $KhR_2$ of $L\cup\text{belts}\subset\sqcup_{i=1}^kS^3$ into a tensor product. This description propagates along the sequence of isomorphisms \eqref{eq:SZ_renom}\listsymbol\eqref{eq:SZ_slide}. We conclude that the statement holds (even without taking $0$-th associated graded maps) with $\widetilde{KhR}_2^+(\phi)\colon\widetilde{KhR}_2^+(\phi(L))\to\widetilde{KhR}_2^+(L)$ being the map that exchanges the $i$-th and $i'$-th tensorial factors and $\alpha=0$.

\subsection{Boundary summand exchanges}
We prove Theorem~\ref{thm:spin_repara_las} for type \eqref{item:exchange_2} diffeomorphisms in Proposition~\ref{prop:diffeomorphism_decomposition}.

We follow the strategy in Section~\ref{sec:concrete_finger}. We claim that the map $\mathcal S_0^2(\tilde\phi^{-1})$ in terms of rows \eqref{eq:SZ_renom}\listsymbol\eqref{eq:SZ_slide} is given by the rows in Figure~\ref{fig:exchange_diagram}, where the second maps on the second and third rows are the ``sandwich'' maps defined as before.

\begin{figure}
\centering
\includegraphics[width=0.6\linewidth]{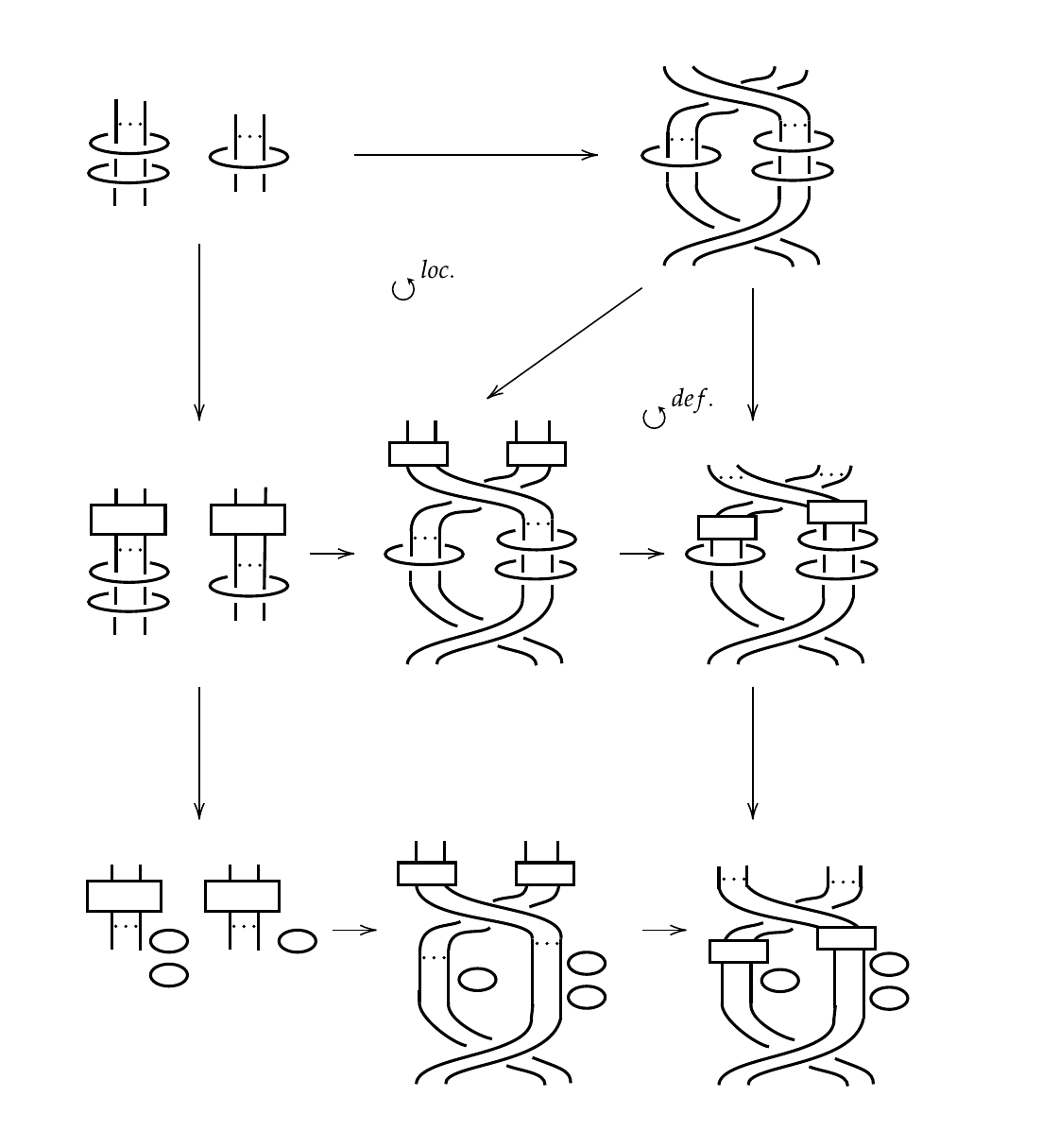}
\caption{Rows \eqref{eq:SZ_renom}\listsymbol\eqref{eq:SZ_slide} of a reverse standard boundary-summand-exchange diffeomorphism.}
\label{fig:exchange_diagram}
\end{figure}

The claim for row \eqref{eq:SZ_renom} comes from an easy examination of the isomorphism \eqref{eq:SZ_MN} as before, and the claim for row \eqref{eq:SZ_res_1} follows because the upper rectangle of Figure~\ref{fig:exchange_diagram} commutes. To show the claim for row \eqref{eq:SZ_slide}, it suffices to prove the lower rectangle of Figure~\ref{fig:exchange_diagram} commutes. By conjugating and sliding one belt-projector combination at a time, it suffices to show the commutativity of the boundary of the following diagram of isomorphisms:
\begin{figure}[H]
\centering
\includegraphics[width=0.55\linewidth]{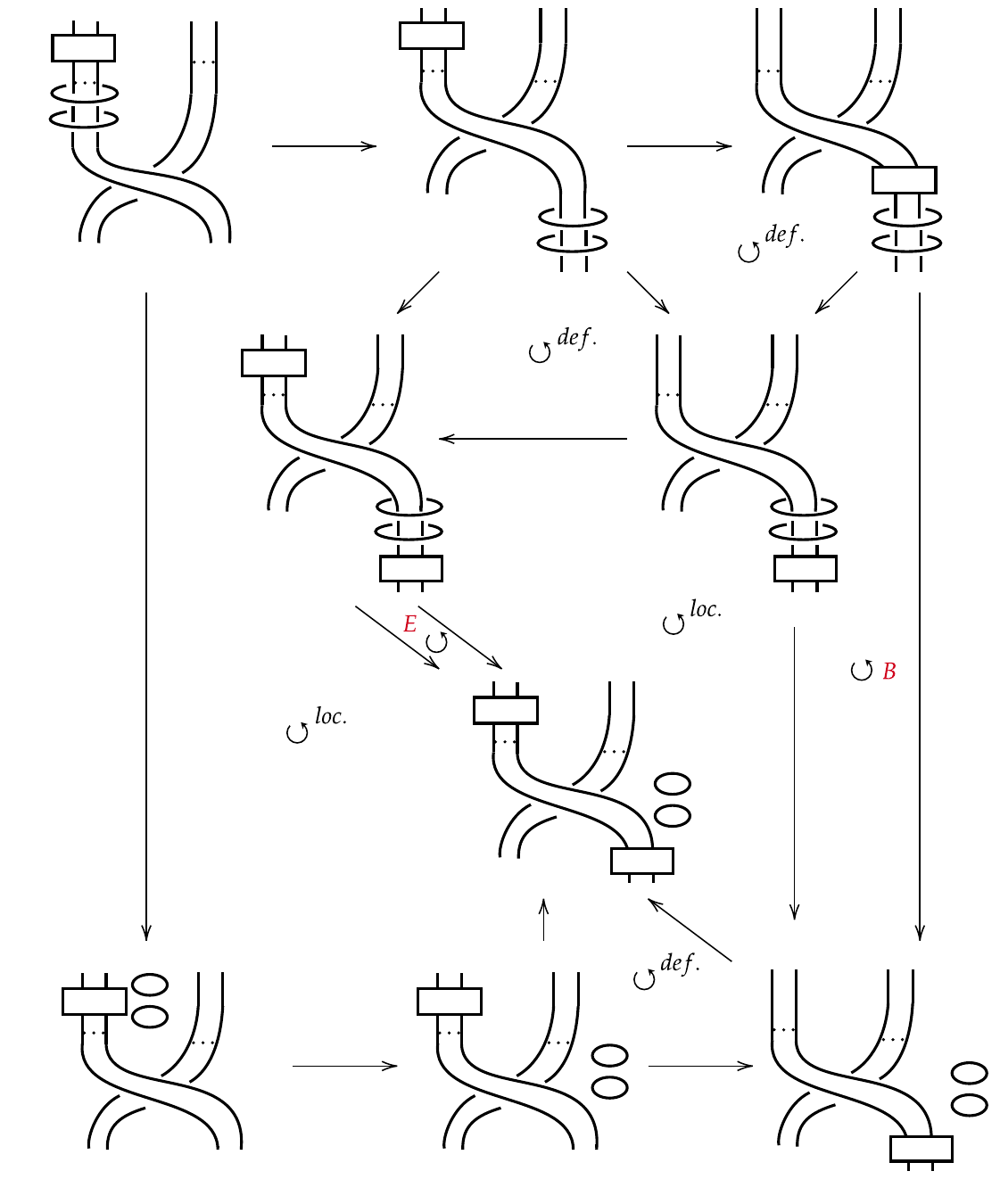}.
\end{figure}
Here, all but two regions commute by definition and locality, and the remaining two of them commute by the commutativity of region $B$ in Section~\ref{sec:concrete_finger} and a variant of region $E$ in Section~\ref{sec:concrete_pass}. The claim follows.

In particular, tracing through the isomorphism \eqref{eq:SZ_simp}, we see that Theorem~\ref{thm:spin_repara_las} holds with $\widetilde{KhR}_2^+(\phi)$ given by the composition
\begin{figure}[H]
\centering
\includegraphics[width=0.6\linewidth]{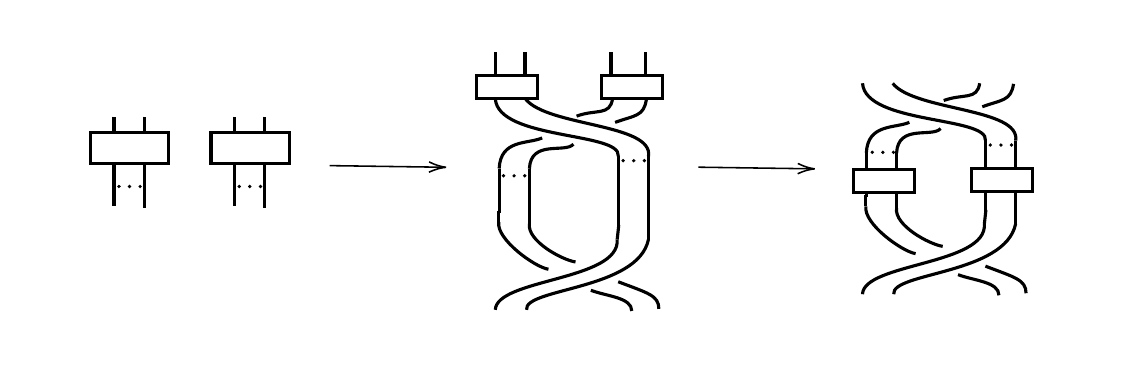}
\end{figure}
and $\alpha=0$.

\subsection{Inversions}\label{sec:inversion}
We prove Theorem~\ref{thm:spin_repara_las} for type \eqref{item:inversion} diffeomorphisms in Proposition~\ref{prop:diffeomorphism_decomposition}.

We claim that the map $\mathcal S_0^2(\tilde\phi^{-1})$ in terms of rows \eqref{eq:SZ_renom}\listsymbol\eqref{eq:SZ_slide} is given by the rows of Figure~\ref{fig:inversion_diagram}. Here the second map on the second row is given by the usual ``sandwich'' map, and the second map on the third row is induced by the chain map that moves, termwise, the unknotted circles through the empty region of the projector (see Proposition~\ref{prop:projector_properties}(1)).

\begin{figure}
\centering
\includegraphics[width=0.6\linewidth]{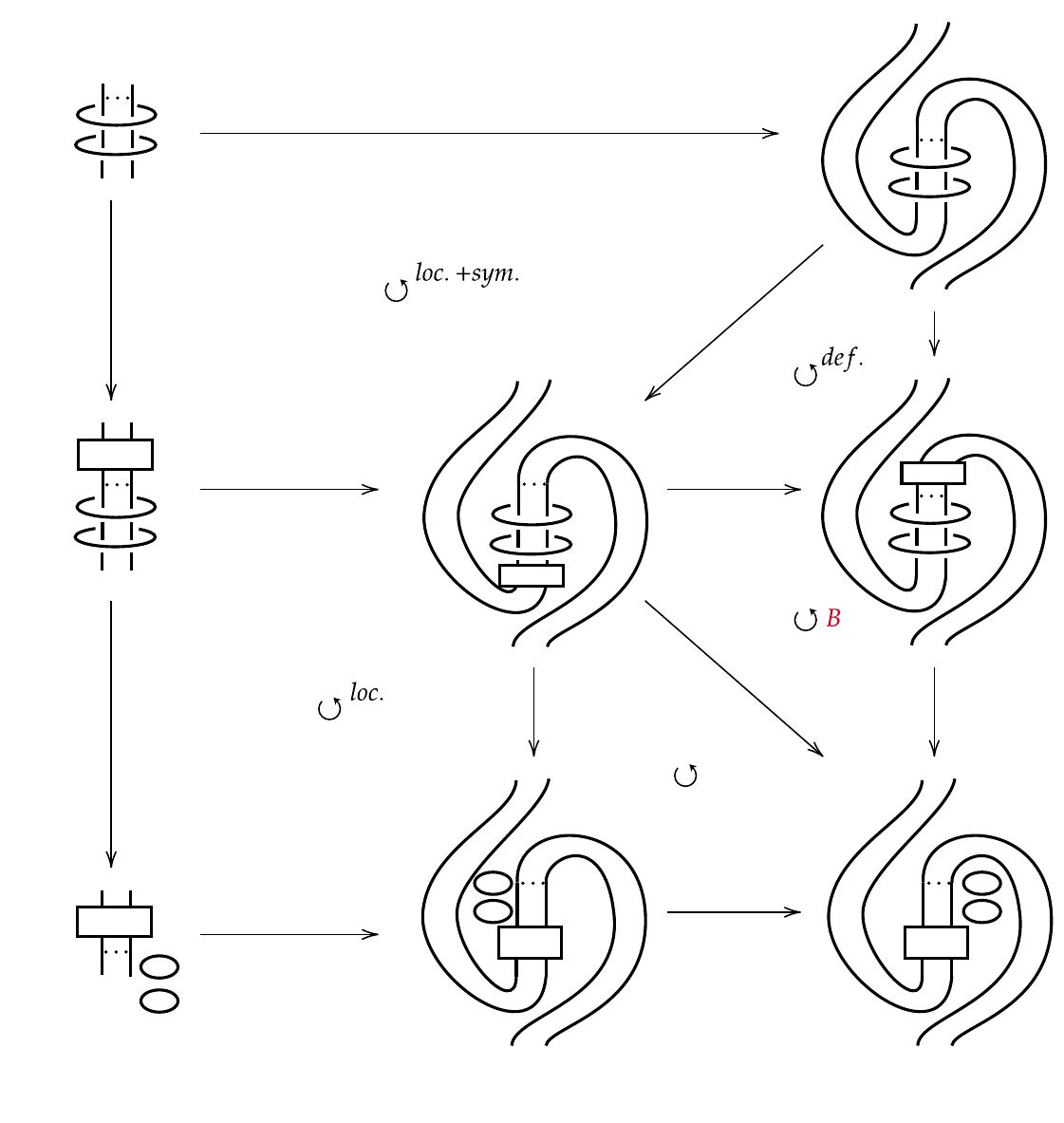}
\caption{Rows \eqref{eq:SZ_renom}\listsymbol\eqref{eq:SZ_slide} of a reverse standard inversion diffeomorphism.}
\label{fig:inversion_diagram}
\end{figure}

It suffices to justify the commutativity of the diagram. We comment on the regions whose commutativity are not immediate. The rectangle region in the upper half commutes because the Rozansky projector is symmetric in the sense of Proposition~\ref{prop:projector_properties}(8). The lower right triangle region commutes by the commutativity of region $B$ in Section~\ref{sec:concrete_finger}, while the lower-most triangle region commutes termwise on the chain level by definition, hence also on homology.

We conclude that Theorem~\ref{thm:spin_repara_las} holds with $\widetilde{KhR}_2^+(\phi)$ given by the rotation map
\begin{figure}[H]
\centering
\includegraphics[width=0.6\linewidth]{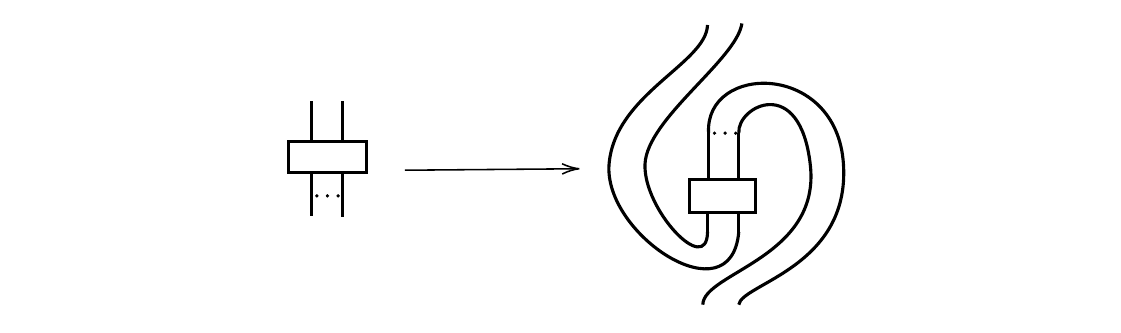}
\end{figure}
and $\alpha=0$ (note that the belt orientations get flipped after the first map in the second row of Figure~\ref{fig:inversion_diagram}, which reflects the action of $\tilde\phi^{-1}$ on $H_2(D_{std})$ appearing in the second factor of \eqref{eq:spin_repara_las}.)

\subsection{Handleslides}\label{sec:handleslides}
We prove Theorem~\ref{thm:spin_repara_las} for type \eqref{item:handleslide} diffeomorphisms in Proposition~\ref{prop:diffeomorphism_decomposition}.

For ease of drawing diagrams, we draw $\tilde\phi^{-1}$ near the handleslide region as
\begin{figure}[H]
\centering
\includegraphics[width=0.8\linewidth]{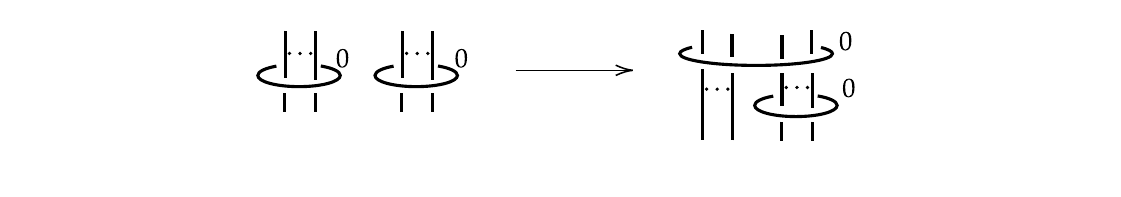}
\end{figure}
with the isotopy putting the first $2$-handle back to its standard position omitted.

We claim that $\mathcal S_0^2(\tilde\phi^{-1})$ in terms of rows \eqref{eq:SZ_renom}\listsymbol\eqref{eq:SZ_slide} is equal to rows $1,2,4$ of Figure~\ref{fig:slide_handle_diagram} postcomposed with symmetrizations. Here, the second map in the second row (and other appearances of this local picture) is given by the composition of isomorphisms (see Proposition~\ref{prop:projector_properties}(7))
\begin{equation}\label{eq:2_to_4_projector}
\vcenter{\hbox{\includegraphics[width=0.6\linewidth]{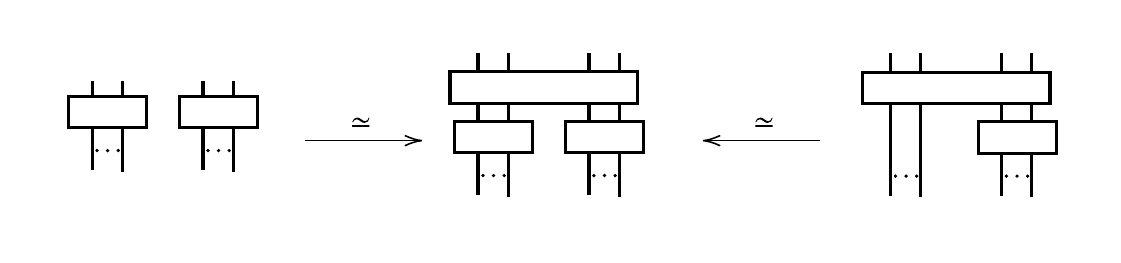}}},
\end{equation}
and the third map in the second row is the usual ``sandwich'' map.

\begin{figure}
\centering
\includegraphics[width=0.65\linewidth]{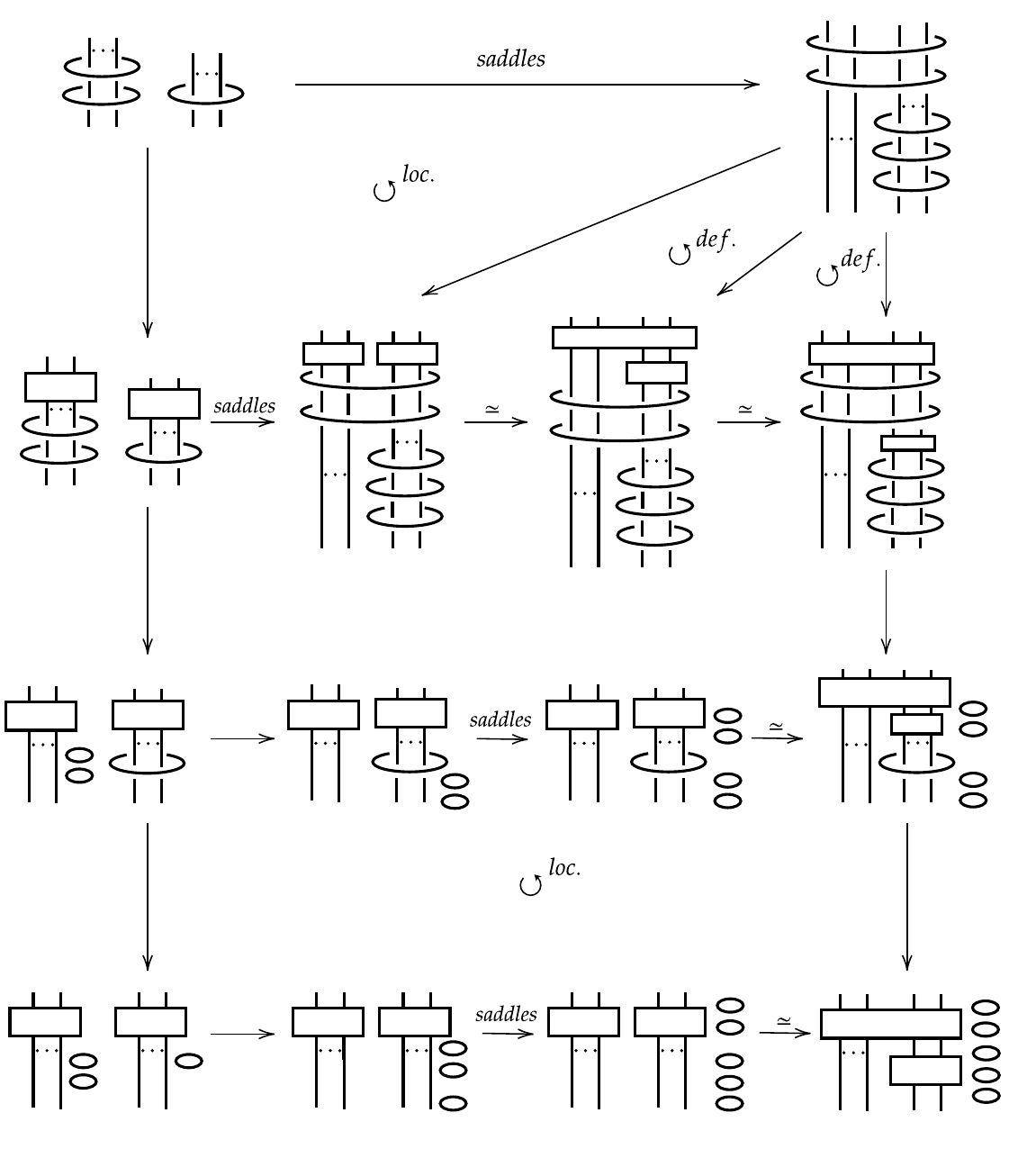}
\caption{Rows \eqref{eq:SZ_renom}\listsymbol\eqref{eq:SZ_slide} of a reverse standard negative handleslide diffeomorphism (before symmetrization), shown as rows $1,2,4$ in the figure.}
\label{fig:slide_handle_diagram}
\end{figure}

To see the claim for row \eqref{eq:SZ_renom}, start with a lasagna filling $(I\times L\cup(n_++r,n_-+r)\text{ cores},v)$, standard in the sense of the explanation of \eqref{eq:SZ_MN} in Section~\ref{sec:SZ}. When we slide the $j$-th $2$-handle over the $(j+1)$-st using $\tilde\phi^{-1}$, keep the input balls invariant and wrap the $j$-th collection of cores in the skein over the $(j+1)$-st $2$-core. After enlarging input balls of the skein, the new skein now has input link $L\cup(n_++r+(n_-)_je_{j+1},n_-+r+(n_+)_je_{j+1})\text{ belts}$, and the evaluation map performed is given by the claimed saddles.

The claim for row \eqref{eq:SZ_res_1} follows by the commutativity of the upper rectangle of Figure~\ref{fig:slide_handle_diagram}.

To show the claim for row \eqref{eq:SZ_slide}, note that the lower rectangle of Figure~\ref{fig:slide_handle_diagram} commutes, hence it remains to show the commutativity of the middle rectangle. We keep the relevant parts of this rectangle and focus on one belt at a time; thus, it suffices to show the commutativity of the boundary of the following diagram:
\begin{figure}[H]
\centering
\includegraphics[width=0.65\linewidth]{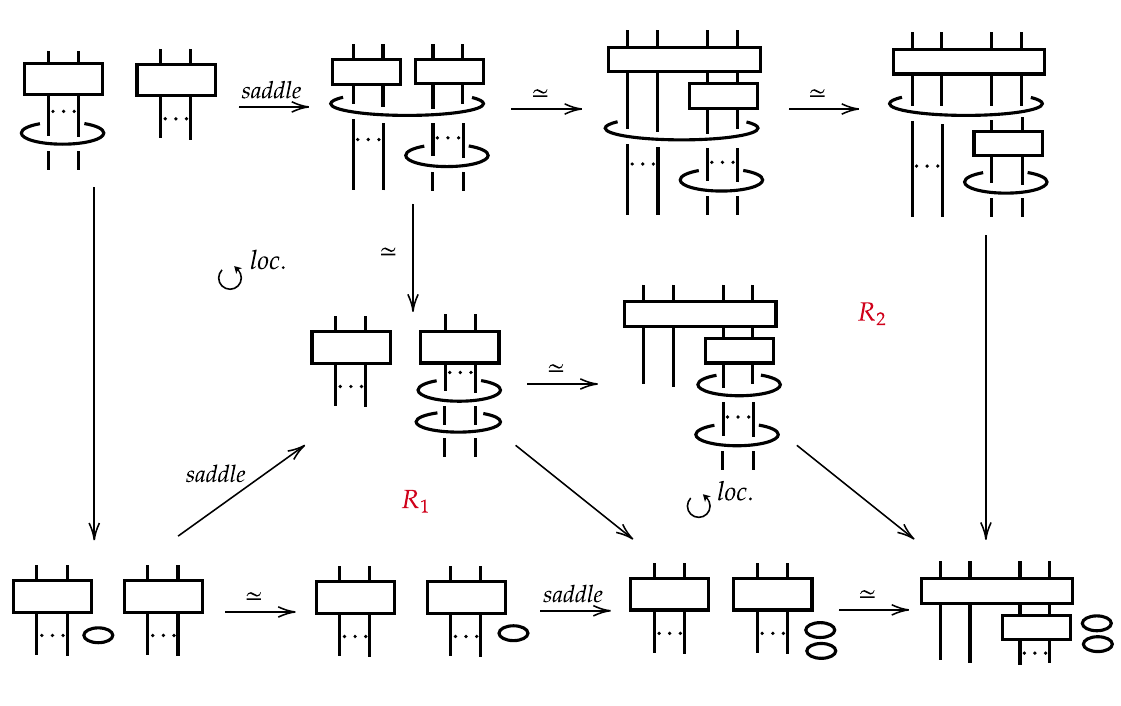}.
\end{figure}
Here, two of the rectangles commute by locality. We justify the commutativity of regions $R_1$ and $R_2$ below.

The relevant parts of region $R_1$ are redrawn as
\begin{figure}[H]
\centering
\includegraphics[width=0.7\linewidth]{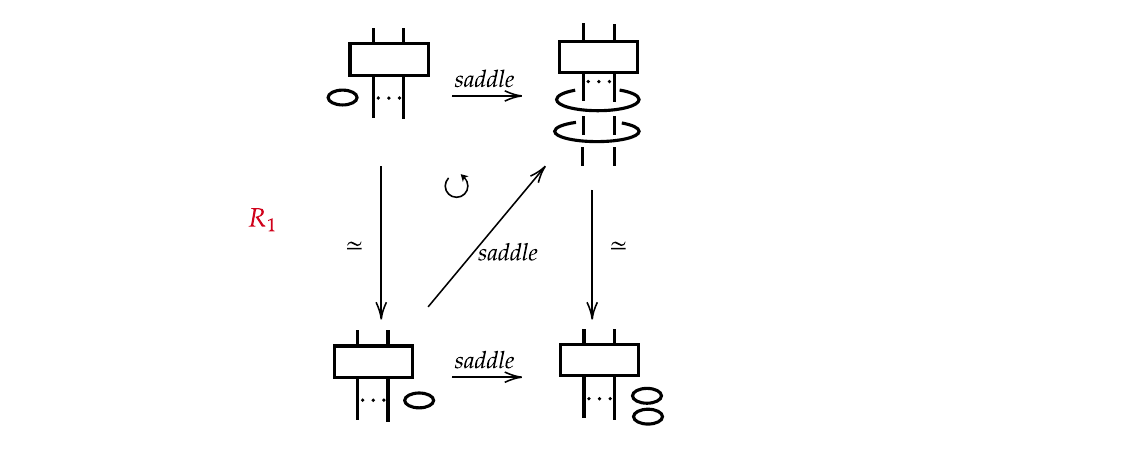},
\end{figure}
where the upper triangle commutes. The commutativity of the lower triangle up to sign follows from the fact that on the chain level, the termwise diagrams
\begin{figure}[H]
\centering
\includegraphics[width=0.65\linewidth]{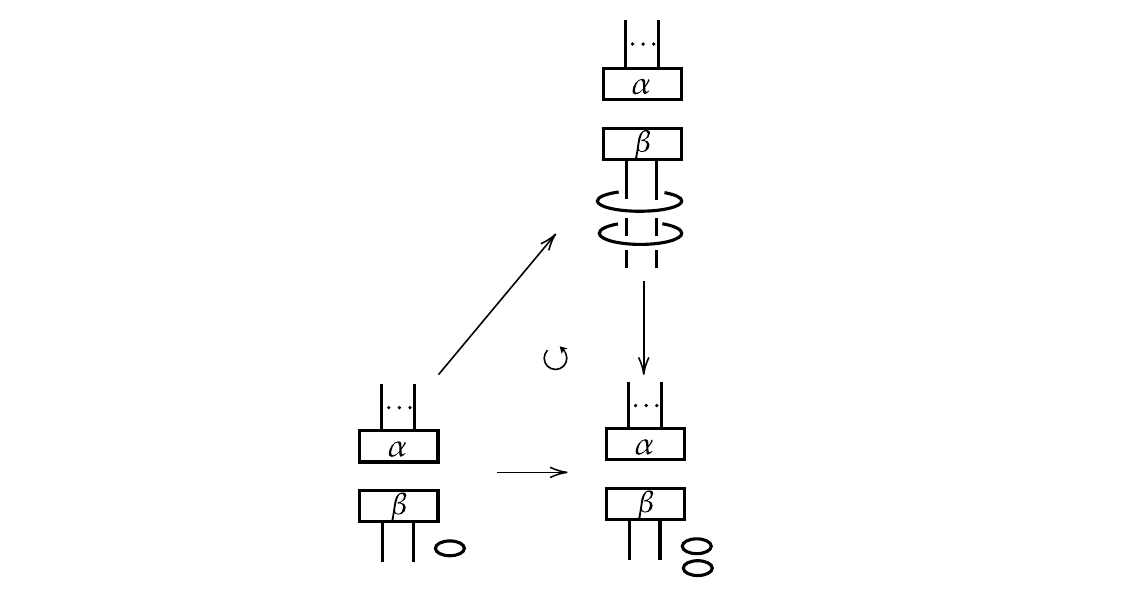}
\end{figure}
commute, and that the lower-right corner of $R_1$ is crossingless, hence no cross term exists. The sign is fixed in Appendix~\ref{sec:sign_region_B_R1}.

The relevant parts of region $R_2$ are redrawn as
\begin{figure}[H]
\centering
\includegraphics[width=0.6\linewidth]{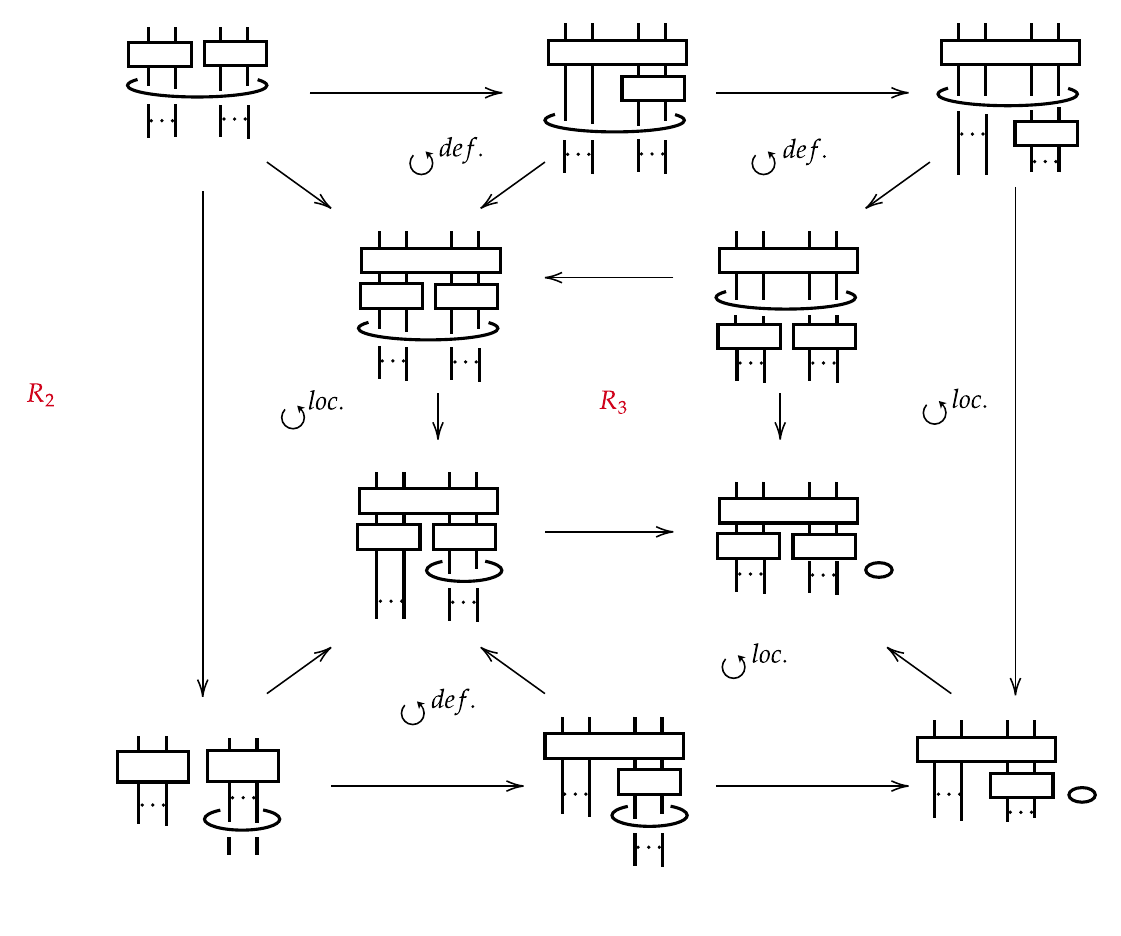},
\end{figure}
where all regions except $R_3$ commutes by definition or locality. We fill in region $R_3$ (and omit one vertex) as
\begin{figure}[H]
\centering
\includegraphics[width=0.6\linewidth]{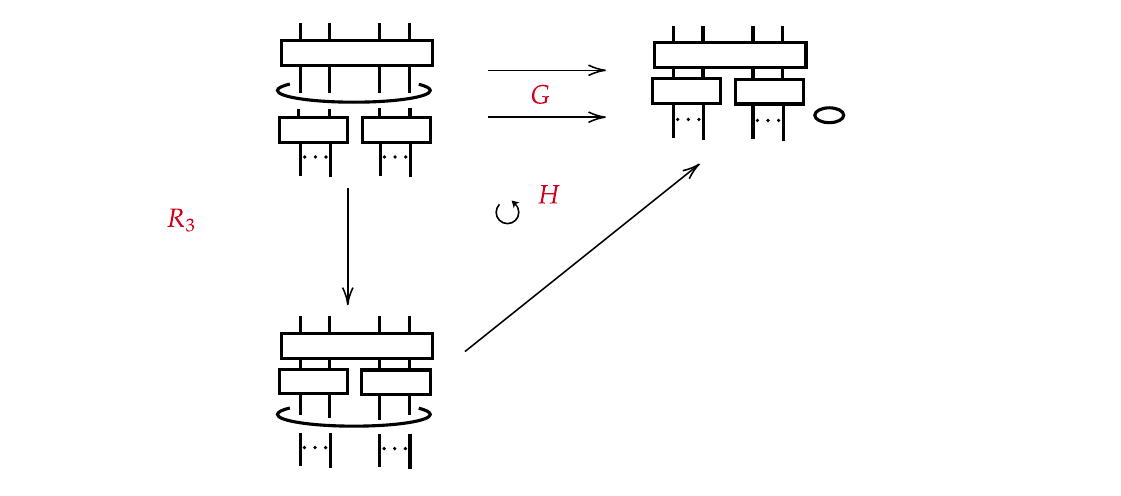}.
\end{figure}
Here, region $G$ commutes by the same argument as the commutativity of region $B$ in Section~\ref{sec:concrete_finger}, and the lower triangle commutes because one can prove the commutativity of the following region $H$ using again the same proof as region $B$:
\begin{figure}[H]
\centering
\includegraphics[width=0.65\linewidth]{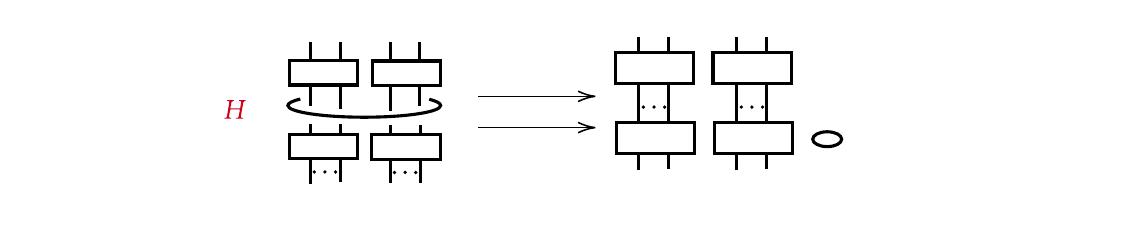}.
\end{figure}
This proves the claim for row \eqref{eq:SZ_slide}.

At lasagna quantum degree $0$, each unknotted circle appearing in row $4$ of Figure~\ref{fig:slide_handle_diagram} carries an $\Khdot$ label, which is sent to $\Khdot\otimes \Khdot$ under a saddle map. We conclude that Theorem~\ref{thm:spin_repara_las} holds for $\widetilde{KhR}_2^+(\phi)$ given by the composition map \eqref{eq:2_to_4_projector} post-composed with an isotopy that puts the first projector in standard position, and $\alpha=0$. This finishes the proof of Theorem~\ref{thm:spin_repara_las}.

\section{Functoriality for spin morphisms}\label{sec:abstract_spin_functoriality}
\subsection{The statements}
In this section, we prove a special case of Theorem~\ref{thm:KhR_2+}, assuming all objects and morphisms are spin.

More precisely, let $\mathbf{Links}_1^{spin}$ denote the category defined similarly as $\mathbf{Links}_1$ in Definition~\ref{def:Links_1}, except that each $S$ in an object $(S,L)$ is assumed to be an abstract spin $\sqcup_{i=1}^k\#^{m_i}(S^1\times S^2)$, and each $W$ in a morphism $(W,\Sigma)\colon(S_0,L_0)\to(S_1,L_1)$ comes with a spin structure making it a spin cobordism from $S_0$ to $S_1$. The goal of this section is to prove the following theorem.

\begin{Thm}\label{thm:abstract_spin_functoriality}
There is a symmetric monoidal functor $\widetilde{KhR}_2^+\colon(\mathbf{Links}_1^{spin})^{op}\to f\mathbf{Vect}_\Q^{\Z\times\Z}$ that extends the definition of $\widetilde{KhR}_2^+$ on objects as in Section~\ref{sec:abstract_spin_homology}. For a morphism $(W,\Sigma)$, $\widetilde{KhR}_2^+(W,\Sigma)$ is homogeneous of degree $(0,-\chi(\Sigma))$.
\end{Thm}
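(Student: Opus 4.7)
The plan is to carry out the strategy sketched in Section~\ref{sec:upside_down} in the spin setting, where the spin structure resolves the embedding ambiguity (B) noted there. Given a morphism $(W,\Sigma)\colon(S_0,L_0)\to(S_1,L_1)$ in $\mathbf{Links}_1^{spin}$, I first choose $L_j$-admissible spin parametrizations $\rho_j\colon S_j\xrightarrow{\cong}S_{std}^{(j)}$ of the two boundaries. The spin structure on $W$, together with the standard spin structures on the bounding $1$-handlebodies $X_j^{std}$ of $S_{std}^{(j)}$, assembles into a spin structure on $X_1^{std}\cong X_0^{std}\cup_{\rho_0}W$. Since a spin $1$-handlebody admits a spin-preserving embedding into $S^4$ unique up to isotopy, this produces concrete complements $-Y_j:=-(S^4\setminus\mathrm{int}(X_j^{std}))$, each an abstract $\natural(D^2\times S^2)$, together with a concrete copy of the transpose $W^t\subset -Y_0$. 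The associated gluing map
\[
\mathcal S_0^2(W^t;\Sigma^t)\colon\mathcal S_0^2(-Y_1;\rho_1(L_1))\longrightarrow\mathcal S_0^2(-Y_0;\rho_0(L_0))
\]
then translates, via the Sullivan--Zhang isomorphism \eqref{eq:SZ}, into a map between tensor products of the form $\widetilde{KhR}_2^+\otimes\mathcal S_0^2$.

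The key technical claim, simultaneously generalizing Theorems~\ref{thm:concrete_functoriality_las} and \ref{thm:spin_repara_las}, is that this map is nonincreasing in the lasagna quantum grading and that its associated graded factorizes as
\[
\widetilde{KhR}_2^+(W,\Sigma)\otimes gr(\mathrm{id}_\alpha\circ\phi_*^{-1})
\]
for a uniquely determined $\widetilde{KhR}_2^+(W,\Sigma)\colon\widetilde{KhR}_2^+(L_1)\to\widetilde{KhR}_2^+(L_0)$ of bidegree $(0,-\chi(\Sigma))$, some $\alpha$ depending only on $[\Sigma]$ and $[L_0]$, and a natural boundary-comparison map $\phi_*$. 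To establish this, I would use a relative Morse function on $W$ together with general position to push $\Sigma$ off the $1$-handle cores, writing $(W,\Sigma) = (W_{\mathrm{emp}},\emptyset)\circ(I\times S_0,\bar\Sigma)$ with $\bar\Sigma\colon L_0\to L_1'$ a cobordism in $I\times S_0$ and $W_{\mathrm{emp}}$ an empty $1$-handle cobordism identifying $L_1'\subset S_0$ with $L_1\subset S_1$. The first piece is handled by Theorem~\ref{thm:concrete_functoriality_las} after spin-parametrization. The second requires a new analysis of how the Sullivan--Zhang isomorphism transforms when a $1$-handle is added to $X_0^{std}$; I would carry this out by further decomposing the $1$-handle attachment into the elementary moves of Proposition~\ref{prop:diffeomorphism_decomposition} and invoking the corresponding calculations from Sections~\ref{sec:connect_summand_exchange}--\ref{sec:handleslides}, combined with the boundary-sum multiplicativity of $\widetilde{KhR}_2^+$.

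Functoriality will then follow automatically: the gluing maps $\mathcal S_0^2(W^t;\Sigma^t)$ compose strictly, and the first-tensor-factor extraction respects composition on the associated graded. Independence of the parametrizations $\rho_j$ reduces to Proposition~\ref{prop:spin_repara}, while independence of the Morse decomposition and push-off choices reduces, via the interior-ball and Dehn-twist invariance established in Section~\ref{sec:diff_rel_bdy}, to Proposition~\ref{prop:D2S2_repara}. The bidegree $(0,-\chi(\Sigma))$ is inherited directly from the underlying gluing map, and symmetric monoidality holds because skein lasagna modules, the Sullivan--Zhang isomorphism, and $\widetilde{KhR}_2^+$ all behave multiplicatively under disjoint union. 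The principal obstacle is the factorization analysis for the empty $1$-handle piece: one must track the Sullivan--Zhang identification across a genuine change of $D_{std}$ (equivalently, a $1$-handle removal from the complement $-Y_0$), extending the Section~\ref{sec:abstract_spin_homology} analysis of spin \emph{diffeomorphisms} of a fixed $D_{std}$ to $4$-dimensional cobordisms between different $D_{std}$'s, where Rozansky-projector manipulations must be controlled as the ambient connected-sum structure changes.
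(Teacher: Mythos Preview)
Your overall architecture matches the paper's: turn the cobordism inside out, pick spin parametrizations of the complements $-(S^4\setminus X_j)\cong D_{std,j}$, apply the Sullivan--Zhang isomorphism, and extract the first tensor factor from the associated graded of the lasagna gluing map. The paper packages this as Theorem~\ref{thm:spin_functoriality_las}, and its proof of Theorem~\ref{thm:abstract_spin_functoriality} from that statement is essentially the formal argument you sketch for independence of choices and functoriality.

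However, there are two genuine gaps in how you propose to establish the key factorization claim. First, your decomposition $(W,\Sigma)=(W_{\mathrm{emp}},\emptyset)\circ(I\times S_0,\bar\Sigma)$ with $W_{\mathrm{emp}}$ built only from $1$-handles is not valid in general: a relative $1$-handlebody complement $W=X_1\setminus\mathrm{int}(X_0)$ between $1$-handlebodies also requires \emph{canceling $2$-handles} (see Proposition~\ref{prop:abstract_decomposition}), for instance when $X_0$ has more $1$-handles than $X_1$. Second, and more seriously, your plan to handle the non-product piece by ``decomposing the $1$-handle attachment into the elementary moves of Proposition~\ref{prop:diffeomorphism_decomposition}'' does not work: that proposition classifies \emph{diffeomorphisms of a fixed} $D_{std}$, whereas here the ambient $D_{std}$ genuinely changes. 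You correctly flag this as the principal obstacle, but you do not resolve it.

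The paper addresses this by turning the handle decomposition of $W$ inside out (Corollary~\ref{cor:abstract_decomposition_reverse}): a $1$-handle on $X$ becomes a $3$-handle attachment $D_{std,1}\hookrightarrow D_{std,0}$ (separating or canceling), and a canceling $2$-handle on $X$ becomes a local $0$-framed $2$-handle attachment on $D_{std,1}$. Each of these elementary embeddings is then analyzed directly against the Sullivan--Zhang isomorphism in Sections~\ref{sec:ball_annilhilation}--\ref{sec:sixpartlast}; these computations are new and not consequences of the Section~\ref{sec:abstract_spin_homology} diffeomorphism analysis. The $3$-handle cases are essentially bookkeeping (the extra tensor factors split off trivially), while the $2$-handle case is the insertion of a unit map $1_{\ell_m}\to P_{\ell_m,0}^\vee$ at the new surgery region.
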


Theorem~\ref{thm:abstract_spin_functoriality} is a formal consequence of the ``turning cobordisms inside out'' trick described in Section~\ref{sec:upside_down}, together with the following theorem.

\begin{Thm}\label{thm:spin_functoriality_las}
Let $D_{std,j}:=\#_{i=1}^{k^{(j)}}\natural^{m_i^{(j)}}(D^2\times S^2)$, $S_{std,j}:=\partial D_{std,j}$, and $L_j\subset S_{std,j}$ be an admissible link, $j=0,1$. If $i\colon D_{std,1}\hookrightarrow int(D_{std,0})$ is a smooth embedding such that $W^t:=D_{std,0}\backslash int(D_{std,1})$ is a $4$-dimensional relative $1$-handlebody complement with $\partial_-W^t=-S_{std,0}$, $\partial_+W^t=-S_{std,1}$, and $\Sigma^t\subset W^t$ is a cobordism between $L_1$ and $L_0$, then the map $$\mathcal S_0^2(W^t;\Sigma^t)\colon\mathcal S_0^2(D_{std,1};L_1)\to\mathcal S_0^2(D_{std,0};L_0)$$ induces a map on the $0$-th associated graded spaces with respect to the lasagna quantum grading. Under the isomorphism \eqref{eq:SZ}, this induced map is of the form
$$\widetilde{KhR}_2^+(\Sigma)\otimes gr_0(\mathrm{id}_{\alpha}\circ i_*)\colon\widetilde{KhR}_2^+(L_1)\otimes gr_0\mathcal S_0^2(D_{std,1})\to\widetilde{KhR}_2^+(L_0)\otimes gr_0\mathcal S_0^2(D_{std,0})$$
for some $\widetilde{KhR}_2^+(\Sigma)=\widetilde{KhR}_2^+(W,\Sigma)\colon\widetilde{KhR}_2^+(L_1)\to\widetilde{KhR}_2^+(L_0)$ and some $\alpha\in H_2(D_{std,0})$. Here, $i_*$ denotes the pushforward map $\mathcal S_0^2(i)\colon\mathcal S_0^2(D_{std,1})\to\mathcal S_0^2(D_{std,0})$.
\end{Thm}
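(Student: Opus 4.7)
The plan is to decompose $(W^t, \Sigma^t)$ into a composition of elementary pieces via a Morse function, verify the claimed formula for each piece, and assemble using the gluing functoriality of $\mathcal{S}_0^2$.

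Since $W^t$ is a relative $1$-handlebody complement, $W^t \cong X_1 \setminus \mathrm{int}(X_0)$ for some $4$-dimensional $1$-handlebodies $X_0 \subset X_1$. Choosing a handle decomposition of $X_1$ in which $X_0$ appears as a sub-handlebody (possible up to handle slides and cancellations), $W^t$ inherits a handle decomposition on $\partial_- W^t$ using only $0$- and $1$-handles. Take a compatible Morse function $f \colon W^t \to [0,1]$ and put $\Sigma^t$ in general position relative to $f$. By dimensional transversality ($\Sigma^t$ is $2$-dimensional and the handle cores are of dimension $0$ or $1$ in the $4$-dimensional $W^t$), after a small isotopy we may assume that $\Sigma^t$ is disjoint from the $0$- and $1$-handle cores, and that the $\Sigma^t$-Morse events occur at levels distinct from the $f$-critical points. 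This decomposes $(W^t, \Sigma^t)$ as a composition of elementary cobordisms of three types: (a) horizontal pieces $I \times S$ carrying a single Morse event or local diagrammatic move in $\Sigma^t$; (b) $0$-handle attachments with $\Sigma^t$ a product; (c) $1$-handle attachments with $\Sigma^t$ a product.

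For horizontal pieces (a), Theorem~\ref{thm:concrete_functoriality_las}, combined with Theorem~\ref{thm:spin_repara_las} for the spin reparametrization identifying $S$ with the concrete $S_{std}$, gives the claimed formula with $\alpha_j = 0$ and $i_{j,*} = \mathrm{id}$. For $0$-handle attachments (b), adjoining a disjoint $B^4$ to $W^t$ introduces a trivial new $S^3$ boundary component; the induced gluing map on $\mathcal{S}_0^2$ is the natural inclusion tensored with $1 \in \mathcal{S}_0^2(B^4)$, so the formula holds with $\alpha_j = 0$ and $\widetilde{KhR}_2^+(\Sigma_j) = \mathrm{id}$. For $1$-handle attachments (c), the boundary $3$-manifold changes either by (c1) joining two disjoint components into one or (c2) adding a new $S^1 \times S^2$ summand to one component. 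Case (c1) amounts to merging two tensor factors in the SZ decomposition, parallel to the analysis of connected-summand exchanges in Section~\ref{sec:connect_summand_exchange}. Case (c2) introduces a new Rozansky projector insertion and a new belt system; the induced map is computed by a diagram chase across rows \eqref{eq:SZ_MN}--\eqref{eq:SZ_simp} of the SZ isomorphism, in the same manner as the analyses in Sections~\ref{sec:concrete_finger}--\ref{sec:handleslides}.

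Assembling via the gluing functoriality $\mathcal{S}_0^2(W^t; \Sigma^t) = \mathcal{S}_0^2(W_N^t; \Sigma_N^t) \circ \cdots \circ \mathcal{S}_0^2(W_1^t; \Sigma_1^t)$, with each elementary factor having the form $\widetilde{KhR}_2^+(\Sigma_j) \otimes gr_0(\mathrm{id}_{\alpha_j} \circ i_{j,*})$ on $gr_0$ of the lasagna quantum grading, the composition has the required form with $\widetilde{KhR}_2^+(\Sigma) = \widetilde{KhR}_2^+(\Sigma_N) \circ \cdots \circ \widetilde{KhR}_2^+(\Sigma_1)$ and aggregate $\alpha \in H_2(D_{std,0})$ obtained from the intermediate shifts and pushforwards. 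The main obstacle will be case (c2): carefully tracking the interaction between the new Rozansky projector, the new belt system, and the existing SZ decomposition, and in particular pinning down the precise contribution to $\alpha_j$ from the framing and orientation of the $1$-handle's attaching data. Signs are to be fixed by the $\gl_2$ webs and foams reformulation of Appendix~\ref{sec:sign}.
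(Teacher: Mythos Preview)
Your overall strategy---decompose into elementary pieces, check each, and assemble by functoriality---matches the paper's approach in Section~\ref{sec:abstract_spin_functoriality_decomposition} and Corollary~\ref{cor:abstract_decomposition_reverse}. However, there is a genuine gap in your decomposition step.

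You claim that $W^t\cong X_1\setminus\mathrm{int}(X_0)$ admits a handle decomposition on $\partial_-W^t$ using only $0$- and $1$-handles, on the grounds that one can arrange $X_0$ to appear as a sub-handlebody of $X_1$. This is false. Consider $X_0=S^1\times B^3$ embedded in $X_1=B^4$; then $W^t=B^4\setminus(S^1\times B^3)\cong D^2\times S^2$ is a single $2$-handle attached to $\partial X_0$, and no arrangement of $0$-/$1$-handles will produce it, since $X_1=B^4$ has no $1$-handles at all. In the $D_{std}$ picture this corresponds to $D_{std,1}=B^4\subset D_{std,0}=D^2\times S^2$, which is certainly a valid instance of the theorem. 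More generally, the proof of Proposition~\ref{prop:abstract_decomposition} shows that after matching up the $1$-handles of $X_0$ with those of $X_1$, the leftover original $1$-handles of $X_0$ must be canceled by ambient $2$-handles; this is item~\eqref{item:2_handle_cancel} there and becomes the local $0$-framed $2$-handle attachment of Corollary~\ref{cor:abstract_decomposition_reverse}\eqref{item:2_handle}, treated in Section~\ref{sec:sixpartlast}.

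So your list of elementary pieces is incomplete: you need a fourth type, a canceling $2$-handle, and you need to verify the formula for it (this is where a new Rozansky projector is genuinely created via the unit map $1_{\ell}\to P_{\ell,0}^\vee$; see Section~\ref{sec:sixpartlast}). Your case~(c2) as described is actually the \emph{easier} dual piece (a canceling $3$-handle in the paper's language, Section~\ref{sec:sixpartlast} is not that); the analysis there simply evaluates belt labels. Once you add the $2$-handle case and track the reparametrizations between stages via type~\eqref{item:twisted_product} morphisms (not just a passing invocation of Theorem~\ref{thm:spin_repara_las}), your outline becomes essentially the paper's proof.
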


The proof of Theorem~\ref{thm:spin_functoriality_las} takes up the bulk of Section~\ref{sec:abstract_spin_functoriality}. In Section~\ref{sec:abstract_spin_functoriality_decomposition} we decompose it into various cases, which are treated individually in Sections~\ref{sec:sixpartone}--\ref{sec:sixpartlast}. Before going there, we deduce Theorem~\ref{thm:abstract_spin_functoriality} as a consequence of Theorem~\ref{thm:spin_functoriality_las}.

\begin{proof}[Proof of Theorem~\ref{thm:abstract_spin_functoriality} assuming Theorem~\ref{thm:spin_functoriality_las}]
Let $(W,\Sigma)\colon(S_0,L_0)\to(S_1,L_1)$ be a morphism in $\mathbf{Links}_1^{spin}$. Write $W=X_1\backslash int(X_0)$ for spin $4$-dimensional $1$-handlebodies $X_0,X_1$. Choose a spin embedding $X_1\hookrightarrow S^4$, we get an embedding $-(S^4\backslash int(X_1))\hookrightarrow-(S^4\backslash int(X_0))$. Choose parametrizations $\tilde\phi_j\colon-(S^4\backslash int(X_j))\xrightarrow{\cong} D_{std,j}=\#_{i=1}^{k^{(j)}}\natural^{m_i^{(j)}}(D^2\times S^2)$ so that $\phi_j=\tilde\phi_j|_{S_j}$ is $L_j$-admissible, $j=0,1$. Then $(W,\Sigma)$ gives rise to a map 
\begin{equation}\label{eq:las_glue_phi_0_W}
\mathcal S_0^2(\tilde\phi_0(W^t);\tilde\phi_0(\Sigma^t))\colon\mathcal S_0^2(D_{std,1};\phi_1(L_1))\to\mathcal S_0^2(D_{std,0};\phi_0(L_0)),
\end{equation}
which determines a map $\widetilde{KhR}_2^+(\tilde\phi_0(W),\tilde\phi_0(\Sigma))\colon\widetilde{KhR}_2^+(\phi_1(L_1))\to\widetilde{KhR}_2^+(\phi_0(L_0))$ by Theorem~\ref{thm:spin_functoriality_las}, uniquely so if we insist it to be zero whenever \eqref{eq:las_glue_phi_0_W} is zero. This in turn uniquely determines a map $\widetilde{KhR}_2^+(W,\Sigma)$ making $$\begin{tikzcd}
\widetilde{KhR}_2^+(\phi_1(L_1))\ar[rrr,"{\widetilde{KhR}_2^+(\tilde\phi_0(W),\tilde\phi_0(\Sigma))}"]\ar[d,"\cong"]&&&\widetilde{KhR}_2^+(\phi_0(L_0))\ar[d,"\cong"])\\
\widetilde{KhR}_2^+(S_1,L_1)\ar[rrr,"{\widetilde{KhR}_2^+(W,\Sigma)}"]&&&\widetilde{KhR}_2^+(S_0,L_0)
\end{tikzcd}$$ commute.

We check that $\widetilde{KhR}_2^+(W,\Sigma)$ is independent of $\tilde\phi_0,\tilde\phi_1$. For another choice $\tilde\phi_0',\tilde\phi_1'$, we have the commutative diagram
$$\begin{tikzcd}
\mathcal S_0^2(D_{std,1};\phi_1'(L_1))\ar[rrr,"{\mathcal S_0^2(\tilde\phi_0'(W^t);\tilde\phi_0'(\Sigma^t))}"]\ar[d,"\cong","\mathcal S_0^2(\tilde\phi_1\circ\tilde\phi_1'^{-1})"']&&&\mathcal S_0^2(D_{std,0};\phi_0'(L_0))\ar[d,"\cong","\mathcal S_0^2(\tilde\phi_0\circ\tilde\phi_0'^{-1})"']\\
\mathcal S_0^2(D_{std,1};\phi_1(L_1))\ar[rrr,"{\mathcal S_0^2(\tilde\phi_0(W^t);\tilde\phi_0(\Sigma^t))}"]&&&\mathcal S_0^2(D_{std,0};\phi_0(L_0))
\end{tikzcd}$$
By Theorem~\ref{thm:spin_repara_las} and Theorem~\ref{thm:spin_functoriality_las}, this diagram descends to a commutative diagram on $gr_0$, which under isomorphism \eqref{eq:SZ} becomes a commutative diagram
$$\begin{tikzcd}
\widetilde{KhR}_2^+(\phi_1'(L_1))\otimes gr_0\mathcal S_0^2(D_{std,1})\ar[rrrrr,"{\widetilde{KhR}_2^+(\tilde\phi_0'(W),\tilde\phi_0'(\Sigma))\otimes gr_0(\mathrm{id}_{\alpha_0}\circ i'_*)}"]\ar[dd,"\cong","\begin{array}{c}\widetilde{KhR}_2^+(\phi_1'\circ\phi_1^{-1})\otimes\\gr_0(\mathrm{id}_{\alpha_2}\circ(\phi_1\circ\phi_1'^{-1})_*)\end{array}"']&&&&&\widetilde{KhR}_2^+(\phi_0'(L_0))\otimes gr\mathcal S_0^2(D_{std,0})\ar[dd,"\cong","\begin{array}{c}\widetilde{KhR}_2^+(\phi_0'\circ\phi_0^{-1})\otimes\\gr_0(\mathrm{id}_{\alpha_3}\circ(\phi_0\circ\phi_0'^{-1})_*)\end{array}"']\\\\
\widetilde{KhR}_2^+(\phi_1(L_1))\otimes gr_0\mathcal S_0^2(D_{std,1})\ar[rrrrr,"{\widetilde{KhR}_2^+(\tilde\phi_0(W),\tilde\phi_0(\Sigma))\otimes gr_0(\mathrm{id}_{\alpha_1}\circ i_*)}"]&&&&&\widetilde{KhR}_2^+(\phi_0(L_0))\otimes gr\mathcal S_0^2(D_{std,0})
\end{tikzcd}$$
for some $\alpha_0,\alpha_1,\alpha_2,\alpha_3$, where $i,i'\colon D_{std,1}\hookrightarrow D_{std,0}$ are naturally determined by the parametrizations. The two composition maps on the second tensorial factors are both nonzero, as they each send the element $gr_0(1)$ ($1$ is the element represented by the empty skein) to a nonzero element. Therefore, the two composition maps $\widetilde{KhR}_2^+(\phi_0'\circ\phi_0^{-1})\circ\widetilde{KhR}_2^+(\tilde\phi_0'(W),\tilde\phi_0'(\Sigma))$ and $\widetilde{KhR}_2^+(\tilde\phi_0(W),\tilde\phi_0(\Sigma))\circ\widetilde{KhR}_2^+(\phi_1'\circ\phi_1^{-1})$ on the first tensorial factors are equal up to some scalar $\lambda\in\Q$. If they are nonzero, then the two compositions on the second tensorial factors are equal up to $\lambda^{-1}$. Since either composition on the second tensorial factors, after postcomposing with the map on $gr_0\mathcal S_0^2$ induced by an embedding $D_{std,0}\subset S^4$, sends $gr_0(1)\in gr_0\mathcal S_0^2(D_{std,1})$ to $gr_0(1)\in gr_0\mathcal S_0^2(S^4)\cong\Q$, we have $\lambda=1$. This shows that $\widetilde{KhR}_2^+(W,\Sigma)$ is independent of the choices of $\tilde\phi_j$, $j=0,1$. It is also independent of the choice of the spin embedding $X_1\hookrightarrow S^4$, as any two such embeddings are isotopic. Hence, $\widetilde{KhR}_2^+(W,\Sigma)$ is well-defined.

The functoriality of $\widetilde{KhR}_2^+(W,\Sigma)$ is proved similarly. The identity morphism induces the identity map by Theorem~\ref{thm:spin_functoriality_las}. If $(W_j,\Sigma_j)\colon(S_j,L_j)\to(S_{j+1},L_{j+1})$, $j=0,1$, are two composable morphisms in $\mathbf{Links}_1^{spin}$, then we can write $W_j=X_{j+1}\backslash int(X_j)$, $j=0,1$, for some spin $4$-dimensional $1$-handlebodies $X_0,X_1,X_2$. Choose a spin embedding $X_2\hookrightarrow S^4$ and parametrizations $\tilde\phi_j\colon-(S^4\backslash int(X_j))\xrightarrow{\cong}D_{std,j}$, so that $\phi_j\colon S_j\xrightarrow{\cong} S_{std,j}$ is $L_j$-admissible, $j=0,1,2$. We obtain concrete models for the abstract induced maps $\widetilde{KhR}_2^+(W_j,\Sigma_j)$, $j=0,1$, as well as $\widetilde{KhR}_2^+(W_1\circ W_0,\Sigma_1\circ\Sigma_0)$. A similar argument as before using Theorem~\ref{thm:spin_functoriality_las} shows that compositions are functorial on these concrete models, hence on the abstract induced maps themselves.

The symmetric monoidality of $\widetilde{KhR}_2^+$ is clear from the construction.
\end{proof}

\subsection{Decomposition into elementary morphisms}\label{sec:abstract_spin_functoriality_decomposition}
If Theorem~\ref{thm:spin_functoriality_las} holds for embeddings $D_{std,j+1}\hookrightarrow int(D_{std,j})$ and composable link cobordisms in $D_{std,j}\backslash int(D_{std,j+1})$, $j=0,1$, then it holds for the composition as well.

Therefore, it suffices to decompose any morphism into a composition of some elementary morphisms, and check Theorem~\ref{thm:spin_functoriality_las} for these elementary ones. We first state such a decomposition result for abstract morphisms, namely morphisms in $\mathbf{Links}_1$.

\begin{Prop}\label{prop:abstract_decomposition}
Every morphism in $\mathbf{Links}_1$ is a composition of some elementary morphisms of the following forms. See Figure~\ref{fig:12_handles}.
\begin{enumerate}[(i)]
\item Product morphisms: abstract $(I\times S_{std},\Sigma)\colon(S_{std},L_0)\to(S_{std},L_1)$ for some $S_{std}=\sqcup_{i=1}^k\#^{m_i}(S^1\times S^2)$.\label{item:product}
\item Ball creations: abstract $((I\times S_{std})\sqcup B^4,I\times L)\colon(S_{std},L)\to(S_{std}\sqcup S^3,L)$ for some $S_{std}=\sqcup_{i=1}^k\#^{m_i}(S^1\times S^2)$.\label{item:ball_creation}
\item Connected sums: abstract $(W_\#,I\times L)\colon(S_{std},L)\to(S_{std,\#},L)$ for some $S_{std}=\sqcup_{i=1}^k\#^{m_i}(S^1\times S^2)$, $k\ge2$, where $S_{std,\#}=(\sqcup_{i=1}^{k-2}\#^{m_i}(S^1\times S^2))\sqcup(\#^{m_{k-1}+m_k}(S^1\times S^2))$, $W_\#$ is a (standard) $1$-handle attachment between the $(k-1)$-th and $k$-th component of $S_{std}$ attached near $\infty\in\#^{m_i}(S^1\times S^2)$, $i=k-1,k$, and $I\times L$ is the trace of $L$ in $W_\#$.\label{item:connected_sum}
\item $1$-handle attachments: abstract $(W_+,I\times L)\colon(S_{std},L)\to(S_{std,+},L)$ for some $S_{std}=\sqcup_{i=1}^k\#^{m_i}(S^1\times S^2)$, $k\ge1$, where $S_{std,+}=(\sqcup_{i=1}^{k-1}\#^{m_i}(S^1\times S^2))\sqcup(\#^{m_k+1}(S^1\times S^2))$, $W_+$ is a (standard) $1$-handle attachment on the $k$-th component of $S_{std}$ that misses $L$, and $I\times L$ is the trace of $L$ in $W_+$.\label{item:1_handle}
\item Canceling $2$-handle attachments: abstract $(W_-,I\times L)\colon(S_{std},L)\to(S_{std,-},L)$ for some $S_{std}=\sqcup_{i=1}^k\#^{m_i}(S^1\times S^2)$, $k\ge1$, $m_k\ge1$, where $S_{std,-}=(\sqcup_{i=1}^{k-1}\#^{m_i}(S^1\times S^2))\sqcup(\#^{m_k-1}(S^1\times S^2))$, $W_-$ is a (standard) $2$-handle attachment on the $k$-th component of $S_{std}$ that misses $L$ and cancels the last $S^1\times S^2$ connected summand, and $I\times L$ is the trace of $L$ in $W_-$.\label{item:2_handle_cancel}
\end{enumerate}
Moreover, in each concrete model $(W,\Sigma)\colon(S_{std,0},L_0)\to(S_{std,1},L_1)$ described above, $L_0,L_1$ can be assumed to be admissible.
\end{Prop}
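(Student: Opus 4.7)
The plan is to apply Morse theory to $W$ relative to $\Sigma$, simplify the resulting handle decomposition using the hypothesis that both $X_0$ and $X_1$ are 4-dimensional 1-handlebodies, and then read off the elementary morphisms from the critical points, inserting type \eqref{item:product} isotopies to make intermediate links admissible.

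First I would write $W=X_1\setminus\mathrm{int}(X_0)$ and pick a self-indexing Morse function $f\colon W\to[0,1]$ with $f^{-1}(0)=\partial_-W$ and $f^{-1}(1)=\partial_+W$, perturbed so that $\Sigma$ is generic with respect to $f$; critical values of $f|_\Sigma$ then correspond to Morse moves on the slice link $\Sigma\cap f^{-1}(t)$. Because $X_1=X_0\cup W$ is a 1-handlebody---in particular has the homotopy type of a disjoint union of wedges of circles and satisfies $H_{\ge 2}(X_1)=0$---standard Cerf theory (handle slides and pair cancellations, which in dimension four realize algebraic cancellations of differing-index handle pairs geometrically) allows us to rearrange so that $W$'s relative handle decomposition uses only indices $0$, $1$, and $2$, with every 2-handle cancelling a 1-handle (either inherited from $X_0$ or added earlier in $W$). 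By general position, the attaching regions of the 1- and 2-handles (a pair of points and a framed circle, respectively) can moreover be chosen to lie in level sets disjoint from the slice link, so that $\Sigma$ is a product near each handle attachment.

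Slicing $W$ at regular values between adjacent critical points then factorizes $(W,\Sigma)$ as a composition of elementary pieces. A piece with no critical point but nontrivial link cobordism is a product region containing a cobordism in $I\times S_{std}$, giving a type \eqref{item:product} morphism. A piece containing exactly one handle attachment becomes: a ball creation of type \eqref{item:ball_creation} (from a 0-handle); a connected sum of type \eqref{item:connected_sum} (from a 1-handle joining distinct components of the current boundary); a 1-handle attachment of type \eqref{item:1_handle} (from a 1-handle within a single component); or a cancelling 2-handle of type \eqref{item:2_handle_cancel}. Any departure of the attaching data from the standard forms stated in the proposition, as well as the admissibility condition on intermediate links, is absorbed into adjacent type \eqref{item:product} isotopies, following the general-position arguments of Section~\ref{sec:concrete_functoriality}.

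The principal obstacle is performing the Cerf-theoretic simplification without disturbing $\Sigma$. Because $\Sigma$ has codimension two in $W$, the 1-parameter families of diffeomorphisms implementing the handle slides and cancellations can generically be chosen to avoid $\Sigma$, so the link cobordism is only modified by ambient isotopies that themselves factor as type \eqref{item:product} moves. A related subtlety is that handle cancellations in dimension four require care, but since all cancelled pairs involve handles of differing indices (1/2 and 2/3, say), they admit geometric cancellation after sufficient handle slides, and no delicate Whitney-disk argument on 2-2 intersections is needed.
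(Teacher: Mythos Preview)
Your overall strategy is natural, but the crucial step---invoking ``standard Cerf theory'' to eliminate 3- and 4-handles and to arrange that every 2-handle cancels a 1-handle---is not justified. The parenthetical claim that in dimension four handle slides and pair cancellations ``realize algebraic cancellations of differing-index handle pairs geometrically'' is false as a general principle: this is precisely where the smooth $h$-cobordism theorem breaks down. In particular, 2/3 cancellation (equivalently, after turning upside down, 1/2 cancellation) requires making a circle and a 2-sphere in a 3-manifold meet geometrically once from algebraic intersection $\pm 1$, and the Whitney disk needed to cancel excess intersections is 2-dimensional in a 3-manifold, so it generically meets the sphere again. Handle slides alone do not resolve this. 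Your final paragraph asserts that no Whitney-disk subtlety arises because the pairs have ``differing indices,'' but the problematic case in 4-manifold handle theory \emph{is} the differing-index 2/3 (or 1/2) case, not a hypothetical 2/2 case. Nothing in your argument uses the hypothesis that $X_1$ is a 1-handlebody in a way that would circumvent this.

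The paper avoids this issue by constructing the handle decomposition directly rather than simplifying an arbitrary one. After reducing to $X_0$ connected, it attaches to $X_0$, inside $X_1$, one new 1-handle for each free generator of $\pi_1(X_1)$; because $X_1$ is a 1-handlebody, such 1-handles exist and are unique up to isotopy. Then each original 1-handle of $X_0$ is slid over the new ones so that it represents the trivial element in $\pi_1(X_1)$; again uniqueness of the configuration (a null-homotopic 1-handle core in a 1-handlebody is standard) means it can be cancelled by an ambient 2-handle. This gives the desired decomposition with only 0-, 1-, 2-handles, every 2-handle cancelling, without ever appealing to cancellation of an arbitrary Morse function. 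The surface $\Sigma$ is handled separately and afterward: once the 4-manifold decomposition is fixed, one makes handles thin in the cocore direction and uses general position so that $\Sigma$ meets each 2-handle only in cocores, then splits off product pieces so that $\Sigma$ misses the handles entirely. This is cleaner than trying to make handle slides avoid $\Sigma$ throughout.
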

The usage of ``standard'' in Proposition~\ref{prop:abstract_decomposition} is in a similar sense to that in Proposition~\ref{prop:diffeomorphism_decomposition}. We will not be pedantic about this distinction below.

\begin{figure}
\centering
\includegraphics[width=0.7\linewidth]{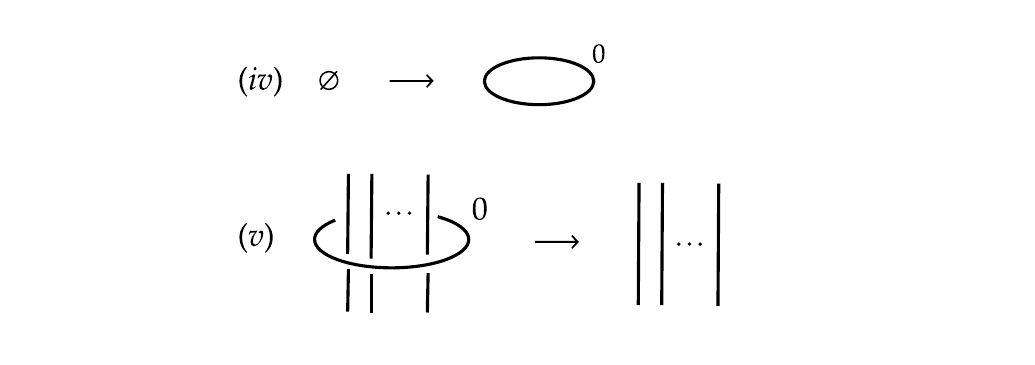}
\caption{Forward: Type \eqref{item:1_handle} and \eqref{item:2_handle_cancel} morphisms in Proposition~\ref{prop:abstract_decomposition}. Backward: Type \eqref{item:3_handle_cancel} and \eqref{item:2_handle} morphisms in Corollary~\ref{cor:abstract_decomposition_reverse}.}
\label{fig:12_handles}
\end{figure}

\begin{proof}
We claim that it suffices to perform the decomposition of a morphism $(W,\Sigma)\colon(S_0,L_0)\to(S_1,L_1)$ on the $4$-manifold level. Indeed, if such a $4$-manifold-level decomposition is given, by general position, further decomposing and making the handles thin in the cocore direction, we may assume that
\begin{itemize}
\item The boundary of each layer intersects $\Sigma$ transversely.
\item Each $1$-handle is disjoint from $\Sigma$.
\item Each $2$-handle intersects $\Sigma$ in a disjoint union of interior cocores. In particular, the attaching region of each $2$-handle is disjoint from $\Sigma$.
\end{itemize}
By further splitting off product pieces, we can assume that $2$-handles have short cores, thus by general position,
\begin{itemize}
\item Each $2$-handle is disjoint from $\Sigma$.
\end{itemize}
Admissibility of boundary links can be achieved by choosing nice parametrizations. The claim follows.

Now we forget about $L_0,L_1,\Sigma$ and exhibit a $4$-manifold-level decomposition for $W$. Write $W=X_1\backslash int(X_0)$ for $4$-dimensional $1$-handlebodies $X_0,X_1$. Without loss of generality, say $W$, hence $X_1$, is nonempty and connected. By the ball creation or the connected sum operation, we may assume $X_0$ to be nonempty and connected as well.

Fix a decomposition of $X_0,X_1$ into handlebodies each with a single $0$-handle. Assume the $0$-handle of $X_0$ is contained in that of $X_1$. Now $\pi_1(X_1)$ is a free group with generators given by the $1$-handles of $X_1$. For each $1$-handle of $X_1$, attach a $1$-handle to $X_0$ inside $int(X_1)$ that represents the corresponding generator in $X_1$ (up to isotopy, there is no choice for this attachment). Next, each original $1$-handle in $X_0$ can slide over these new $1$-handles ambiently in $X_1$ so that it represents the trivial element in $\pi_1(X_1)$. Again, there is only one possible configuration, hence we see after sliding, each original $1$-handle can be canceled by an ambient canceling $2$-handle, making the rest of the $X_0$ complement a product.
\end{proof}

\begin{Cor}\label{cor:abstract_decomposition_reverse}
Any $(i\colon D_{std,1}\hookrightarrow D_{std,0},\Sigma^t\subset W^t)$ in the statement of Theorem~\ref{thm:spin_functoriality_las} can be decomposed into a composition of elementary ones of the following forms.
\begin{enumerate}[(i)]
\item Cobordisms in twisted products: $i\colon D_{std,1}\xrightarrow[\cong]{\tilde\phi}D_{std,0}\hookrightarrow D_{std,0}$ where $\tilde\phi$ is an orientation-preserving diffeomorphism and the second map is a collar-thickening; $\Sigma^t$ is any cobordism between admissible links.\label{item:twisted_product}
\item Ball annihilations: $D_{std,1}=D_{std,0}\#B^4$, $i$ is the $4$-handle attachment that caps off the last $S^3$ boundary component; $\Sigma^t$ is the trace in $W^t$ of an admissible link $L\subset S_{std,1}$ missing the last component.\label{item:ball_annihilation}
\item Separating $3$-handle attachments: $D_{std,1}=\#_{i=1}^{k-2}\natural^{m_i}(D^2\times S^2)\#\natural^{m_{k-1}+m_k}(D^2\times S^2)$, $D_{std,0}=\#_{i=1}^{k}\natural^{m_i}(D^2\times S^2)$, $k\ge2$, $i$ is a standard $3$-handle attachment onto the last summand of $S_{std,1}$ that separates the first $m_k$ and the last $m_{k+1}$ $2$-handles; $\Sigma^t$ is the trace in $W^t$ of an admissible link $L\subset S_{std,1}$ that misses the $3$-handle attaching region.\label{item:3_handle_sep}
\item Canceling $3$-handle attachments: $D_{std,1}=\#_{i=1}^{k-1}\natural^{m_i}(D^2\times S^2)\#\natural^{m_k+1}(D^2\times S^2)$, $D_{std,0}=\#_{i=1}^k\natural^{m_i}(D^2\times S^2)$, $k\ge1$, $i$ is a standard $3$-handle attachment that cancels the last $2$-handle; $\Sigma^t$ is the trace in $W^t$ of an admissible link $L\subset S_{std,1}$ that misses the $3$-handle attaching region.\label{item:3_handle_cancel}
\item Local $0$-framed $2$-handle attachments: $D_{std,1}=\#_{i=1}^{k-1}\natural^{m_i}(D^2\times S^2)\#\natural^{m_k-1}(D^2\times S^2)$, $D_{std,0}=\#_{i=1}^k\natural^{m_i}(D^2\times S^2)$, $k\ge1$, $m_k\ge1$, $i$ is a standard local $0$-framed $2$-handle attachment in the last boundary component; $\Sigma^t$ is the trace in $W^t$ of an admissible link $L\subset S_{std,1}$ that misses the $2$-handle attaching region.\label{item:2_handle}
\end{enumerate}
\end{Cor}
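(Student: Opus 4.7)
The plan is to deduce the corollary from Proposition~\ref{prop:abstract_decomposition} applied directly to $(W^t,\Sigma^t)$, viewed as a morphism in $\mathbf{Links}_1$ from $(-S_{std,0},L_0)$ to $(-S_{std,1},L_1)$, and then to translate each elementary morphism thus produced into one of the elementary operations listed in Cor~(i)--(v) via Poincar\'e-duality on handle indices. Since by hypothesis $W^t$ is a $4$-dimensional relative $1$-handlebody complement, it can be written as $W^t = X_1' \setminus \mathrm{int}(X_0')$ for some $1$-handlebodies $X_0' \subset \mathrm{int}(X_1')$ with $\partial X_0' = -S_{std,0}$, $\partial X_1' = -S_{std,1}$. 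This presentation realizes $(W^t,\Sigma^t)$ as a genuine morphism in $\mathbf{Links}_1$, so Proposition~\ref{prop:abstract_decomposition} applies verbatim, with the admissibility ``moreover'' clause ensuring admissible intermediate boundary links.

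Applying Proposition~\ref{prop:abstract_decomposition} produces a decomposition of $(W^t,\Sigma^t)$ into a composition of Prop-type elementary morphisms. Each such morphism attaches an index-$k$ handle to $X_0'$ in the direction of building $X_1'$; read from the opposite (inward filling) direction as a handle attached to $\partial D_{std,1}$ going outward to realize $D_{std,0}$, this is an index-$(4-k)$ handle attached to $D_{std,1}$. Matching the five cases: Prop~(i) products correspond to Cor~(i) twisted products; Prop~(ii) ball creations (a $B^4$ added disjointly to $X_1'$) correspond to Cor~(ii) ball annihilations (a $4$-handle caps off the resulting extra $S^3$ boundary component of $D_{std,1}$); Prop~(iii) connected sums (a $1$-handle of $W^t$ joining two boundary components of $X_0'$) correspond to Cor~(iii) separating $3$-handles (splitting a single $\#^{m_{k-1}+m_k}(S^1 \times S^2)$ boundary component of $D_{std,1}$ into two); Prop~(iv) $1$-handle attachments (with both endpoints in a single component of $X_0'$) correspond to Cor~(iv) canceling $3$-handles (reducing a $\#^{m_k+1}(S^1 \times S^2)$ boundary component of $D_{std,1}$ to $\#^{m_k}(S^1 \times S^2)$); and Prop~(v) canceling $2$-handles correspond to Cor~(v) local $0$-framed $2$-handle attachments. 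The cobordism $\Sigma^t$ is already controlled layer-by-layer by Proposition~\ref{prop:abstract_decomposition}: it is a trace of an admissible link within every type (ii)--(v) layer and is arbitrary only in type (i) layers.

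The main obstacle is the bookkeeping in these five correspondences, namely verifying that attaching the claimed index-$(4-k)$ handle to $D_{std,1}$ genuinely reproduces the intermediate embedding with the prescribed boundary topology prescribed in Cor~(i)--(v). I expect each case to reduce to a short handle-duality check once the decomposition $W^t = X_1' \setminus \mathrm{int}(X_0')$ is fixed (for the ``trivially separating'' versus ``cancelling'' distinction between (iii) and (iv), one reads off whether the Prop $1$-handle has its two feet in distinct components of $X_0'$ or in a single one); the argument is therefore not deep, but the case-by-case nature is what prevents a one-line proof.
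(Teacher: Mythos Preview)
Your proposal is correct and follows essentially the same strategy as the paper, whose proof is the single sentence ``Turn the moves in Proposition~\ref{prop:abstract_decomposition} upside down and use type~\eqref{item:twisted_product} morphisms to absorb parametrization changes.'' Your handle-duality reading of each Prop~(i)--(v) move as a Cor~(i)--(v) move is exactly what ``turning upside down'' means here, and the bookkeeping you flag at the end (matching standard forms after reparametrization) is precisely what the paper means by absorbing parametrization changes into type~\eqref{item:twisted_product} morphisms.
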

\begin{proof}
Turn the moves in Proposition~\ref{prop:abstract_decomposition} upside down and use type \eqref{item:twisted_product} morphisms to absorb parametrization changes. 
\end{proof}

\subsection{Cobordisms in twisted products}
\label{sec:sixpartone}
We prove Theorem~\ref{thm:spin_functoriality_las} for type \eqref{item:twisted_product} morphisms in Corollary~\ref{cor:abstract_decomposition_reverse}.

We first decompose such a morphism into a composition of the following more elementary ones.
\begin{enumerate}[(a)]
\item A collar-thickening: $i\colon D_{std}\hookrightarrow D_{std}$ is a collar-thickening; $\Sigma^t$ is any cobordism between admissible links.
\item A reparametrization: $i\colon D_{std}\xrightarrow[\cong]{\tilde\phi}D_{std}\hookrightarrow D_{std}$ where $\tilde\phi\in\Diff^+(D_{std})$ with $\phi=\tilde\phi|_{S_{std}}$ $L$-admissible for an admissible link $L\subset S_{std}$, and the second map is a collar-thickening; $\Sigma^t$ is the product cobordism from $L$ to $\phi(L)$;
\item A name change: $i\colon D_{std,1}\xrightarrow[\cong]{\tilde\phi}D_{std,0}\hookrightarrow D_{std,0}$ where $\tilde\phi$ is a standard orientation-preserving diffeomorphism that exchanges some $i$-th and $(i+1)$-th connected summands of $D_{std,1}$, and the second map is a collar thickening; $\Sigma^t$ is the product cobordism from some admissible link $L$ to $\phi(L)$.
\end{enumerate}

Theorem~\ref{thm:spin_functoriality_las} for each type of morphisms above is now a consequence of our previous work.

(a): The statement follows from Theorem~\ref{thm:concrete_functoriality_las}.

(b): The statement follows from Theorem~\ref{thm:spin_repara_las}.

(c): The statement follows from an argument similar to that of Section~\ref{sec:connect_summand_exchange}.

\subsection{Ball annihilations}\label{sec:ball_annilhilation}
We prove Theorem~\ref{thm:spin_functoriality_las} for type \eqref{item:ball_annihilation} morphisms in Corollary~\ref{cor:abstract_decomposition_reverse}.

In the sequence of isomorphisms \eqref{eq:SZ_MN}\listsymbol\eqref{eq:SZ_simp} leading to \eqref{eq:SZ}, the effect of the extra $4$-handle $i\colon D_{std,1}\hookrightarrow D_{std,0}$ comes in nowhere. Therefore, terms in each row \eqref{eq:SZ_MN}\listsymbol\eqref{eq:SZ_simp} for $(D_{std,1},L)$ and $(D_{std,0},L)$ are isomorphic via the obvious isomorphisms. We conclude that $\mathcal S_0^2(W^t;\Sigma^t)$ is equal to $\widetilde{KhR}_2^+(\Sigma)\otimes i_*$ in terms of row \eqref{eq:SZ_simp}, where $\widetilde{KhR}_2^+(\Sigma)$ is induced by the natural isomorphism $\widetilde{KhR}_2^+(\emptyset)\cong\Q$, $\emptyset$ being the empty link in the last $S^3$ factor.

\subsection{Separating \texorpdfstring{$3$}{3}-handles}
We prove Theorem~\ref{thm:spin_functoriality_las} for type morphisms \eqref{item:3_handle_sep} in Corollary~\ref{cor:abstract_decomposition_reverse}.

The argument is as in Section~\ref{sec:ball_annilhilation}, as the separating $3$-handle missing $L$ intertwines with the isomorphisms \eqref{eq:SZ_MN}\listsymbol\eqref{eq:SZ_simp} in the evident way. We conclude that $\mathcal S_0^2(W^t;\Sigma^t)$ is equal to $\widetilde{KhR}_2^+(\Sigma)\otimes i_*$ in terms of row \eqref{eq:SZ_simp}, where $\widetilde{KhR}_2^+(\Sigma)$ is induced by the canonical isomorphism $\widetilde{KhR}_2^+(L_{k-1}\sqcup L_k)\xrightarrow{\cong}\widetilde{KhR}_2^+(L_{k-1})\otimes\widetilde{KhR}_2^+(L_k)$. Here $L_i$ denote the part of $L\subset S_{std,0}$ in the $i$-th boundary component.

\subsection{Canceling \texorpdfstring{$3$}{3}-handles}
We prove Theorem~\ref{thm:spin_functoriality_las} for type morphisms \eqref{item:3_handle_cancel} in Corollary~\ref{cor:abstract_decomposition_reverse}. 

This morphism can be visualized locally as the reverse of (iv) in Figure~\ref{fig:12_handles}. In each colimit summand in the term corresponding to $\mathcal S_0^2(D_{std,1};L)$ in each of the rows \eqref{eq:SZ_MN}\listsymbol\eqref{eq:SZ_slide}, a tensorial factor corresponding to belts coming from the last $2$-handle can be split off. The canceling $3$-handle evaluates the terms in these tensorial factors to scalars by sending $\Khdot$ to $1$ and $1$ to $0$. We conclude that $\mathcal S_0^2(W^t;\Sigma^t)$ is equal to $\widetilde{KhR}_2^+(\Sigma)\otimes i_*$ in terms of row \eqref{eq:SZ_simp}, where $\widetilde{KhR}_2^+(\Sigma)$ is the map that forgets the empty projector $P_{0,0}^\vee$ coming from the last surgery region in $S_{std,1}$.

\subsection{Local \texorpdfstring{$2$}{2}-handles}
\label{sec:sixpartlast}
We prove Theorem~\ref{thm:spin_functoriality_las} for type morphisms \eqref{item:2_handle} in Corollary~\ref{cor:abstract_decomposition_reverse}.

This morphism can be visualized locally as the reverse of (v) in Figure~\ref{fig:12_handles}. In terms of rows \eqref{eq:SZ_MN} or \eqref{eq:SZ_renom}, $\mathcal S_0^2(W^t;\Sigma^t)$ is equal to the inclusion as the term with $(n_+)_m=(n_-)_m=r_m=0$, where $m=\sum_{i=1}^km_i$. In terms of rows \eqref{eq:SZ_res_1} or \eqref{eq:SZ_slide}, $\mathcal S_0^2(W^t;\Sigma^t)$ is equal to the map to this $(n_+)_m=(n_-)_m=r_m=0$ term induced by the unit map $1_{\ell_m}\to P_{\ell_m,0}^\vee$ at the last surgery region in $S_{std,0}$. We conclude that $\mathcal S_0^2(W^t;\Sigma^t)$ is equal to $\widetilde{KhR}_2^+(\Sigma)\otimes i_*$ in terms of row \eqref{eq:SZ_simp}, where $\widetilde{KhR}_2^+(\Sigma)$ is the map induced by the unit map $1_{\ell_m}\to P_{\ell_m,0}^\vee$. This finishes the proof of Theorem~\ref{thm:spin_functoriality_las}.

\section{Remove the spin assumption}\label{sec:nonspin}
In this section, we remove the spin assumption in Theorem~\ref{thm:abstract_spin_functoriality} and promote it to Theorem~\ref{thm:KhR_2+}. To this end, we review the Gluck twist operation in Section~\ref{sec:gluck} and define induced maps on Khovanov skein lasagna modules by Gluck twists in Section~\ref{sec:las_gluck}. This allows us define to $\widetilde{KhR}_2^+$ on objects of $\mathbf{Links}_1$ in Section~\ref{sec:nonspin_obj} and on morphisms of $\mathbf{Links}_1$ in Section~\ref{sec:nonspin_mor}, strengthening results in Section~\ref{sec:abstract_spin_homology} and Section~\ref{sec:abstract_spin_functoriality}, respectively.

\subsection{The Gluck twist operation}\label{sec:gluck}
Let $X$ be a compact oriented $4$-manifold and $S\subset int(X)$ be an embedded unoriented $2$-sphere with trivial normal bundle. The closed tubular neighborhood $\overline{\nu(S)}$ of $S$ is diffeomorphic to $D^2\times S^2$. The boundary $S^1\times S^2$ admits a nonspin diffeomorphism $\tau$ given by a Dehn twist along the $S^2$-factor, or more explicitly $(\theta,x)\mapsto(\theta,\mathrm{rot}_\theta(x))$ where $\mathrm{rot}_\theta\colon S^2\to S^2$ is the rotation-by-$\theta$ map along some fixed axis. The \textit{Gluck twist} of $X$ along $S$, denoted $X_S$, is the $4$-manifold obtained by cutting out $\nu(S)$ and regluing it back by a $\tau$-twist. We make some elementary observations:

\begin{enumerate}
\item Since the natural inclusion $O(2)\times O(3)\subset\Diff(S^1\times S^2)$ is a homotopy equivalence \cite{hatcher1981diffeomorphism}, $\pi_1(\Diff^+(D^2\times S^2))\xrightarrow{\partial}\pi_1(\Diff^+(S^1\times S^2))$ is surjective, so the manifold $X_S$ is well-defined up to a canonical diffeomorphism (up to isotopy, omitted below).
\item If $S$ is isotopic to $S'$, then $X_S$ is diffeomorphic to $X_{S'}$ via a diffeomorphism determined by an isotopy from $S$ to $S'$.
\item If $S\subset X$ is unknotted, then $X_S$ is diffeomorphic to $X$ via some diffeomorphism determined by a bounding $3$-ball $B$.
\item The manifold $X_S$ contains another copy of $S$, and the iterated Gluck twist $(X_S)_S$ is canonically diffeomorphic to $X$.
\item If $S=S_0\cup\cdots\cup S_k$ is disjoint union of unoriented $2$-spheres in $int(X)$ with trivial normal bundles, then the iterated Gluck twist $(\cdots(X_{S_0})_{S_1}\cdots)_{S_k}$ is independent of the ordering of $S_0,\cdots,S_k$ up to canonical diffeomorphisms. Write $X_S$ or $X_{S_0,\cdots,S_k}$ for this iterated Gluck twist.
\item Let $S_0,S_1\subset int(X)$ be disjoint embedded unoriented $2$-spheres with trivial normal bundles, and $\gamma$ be a path in $int(X)$ equipped with a nonvanishing normal vector field, connecting $S_0$ and $S_1$ with interior disjoint from $S_0\cup S_1$, whose tangent and normal vectors at the endpoints are transverse to $S_0,S_1$. Let $S_0\#S_1=S_0\#_\gamma S_1$ be the connected sum of $S_0$ and $S_1$ along $\gamma$. Then $X_{S_0\#S_1}$ is canonically diffeomorphic to $X_{S_0,S_1}$ via some diffeomorphism determined by $S_0,S_1,\gamma$, as can be seen from relative Kirby diagrams of the twisted $\nu(S_0\cup\gamma\cup S_1)$'s rel the common boundary, as shown in Figure~\ref{fig:gluck_conn_sum_kirby}. If one switches the roles of $S_0,S_1$, then the diffeomorphism changes by a barbell diffeomorphism implanted from $\nu(S_0\cup\gamma\cup S_1)$.
\begin{figure}
\centering
\includegraphics[width=0.75\linewidth]{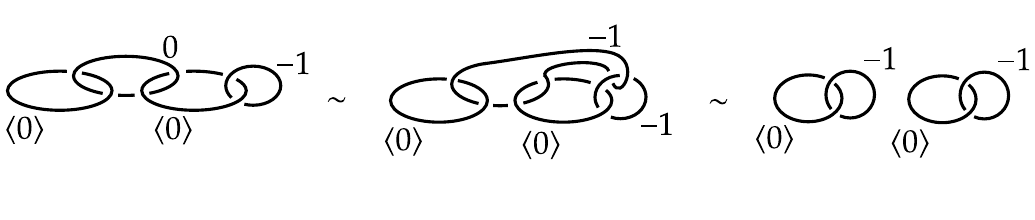}
\caption{Kirby moves exhibiting a diffeomorphism from $(\nu(S_0\cup\gamma\cup S_1))_{S_0\#S_1}$ to $(\nu(S_0\cup\gamma\cup S_1))_{S_0,S_1}$ rel boundary. Each move slides the $2$-handle on the left across one other handle.}
\label{fig:gluck_conn_sum_kirby}
\end{figure}
\item Let $X,S$ be as above and $L\subset\partial X$ be an oriented link. We assign two distinguished isomorphisms
\begin{equation}\label{eq:gluck_H2}
H_2(\tau_S)_\pm\colon H_2^L(X)\xrightarrow{\cong}H_2^L(X_S)
\end{equation}
as follows. Let $\Sigma\subset X$ be an oriented surface bounding $L$ that intersects $S$ transversely. Take $\nu(S)$ small so that $\Sigma\cap\nu(S)$ is a disjoint union of cocore disks. Normally frame $\Sigma$ near $\Sigma\cap\nu(S)$ and give $\Sigma\cap\partial\nu(S)$ the induced framing. Take $\Sigma'\looparrowright X_S$ to be the immersed surface given by $\Sigma$ outside $\nu(S)$, and $\#U$ embedded disks capping $\tau(U)$ off inside the twisted $\nu(S)$, each having self-intersection number $\pm1$. We demand $H_2(\tau_S)_\pm([\Sigma])=[\Sigma']$. One can check that this is well-defined. If $S$ is unknotted, then $H_2(\tau_S)_\pm=\mathrm{id}$, where the codomain and the domain are identified via the canonical diffeomorphism $X_S\cong X$ determined by a given bounding $3$-ball of $S$. Similarly, under the canonical isomorphism $(X_S)_S\cong X$, we have $H_2(\tau_S)_\mp\circ H_2(\tau_S)_\pm=\mathrm{id}_{H_2^L(X)}$ and $H_2(\tau_S)_\pm^2\colon H_2^L(X)\xrightarrow{\cong}H_2^L(X)$ is given by $\alpha\mapsto\alpha\pm(\alpha\cdot[S])[S]$, where we give $S$ an arbitrary orientation to regard $[S]$ as a class in $H_2(X)$.
\end{enumerate}

\subsection{Lasagna induced maps by Gluck twists}\label{sec:las_gluck}
\begin{Thm}\label{thm:gluck}
Let $X$ be a compact oriented $4$-manifold, $L\subset\partial X$ be a framed oriented link, and $S\subset int(X)$ be an embedded unoriented $2$-sphere with trivial normal bundle. There are two natural maps
\begin{equation}\label{eq:gluck}
\tau_{X,L,S,\pm}\colon\mathcal S_0^2(X;L)\to\mathcal S_0^2(X_S;L)
\end{equation}
of $\Q$-vector spaces. Moreover,
\begin{enumerate}[(1)]
\item For any $\alpha\in H_2^L(X)$, $\tau_{X,L,S,\pm}$ restricts to a map $\mathcal S_0^2(X;L;\alpha)\to\mathcal S_0^2(X_S;L;H_2(\tau_S)_\pm(\alpha))$ homogeneous with bidegree shift $(\mp(\alpha\cdot[S])^2/2,\pm(\alpha\cdot[S])^2/2)$. Here $H_2(\tau_S)_\pm$ is the isomorphism \eqref{eq:gluck_H2}.
\item If $S$ is unknotted, then $\tau_{X,L,S,\pm}=\mathrm{id}$, where the codomain and the domain of $\tau_{X,L,S,\pm}$ are identified via the canonical diffeomorphism $X_S\cong X$ determined by a given bounding $3$-ball of $S$.
\item Under the canonical diffeomorphism $(X_S)_S\cong X$, $\tau_{X_S,L,S,\mp}\circ\tau_{X,L,S,\pm}=\mathrm{id}$. In particular, $\tau_{X,L,S,\pm}$ is an isomorphism.
\item If $S=S_0\cup\cdots\cup S_k\subset int(X)$ is a disjoint union unoriented $2$-spheres, then the induced map $\tau_{X_{S_0,\cdots,S_{k-1}},L,S_k,\pm}\circ\cdots\circ\tau_{X,L,S_0,\pm}\colon\mathcal S_0^2(X;L)\to\mathcal S_0^2(X_{S_0,\cdots,S_k};L)$ is independent of the ordering of $S_0,\cdots,S_k$ up to canonical diffeomorphisms. Write for short $\tau_{X,L,S,\pm}$ for this composition.
\end{enumerate}
\end{Thm}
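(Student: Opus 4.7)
The plan is to construct $\tau_{X,L,S,\pm}$ by a local modification of representative lasagna fillings near $S$, and then verify the four listed properties.

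\emph{Construction.} Given a lasagna filling $(\Sigma,v)$ of $(X,L)$ with input balls $B$, I first use general position and the enclosement relation to reduce to the case where $B\cap\nu(S)=\emptyset$ and $\Sigma$ meets $S$ transversely. Then $\Sigma\cap\nu(S)$ is a disjoint union of framed cocore disks $D_1,\ldots,D_n$, and $\Sigma_0:=\Sigma\setminus\mathrm{int}(\nu(S))$ sits canonically in $X_S$ with $n$ framed boundary circles on $\partial\nu(S)_S\cong S^1\times S^2$, obtained as the $\tau$-images of $\partial D_i$. Since $\nu(S)_S\cong D^2\times S^2$ is simply connected, each such circle bounds framed disks inside $\nu(S)_S$; two canonical choices $D_i^+$ and $D_i^-$ are characterized by the property that $D_i\cup_\tau(-D_i^{\pm})$ has self-intersection $\pm 1$ in the abstract double $\nu(S)\cup_\tau\nu(S)_S$. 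Set $\tau_{X,L,S,\pm}(\Sigma,v):=(\Sigma_0\cup\bigcup_i D_i^\pm,\,v)$.

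\emph{Well-definedness.} To descend to $\mathcal S_0^2(X_S;L)$, I check compatibility with the defining relations. Linearity in $v$ and compatibility with enclosement are straightforward after arranging input balls to stay disjoint from $\nu(S)$. Isotopy invariance of the skein rel boundary is the crux: a generic isotopy of $\Sigma$ meets $S$ in a finite sequence of Morse-type tangencies, each creating or annihilating a pair of oppositely oriented cocores joined by a small saddle. Under $\tau_{X,L,S,\pm}$, such a pair becomes two $\pm 1$-framed capping disks together with a saddle piece, whose union lies in a small $4$-ball in $X_S$. By neck-cutting along a $3$-sphere isolating this local contribution and invoking the explicit structure of $\mathcal S_0^2(D^2\times S^2)$ in Example~\ref{ex:D2S2}, together with the shifting automorphisms of Definition~\ref{def:shift_auto}, the tangency modification is shown lasagna-equivalent to the trivial modification, proving isotopy invariance.

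\emph{Verification of properties.} Property (1) is a grading bookkeeping: replacing each $D_i$ by $D_i^\pm$ preserves $\chi(\Sigma)$, and the resulting homology class in $H_2^L(X_S)$ is $H_2(\tau_S)_\pm(\alpha)$ directly from the definition \eqref{eq:gluck_H2}. The bidegree shift $(\mp(\alpha\cdot[S])^2/2,\pm(\alpha\cdot[S])^2/2)$ arises because the lasagna quantum grading depends on the self-intersection of $[\Sigma']$ through the $\pm 1$-framed disks, and the precise coefficient is read off from a standard generating filling in the local model $(D^2\times S^2,L)$ by tracking through Theorem~\ref{thm:SZ}. Property (2) follows because if $S$ bounds $B^3\subset X$, we may isotope $\Sigma$ across $B^3$ to be disjoint from $\nu(S)$, so the construction reduces to the identity under the canonical diffeomorphism $X_S\cong X$. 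Property (3) follows from the observation that two successive $\pm 1$-cap replacements produce disks of algebraic self-intersection $0$ in $(X_S)_S\cong X$, which by the same well-definedness argument are lasagna-equivalent to the original $D_i$. Property (4) is tautological once one checks the canonical diffeomorphisms $X_{S_0,\ldots,S_k}$ are compatible under reordering, which is immediate from disjointness of the $\nu(S_i)$'s.

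\emph{Main obstacle.} The principal technical difficulty is the isotopy invariance step, which rests on a local lasagna identity in $\mathcal S_0^2(D^2\times S^2;L)$ for $L$ a small admissible belt link: a cancelling cocore pair capped with $\pm 1$-framed disks is equivalent to the trivial modification after applying a shifting automorphism of determined class. Establishing this requires the Sullivan--Zhang isomorphism \eqref{eq:SZ} to trade the lasagna module for $\widetilde{KhR}_2^+(L)\otimes\mathcal S_0^2(D^2\times S^2)$, where the identity becomes a tractable equality of elements computable via Example~\ref{ex:D2S2} and the normalization of Definition~\ref{def:shift_auto}; matching the grading shifts consistently across all tangency types and sign conventions is the source of the bookkeeping in the bidegree formula of property (1).
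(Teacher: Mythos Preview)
Your geometric approach of replacing cocores by embedded capping disks $D_i^\pm$ cannot produce the map of the theorem, because it necessarily preserves the $(h,q)$-bidegree. The tridegree of a lasagna filling $(\Sigma',v)$ is $(h_v,\,q_v-\chi(\Sigma'),\,[\Sigma'])$; replacing disks by disks leaves $h_v$ and $\chi$ unchanged, so your map has bidegree shift $(0,0)$. Property~(1), however, demands a shift of $(\mp(\alpha\cdot[S])^2/2,\,\pm(\alpha\cdot[S])^2/2)$, which is nonzero whenever $\alpha\cdot[S]\neq 0$. Your remark that ``the lasagna quantum grading depends on the self-intersection of $[\Sigma']$'' conflates the auxiliary lasagna quantum filtration with the ordinary $(h,q)$-grading in which property~(1) is stated; the latter does not see self-intersections of undecorated skeins.

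The paper's construction is not geometric in this sense. After writing $x=[(\Sigma,v)]$ as the image of the canonical class $1\in\mathcal S_0^2(D^2\times S^2;U)$ under gluing in $(\Sigma\setminus\nu(S),v)$, one \emph{defines} $\tau_\pm(x)$ as the image of a distinguished element $1_\pm\in\mathcal S_0^2(D^2\times S^2;\tau(U))$ under the same gluing. Crucially, $1_\pm$ is not represented by capping disks: it is defined via the isomorphism \eqref{eq:SZ} as $1_\pm\otimes 1$ for a specific Rozansky--Willis class $1_\pm\in\widetilde{KhR}_2^+(\tau(U))$, or equivalently by a skein with one input ball labeled by the generator $1\in KhR_2(T(n,n)_{n_+,n_-})$. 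This Khovanov-theoretic decoration sits in homological degree $-\alpha_T^2/2=\mp(n_+-n_-)^2/2$, which is exactly what produces the required shift. Well-definedness under finger/Whitney moves then rests on Lemma~\ref{lem:twist_belt_link}(2)(3) (behavior of $1_\pm$ under undotted/dotted annular cobordisms), and property~(3) on Lemma~\ref{lem:twist_belt_link}(4) (the gluing identity $1_-\otimes 1_+\mapsto 1$); these are nontrivial facts about Rozansky projectors and torus-link Khovanov homology, not consequences of disk replacement. Your appeal to shifting automorphisms for isotopy invariance is also problematic: if the result were only correct up to a shift depending on the number of tangencies, the map would not descend to $\mathcal S_0^2$ at all.
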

As we will only be using $\tau_{X,L,S,+}$ in the sequel, write for short $\tau_{X,L,S}=\tau_{X,L,S,+}$.

The proof of Theorem~\ref{thm:gluck} takes up most of Section~\ref{sec:las_gluck}.

A precursor of the fact that the Khovanov skein lasagna module (over $\Q$) is invariant under Gluck twists was obtained in \cite[Section~6.10]{ren2024khovanov}, where it was shown that the Khovanov skein lasagna module does not detect potential exotic $4$-spheres obtained from Gluck twists.

\begin{Cor}
Let $X,L,S$ be as in Theorem~7.1. If $S$ is null-homologous, then $\mathcal S_0^2(X;L)$ and $\mathcal S_0^2(X_S;L)$ are isomorphic as graded vector spaces.\qed
\end{Cor}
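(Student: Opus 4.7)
The plan is to deduce the corollary as a direct consequence of Theorem~\ref{thm:gluck}. By part~(3) of that theorem, the map $\tau_{X,L,S,+}\colon\mathcal{S}_0^2(X;L)\to\mathcal{S}_0^2(X_S;L)$ is already an isomorphism of $\mathbb{Q}$-vector spaces, so the content of the corollary is only that it respects the trigrading once the grading sets on both sides are suitably identified.

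First I would take the canonical identification of skein grading torsors to be $H_2(\tau_S)_+\colon H_2^L(X)\xrightarrow{\cong}H_2^L(X_S)$ from \eqref{eq:gluck_H2}. Then Theorem~\ref{thm:gluck}(1) says $\tau_{X,L,S,+}$ takes the $\alpha$-summand of $\mathcal{S}_0^2(X;L)$ into the summand at $H_2(\tau_S)_+(\alpha)$, i.e., it preserves the skein grading after this identification.

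Next I would verify preservation of the bidegree. The null-homologous hypothesis gives $[S]=0\in H_2(X)$, hence $\alpha\cdot[S]=0$ for every $\alpha\in H_2^L(X)$. Substituting into the bidegree shift $(-(\alpha\cdot[S])^2/2,(\alpha\cdot[S])^2/2)$ furnished by Theorem~\ref{thm:gluck}(1) yields $(0,0)$, so $\tau_{X,L,S,+}$ preserves the homological and quantum gradings on each skein summand. Combining these two observations gives the desired isomorphism of graded vector spaces.

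I do not anticipate a real obstacle, as this is pure bookkeeping once Theorem~\ref{thm:gluck} is available. The only point worth flagging is that the bijection $H_2(\tau_S)_+$ need not coincide with the identity even when $[S]=0$ (one still cuts and reglues along $S$, and the recipe in \eqref{eq:gluck_H2} involves self-intersection $+1$ caps in the twisted $\nu(S)$), so one must state explicitly that $H_2(\tau_S)_+$ is the chosen identification between the two skein grading torsors.
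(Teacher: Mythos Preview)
Your proposal is correct and matches the paper's approach: the corollary is marked with \qed\ in the paper, i.e.\ it is regarded as immediate from Theorem~\ref{thm:gluck}, and your argument spells out exactly the intended reasoning using parts~(1) and~(3).
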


\begin{Cor}
Let $X,L,S$ be as in Theorem~7.1. For any $\alpha\in H_2^L(X)$, we have an isomorphism $$\mathcal S_0^2(X;L;\alpha+(\alpha\cdot[S])[S])\cong\mathcal S_0^2(X;L;\alpha)$$ with bidegree shift $((\alpha\cdot[S])^2,-(\alpha\cdot[S])^2)$.
\end{Cor}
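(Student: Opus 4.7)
The plan is to iterate the Gluck twist isomorphism from Theorem~\ref{thm:gluck} twice along the same sphere and exploit the identity $H_2(\tau_S)_{+}^{2}(\alpha)=\alpha+(\alpha\cdot[S])[S]$ recorded in observation~(7) of Section~\ref{sec:gluck}. Concretely, I would consider the composition
\[
\tau_{X_S,L,S,+}\circ\tau_{X,L,S,+}\colon \mathcal S_0^2(X;L)\to\mathcal S_0^2(X_S;L)\to\mathcal S_0^2((X_S)_S;L)
\]
and use the canonical diffeomorphism $(X_S)_S\cong X$ from observation~(4) to regard this composition as an automorphism of $\mathcal S_0^2(X;L)$.

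By Theorem~\ref{thm:gluck}(1), on the skein grading this automorphism sends $\alpha$ to $H_2(\tau_S)_{+}^2(\alpha)=\alpha+(\alpha\cdot[S])[S]$. For the bidegree shift, I would simply add the contributions of the two factors: the first contributes $(-(\alpha\cdot[S])^2/2,+(\alpha\cdot[S])^2/2)$, and the second, evaluated on the intermediate class $\beta=H_2(\tau_S)_+(\alpha)\in H_2^L(X_S)$, contributes $(-(\beta\cdot[S])^2/2,+(\beta\cdot[S])^2/2)$. Taking the inverse automorphism (which exists by Theorem~\ref{thm:gluck}(3)) then yields an isomorphism in the direction stated in the corollary, with bidegree shift $((\alpha\cdot[S])^2,-(\alpha\cdot[S])^2)$, provided $(\beta\cdot[S])^2=(\alpha\cdot[S])^2$.

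The one non-cosmetic point is the verification that $(\beta\cdot[S])^2=(\alpha\cdot[S])^2$, where $[S]$ on the left is the class of $S\subset X_S$. This is essentially where all the bookkeeping lives. I would argue as follows: since $S$ has trivial normal bundle, a parallel pushoff $S'$ of $S$ in $X$ is disjoint from $S$, so from the definition of $H_2(\tau_S)_+$ it is sent to its own pushoff class in $X_S$; in particular $[S]\in H_2(X_S)$ is represented by such a pushoff. If $\Sigma$ is a surface representing $\alpha$ in $X$, transverse to $S$ in $n=\alpha\cdot[S]$ signed points, then $\beta$ is represented by $\Sigma'$ obtained by removing the $|n|$ cocore disks inside $\nu(S)$ and replacing them with $|n|$ capping disks in the twisted $\nu(S)$. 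Each such disk meets a pushoff of $S$ transversely in exactly one point, and counting signs gives $\beta\cdot[S]=\pm n$, hence $(\beta\cdot[S])^2=(\alpha\cdot[S])^2$.

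Assembling these ingredients completes the proof. The remaining steps (that the composition is a map of skein modules, that the bidegree shifts add, that inversion flips the sign of the shift) are formal consequences of Theorem~\ref{thm:gluck}, so there is no further obstacle.
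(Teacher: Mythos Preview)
Your proposal is correct and follows exactly the same approach as the paper, which simply states that $(\tau_{X_S,L,S}\circ\tau_{X,L,S})^{-1}$ gives the desired isomorphism. You have spelled out the bidegree computation that the paper leaves implicit; your verification that $(\beta\cdot[S])^2=(\alpha\cdot[S])^2$ is fine, though it can be streamlined by taking the pushoff $S'$ to lie just \emph{outside} $\nu(S)$, so that $\Sigma'\cap S'=\Sigma\cap S'$ directly and hence $\beta\cdot[S]=\alpha\cdot[S]$ on the nose.
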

\begin{proof}
$(\tau_{X_S,L,S}\circ\tau_{X,L,S})^{-1}$ gives such an isomorphism.
\end{proof}

We take a slight detour before giving the construction of \eqref{eq:gluck}.

A belt link in $S^1\times S^2$, in the sense of Section~\ref{sec:diff_rel_bdy}, is a framed oriented link that is isotopic to a union of some even number of $S^1$ fibers with standard framing and various orientations. We defined, for a standard belt link $U$ as shown on the left of Figure~\ref{fig:belt_link}, a distinguished class $1\in\mathcal S_0^2(D^2\times S^2;U)$ as the class $1\otimes1\in\widetilde{KhR}_2^+(U)\otimes\mathcal S_0^2(D^2\times S^2)$ under the isomorphism \eqref{eq:SZ}. It has homological degree $0$, quantum degree $-\#U$, and skein degree $\alpha_U$. Alternatively, it is the class represented by the standard union of cocores that cap off $U$ as a lasagna filling without input balls. We claim that in fact every collection of disks capping off $U$ with framing and orientation represents the class $1$, and consequently, as every belt link is isotopic to a standard one, there is a well-defined element $1\in\mathcal S_0^2(D^2\times S^2;U)$ for every belt link $U$ which is independent of the parametrization of the pair $(D^2\times S^2,U)$. When $U=\emptyset$ there is nothing to show. When $U\ne\emptyset$, to see the claim, use Gabai's $4$-dimensional lightbulb theorem \cite[Theorem~10.1]{gabai20204} to isotope one component $C_1$ of the collection of disks to standard position. The complement of $\nu(C_1)$ is diffeomorphic to a $4$-ball, in which the other $\#U-1$ components of $U$ form an unlink on the boundary, capped off by other disks in the collection. These $\#U-1$ disks evaluate to the standard element $1\otimes\cdots\otimes1$ on the boundary (this can be seen by passing to Lee homology). Hence, one can replace these $\#U-1$ disk components by standard cocores without changing the evaluation, and the claim follows.

A \textit{twisted belt link} is a framed oriented link in $S^1\times S^2$ that is $\tau(U)$ for some belt link $U\subset S^1\times S^2$. A \textit{standard positive/negative twisted belt link} is a twisted belt link that takes a standard form as shown in Figure~\ref{fig:twisted_belt_link}, which in particular is admissible. For a standard positive/negative twisted belt link $T$, define a distinguished class $1_\pm\in\mathcal S_0^2(D^2\times S^2;T)$ as the class $1_\pm\otimes1\in\widetilde{KhR}_2^+(T)\otimes\mathcal S_0^2(D^2\times S^2)$ under the isomorphism \eqref{eq:SZ}. Here, if $U$ denotes the standard belt link with strands having orientations matching those of $T$, then we have an isomorphism of bigraded vector spaces $\widetilde{KhR}_2^+(T;\Z)\cong\widetilde{KhR}_2^+(U;\Z)$ via termwise simplifying Reidemeister I, II maps (see \cite[Lemma~2.26]{willis2021khovanov}), well-defined up to sign, and $1_\pm\in\widetilde{KhR}_2^{+,0,-\#T}(T)$ is defined as the element corresponding to $1\in\widetilde{KhR}_2^{+,0,-\#U}(U)$ under this isomorphism rationalized. To fix the sign, see Appendix~\ref{sec:sign_gluck}. The class $1_\pm\in\mathcal S_0^2(D^2\times S^2;T)$ has homological degree $-\alpha_T^2/2$, quantum degree $\alpha_T^2/2-\#T$, and skein degree $\alpha_T$.

Alternatively, if $T$ is a standard positive belt link, write $n=\#T$, and say $T$ has $n_+$ (resp. $n_-$) strands oriented upward (resp. downward). Then up to sign, $1_+\in\widetilde{KhR}_2^+(T)$ is the image of $1$ under $\Z\cong\widetilde{KhR}_2^{+,0,-n}(T(n,n)_{n_+,n_-};\Z)\xrightarrow[\cong]{\iota}\widetilde{KhR}_2^{+,0,-n}(T;\Z)\to\widetilde{KhR}_2^{+,0,-n}(T)$ where $T(n,n)_{n_+,n_-}$ is the torus link $T(n,n)$ in $S^3$ with orientation on $n_-$ of the strands reversed, $\iota$ is the unit map creating the Rozansky projector which is an isomorphism by the proofs in \cite{manolescu2023generalization}, and the first isomorphism is due to \cite[Theorem~3]{stovsic2009khovanov}, suitably renormalized. To see this alternative description, it suffices to show that $1\in\widetilde{KhR}_2^+(U;\Z)\cong\widetilde{KhR}_2^+(T;\Z)$ is primitive, which follows from the fact that the Rozansky projector is idempotent up to homotopy. By an abuse of notation, below we also write frequently $1\in\widetilde{KhR}_2^+(T(n,n)_{n_+,n_-})$ for $1\in\Z\cong\widetilde{KhR}_2^{+,0,-n}(T(n,n)_{n_+,n_-};\Z)$ rationalized, and $1\in KhR_2^+(T(n,n)_{n_+,n_-})$ for its image under the renormalization. The sign of $1\in\widetilde{KhR}_2^+(T(n,n)_{n_+,n_-})$ is fixed by demanding it to map to $1_+\in\widetilde{KhR}_2^+(T)$ under the unit map. By this alternative description, the element $1_+\in\mathcal S_0^2(D^2\times S^2;T)$ is represented by the standard lasagna filling $(I\times T(n,n)_{n_+,n_-},1)$ of $(D^2\times S^2,T)$ with one input ball being a shrunk $0$-handle, input link $T(n,n)_{n_+,n_-}$ with label $1\in KhR_2^+(T(n,n)_{n_+,n_-})\cong(t^{-1}q)^{(n_+-n_-)^2/2}\widetilde{KhR}_2^+(T(n,n)_{n_+,n_-})$, and a product skein contained in a collar neighborhood of the boundary of the $0$-handle.

\begin{figure}
\centering
\includegraphics[width=0.65\linewidth]{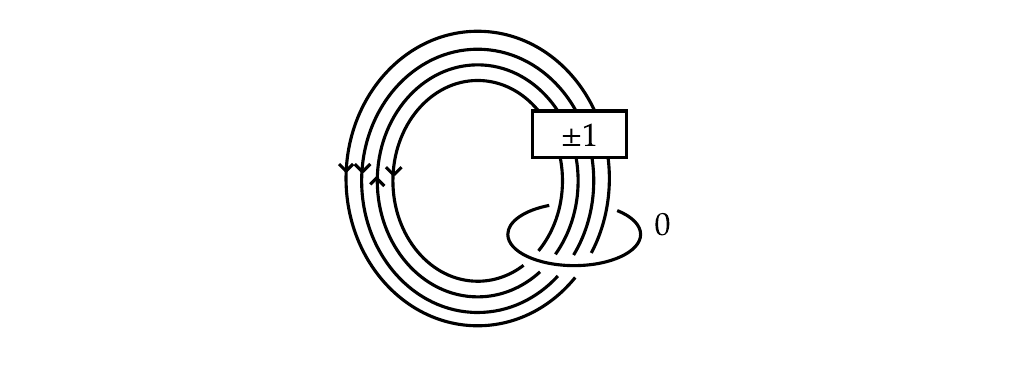}
\caption{Diagram of a standard positive/negative twisted belt link in $S^1\times S^2$.}
\label{fig:twisted_belt_link}
\end{figure}

\begin{Lem}\label{lem:twist_belt_link}
\begin{enumerate}[(1)]
\item For a standard positive/negative twisted belt link $T\subset S^1\times S^2$, the class $1_\pm\in\mathcal S_0^2(D^2\times S^2;T)$ is independent of the parametrization of the pair $(D^2\times S^2,T)$. In particular, for any twisted belt link $T$, there are two distinguished classes $1_\pm\in\mathcal S_0^2(D^2\times S^2;T)$.
\item If $\Sigma\colon T\to T'$ is an annular cobordism annihilating two components of a twisted belt link $T\subset S^1\times S^2$, where the annihilating annulus component is $\partial$-parallel, then $\mathcal S_0^2(I\times S^1\times S^2;\Sigma)\colon\mathcal S_0^2(D^2\times S^2;T)\to\mathcal S_0^2(D^2\times S^2;T')$ maps $1_\pm$ to $0$.
\item Let $\Sigma\colon T\to T'$ be the cobordism in (2) with an extra dot on the annihilating annulus component, then $\mathcal S_0^2(I\times S^1\times S^2;\Sigma)\colon\mathcal S_0^2(D^2\times S^2;T)\to\mathcal S_0^2(D^2\times S^2;T')$ maps $1_\pm$ to $1_\pm$.
\item Let $T_0\subset S^1\times S^2$ be a standard negative twisted belt link. Let $T_1\subset S^1\times S^2$ (resp. $U\subset S^1\times S^2$) be the standard positive twisted belt link (resp. standard belt link) with the same number of components (with orientations) as $T_0$. The gluing map $\mathcal S_0^2(D^2\times S^2;T_0)\otimes\mathcal S_0^2(D^2\times S^2;T_1)\to\mathcal S_0^2(D^2\times S^2;U)$ as shown in Figure~\ref{fig:1_+_1_-_to_1} maps $1_-\otimes1_+$ to $1$.
\begin{figure}
\centering
\includegraphics[width=0.6\linewidth]{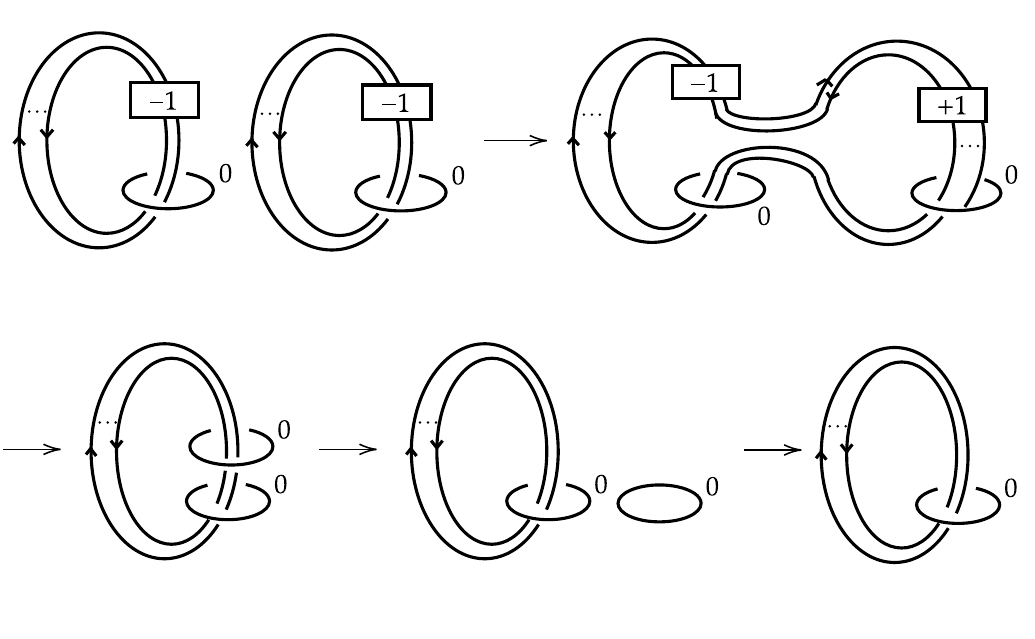}
\caption{A gluing map $(D^2\times S^2\sqcup D^2\times S^2;T_0\sqcup T_1)\to(D^2\times S^2;U)$.}
\label{fig:1_+_1_-_to_1}
\end{figure}
\item Let $U_0,U_1\subset S^1\times S^2$ be belt links and $T_0,T_1\subset S^1\times S^2$ be twisted belt links, so that $T_i$ has the same number of components (with orientations) as $U_i$, $i=0,1$, as shown in Figure~\ref{fig:1_1_1_to_1_+_1_+}. Suppose $U_0$ and $U_1$ together have $n$ components, of which $n_+$ are oriented upward, and $n_-$ downward. The image of $1\otimes1\otimes1\in\mathcal S_0^2(D^2\times S^2;U_0)\otimes\mathcal S_0^2(D^2\times S^2;U_1)\otimes\widetilde{KhR}_2^+(T(n,n)_{n_+,n_-})$ in $\mathcal S_0^2(D^2\times S^2;T_0)\otimes\mathcal S_0^2(D^2\times S^2;T_1)$ under the gluing map shown in Figure~\ref{fig:1_1_1_to_1_+_1_+} is equal to $1_+\otimes\mathrm{id}_\alpha(1_+)$ plus terms with lower lasagna quantum gradings, for some $\alpha\in H_2(D^2\times S^2)$.
\begin{figure}
\centering
\includegraphics[width=0.7\linewidth]{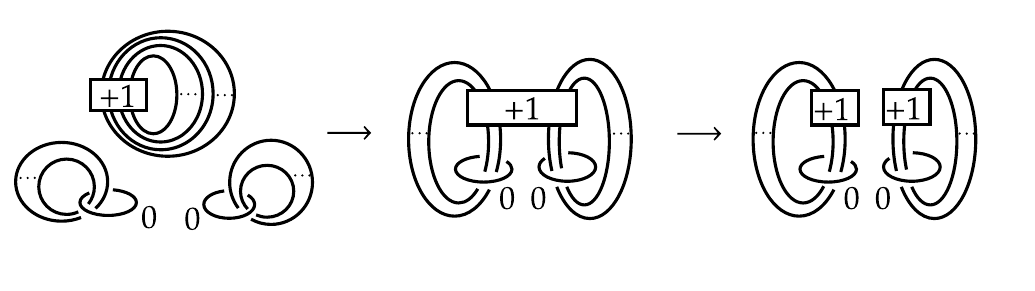}
\caption{A gluing map $(D^2\times S^2\sqcup D^2\times S^2\sqcup B^4;U_0\sqcup U_1\sqcup T(n,n)_{n_+,n_-})\to(D^2\times S^2\natural D^2\times S^2;T_0\cup T_1)$. The second map slides the strands on the left over the $2$-handle on the right.}
\label{fig:1_1_1_to_1_+_1_+}
\end{figure}
\end{enumerate}
\end{Lem}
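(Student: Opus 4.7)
The unifying strategy is to use the alternative description of $1_\pm$ stated just before the lemma: $1_\pm$ is represented by the standard lasagna filling with one input ball (a shrunk $0$-handle) containing the torus link $T(n,n)_{n_+,n_-}$ with decoration $1\in KhR_2^+(T(n,n)_{n_+,n_-})$, together with a standard product skein in a collar of the boundary. Under this description, every operation on $1_\pm$ in the lemma reduces---after applying the enclosement relation---to a $KhR_2$-level cobordism computation in $S^3$ on the distinguished class $1$ of a torus link, combined with bidegree constraints from the vanishing $\widetilde{KhR}_2^{+,0,<-|\ell|}(1_\ell)=0$ and the one-dimensionality $\widetilde{KhR}_2^{+,0,-|\ell|}(1_\ell)\cong\Q\cdot 1$ recorded in the proof of Lemma~\ref{lem:barbell_las}.

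For Part~(1), since $D^2\times S^2$ has $k=m=1$, Theorem~\ref{thm:diff_D2S2_rel_boundary} gives $\Diff_\partial(D^2\times S^2)=\Diff_{\partial,loc}(D^2\times S^2)$ modulo isotopy, so any boundary-fixing diffeomorphism can be localized away from a given lasagna filling and acts trivially on $\mathcal S_0^2(D^2\times S^2;T)$. For parametrization changes acting nontrivially on the boundary, Proposition~\ref{prop:diff_spin_S_std} identifies the spin mapping class group of $S^1\times S^2$ as generated by the single inversion; applying Theorem~\ref{thm:spin_repara_las} and tracing through Section~\ref{sec:inversion} yields $\alpha=0$, so the induced isomorphism on $gr_0\mathcal S_0^2$ under \eqref{eq:SZ} is $\widetilde{KhR}_2^+(\phi)\otimes\mathrm{id}$. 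The action of $\widetilde{KhR}_2^+(\phi)$ on the one-dimensional $\widetilde{KhR}_2^{+,0,-n}(T)$ is $\pm 1$, and the sign fix of Appendix~\ref{sec:sign} rules out the minus. Since $1_\pm$ is concentrated at the top of the lasagna quantum filtration, well-definedness at $gr_0$ lifts to well-definedness on the nose.

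For Parts~(2) and (3), I enclose the annular cobordism into the input ball of the $T(n,n)$-representative of $1_\pm$, reducing the problem to a cobordism-induced map on $KhR_2$ from $T(n,n)_{n_+,n_-}$ to a torus-link model of $T'$, applied to the distinguished class $1$. For the undotted annulus (Part~(2)), the bidegree shift is $(0,0)$, so the image lies in $\widetilde{KhR}_2^{+,0,-n}$ of the target torus link with $n-2$ components, which vanishes by the Sto\v{s}i\'{c} bound since $-n<-(n-2)$; hence $1_\pm\mapsto 0$. For the dotted annulus (Part~(3)), each dot contributes a $q$-shift of $+2$ in the $\widetilde{KhR}_2^+$ convention (so a cobordism with $k$ dots shifts by $(0,-\chi+2k)$), and the image lands in $\widetilde{KhR}_2^{+,0,-(n-2)}\cong\Q\cdot 1$. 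A direct Bar-Natan computation---the merge-then-cap-with-dot on the two annihilated components implements $a\otimes b\mapsto\epsilon(\Khdot\cdot ab)$, sending the relevant lowest-weight contribution of the primitive class $1$ to the primitive class $1$ of the quotient torus link---identifies the image up to sign, pinned down by Appendix~\ref{sec:sign}.

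For Parts~(4) and (5), I enclose the gluing cobordisms into the input balls of the $T_i$-representatives and compute the resulting $KhR_2$-level cobordism. For Part~(4), the cobordism joins two torus links of opposite twist into the torus-link model of $U$, and by bidegree both $1\otimes 1$ in the source and $1$ in the target sit in the one-dimensional lowest-quantum-grading component, so the image is a nonzero scalar multiple of $1$; the scalar equals $+1$ by either a Lee-spectral-sequence evaluation or a direct trace against the dual Lee class, with signs again resolved in Appendix~\ref{sec:sign}. For Part~(5), the handleslide's action on $H_2(D^2\times S^2\natural D^2\times S^2)$ produces the shifting $\mathrm{id}_\alpha$ with $\alpha$ the class of the $2$-handle's trace, as analyzed in Section~\ref{sec:handleslides}; the leading $gr_0$-piece is identified with $1_+\otimes 1_+$ via the $2$-to-$4$-projector equivalence of Proposition~\ref{prop:projector_properties}(\ref{item:2.6.7}) applied termwise to the torus-link representatives, while the lower-lasagna-quantum-grading terms arise from the $\Khdot$-decorated saddles introduced during the handleslide, analogous to the end of Section~\ref{sec:concrete_handleslide}. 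The main technical obstacle throughout is sign-tracking---particularly the scalar in Part~(4) and the precise identification of $\alpha$ in Part~(5)---handled by passing to the $\gl_2$ webs and foams formalism of Appendix~\ref{sec:sign}.
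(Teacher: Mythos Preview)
Your overall strategy---representing $1_\pm$ via the $T(n,n)$-decorated lasagna filling and reducing to $KhR_2$-level computations---matches the paper's, but Part~(1) has a genuine gap. Your dichotomy ``rel boundary'' versus ``nontrivial spin mapping class'' is incomplete: a diffeomorphism $\tilde\phi\in\Diff^+(D^2\times S^2)$ whose boundary restriction lies in the identity component of $\Diff^{spin}(S^1\times S^2)$ need not be isotopic rel boundary to the identity. It may act as an isotopy insertion that, for instance, permutes the strands of $T$ by a braid. The paper decomposes $\tilde\phi$ into four pieces---an inversion-or-identity $\tilde\phi_1$, a strand-permuting braid $\tilde\phi_2$, an isotopy insertion $\tilde\phi_3$ fixing $T'$ setwise (found via Waldhausen), and a rel-boundary piece $\tilde\phi_4$---and checks each separately. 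The braiding and inversion steps require new ``twisted'' commutativity regions $TB$, $TD$, $TE$, which are analogs of regions $B$, $D$, $E$ from Section~\ref{sec:concrete_functoriality} but with a $\pm1$ full twist in place of the belt circles; these are proved by the same termwise method, with a nontrivial choice of chain map (from \cite{chen2025flip}) needed to make the $TD$ argument go through. Your appeal to Section~\ref{sec:inversion} handles only $\tilde\phi_1$, and your lift from $gr_0$ to ``on the nose'' is unnecessary once one observes (as the paper does) that for the relevant elementary diffeomorphisms the induced map is \emph{exactly} $\widetilde{KhR}_2^+(\phi)\otimes\mathrm{id}$, not merely on the associated graded.

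For Parts~(3)--(5) the paper's arguments are again region-based (regions $TC$, $TX$, $TG$ respectively) rather than the direct $KhR_2$ computations you sketch. In particular, your Part~(4) claim that the scalar is nonzero via ``Lee spectral sequence evaluation or a direct trace'' is not a proof; the paper instead shows that the twist-cancellation composite is termwise the identity foam (region $TX$). For Part~(5), the reduction passes through an intermediate projector-level diagram (Figure~\ref{fig:gluck_sum_check}) before reaching region $TG$; your invocation of Proposition~\ref{prop:projector_properties}(\ref{item:2.6.7}) and Section~\ref{sec:handleslides} is in the right spirit but skips this translation. Part~(2) is correct as you state it and matches the paper.
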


\begin{proof}
(1) Let $T,T'\subset S^1\times S^2$ be standard twisted belt links, both of which are positive/negative, and $\tilde\phi\colon D^2\times S^2\to D^2\times S^2$ be an orientation-preserving diffeomorphism mapping $T$ to $T'$ with framing and orientation. We need to show that $\mathcal S_0^2(\widetilde{\phi})\colon\mathcal S_0^2(D^2\times S^2;T)\to\mathcal S_0^2(D^2\times S^2;T')$ sends $1_\pm$ to $1_\pm$.

We decompose $\tilde\phi\colon D^2\times S^2\to D^2\times S^2$ into the composition of the following four diffeomorphisms:

\begin{enumerate}[(i)]
\item A diffeomorphism $\tilde\phi_1$ of $D^2\times S^2$ with $\phi_{1,*}=\phi_*$ on $H_*(S^1\times S^2)$ (here and below, dropping the tilde indicates restricting to the boundary) that sends $T$ to a standard positive/negative twisted belt link $T_1$, which is either
\begin{enumerate}[(a)]
\item the identity diffeomorphism; or
\item a diffeomorphism that sends the $0$-handle (resp. $2$-handle) of $D^2\times S^2$ to itself, preserving the core and the cocore of the $2$-handle but reversing each of their orientations.
\end{enumerate}
\item A diffeomorphism $\tilde\phi_2$ that is the identity on the $2$-handle of $D^2\times S^2$ as well as on a shrunk copy of the $0$-handle, and the trace of a braiding away from the $2$-handle attaching region that permutes strands of $T_1$ in a collar neighborhood of the boundary of the $0$-handle, that sends $T_1$ to $T'$.
\item A diffeomorphism $\tilde\phi_3$ that is the identity on a shrunk copy of $D^2\times S^2$, and the trace of an isotopy of $S^1\times S^2$ in a collar neighborhood of $\partial(D^2\times S^2)$ which preserves $T'$ as a set, so that $\phi_3\circ\phi_2\circ\phi_1=\phi$.
\item A diffeomorphism $\tilde\phi_4$ that is rel boundary.
\end{enumerate}

Here, if $T\ne\emptyset$, changing the braiding in (ii) by a pure braid if necessary, one can always find $\tilde\phi_3$ in (iii) as claimed, thanks to Waldhausen's classical work \cite{waldhausen1968irreducible} applied to the Haken manifold $S^1\times S^2\backslash T'$. If $T=\emptyset$, the existence of $\tilde\phi_3$ is trivial.

We show that each $\tilde\phi_i$ sends $1_\pm$ to $1_\pm$.

If $\tilde\phi_1\ne\mathrm{id}$, then we may choose it so that its effect on $T$ is the composition of the inverse of a standard inversion as in Proposition~\ref{prop:diffeomorphism_decomposition}(v), and the trace of some isotopy in $S_{std}$ that consists of only overpass/underpass moves and isotopies via admissible links, in the sense of Proposition~\ref{prop:concrete_decomposition}(i)(v). We note that in the proofs of Theorem~\ref{thm:concrete_functoriality_las} and Theorem~\ref{thm:spin_repara_las}, taking associated graded spaces and introducing shifting isomorphism were only necessary for the handleslide move (Proposition~\ref{prop:concrete_decomposition}(vi)), the barbell move and the trace of an isotopy involving handleslides (special cases of Proposition~\ref{prop:diffeomorphism_decomposition}(i)(ii)). Since $\tilde\phi_1^{-1}$ admits a decomposition in which none of these special cases arise, the proofs of these theorems show that $\mathcal S_0^2(\tilde\phi_1)$ is exactly equal to $\widetilde{KhR}_2^+(\phi_1^{-1})\otimes\mathrm{id}\colon\widetilde{KhR}_2^+(T)\otimes\mathcal S_0^2(D^2\times S^2)\to\widetilde{KhR}_2^+(T_1)\otimes\mathcal S_0^2(D^2\times S^2)$ under the isomorphism \eqref{eq:SZ}, where $\widetilde{KhR}_2^+(\phi_1^{-1})$ is the composite map
\begin{figure}[H]
\centering
\includegraphics[width=0.8\linewidth]{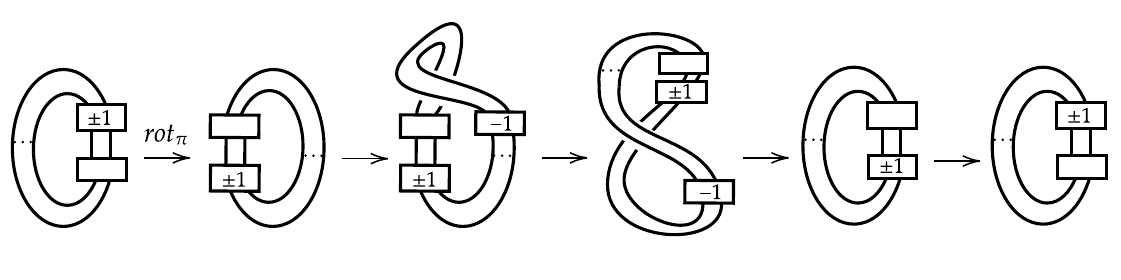}.
\end{figure}

To check that $\widetilde{KhR}_2^+(\phi_1^{-1})$ sends $1_\pm$ to $1_\pm$, by exploiting strategies similar to those in Section~\ref{sec:concrete_functoriality}, one reduces to show the commutativity of regions $TB,TE$ shown as follows:
\begin{figure}[H]
\centering
\includegraphics[width=0.7\linewidth]{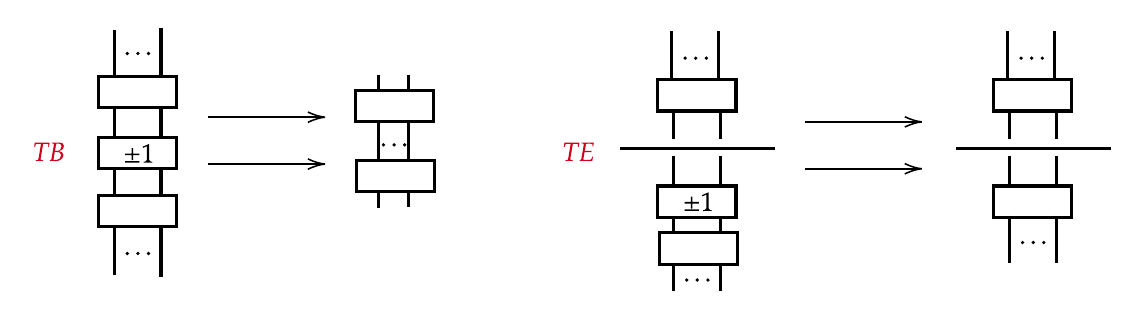}.
\end{figure}
Here, in each region, the second map is given termwise by absorbing the $\pm1$ twist into the second projector, and the first map is given termwise by pushing the $\pm1$ twist up and absorbing it into the first projector.

The commutativity of these regions is proved similarly as regions $B,E$ in Section~\ref{sec:concrete_functoriality}. For region $TB$, the composition of the inverse of the first map with the second map is given termwise by rotating the middle crossingless unlink by $2\pi$, hence is the identity chain map up to sign. To fix the sign, see Appendix~\ref{sec:sign_gluck}. This proves the commutativity of region $TB$. The proof of region $TE$ exactly follows that of region $E$. This proves that $\mathcal S_0^2(\tilde\phi_1)$ sends $1_\pm$ to $1_\pm$.

Analogously, $\mathcal S_0^2(\tilde\phi_2)$ is exactly equal to $\widetilde{KhR}_2^+(\phi_2^{-1})\otimes\mathrm{id}$, where $\widetilde{KhR}_2^+(\phi_2^{-1})\colon\widetilde{KhR}_2^+(T_1)\to\widetilde{KhR}_2^+(T')$ is the map that braids the strands according to $\tilde\phi_2$, shown as a composition of maps of the form (the braiding can happen between any two adjacent strands, either positively or negatively; only a special case is depicted)
\begin{figure}[H]
\centering
\includegraphics[width=0.7\linewidth]{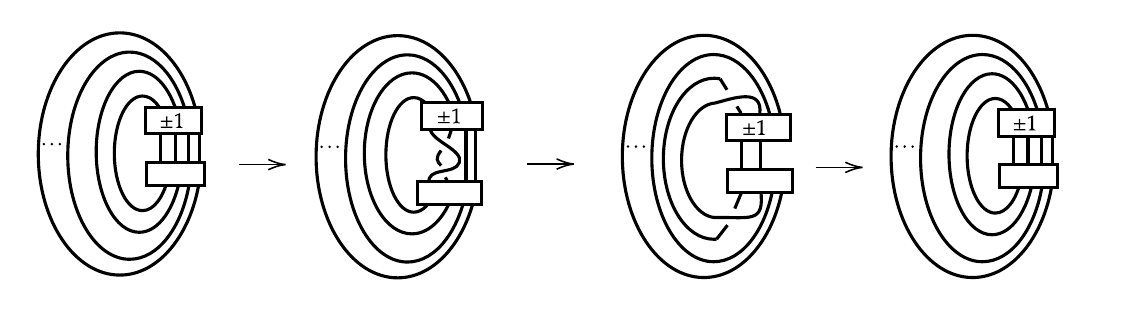}.
\end{figure}
To check that $\widetilde{KhR}_2^+(\phi_2^{-1})$ maps $1_\pm$ to $1_\pm$, in addition to the commutativity of region $TB$ above, one also need the commutativity of region $TD$ (and its mirrored version):
\begin{figure}[H]
\centering
\includegraphics[width=0.7\linewidth]{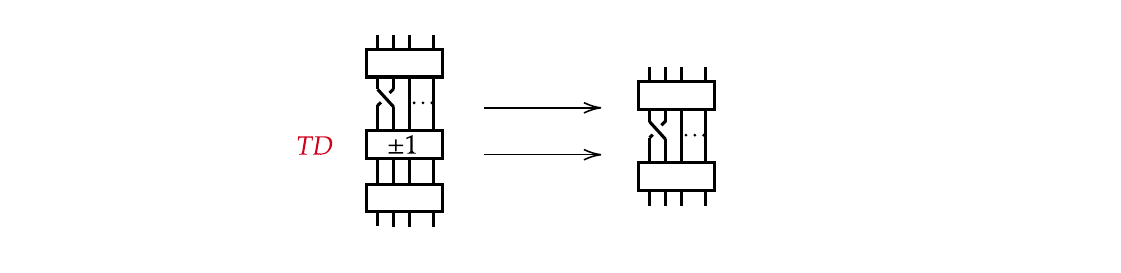}.
\end{figure}
In order to apply the strategy for region $D$, we need to show that the chain map that pushes the $\pm1$ twist up past the crossing (called $X$) is nondecreasing in the homological degree contributed by the distinguished crossing $X$. This can be realized by choosing the twist-pushing map to be the composition of the creation of a canceling pair of twists above $X$ (a $\pm1$ twist above a $\mp1$ twist) and a rotation map that cancels the $\mp1$ twist above and the $\pm1$ twist below $X$, carefully chosen as the composition of two ``$\varphi$'' maps in \cite[Lemma~4.5]{chen2025flip}. This proves that $\mathcal S_0^2(\tilde\phi_2)$ sends $1_\pm$ to $1_\pm$.

The diffeomorphism $\tilde\phi_3$ induces the identity map, since any lasagna filling of $(D^2\times S^2,T')$ can be isotoped to be $I\times T'$ in a collar neighborhood of the boundary.

Finally, by Gabai's $4$-dimensional lightbulb theorem (or Theorem~\ref{thm:diff_D2S2_rel_boundary}), $\tilde\phi_4$ is isotopic rel boundary to a diffeomorphism supported on a local $4$-ball, hence also induces the identity map.\smallskip

(2) Pick $T,T'$ to be standard positive or negative twisted belt links and $\Sigma$ to be standard. Then, by the same argument used for $\tilde\phi_1,\tilde\phi_2$ in (1) above, the induced map takes the form $\widetilde{KhR}_2^+(\Sigma)\otimes\mathrm{id}$ under the isomorphism \eqref{eq:SZ}. This claim now follows from the fact that $\widetilde{KhR}_2^{+,0,-\#T'-2}(T')=0$ by \cite{manolescu2023generalization,stovsic2009khovanov}.\smallskip

(3) As in (2), pick $T,T',\Sigma$ to be standard. One shows that $\widetilde{KhR}_2^+(\Sigma)$ maps $1_\pm\in\widetilde{KhR}_2^+(T)$ to $1_\pm\in\widetilde{KhR}_2^+(T')$ by using the commutativity of region $TB$ above, as well as the commutativity of region $TC$:
\begin{figure}[H]
\centering
\includegraphics[width=0.7\linewidth]{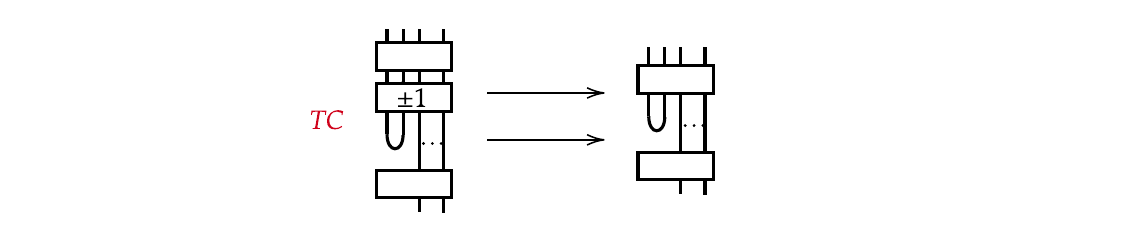},
\end{figure}
which is proved in the same way as for region $C$.\smallskip

(4) Let $n=\#T_i$ with $n_+$ (resp. $n_-$) strands oriented upward (resp. downward). By the description preceding Lemma~\ref{lem:twist_belt_link}, $1_+\in\mathcal S_0^2(D^2\times S^2;T_0)$ is the image of $1\in KhR_2^+(T(n,n)_{n_+,n_-})\cong\mathcal S_0^2(B^4;T(n,n)_{n_+,n_-})$ under the natural $2$-handle attachment map. Thus, the image of $1_-\otimes1_+$ under the stated gluing map is also the image of $1_-\otimes1\in\mathcal S_0^2(D^2\times S^2;T_0)\otimes\mathcal S_0^2(B^4;T(n,n)_{n_+,n_-})$ under the gluing map given by $n$ saddles followed by an isotopy. The claim that the image equals $1$ follows from the commutativity of region $TX$:
\begin{figure}[H]
\centering
\includegraphics[width=0.7\linewidth]{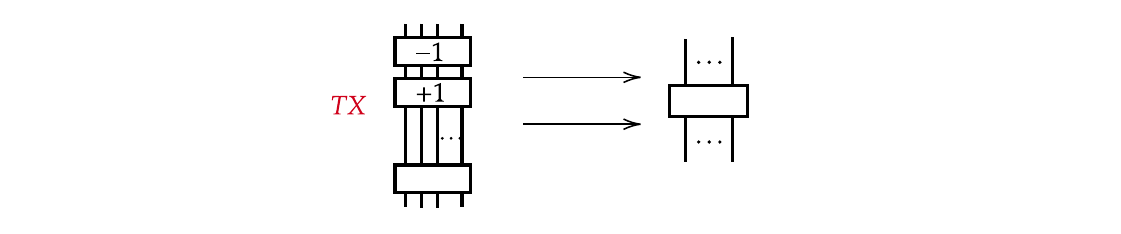},
\end{figure}
where the first map is the composition of two absorptions of twists into the projector, and the second map is the composition of Reidemeister-induced maps that cancel the two twists. As in the proof of other similar regions, the composition of the inverse of the first map with the second map is termwise the identity chain map up to sign as the composition of the cobordisms is isotopic to identity. To fix the sign, see Appendix~\ref{sec:sign_gluck}.

(5) Consider the diagram of isomorphisms on $\widetilde{KhR}_2^+$ in Figure~\ref{fig:gluck_sum_check}, where the first map on the second row is given by termwise maps that slide the strands on the left across the middle opening region of the second projector. We first reduce to checking the commutativity of Figure~\ref{fig:gluck_sum_check}.

\begin{figure}
\centering
\includegraphics[width=0.7\linewidth]{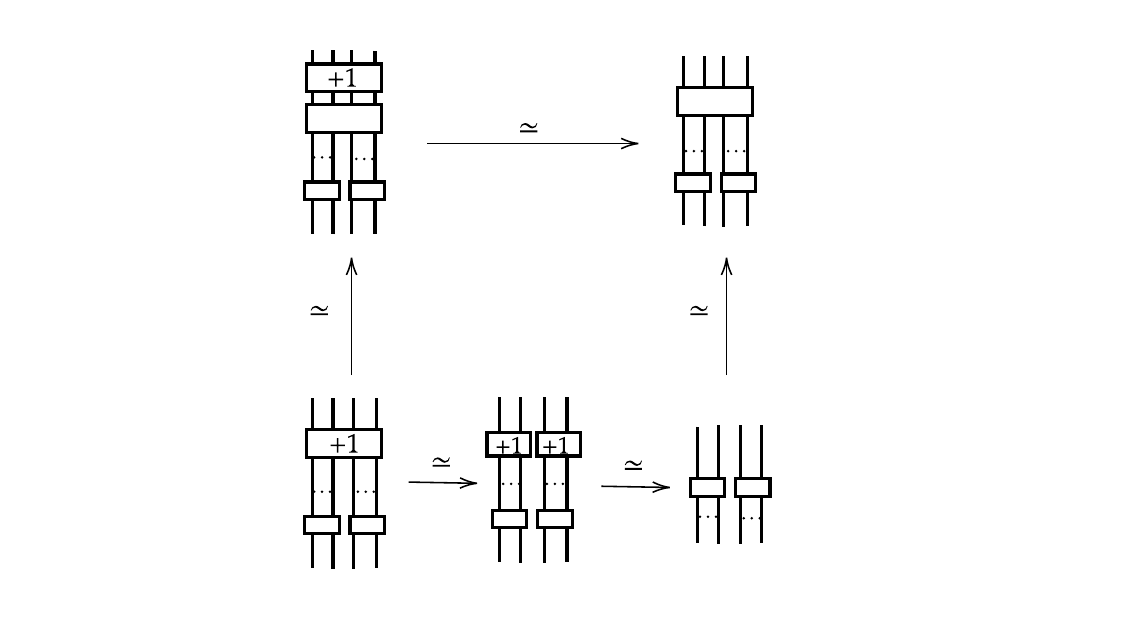}
\caption{Check compatibility of various element $1$'s under connected sums.}
\label{fig:gluck_sum_check}
\end{figure}

The image of $1\otimes1\otimes1\in\mathcal S_0^2(D^2\times S^2;U_0)\otimes\mathcal S_0^2(D^2\times S^2;U_1)\otimes\widetilde{KhR}_2^+(T(n,n)_{n_+,n_-})$ in the middle term in Figure~\ref{fig:1_1_1_to_1_+_1_+} is $v\otimes1\in\widetilde{KhR}_2^+(L)\otimes\mathcal S_0^2(D^2\times S^2\natural D^2\times S^2)\cong\mathcal S_0^2(D^2\times S^2\natural D^2\times S^2;L)$ where $L$ is the admissible link shown on the boundary, and $v$ is the element in the (closure of the) bottom left term in Figure~\ref{fig:gluck_sum_check} whose image in the (closure of the) bottom right term under the composite isomorphism that goes through the first row is equal to $1\otimes1$. The image of $v\otimes1$ in the last term in Figure~\ref{fig:1_1_1_to_1_+_1_+}, by the description of handleslide maps in Section~\ref{sec:concrete_handleslide} and the definition of the element $1$'s, is equal to $f(v)\otimes(1\otimes\mathrm{id}_\alpha(1))\in\widetilde{KhR}_2^+(T_0\cup T_1)\otimes\mathcal S_0^2(D^2\times S^2\natural D^2\times S^2)$ plus terms with lower lasagna quantum gradings, where $f$ is the first map on the second row of Figure~\ref{fig:gluck_sum_check}. To check $f(v)=1_+\otimes1_+$, it is thus sufficient to check the commutativity of Figure~\ref{fig:gluck_sum_check}.

In turn, the commutativity of Figure~\ref{fig:gluck_sum_check} is reduced to the commutativity of region $TG$:
\begin{figure}[H]
\centering
\includegraphics[width=0.7\linewidth]{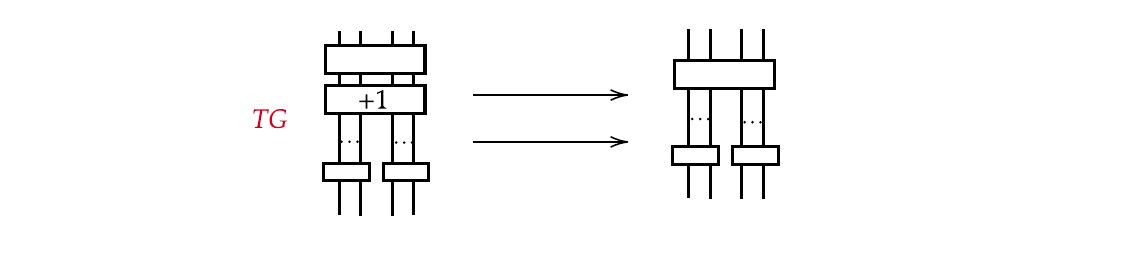},
\end{figure}
where the first map is the twist absorption into the projector on the top, and the second map is obtained by sliding the strands connected to the bottom-left projector across the bottom-right, thereby undoing the $+1$ linking between the two groups of strands, and then performing two twist absorptions into the bottom projectors. The commutativity of region $TG$ is checked termwise as before.
\end{proof}

We also prove the following technical lemma as a consequence of Lemma~\ref{lem:twist_belt_link}(5). This is not needed for the proof of Theorem~\ref{thm:gluck}.

\begin{Lem}\label{lem:gluck_sum}
Write $D_{std}:=\#_{i=1}^k\natural^{m_i}(D^2\times S^2)$ and $S_{std}=\partial D_{std}$. Let $X$ be an abstract $D_{std}$ together with an orientation-preserving identification $\partial X=S_{std}$. Let $S_0,S_1\subset int(X)$ be unoriented $2$-spheres, $S_0\#S_1$ be the connected sum of them along a given path $\gamma$, and $L\subset S_{std}$ be an admissible link. Suppose that
\begin{itemize}
\item $\psi\colon X_{S_0,S_1}\xrightarrow{\cong}D_{std}$ and $\psi'\colon X_{S_0\#S_1}\xrightarrow{\cong}D_{std}$ are orientation-preserving diffeomorphisms rel boundary;
\item The complement of $\nu(S_0\cup\gamma\cup S_1)$ in $X$ is a $4$-dimensional relative $1$-handlebody complement $W^t$ with $\partial_-W^t=-S_{std}$ and $\partial_+W^t=-\partial\nu(S_0\cup\gamma\cup S_1)$.
\end{itemize}
Then, the composition $$\mathcal S_0^2(D_{std};L)\xrightarrow[\cong]{\mathcal S_0^2(\psi^{-1})}\mathcal S_0^2(X_{S_0,S_1};L)\xrightarrow[\cong]{\tau_{X,L,S_0\cup S_1}^{-1}}\mathcal S_0^2(X;L)\xrightarrow[\cong]{\tau_{X,L,S_0\#S_1}}\mathcal S_0^2(X_{S_0\#S_1};L)\xrightarrow[\cong]{\mathcal S_0^2(\psi')}\mathcal S_0^2(D_{std};L)$$ induces a map on the $0$-th associated graded spaces with respect to the lasagna quantum grading. Under the isomorphism \eqref{eq:SZ}, this induced map is equal to $\mathrm{id}\otimes gr_0(\mathrm{id}_\alpha)$ for some $\alpha\in H_2(D_{std})$.
\end{Lem}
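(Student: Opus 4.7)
The plan is to rewrite the displayed composition, on the $0$-th associated graded with respect to the lasagna quantum grading, as the pushforward by a single self-diffeomorphism of $D_{std}$ rel boundary, and then invoke Proposition~\ref{prop:D2S2_repara}. By observation~(6) of Section~\ref{sec:gluck}, the data $(S_0,\gamma,S_1)$ determines a canonical diffeomorphism rel boundary $\eta\colon X_{S_0,S_1}\xrightarrow{\cong}X_{S_0\#S_1}$. Setting $\tilde\psi:=\psi'\circ\eta\circ\psi^{-1}\in\Diff_\partial(D_{std})$, the heart of the argument will be to show that $\tau_{X,L,S_0\#S_1}\circ\tau_{X,L,S_0\cup S_1}^{-1}$ agrees on $gr_0$ with $\mathcal S_0^2(\eta)$ up to multiplication by a shifting automorphism $\mathrm{id}_{\alpha_0}$; granted this, the full composition becomes $\mathcal S_0^2(\tilde\psi)$ followed by such a shift on $gr_0$, and Proposition~\ref{prop:D2S2_repara} yields the desired form $\mathrm{id}\otimes gr_0(\mathrm{id}_\alpha)$.

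To carry out the key comparison, I would first represent an arbitrary class in $\mathcal S_0^2(X_{S_0,S_1};L)$ by a lasagna filling $(\Sigma,v)$. Using that $W^t$ is a relative $1$-handlebody complement, and applying Gabai's $4$-dimensional lightbulb theorem inside $\nu(S_0\cup\gamma\cup S_1)$ as in the proof of Theorem~\ref{thm:diff_D2S2_rel_boundary}, one can isotope $\Sigma$ so that all of its input balls lie in $W^t$, and its portion inside the twisted neighborhood is a standard collection of twisted cocores capping a standard twisted belt link on the boundary, decorated by the class $1_+$ from Lemma~\ref{lem:twist_belt_link}. Since both compositions act trivially on the part of $\Sigma$ lying in $W^t$, the comparison reduces to a local question inside $\nu(S_0\cup\gamma\cup S_1)$: transport the two twisted lasagna decorations through the Kirby moves of Figure~\ref{fig:gluck_conn_sum_kirby}. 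This is essentially the content of Lemma~\ref{lem:twist_belt_link}(5): the element $1_+\otimes1_+$ arising from Gluck-twisting around $S_0$ and $S_1$ separately equals $1_+\otimes\mathrm{id}_{\alpha_0}(1_+)$ arising from Gluck-twisting around $S_0\#S_1$, modulo lower lasagna quantum terms, for some $\alpha_0\in H_2(D_{std})$.

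The hard part will be turning this model computation into an honest statement inside $X$, and managing the residual ambiguities. Lemma~\ref{lem:twist_belt_link}(5) is stated for $D^2\times S^2\natural D^2\times S^2$, so I would insert the model into $X$ via the enclosement relation together with the locality of the $\tau$ construction, checking that the gluing of Figure~\ref{fig:1_1_1_to_1_+_1_+} matches the inclusion $\nu(S_0\cup\gamma\cup S_1)\hookrightarrow X$ on the twisted side. A secondary wrinkle is that the diffeomorphism $\eta$ depends on the ordering of $S_0,S_1$ up to a barbell implanted in $\nu(S_0\cup\gamma\cup S_1)$; however, by Lemma~\ref{lem:barbell_las} the effect of such a barbell on $gr_0$ is again a shifting automorphism $\mathrm{id}_\beta$, and will be absorbed together with $\alpha_0$ into the final class $\alpha\in H_2(D_{std})$ produced by Proposition~\ref{prop:D2S2_repara} applied to $\tilde\psi$.
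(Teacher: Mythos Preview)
Your strategy is the paper's, repackaged through the diffeomorphism $\eta\colon X_{S_0,S_1}\to X_{S_0\#S_1}$: the paper likewise normalizes $\psi'$ so that $\psi$ and $\psi'$ agree outside $X_1:=\nu(S_0\cup\gamma\cup S_1)$ and differ by exactly this $\eta$ inside, then invokes Proposition~\ref{prop:D2S2_repara}, with Lemma~\ref{lem:twist_belt_link}(5) supplying the local comparison. Two small points: putting the skein into cocore form inside $X_1$ is general position plus neck-cutting, not Gabai's lightbulb theorem; and since the lasagna quantum grading lives on $\mathcal S_0^2(D_{std};L)$ via \eqref{eq:SZ}, your intermediate claim ``$\tau\circ\tau^{-1}$ agrees on $gr_0$ with $\mathcal S_0^2(\eta)$ up to shift'' only makes sense after conjugating by $\psi,\psi'$, which is already the full composition.

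The substantive gap is in your globalization step. You assert that ``both compositions act trivially on the part of $\Sigma$ lying in $W^t$, so the comparison reduces to a local question,'' and propose to handle it by ``the enclosement relation together with the locality of the $\tau$ construction.'' But Lemma~\ref{lem:twist_belt_link}(5) gives you $1_+\otimes1_+$ versus $1_+\otimes\mathrm{id}_\alpha(1_+)$ \emph{plus terms of lower lasagna quantum degree} in $\mathcal S_0^2(D_{std,1};T_0\cup T_1)$, and you must then show that gluing the external lasagna filling $(\Sigma^t,v)$ over $W^t$ sends lower-degree terms in $D_{std,1}$ to lower-degree terms in $D_{std}$, with the correct tensor form on $gr_0$. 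Enclosement and locality do not give this; it is exactly the content of Theorem~\ref{thm:spin_functoriality_las}, applied to the cobordism $\psi(W^t)$ after splitting off a collar carrying the decoration $v$. The paper makes this explicit in its final paragraph of the proof; without it the local computation does not propagate to $D_{std}$.
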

\begin{proof}
Orient $S_0,S_1$ so that the connected sum operation respects the orientations. Let $X_1=\nu(S_0\cup\gamma\cup S_1)$, which is naturally identified with $D_{std,1}:=\natural^2(D^2\times S^2)$ using the orientations of $S_0,S_1$.

A generic lasagna filling $(\Sigma_+^t,v)$ representing some element $x\in\mathcal S_0^2(X;L)$ can be assumed to have input balls away from $X_1$ and skein $\Sigma_+^t$ intersecting $X_1$ in a disjoint union of cocores transverse to $S_0\cup S_1$. The boundary of these cocores is a belt link $U_0\cup U_1$ in $\partial X_1\cong S_{std}$, as shown in the middle of Figure~\ref{fig:S_0_S_1}. Let $(\Sigma^t,v)$ denote the part of $(\Sigma_+^t,v)$ outside $X_1$, which can be thought of as a lasagna filling of $(W^t,-(U_0\cup U_1)\sqcup L)$. The spheres $S_0,S_1,S_0\#S_1$ are also shown in the middle of Figure~\ref{fig:S_0_S_1}. The Gluck twist $(X_1)_{S_0,S_1}$ can be naturally reidentified with $D_{std,1}$ via a diffeomorphism sending $U_0\cup U_1$ to the twisted belt link $T_0\cup T_1$ shown on the left of Figure~\ref{fig:S_0_S_1}. More explicitly, the Gluck twist on $S_j$ is performed along the copy of $S_j$ on $\partial\nu(S_j)$ closest to $\partial X_1$ by a counterclockwise full rotation around the center of the part of $S_j$ shown in Figure~\ref{fig:S_0_S_1} (the north pole) and the center of the $2$-core part of $S_j$ (the south pole), $j=0,1$, while the identification $(X_1)_{S_0,S_1}\cong D_{std,1}$ pushes the effect of these Gluck twists towards the boundary. By construction, $\tau_{X,L,S_0\cup S_1}(x)$ is the image of $1_+\otimes 1_+\in\mathcal S_0^2(D_{std,1};T_0\cup T_1)$ under gluing the lasagna filling $(\Sigma^t,v)$.

\begin{figure}
\centering
\includegraphics[width=0.75\linewidth]{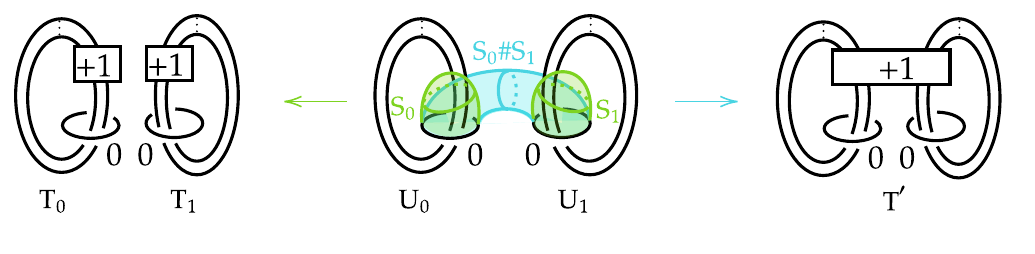}
\caption{Middle: The neighborhood $X_1=\nu(S_0\cup\gamma\cup S_1)\cong D_{std,1}$, with the belt link $U_0\cup U_1$ shown on the boundary, the spheres $S_0,S_1$ shown in green, and the sphere $S_0\#S_1$ shown in blue. The parts of spheres shown are slightly pushed into the interior of the $0$-handle. Left: $(X_1)_{S_0,S_1}$ reparametrized as $D_{std,1}$, with $U_0\cup U_1$ shown as the twisted belt link $T_0\cup T_1$ on the boundary. Right: $(X_1')_{S_0\#S_1}$ reparametrized as $D_{std,1}$, with $U_0\cup U_1$ shown as the admissible link $T'$ on the boundary.}
\label{fig:S_0_S_1}
\end{figure}

On the other hand, let $X_1'$ denote a slight shrunk copy of $X_1$. The Gluck twist $(X_1')_{S_0\#S_1}$ can be naturally reidentified with $D_{std,1}$ via a diffeomorphism sending $U_0\cup U_1$ to the admissible link $T'$ shown on the right of Figure~\ref{fig:S_0_S_1}, by a description analogous to the previous case. The boundary parametrizations of $(X_1)_{S_0,S_1}$ and $(X_1')_{S_0\#S_1}$ differ by $\phi=\tau_{S_0\#S_1}\circ(\tau_{S_0}\circ\tau_{S_1})^{-1}\in\Diff^{spin}(\partial D_{std,1})$, where here we abuse the notation and use $S_0,S_1$ to denote the two core spheres of $\partial D_{std,1}$, and $\tau_S$ to denote the Dehn twist along the sphere $S$. The mapping class of $\phi$ is trivial, hence the parametrization $(X_1')_{S_0\#S_1}\cong D_{std,1}$ extends by a levelwise diffeomorphism to a parametrization $(X_1)_{S_0\#S_1}\cong D_{std,1}$ whose boundary parametrization agrees with that of $(X_1)_{S_0,S_1}\cong D_{std,1}$. This levelwise diffeomorphism can be chosen to isotope the link $T'$ to $T_0\cup T_1$ by sliding the strands on the left over the second $2$-handle. Let $X_1''$ be a slight shrunk copy of $X_1'$, and push $S_0\#S_1$ slightly outside $X_1''$, within $X_1'$. To calculate $\tau_{X,L,S_0\#S_1}(x)$, we start with $1\in\mathcal S_0^2(X_1'';U_0\cup U_1)$. Add in an input ball in $int(X_1')\backslash X_1''$ with an input link $T(n,n)_{n_+,n_-}$ carrying the label $1\in KhR_2(T(n,n)_{n_+,n_-})$, as shown on the left of Figure~\ref{fig:1_1_1_to_1_+_1_+}, where $n_+$ and $n_-$ denote the numbers of positively and negatively oriented strands of $U_0\cup U_1$, respectively, and $n=n_++n_-$. Then evaluate to $((X_1')_{S_0\#S_1},T')$, implementing the effect of the lasagna Gluck twist along $S_0\#S_1$ within $X_1'$. Next, evaluate to $((X_1)_{S_0\#S_1},T_0\cup T_1)$ by gluing in the trace of the isotopy $T'\sim T_0\cup T_1$. Finally, glue in $(\Sigma^t,v)$ to obtain $\tau_{X,L,S_0\#S_1}(x)\in\mathcal S_0^2(X_{S_0\#S_1};L)$. By Lemma~\ref{lem:twist_belt_link}(5), $\tau_{X,L,S_0\#S_1}(x)$ is thus equal to the image of $1_+\otimes\mathrm{id}_\alpha(1_+)+\cdots\in\mathcal S_0^2(D_{std,1};T_0\cup T_1)$ under gluing in the lasagna filling $(\Sigma^t,v)$, for some $\alpha\in H_2(D^2\times S^2)$, where $\cdots$ are terms with negative lasagna quantum degrees.

Now, change $\psi'$ by a diffeomorphism rel boundary if necessary (which does not affect the statement thanks to Proposition~\ref{prop:D2S2_repara}), we may assume $\psi$ and $\psi'$ agree outside $X_1$, and differ by the composite diffeomorphism $(X_1)_{S_0,S_1}\cong D_{std,1}\cong(X_1)_{S_0\#S_1}$ inside $X_1$. As such, the element $\mathcal S_0^2(\psi)(\tau_{X,L,S_0\cup S_1}(x))$ (resp. $\mathcal S_0^2(\psi')(\tau_{X,L,S_0\#S_1}(x))$) is equal to the image of $z_1:=1_+\otimes1_+\in\mathcal S_0^2(D_{std,1};T_0\cup T_1)$ (resp. $z_2:=1_+\otimes\mathrm{id}_\alpha(1)+\cdots\in\mathcal S_0^2(D_{std,1};T_0\cup T_1)$) under gluing $\psi_*(\Sigma^t,v)$, the pushforward of $(\Sigma^t,v)$ under $\psi$. We drop $\psi$ from the notation near $D_{std,1}$ in what follows.

By changing $\psi_*(\Sigma^t,v)$ in its equivalence class, we may assume that it contains a single input ball, which is near $\partial D_{std,1}$. Further decompose $\psi_*(\Sigma^t,v)$ into
\begin{enumerate}[(i)]
\item A lasagna filling in a collar neighborhood $I\times\partial D_{std,1}$ of $\partial D_{std,1}$ rel $-(T_0\cup T_1)\sqcup(T_0\cup T_1\sqcup L_0)$ with an input link $L_0$ and skein $I\times(T_0\cup T_1)\cup[1/2,1]\times L_0$. Here, when regarded as a link in $\partial D_{std,1}$, $L_0$ is admissible and distant from $T_0\cup T_1$.
\item A link cobordism in (a shrunk copy of) $\psi(W^t)$ from $T_0\cup T_1\sqcup L_0$ to $L$.
\end{enumerate}
Since gluing (i) amounts to putting some label $v$ in the new tensorial factor $\widetilde{KhR}_2^+(L_0)$ of $\widetilde{KhR}_2^+(T_0\cup T_1\sqcup L_0)$ (which do not contribute to lasagna quantum degrees), the images of $z_1,z_2$ under (i) also differ by a shifting isomorphism plus terms with negative lasagna quantum degrees. Finally, in view of Theorem~\ref{thm:spin_functoriality_las}, their images under the whole gluing of $(\Sigma^t,v)$ differ by a shifting isomorphism plus terms with negative lasagna quantum degrees, as desired.
\end{proof}

We now give the proof of Theorem~\ref{thm:gluck}.

\begin{proof}[Proof of Theorem~\ref{thm:gluck}]
The map $\tau_{X,L,S,\pm}$ in \eqref{eq:gluck} is constructed as follows.

Let $(\Sigma,v)$ be a lasagna filling of $(X,L)$ representing a given element $x\in\mathcal S_0^2(X;L)$. By general position, we may assume that the input balls of $\Sigma$ are disjoint from $S$, and that $\Sigma$ intersects $S$ transversely at some finitely many points. Let $\overline{\nu(S)}\cong D^2\times S^2$ be a closed tubular neighborhood of $S$, so that $\Sigma$ intersects $D^2\times S^2$ in some finite number of cocore disks, with some boundary $U\subset S^1\times S^2$.

If $U$ has an odd number of components, then Theorem~\ref{thm:SZ} implies that $(\Sigma,v)$ defines the zero class when restricted to a lasagna filling of $(D^2\times S^2,U)$, hence it also defines the zero class in $\mathcal S_0^2(X;L)$.

Suppose now $U$ has an even number of components. Then $U$ is a belt link. Write $(X,\Sigma)=(X\backslash\nu(S),\Sigma\backslash\nu(S))\cup(D^2\times S^2,\Sigma\cap D^2\times S^2)$, we see that $x=[(\Sigma,v)]$ is the image of $1$ under the map $$\mathcal S_0^2(D^2\times S^2;U)\to\mathcal S_0^2(X;L)$$ that glues in the lasagna filling $(\Sigma\backslash\nu(S),v)$ of $(X\backslash\nu(S),L\cup(-U))\cong(X_S\backslash\nu(S),L\cup(-\tau(U)))$.

The Gluck twist $X_S$ is obtained from $X$ by cutting out $D^2\times S^2$ and regluing it back via the Dehn twist $\tau$. The belt link $U$, on the boundary of this glued-in $D^2\times S^2$, is thus the twisted belt link $\tau(U)$. We define $\tau_{X,L,S,\pm}(x)$ to be the image of $1_\pm$ under the gluing map $$\mathcal S_0^2(D^2\times S^2;\tau(U))\to\mathcal S_0^2(X_S;L)$$ that glues in the lasagna filling $(\Sigma\backslash\nu(S),v)$ of $(X\backslash\nu(S),L\cup(-U))$.\medskip

We check that $\tau_{X,L,S,\pm}$ is well-defined.

First, by Lemma~\ref{lem:twist_belt_link}(1), $\tau_{X,L,S,\pm}([(\Sigma,v)])$ is independent of the parametrization $\overline{\nu(S)}\cong D^2\times S^2$.

Next, we check that $\tau_{X,L,S,\pm}([(\Sigma,v)])$ is independent of $(\Sigma,v)$. It is clearly linear in the label $v$. If $(\Sigma',v')$ is another lasagna filling whose input balls contains those of $\Sigma$ and are disjoint from $S$, the two gluing maps leading to $\tau_{X,L,S,\pm}(x)$ are equal. Hence, it remains to check that $\tau_{X,L,S,\pm}([(\Sigma,v)])$ is invariant under isotoping the skein $\Sigma$ rel boundary.

By general position, every isotopy of the skein $\Sigma$ rel boundary is decomposed into some finger/Whitney moves that create/annihilate transverse intersections $\Sigma\cap S$ in pairs, and some isotopies that do not create/annihilate intersections with $S$. We need to show that $\tau_{X,L,S,\pm}([(\Sigma,v)])$ is invariant under a finger move from $\Sigma$ to some $\Sigma'$ across $S$.

By a neck-cutting, we may assume that the component of $\Sigma$ that undergoes the finger move is a local $2$-sphere $S_0$, which can be either undotted or dotted. When $S_0$ is undotted, $\tau_{X,L,S,\pm}([(\Sigma,v)])$ is zero since $S_0$ evaluates to zero. For $(\Sigma',v)$, since a collar neighborhood of the boundary of $\nu(S)$ outside $\nu(S)$ contains $\Sigma'$ as an undotted annulus annihilation map, we conclude from Lemma~\ref{lem:twist_belt_link}(2) that $\tau_{X,L,S,\pm}([(\Sigma',v)])$ is also zero. Similarly, when $S_0$ is dotted, we find using Lemma~\ref{lem:twist_belt_link}(3) that $\tau_{X,L,S,\pm}([(\Sigma,v)])$ and $\tau_{X,L,S,\pm}([(\Sigma',v)])$ are equal. This shows that $\tau_{X,L,S,\pm}$ is well-defined.

It is clear from the definition that $\tau_{X,L,S,\pm}$ preserves the homological and the quantum degrees, and covers $H_2(\tau_S)_\pm$ in the skein degree.\smallskip

We prove the addenda (1) to (4). Item (4) is trivial. Item (1) follows by comparing the degree of $1\in\mathcal S_0^2(D^2\times S^2;U)$ with those of $1_\pm\in\mathcal S_0^2(D^2\times S^2;\tau(U))$.

(2) By general position, we may isotope any skein in $X$ rel $L$ to be disjoint from a given $3$-ball bounding $S$. The statement follows.

(3) Push $S\subset X_S$ to be disjoint from $\nu(S)\subset X_S$, and let $S'$ denote this pushoff copy. Find a copy of $D^2\times S^2$ in $int(X)$ containing $\nu(S)\cup\nu(S')$ as the tubular neighborhood of two core spheres. Any given lasagna filling $(\Sigma,v)$ of $(X,L)$ can be assumed to have input balls disjoint from $D^2\times S^2$ and $\Sigma$ intersecting $D^2\times S^2$ in some number of cocores. The statement follows from Lemma~\ref{lem:twist_belt_link}(4) applied to the gluing of $((D^2\times S^2)\backslash(\nu(S)\cup\nu(S')),\Sigma\backslash(\nu(S)\cup\nu(S')))$ onto $(\nu(S)\sqcup\nu(S'),\tau(\partial\nu(S)\cap\Sigma)\sqcup\tau(\partial\nu(S')\cap\Sigma))$.
\end{proof}

\subsection{\texorpdfstring{$\widetilde{KhR}_2^+$}{KhR2+} on objects}\label{sec:nonspin_obj}
As before, let $D_{std}=\#_{i=1}^k\natural^{m_i}(D^2\times S^2)$ and $S_{std}=\partial D_{std}=\sqcup_{i=1}^k\#^{m_i}(S^1\times S^2)$. We prove the following theorem.

\begin{Thm}\label{thm:abstract_homology}
Theorem~\ref{thm:abstract_spin_homology} is still true when $S$ is only assumed to be an abstract $S_{std}$.\footnote{We warn the readers that the notation $S$ is used both for a $3$-manifold that is an abstract $S_{std}$, and a union of disjoint embedded $2$-spheres in a $4$-manifold. In the rest of this section, $S$ will only appear for the second purpose.}
\end{Thm}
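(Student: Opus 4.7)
The plan is to extend the construction of Section~\ref{sec:abstract_spin_homology} by removing the spin restriction on parametrizations and on the reparametrization isomorphisms, using the Gluck twist machinery of Section~\ref{sec:las_gluck} to supply the missing non-spin part. Let $P(S,L)$ denote the set of all $L$-admissible orientation-preserving parametrizations $\rho\colon S\xrightarrow{\cong}S_{std}$. Once canonical functorial reparametrization isomorphisms $\widetilde{KhR}_2^+(\rho(L))\cong\widetilde{KhR}_2^+(\rho'(L))$ are established for any $\rho,\rho'\in P(S,L)$, we can define $\widetilde{KhR}_2^+(S,L)$ as the cross-section subspace of $\prod_{\rho\in P(S,L)}\widetilde{KhR}_2^+(\rho(L))$ exactly as in the proof of Theorem~\ref{thm:abstract_spin_homology}. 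The task thus reduces to upgrading Proposition~\ref{prop:spin_repara} to arbitrary $L$-admissible $\phi\in\Diff^+(S_{std})$.

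The quotient $\pi_0(\Diff^+(S_{std}))/\pi_0(\Diff^{spin}(S_{std}))$ is generated by Dehn twists $\tau_S$ along the core $2$-spheres $S\subset S_{std}$ of each $S^1\times S^2$ summand; since the spin case is already handled by Proposition~\ref{prop:spin_repara}, it suffices to define $\widetilde{KhR}_2^+(\tau_S)$ for such a Dehn twist and verify the resulting functoriality relations. Each such $\tau_S$ admits a lift $\tilde\tau_S\in\Diff^+(D_{std})$ built using the rotational $S^1$-symmetry of the ambient $D^2\times S^2$ factor to extend the Dehn twist into the interior. I would analyze the pushforward $\mathcal S_0^2(\tilde\tau_S^{-1})\colon\mathcal S_0^2(D_{std};\tau_S(L))\to\mathcal S_0^2(D_{std};L)$ by relating it to the Gluck twist induced map $\tau_{D_{std},L,\widehat S}$ from Theorem~\ref{thm:gluck}, where $\widehat S$ is an interior pushoff of $S$: both operations amount to cutting out a tubular neighborhood of a core sphere and regluing with a twist, and differ only by where this twist is supported. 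Running the argument of Theorem~\ref{thm:spin_repara_las}, and substituting the grading control provided by Theorem~\ref{thm:gluck}(1), then shows that on the $0$-th lasagna-quantum associated graded the pushforward takes the form $\widetilde{KhR}_2^+(\tau_S)\otimes gr_0(\mathrm{id}_\alpha)$ under the isomorphism \eqref{eq:SZ} for some $\alpha\in H_2(D_{std})$ depending on $[\tau_S(L)]$, and we extract $\widetilde{KhR}_2^+(\tau_S)$ as the first tensor factor.

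The remaining work is verifying functoriality, paralleling the deduction of Proposition~\ref{prop:spin_repara} from Theorem~\ref{thm:spin_repara_las}. Relations within the spin subgroup are already established, so only the new relations need attention: (i) $\tau_S^2=\mathrm{id}$, forcing $\widetilde{KhR}_2^+(\tau_S)^2=\mathrm{id}$, which follows from Theorem~\ref{thm:gluck}(3); (ii) commutativity $\tau_S\tau_{S'}=\tau_{S'}\tau_S$ for distinct core spheres, which follows from Theorem~\ref{thm:gluck}(4); and (iii) the mixed relations expressing how a spin diffeomorphism $\phi$ conjugates $\tau_S$ into $\tau_{\phi(S)}$, compatibly with our construction. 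I expect (iii) to be the main obstacle. It requires identifying the Gluck twist along $\widehat S$ with the Gluck twist along $\widehat{\phi(S)}$ after passing through the spin reparametrization $\phi$, and will follow from Lemma~\ref{lem:gluck_sum} together with Theorem~\ref{thm:gluck}(4): the delicate point is to simultaneously track the induced maps on $\widetilde{KhR}_2^+$ and the shifting automorphisms $\mathrm{id}_\alpha$ on $\mathcal S_0^2(D_{std})$ appearing in these statements, and verify that the second-factor shifts assemble into a consistent cocycle so that the first-factor maps satisfy the desired relations. Lemma~\ref{lem:gluck_sum} was established precisely to handle this type of bookkeeping.
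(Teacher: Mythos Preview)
Your proposal contains a genuine error in the setup. You claim that the Dehn twist $\tau_S$ along a core $2$-sphere admits a lift $\tilde\tau_S\in\Diff^+(D_{std})$ ``built using the rotational $S^1$-symmetry of the ambient $D^2\times S^2$ factor.'' This is false: $D_{std}$ has a unique spin structure (since $H^1(D_{std};\Z/2)=0$), so any self-diffeomorphism of $D_{std}$ restricts to a spin diffeomorphism on the boundary; but $\tau_S$ is precisely \emph{not} spin---it shifts $\mathfrak s_0$ by $PD[S]\ne0$. No such lift exists. Consequently your pushforward map $\mathcal S_0^2(\tilde\tau_S^{-1})\colon\mathcal S_0^2(D_{std};\tau_S(L))\to\mathcal S_0^2(D_{std};L)$ does not make sense as written.

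The paper fixes this exactly where you gesture toward it but do not carry through: the Dehn twist extends instead to a diffeomorphism $\tilde\phi\colon D_{std}\xrightarrow{\cong}(D_{std})_{\widehat S}$ into the Gluck twist, and the correct lasagna-level map is the composition
\[
\mathcal S_0^2(D_{std};\phi(L))\xrightarrow{\tau_{D_{std},\phi(L),\widehat S}}\mathcal S_0^2((D_{std})_{\widehat S};\phi(L))\xrightarrow{\mathcal S_0^2(\tilde\phi^{-1})}\mathcal S_0^2(D_{std};L),
\]
which is Theorem~\ref{thm:nonspin_repara_las}. This is a substantive change: the Gluck twist map is not just a bookkeeping device for comparing two self-maps of $D_{std}$, it is the mechanism that produces a map between skein lasagna modules of the \emph{same} manifold at all.

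Your functoriality strategy---defining $\widetilde{KhR}_2^+$ on generators and checking relations (i)--(iii)---is also weaker than the paper's. The paper proves independence of the choice of $S$ and $\tilde\phi$ directly in Theorem~\ref{thm:nonspin_repara_las} (using Lemmas~\ref{lem:disjoint_2_spheres}--\ref{lem:partial_parallel_complement} and~\ref{lem:gluck_sum}), and then establishes functoriality for arbitrary composable pairs via a single commutative diagram \eqref{eq:gluck_las_functorial}, bypassing any need for a presentation of $\pi_0(\Diff^+(S_{std}))$. Your relation-by-relation approach would need to know that (i)--(iii) form a \emph{complete} set of relations modulo the spin subgroup, which you have not justified. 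Moreover, your invocation of Theorem~\ref{thm:gluck}(3) for $\tau_S^2=\mathrm{id}$ is off: that item says $\tau_{\mp}\circ\tau_{\pm}=\mathrm{id}$ for opposite-sign Gluck maps, whereas what you would actually use is that $\tau_S^2$ is spin (hence covered by Proposition~\ref{prop:spin_repara}), together with a compatibility check between the two descriptions.
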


In other words, if $L\subset S_{std}$ is admissible and $\phi\in\Diff^+(S_{std})$ is any $L$-admissible orientation-preserving diffeomorphism, we wish to assign an isomorphism $\widetilde{KhR}_2^+(\phi)\colon\widetilde{KhR}_2^+(\phi(L))\xrightarrow{\cong}\widetilde{KhR}_2^+(L)$ functorially in $\phi$. This is provided by Theorem~\ref{thm:nonspin_repara_las}, which will feature in the later proof of Theorem~\ref{thm:abstract_homology}.

\begin{Thm}\label{thm:nonspin_repara_las}
Let $L\subset S_{std}$ be an admissible link and $\phi\in\Diff^+(S_{std})$ be an orientation-preserving $L$-admissible diffeomorphism. Let $S\subset int(D_{std})$ be a $\partial$-parallel finite union of disjoint embedded unoriented $2$-spheres, and $\tilde\phi\colon D_{std}\xrightarrow{\cong}(D_{std})_S$ be a diffeomorphism that extends $\phi$. Then the composition $$\mathcal S_0^2(D_{std};\phi(L))\xrightarrow{\tau_{D_{std},\phi(L),S}}\mathcal S_0^2((D_{std})_S;\phi(L))\xrightarrow{\mathcal S_0^2(\tilde\phi^{-1})}\mathcal S_0^2(D_{std};L)$$ induces a map on the $0$-th associated graded spaces with respect to the lasagna quantum grading. Under the isomorphism \eqref{eq:SZ}, this induced map is uniquely of the form 
\begin{equation}\label{eq:nonspin_repara_las}
\widetilde{KhR}_2^+(\phi)\otimes gr_0(\mathrm{id}_\alpha\circ\phi_*^{-1})\colon\widetilde{KhR}_2^+(\phi(L))\otimes gr_0\mathcal S_0^2(D_{std})\to\widetilde{KhR}_2^+(L)\otimes gr_0\mathcal S_0^2(D_{std})
\end{equation}
for some isomorphism $\widetilde{KhR}_2^+(\phi)$ independent of $S$ and $\tilde\phi$, and some $\alpha\in H_2(D_{std})$. Here, $\phi_*$ in \eqref{eq:nonspin_repara_las} is defined as the composition $\mathcal S_0^2(D_{std})\cong\mathcal S_0^2(I\times S_{std})\xrightarrow[\cong]{\mathcal S_0^2(I\times\phi)}\mathcal S_0^2(I\times S_{std})\cong\mathcal S_0^2(D_{std})$.
\end{Thm}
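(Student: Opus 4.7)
The strategy is to reduce to Theorem~\ref{thm:spin_repara_las} by decomposing an arbitrary pair $(\phi,\tilde\phi)$ into elementary building blocks, augmented by one new non-spin elementary piece capturing a boundary Gluck twist. The uniqueness statement in \eqref{eq:nonspin_repara_las} is immediate from non-degeneracy; for well-definedness with respect to the choice of $(S,\tilde\phi)$, I would compare two such choices $(S,\tilde\phi)$ and $(S',\tilde\phi')$ by considering $S\sqcup S'$ (still $\partial$-parallel) and using Theorem~\ref{thm:gluck}(3)--(4) together with Lemma~\ref{lem:gluck_sum} to reduce the discrepancy to a diffeomorphism of $D_{std}$ rel boundary, which is handled by Proposition~\ref{prop:D2S2_repara}.

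The decomposition step parallels Proposition~\ref{prop:diffeomorphism_decomposition} with one new elementary type: a \emph{boundary Gluck twist}, where $\phi$ is the Dehn twist along a core $S^2$ of an $S^1\times S^2$ summand of $S_{std}$ (which generates the non-spin part of $\pi_0(\Diff^+(S^1\times S^2))$), $S\subset int(D_{std})$ is a small pushoff of this core $S^2$, and $\tilde\phi$ is the canonical identification $D_{std}\cong(D_{std})_S$ trivial outside $\nu(S)$. Every $\phi\in\Diff^+(S_{std})$ is a composition of spin generators from Proposition~\ref{prop:diff_spin_S_std} together with such Dehn twists; for spin elementary pieces one takes $S=\emptyset$ and Theorem~\ref{thm:spin_repara_las} directly applies. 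Functoriality under composition of elementary pieces follows from the well-definedness step together with Lemma~\ref{lem:gluck_sum}, which ensures that composing two boundary Gluck twists yields, modulo shifting isomorphisms, the expected composition of induced maps on Rozansky-Willis homology.

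The heart of the argument is the computation of $\mathcal S_0^2(\tilde\phi^{-1})\circ\tau_{D_{std},\phi(L),S}$ on $gr_0$ for a single boundary Gluck twist. After arranging the skein $\Sigma$ of a representing lasagna filling to intersect $\nu(S)\cong D^2\times S^2$ transversely in cocore disks bounding a belt link $U\subset\partial\nu(S)$, the construction of $\tau$ in Theorem~\ref{thm:gluck} replaces $U$ by the twisted belt link $\tau(U)$ and sends the distinguished class $1\in\mathcal S_0^2(D^2\times S^2;U)$ to $1_+\in\mathcal S_0^2(D^2\times S^2;\tau(U))$. Composing with $\mathcal S_0^2(\tilde\phi^{-1})$ restores the original boundary parametrization, and tracking the result through the sequence \eqref{eq:SZ_MN}--\eqref{eq:SZ_simp} as in Sections~\ref{sec:concrete_finger}--\ref{sec:handleslides} will produce the claimed product form $\widetilde{KhR}_2^+(\phi)\otimes gr_0(\mathrm{id}_\alpha\circ\phi_*^{-1})$, with $\widetilde{KhR}_2^+(\phi)$ being the Reidemeister-I-style identification $\widetilde{KhR}_2^+(\phi(L))\cong\widetilde{KhR}_2^+(L)$ underlying the $1_+\leftrightarrow 1$ correspondence of Lemma~\ref{lem:twist_belt_link}(1).

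The main obstacle will be the chain-level commutativity checks in this last step, analogous to regions $TB,TC,TD,TE$ in the proof of Lemma~\ref{lem:twist_belt_link}(1): verifying that the $\pm 1$ twist absorption maps interact correctly with belt-slide and projector-insertion maps when a belt passes through the twist region. These are the non-spin analogs of regions $B$ through $H$ from the preceding sections, and will require termwise sign analysis against the conventions fixed in Appendix~\ref{sec:sign_gluck}.
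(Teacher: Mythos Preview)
Your overall architecture matches the paper: decompose $\phi$ into spin generators plus Dehn twists along core $S^2$'s, handle spin pieces via Theorem~\ref{thm:spin_repara_las} with $S=\emptyset$, and treat the Dehn twist explicitly using the $1_+$ machinery. However, there are two issues worth flagging.

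\textbf{The independence-on-$S$ argument has a gap.} Your plan to ``consider $S\sqcup S'$ (still $\partial$-parallel)'' and reduce to a rel-boundary diffeomorphism does not work as stated. First, $S$ and $S'$ need not be disjoint, and even after applying Lemma~\ref{lem:disjoint_2_spheres} the union need not remain $\partial$-parallel without further care. More seriously, $[S\sqcup S']=[S]+[S']=0$ in $H_2(D_{std};\Z/2)$, so there is \emph{no} extension of $\phi$ to a diffeomorphism $D_{std}\to(D_{std})_{S\sqcup S'}$: the boundary spin structure would be wrong. Thus the discrepancy $\tau_{S'}^{-1}\circ\mathcal S_0^2(\tilde\phi'\circ\tilde\phi^{-1})\circ\tau_S$ does not factor through a rel-boundary self-diffeomorphism of $D_{std}$ in the way Proposition~\ref{prop:D2S2_repara} would require. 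The paper instead uses that $[S]=[S']=\phi_*(\mathfrak s_0)-\mathfrak s_0$ and invokes Lemma~\ref{lem:relate_2_spheres} to connect $S$ and $S'$ by a sequence of unknotted-sphere creations and $\partial$-parallel connected sums; invariance under each move is then checked via Theorem~\ref{thm:gluck}(2) and Lemma~\ref{lem:gluck_sum} (whose hypotheses are verified using Lemma~\ref{lem:partial_parallel_complement}). You need this intermediate topological reduction, not a direct $S\sqcup S'$ comparison.

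\textbf{The Dehn-twist computation is easier than you anticipate.} In the paper's treatment, after choosing $S$ to be a pushin of the twisting sphere and $\tilde\phi$ the canonical extension, one uses the \emph{torus-link} description of $1_+$ (the paragraph preceding Lemma~\ref{lem:twist_belt_link}) rather than the Reidemeister-I-style identification. The resulting map on a standard lasagna filling is given by saddling in the class $1\in\widetilde{KhR}_2^+(T(\ell,\ell)_{\ell_+,\ell_-})$ just above the $j$-th surgery region and then cancelling the twists; since this is \emph{distant from the surgery region}, locality alone yields the tensor-product form with $\alpha=0$. No new ``region'' commutativity checks are needed here: the regions $TB,TC,TD,TE$ live inside the proof of Lemma~\ref{lem:twist_belt_link} (hence inside the construction of $\tau$ and the well-definedness of $1_+$), and by the time you invoke Theorem~\ref{thm:gluck} in Theorem~\ref{thm:nonspin_repara_las} they have already been absorbed.
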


The existence of an extension $\tilde\phi$ as in Theorem~\ref{thm:nonspin_repara_las} can be seen as follows. The set of spin structures on $S_{std}$ is affine over $H^1(S_{std};\Z/2)$. There is a distinguished spin structure, denoted $\mathfrak s_0$, which is the restriction of the unique spin structure on $D_{std}$. Write $\phi_*(\mathfrak s_0)=\mathfrak s_0+\beta$ for some $\beta\in H^1(S_{std};\Z/2)\cong H_2(S_{std};\Z/2)\cong H_2(D_{std};\Z/2)$, represented by some $S=\sqcup_{i=1}^mS_i\subset int(D_{std})$, a $\partial$-parallel union of disjoint embedded unoriented $2$-spheres. The restriction of the spin structure on $(D_{std})_S$ on $S_{std}$ is $\mathfrak s_0+[S]=\phi_*(\mathfrak s_0)$, and the existence of $\tilde\phi$ follows from the surjectivity of $\Diff^+(D_{std})\to\Diff^{spin}(S_{std})$.

We will prove Theorem~\ref{thm:nonspin_repara_las} after some topological observations.

\begin{Lem}\label{lem:disjoint_2_spheres}
If $S_0,S_1\subset S_{std}$ are two collections of disjoint embedded unoriented $2$-spheres, then $S_0$ is related to some other collection $S_0'$ that is disjoint from $S_1$ via a sequence of (isotopies and) inverses of the connected sum operation.
\end{Lem}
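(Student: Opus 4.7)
The plan is a standard innermost-disk argument, analogous to the prime-decomposition technique for 2-spheres in 3-manifolds. After placing $S_0$ and $S_1$ in general position so that $S_0 \cap S_1$ is a finite disjoint union of transverse circles in the 3-manifold $S_{std}$, I will induct on the number of such circles, reducing the count by one at each step via a single inverse connected sum.

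For the inductive step, observe that since each component $S_{1,j}$ is a 2-sphere, the intersection circles cut $S_{1,j}$ into disk regions, and hence there is an innermost disk $D \subset S_{1,j}$ with boundary $C := \partial D$ lying on some component $S_{0,i}$ of $S_0$ and with $\mathrm{int}(D)$ disjoint from $S_0$. The circle $C$ separates the sphere $S_{0,i}$ into two disks $D_1, D_2$. Using a small collar of $S_{1,j}$, choose parallel pushoffs $D_+', D_-' \subset S_{std}$ of $D$ on opposite normal sides of $S_{1,j}$, each with $\partial D_\pm' = C$ (achieved by sliding the pushed-off boundary back along the collar). The innermost property, together with taking the pushoffs sufficiently small, ensures that $\mathrm{int}(D_+')$ and $\mathrm{int}(D_-')$ are disjoint from $S_0 \cup S_1$. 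Then set $\Sigma_1 := D_1 \cup D_+'$ and $\Sigma_2 := D_2 \cup D_-'$, smoothed along $C$; by construction these are disjoint embedded 2-spheres, and together disjoint from the other components of $S_0$.

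A standard tube-cancellation argument identifies the connected sum $\Sigma_1 \#_\gamma \Sigma_2$ along a short arc $\gamma$ crossing $S_{1,j}$ transversely once, through the region between $D_+'$ and $D_-'$, with $S_{0,i}$ up to isotopy. Hence replacing $S_{0,i}$ in $S_0$ by $\{\Sigma_1, \Sigma_2\}$ is an inverse of the connected sum operation. The new collection has strictly fewer intersection circles with $S_1$, since $C$ has been surgered off, so the induction closes.

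I do not expect a substantial obstacle: the argument mirrors the familiar innermost-disk technique for surfaces in 3-manifolds. The only care required is to verify that the pushoffs $D_\pm'$ produce smoothly embedded $\Sigma_1, \Sigma_2$ with compatible framings, and that the intersection count does strictly decrease — both following from the innermost property of $D$ and a routine local normal-bundle analysis.
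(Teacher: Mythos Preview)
Your proposal is correct and is exactly the argument the paper gives: the paper's entire proof reads ``Put $S_0$ and $S_1$ into transverse position. Surger $S_0$ along innermost disks on $S_1$ bounded by $S_0\cap S_1$,'' and you have simply unpacked this surgery in detail. One minor remark: the spheres here are unframed and unoriented, so your closing comment about ``compatible framings'' is unnecessary.
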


\begin{proof}
Put $S_0$ and $S_1$ into transverse position. Surger $S_0$ along innermost disks on $S_1$ bounded by $S_0\cap S_1$.
\end{proof}

\begin{Lem}\label{lem:relate_2_spheres}
If $S_0,S_1\subset S_{std}$ are two collections of disjoint embedded unoriented $2$-spheres with $[S_0]=[S_1]\in H_2(S_{std};\Z/2)$, then $S_0$ is related to $S_1$ via a sequence of (isotopies, and) connected sums, separating $2$-sphere creations, and their inverses.
\end{Lem}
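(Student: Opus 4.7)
The plan is to bring both $S_0$ and $S_1$ to a common canonical representative depending only on their shared mod $2$ homology class. First, by applying Lemma~\ref{lem:disjoint_2_spheres} to $S_0$---and noting that isotopies and inverse connected sums are among the allowed moves---I may replace $S_0$ by an equivalent collection disjoint from $S_1$. Since every allowed move is supported in a single connected component of $S_{std}$, the problem reduces to the connected case $S_{std} = \#^m(S^1 \times S^2)$, which I henceforth assume.

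The main claim is that any finite disjoint collection $S$ of embedded unoriented $2$-spheres in $\#^m(S^1 \times S^2)$ is equivalent, under the allowed moves, to a distinguished sphere $S^{\mathrm{std}}(\alpha)$ determined by $\alpha = [S] \in H_2(\#^m(S^1 \times S^2); \Z/2) \cong (\Z/2)^m$: namely, the sphere obtained by tubing together the core $2$-spheres indexed by the support of $\alpha$ along a fixed system of arcs (with $S^{\mathrm{std}}(0) = \emptyset$). Granted this claim, the hypothesis $[S_0] = [S_1]$ forces both sides to reduce to the same $S^{\mathrm{std}}$, completing the proof.

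To prove the claim I proceed in three stages. First, I take successive connected sums of the components of $S$ along generic paths to reduce $S$ to a single embedded sphere $T$, discarding any resulting null-homologous spheres as separating $2$-sphere removals. Second, if the integer class $[T] = \sum a_i e_i$ has $|a_i| \geq 2$ for some $i$, I split $T$ via an inverse connected sum into two spheres whose integer classes sum to $[T]$ with strictly smaller $\ell^1$-norm; iterating and then recombining via connected sums along carefully chosen tube paths, I arrive at a single sphere $T'$ with integer class $\sum_{i \in I} \pm e_i$, where $I$ is the support of $\alpha$. Third, I invoke the classical isotopy classification of embedded $2$-spheres in $\#^m(S^1 \times S^2)$ (following Laudenbach~\cite{laudenbach1974topologie}) to conclude that $T'$ is isotopic, as an unoriented embedded sphere, to $S^{\mathrm{std}}(\alpha)$. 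Any residual sign discrepancies in the integer class can be absorbed by further cycles of splitting, local isotopy, and recombination along paths that differ from the original by loops around individual $S^1 \times S^2$ factors---such loops precisely have the effect of flipping a sign of $e_i$.

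The main obstacle is the third stage: the sphere classification together with the bookkeeping of signs. The unsigned isotopy classification itself is a classical consequence of Laudenbach's work on sphere systems, but matching a given sphere to the standard representative via the allowed moves requires genuine use of inverse connected sums, since the sign-changing self-diffeomorphisms of $\#^m(S^1 \times S^2)$ are not isotopic to the identity in general. The inverse-connected-sum flexibility, which is not available for isotopy alone, is what ultimately makes the canonical form well defined up to the equivalence relation of the lemma.
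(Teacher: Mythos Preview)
Your canonical-form strategy is natural, but Stages 2 and 3 both have genuine gaps.

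Stage 2, taken literally, can fail: a sphere $T$ with integer class $2e_1+e_2$ in $\#^2(S^1\times S^2)$ admits no inverse connected sum into pieces of strictly smaller total $\ell^1$-norm, since any two embedded spheres $A,B$ (each of primitive or zero integer class) with $[A]\pm[B]=\pm(2e_1+e_2)$ satisfy $|[A]|_1+|[B]|_1\ge 3$. Your suggestion to ``split then recombine along carefully chosen tube paths'' may be what you intend, but you give no argument that the required geometric splittings exist.

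Stage 3 misapplies Laudenbach: his theorem identifies isotopy with homotopy, not with homology. For $m\ge 3$ there exist non-isotopic non-separating spheres with identical integer class; e.g.\ in $\#^3(S^1\times S^2)$ the corank-one free factors $\langle b,c\rangle$ and $\langle aba^{-1},c\rangle$ of $F_3=\langle a,b,c\rangle$ are non-conjugate (if $g\langle b,c\rangle g^{-1}=\langle aba^{-1},c\rangle$, then malnormality and $c$ lying in both forces $g\in\langle b,c\rangle$, whence $aba^{-1}\in\langle b,c\rangle$, absurd) but have the same abelianization image, so the corresponding spheres are non-isotopic yet both have class $e_1$. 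Your sign-flipping maneuver only adjusts the integer class, not this finer $\pi_2$ ambiguity.

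The paper's proof avoids both problems with a single observation. After reducing to $S_{std}$ connected, make $S_0,S_1$ connected and (via Lemma~\ref{lem:disjoint_2_spheres} followed by reconnecting outside $S_1$) disjoint; then set $S:=S_0\#S_1$. Since $[S]=2\alpha=0$ in $H_2(S_{std};\Z/2)$, the sphere $S$ is separating, and $S_0\sim S_0\cup S\sim S_1$ by one separating-sphere creation and one connected sum. Applying this same trick with $S_1$ replaced by $S^{\mathrm{std}}(\alpha)$ would correctly replace your Stages 2--3; so the canonical-form framework is salvageable, but only by importing the paper's key idea.
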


\begin{proof}
Without loss of generality, say $S_{std}$ is connected. Take connected sums to make both $S_0$ and $S_1$ connected. If $[S_0]=[S_1]=0$, then $S_0,S_1$ are both separating, hence they are related by the moves. If $[S_0]=[S_1]\ne0$, apply Lemma~\ref{lem:disjoint_2_spheres} to make $S_0$ disjoint from $S_1$. Since $S_1$ is nonseparating, we may take connected sums outside $S_1$ to make $S_0$ connected again. Now form a connected sum $S:=S_0\#S_1$, which may be assumed to be disjoint from $S_0\cup S_1$. Since $[S]=[S_0]+[S_1]=0$, $S$ is separating. Therefore, $S_0$ is related to $S_1$ via one separating sphere creation and one connected sum: $S_0\sim S_0\cup S\sim S_1$.
\end{proof}

\begin{Lem}\label{lem:partial_parallel_complement}
Let $S_0,S_1\subset S_{std}=\partial D_{std}$ be two disjoint $2$-spheres, $\gamma\subset S_{std}$ be an arc connecting them, and $D_{std}\subset S^4$ be the standard embedding with complement a $4$-dimensional $1$-handlebody. Then the complement of a neighborhood of $S_0\cup\gamma\cup S_1$ in $S^4$ is a $4$-dimensional $1$-handlebody.
\end{Lem}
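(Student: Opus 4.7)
The plan is to realize $Y := \overline{S^4 \setminus \nu(S_0 \cup \gamma \cup S_1)}$ as $\natural^2(S^1 \times B^3)$ by extending the natural handle decomposition of $N := \nu(S_0 \cup \gamma \cup S_1) \cong \natural^2(D^2 \times S^2)$ to a handle decomposition of the whole $S^4$. The decomposition of $N$ itself consists of one $0$-handle (a $4$-ball neighborhood of a point on $\gamma$) together with two $2$-handles attached along a $2$-component $0$-framed unlink in $\partial B^4 = S^3$, with cores that recover $S_0$ and $S_1$. I will add to $N$, on the $Y$ side, two $3$-handles that cancel the two $2$-handles together with a single $4$-handle. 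After cancellation, $N \cup H_3^{(0)} \cup H_3^{(1)} \cong B^4$, so $S^4 \cong B^4 \cup H_4$; reading the cobordism $Y = H_3^{(0)} \cup H_3^{(1)} \cup H_4$ upside down then exhibits $Y$ as a $0$-handle together with two $1$-handles, i.e.\ as $\natural^2(S^1 \times B^3)$.

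The key step is producing the two cancelling $3$-handles. Let $B_i \subset \partial N = \#^2(S^1 \times S^2)$ be the boundary-parallel copy of $S_i$, namely a $\{*\} \times S^2$ in the $i$-th $S^1 \times S^2$ summand of $\partial N$. Then $B_i$ meets the belt circle $b_i$ of the $i$-th $2$-handle of $N$ transversely in a single point and misses $b_{1-i}$, so the standard $2$/$3$-handle cancellation criterion applies provided one can find disjoint smoothly embedded $3$-balls $D_0^+, D_1^+ \subset Y$ with $\partial D_i^+ = B_i$; one then thickens each $D_i^+$ to obtain $H_3^{(i)}$.

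To produce the $D_i^+$, push $S_0$ and $S_1$ slightly off $S_{std}$ into the interior of the hypothesized $4$-dimensional $1$-handlebody $X_0 := \overline{S^4 \setminus D_{std}} \cong \natural^p(S^1 \times B^3)$. By Laudenbach's classification of embedded $2$-spheres in $\#^p(S^1 \times S^2)$, each $S_i \subset \partial X_0$ is isotopic within $\partial X_0$ to a boundary-connect sum of the standard belt spheres $\{*\} \times S^2$ of some of the $1$-handles of $X_0$; boundary-connect-summing the corresponding pairwise disjoint cocore $3$-balls $\{*\} \times B^3 \subset X_0$ (along thickenings of the arcs in $\partial X_0$ used for the $\natural$ of spheres) yields an embedded $3$-ball $D_i \subset X_0$ with $\partial D_i = S_i$. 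By carrying out the construction in disjoint regular neighborhoods of $S_0$ and $S_1$ in $\partial X_0$ and $X_0$, and routing the summing arcs away from $\gamma$, I can arrange $D_0$, $D_1$, and $\gamma$ to be pairwise disjoint. Pushing each $D_i$ slightly off $\nu(S_i)$ into $Y$ produces the required $D_i^+$.

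The main obstacle will be this disjointness arrangement: a priori, two smoothly embedded $3$-balls in a $4$-manifold meet in a $2$-dimensional submanifold, so generic transversality alone is insufficient. The argument will succeed because the explicit $1$-handle structure of $X_0$ provides automatically disjoint cocore $3$-balls for distinct $1$-handles, so the only freedom requiring care lies in the choice of the $1$-dimensional summing arcs inside $\partial X_0$, and these are easily arranged to be disjoint from one another and from the $1$-dimensional $\gamma$ by general position inside the $3$-manifold $\partial X_0$.
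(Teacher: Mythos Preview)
Your overall architecture is the same as the paper's: reduce to finding disjoint embedded $3$-balls $D_0,D_1$ in $S^4$ with $\partial D_i=S_i$ and interiors disjoint from $\gamma$; these then serve as cancelling $3$-handles for the two $2$-handles of $N=\nu(S_0\cup\gamma\cup S_1)$, leaving $Y\cong\natural^2(S^1\times B^3)$. That reduction is fine.

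The gap is in how you produce \emph{disjoint} $D_0,D_1$. Two points:

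\begin{enumerate}
\item Your normal form for $S_i$ is not correct as stated. It is not true that every embedded $2$-sphere in $\#^m(S^1\times S^2)$ is isotopic to a connected sum of distinct belt spheres $\{*\}\times S^2$; such connected sums are always nonseparating (their $\Z/2$-homology class is a nonzero sum of generators), so essential separating spheres are never of this form. What \emph{is} true (via Laudenbach--Po\'enaru, since every diffeomorphism of $\#^m(S^1\times S^2)$ extends over $\natural^m(S^1\times B^3)$) is that every sphere in $\partial X_0$ bounds \emph{some} $3$-ball in $X_0$; but the explicit ``cocore-sum'' description you give does not cover all cases.
\item More seriously, the disjointness argument does not go through. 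You write that the construction can be carried out ``in disjoint regular neighborhoods of $S_0$ and $S_1$,'' but the bounding balls $D_i$ are not contained in any neighborhood of $S_i$: cocore $3$-balls run deep into $X_0$, and boundary-connect-sum $3$-balls sit far from the boundary spheres. Laudenbach's theorem puts \emph{one} sphere in standard position, not a disjoint pair; if both $S_0$ and $S_1$ happen to involve the same $1$-handle of $X_0$ (e.g.\ both isotopic to the same belt sphere, or one separating and one nonseparating sharing a summand), there is no reason the individually constructed $D_0,D_1$ avoid each other. Your final paragraph identifies this as the main obstacle but then dismisses it by appealing to disjointness of cocores for \emph{distinct} $1$-handles --- which is exactly the hypothesis you have not verified.
\end{enumerate}

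The paper closes this gap by first putting the \emph{pair} $(S_0,S_1)$ into a simultaneous standard position, via a four-case analysis according to which of $S_0,S_1,S_0\cup S_1$ are separating in $S_{std}$. In each case an explicit diffeomorphism (extending over $D_{std}$ or $X_0$) carries the pair to a model configuration for which disjoint bounding balls are written down by hand --- sometimes in $D_{std}$, sometimes in $X_0$. Your argument would be complete once you supply an analogous simultaneous normal form; treating the spheres one at a time is not enough.
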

\begin{proof}
Since $S_0,S_1$ lie on the same connected component of $S_{std}$, by capping off extra components if necessary, we may assume $D_{std}=\natural^m(D^2\times S^2)$. It suffices to show that $S_0,S_1$ bound disjoint $3$-balls $B_0,B_1$ in $S^4$ whose interiors are disjoint from $\gamma$. We divide into four cases.

\textbf{Case 1}: $S_0,S_1$ are both separating.

Write $D_{std}=(D^2\times S^2)\natural\cdots\natural(D^2\times S^2)$ and $S_{std}=(S^1\times S^2)\#\cdots\#(S^1\times S^2)$. By a spin parametrization change of $S_{std}$ (which extends to $D_{std}$), we may assume that $S_j$ is a copy of the $a_j$-th connected sum $2$-sphere in $S_{std}$, $j=0,1$, $0\le a_0\le a_1\le m-1$ (when $a_j=0$, we interpret this as saying that $S_j$ bounds a ball in $S_{std}$). We may choose $B_j$ to be a copy of the $a_j$-th boundary connected sum $3$-ball in $D_{std}$, $j=0,1$ (when $a_j=0$, we interpret as saying that $B_j$ is a $\partial$-parallel $3$-ball).

\textbf{Case 2}: $S_0$ is separating, $S_1$ is nonseparating.

As in Case 1, $S_0$ bounds a $3$-ball $B_0$ in $D_{std}$. By applying a spin parametrization change of $S_{std}$ to make $S_1$ standard, we see that $S_1$ is the belt sphere of the cocore of a $1$-handle in the $4$-dimensional $1$-handlebody $S^4\backslash D_{std}$. Hence we may choose $B_1$ to be this cocore $3$-ball.

\textbf{Case 3}: $S_0\cup S_1$ is nonseparating.

By a spin parametrization change of $S_{std}$, we see that $S_0,S_1$ are belt spheres of the cocores of two different $1$-handles in $S^4\backslash D_{std}$. Hence we may choose $B_0,B_1$ to be these cocore $3$-balls.

\textbf{Case 4}: $S_0,S_1$ are nonseparating, but $S_0\cup S_1$ is separating.

If $S_0$ and $S_1$ are isotopic, they bound two parallel cocores of a $1$-handle in $S^4\backslash D_{std}$. If they are nonisotopic, by a spin parametrization change of $S_{std}=(S^1\times S^2)\#\cdots\#(S^1\times S^2)$, we may assume that they are embedded in the $i$-th connected summand in $S_{std}$ as two nonisotopic $S^2$ factors, for some $1<i<n$. It is clear that they bound disjoint $3$-balls $B_0,B_1$ in the $i$-th boundary connected summand of $S^4\backslash D_{std}=(S^1\times B^3)\natural\cdots\natural(S^1\times B^3)$.
\end{proof}

\begin{proof}[Proof of Theorem~\ref{thm:nonspin_repara_las}]
The independence of the statement on the choice of $\tilde\phi$ follows from Proposition~\ref{prop:D2S2_repara}.

We prove its independence on $S$, a $\partial$-parallel finite union of disjoint unoriented $2$-spheres. Suppose $S'$ is another such union of $2$-spheres, then $[S]=[S']=\phi_*(\mathfrak s_0)-\mathfrak s_0\in H_2(D_{std};\Z/2)\cong H_2(S_{std};\Z/2)$. Note that every separating $2$-sphere in $S_{std}$ is unknotted in $D_{std}$, thus by Lemma~\ref{lem:relate_2_spheres}, $S$ and $S'$ are related by a sequence of
\begin{enumerate}[(a)]
\item unknotted $2$-sphere creations,
\item $\partial$-parallel connected sums,
\end{enumerate}
or their inverses, through $\partial$-parallel union of $2$-spheres. This sequence of moves determine a canonical identification $(D_{std})_S=(D_{std})_{S'}$ rel boundary. Theorem~\ref{thm:gluck}(2) implies the independence of $S$ under type (a) moves, while Lemma~\ref{lem:partial_parallel_complement} and Lemma~\ref{lem:gluck_sum} imply the independence of $S$ under type (b) moves.

In view of the commutative diagram \eqref{eq:gluck_las_functorial}, it suffices to decompose $\phi$ into elementary diffeomorphisms and prove the theorem for each elementary one. The mapping class group of $S_{std}$ is generated by the spin mapping class group $\pi_0(\Diff^{spin}(S_{std}))$ together with Dehn twists along each of the $\sum_{i=1}^km_i$ core $S^2$ factors, each of which can be taken to have a standard form, visualized as in Figure~\ref{fig:Dehn_twist} in the presence of an admissible link.

\begin{figure}
\centering
\includegraphics[width=0.7\linewidth]{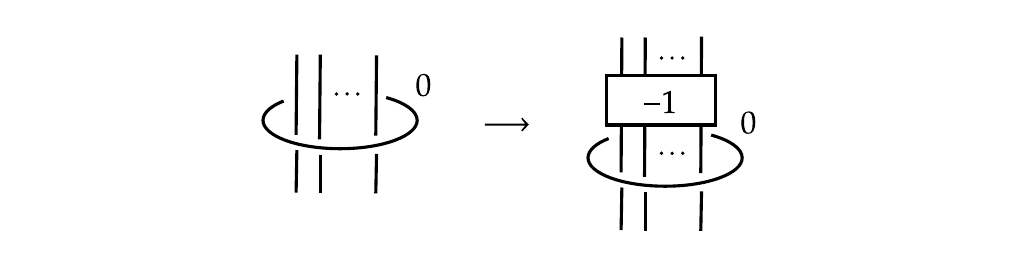}
\caption{The Dehn twist on $S_{std}$ along a core $2$-sphere.}
\label{fig:Dehn_twist}
\end{figure}

When $\phi\in\Diff^{spin}(S_{std})$, choose $S=\emptyset$, the statement follows from the spin case, Theorem~\ref{thm:spin_repara_las}.

When $\phi$ is the negative Dehn twist along the $j$-th $S^2$ factor in $S_{std}$, let $S\subset int(D_{std})$ be a slight pushin of the Dehn twist $2$-sphere in $S_{std}$ and $\tilde\phi\colon D_{std}\to(D_{std})_S$ be the natural diffeomorphism extending $\phi$, given by pushing the effect of the twist towards the boundary. Take a standard lasagna filling $(I\times\phi(L)\cup(n_+,n_-)\text{ cores},v)$, $v\in\widetilde{KhR}_2^+(\phi(L)\cup(n_+,n_-)\text{ belts})$ representing some element $x\in\mathcal S_0^2(D_{std};\phi(L))$, $n_\pm\in\Z^{\sum_{i=1}^km_i}$. By construction of the Gluck twist map \eqref{eq:gluck} in the proof of Theorem~\ref{thm:gluck} and the alternative description of the element $1_+$ preceding Lemma~\ref{lem:twist_belt_link}, the image of $x$ under the composition map $\mathcal S_0^2(\tilde\phi^{-1})\circ\tau_{D_{std},\phi(L),S}$ is represented by the standard lasagna filling $(I\times L\cup(n_+,n_-)\text{ cores},w)$, where $w\in\widetilde{KhR}_2^+(L\cup(n_+,n_-)\text{ belts})$ is the image of $v\otimes1\in\widetilde{KhR}_2^+(\phi(L)\cup(n_+,n_-)\text{ belts})\otimes\widetilde{KhR}_2^+(T(\ell,\ell)_{\ell_+,\ell_-})$ under the composition (shown near the $j$-th surgery region)
\begin{figure}[H]
\centering
\includegraphics[width=0.7\linewidth]{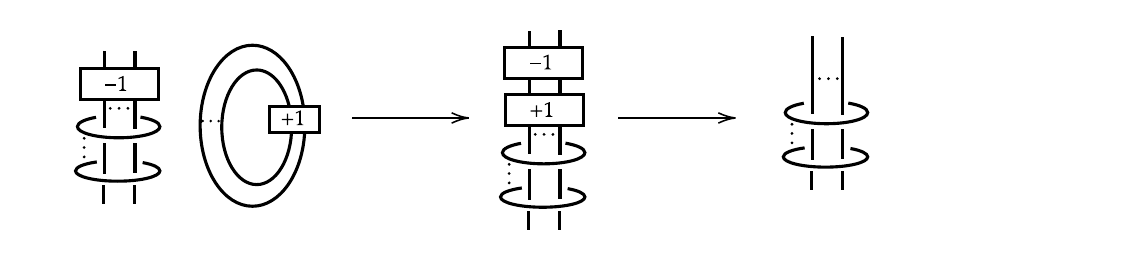},
\end{figure}
where $\ell_+,\ell_-$ denote the number of times $L$ intersects the $j$-th surgery region positively, negatively, respectively, and $\ell=\ell_++\ell_-$. Since this map is distant from the surgery region, locality implies that the statement holds for $S,\tilde\phi$ with $\alpha=0$ and $\widetilde{KhR}_2^+(\phi)$ given by saddling in $1\in\widetilde{KhR}_2^+(T(\ell,\ell)_{\ell_+,\ell_-})$ right above the $j$-th projector, composed with Reidemeister-induced maps that cancels the $\pm1$ twists.
\end{proof}

\begin{proof}[Proof of Theorem~\ref{thm:abstract_homology}] 
This is similar to deducing Theorem~\ref{thm:abstract_spin_homology} from Theorem~\ref{thm:spin_repara_las}. The only difference is that the lasagna level functoriality itself is more involved, which we now address.

Suppose $\phi_j\in\Diff^+(S_{std})$ with lifts $\tilde\phi_j\colon D_{std}\xrightarrow{\cong}(D_{std})_{S_j}$, $j=0,1$, so that $\phi_0$ is $L$-admissible and $\phi_1$ is $\phi_0(L)$-admissible. Let $N:=(-1,0]\times S_{std}$ be a collar neighborhood of the boundary of $D_{std}$. After some isotopies, we may assume that
\begin{itemize}
\item $S_j\subset\{-(j+1)/3\}\times S_{std}\subset N$, $j=0,1$;
\item $\tilde\phi_j=\mathrm{id}\times\phi_j$ on $N$, $j=0,1$.
\end{itemize}
The composition map $\phi:=\phi_1\circ\phi_0$ extends to a map $\tilde\phi\colon D_{std}\to(D_{std})_S$, where $S=\tilde\phi_1(S_0)\cup S_1$. More precisely, we take $\tilde\phi$ as the composition $$D_{std}\xrightarrow{\tilde\phi_0}(D_{std})_{S_0}\xrightarrow{(\tilde\phi_1)_{S_0}}((D_{std})_{S_1})_{\tilde\phi_1(S_0)}\cong(D_{std})_S,$$ where the middle map is the natural map induced by $\tilde\phi_1$.

Consider the following diagram of isomorphisms:
\begin{equation}\label{eq:gluck_las_functorial}
\begin{tikzcd}
\mathcal S_0^2(D_{std};\phi(L))\ar[rr,"{\tau_{D_{std},\phi(L),S}}"]\ar[dd,"{\tau_{D_{std},\phi(L),S_1}}"']&&\mathcal S_0^2((D_{std})_S;\phi(L))\ar[rr,"{\mathcal S_0^2(\tilde\phi^{-1})}"]\ar[rrdd,"{\mathcal S_0^2((\tilde\phi_1)_{S_0}^{-1})}"]&&\mathcal S_0^2(D_{std};L)\\\\
\mathcal S_0^2((D_{std})_{S_1};\phi(L))\ar[rr,"{\mathcal S_0^2(\tilde\phi_1^{-1})}"']\ar[uurr,"{\tau_{(D_{std})_{S_1},\phi(L),\tilde\phi_1(S_0)}}"]&&\mathcal S_0^2(D_{std};\phi_0(L))\ar[rr,"{\tau_{D_{std},\phi_0(L),S_0}}"']&&\mathcal S_0^2((D_{std})_{S_0};\phi_0(L))\ar[uu,"{\mathcal S_0^2(\tilde\phi_0^{-1})}"'].
\end{tikzcd}
\end{equation}

The upper left triangle is commutative by Theorem~\ref{thm:gluck}(4). The lower quadrilateral is commutative by naturality of the lasagna Gluck twist construction. The upper right triangle is trivially commutative. Note that the first row of \eqref{eq:gluck_las_functorial} is a lasagna defining map for $\widetilde{KhR}_2^+(\phi)\colon\widetilde{KhR}_2^+(\phi(L))\to\widetilde{KhR}_2^+(L)$ in the sense of Theorem~\ref{thm:nonspin_repara_las}, even though $S$ is not necessarily $\partial$-parallel. This is because one may apply Lemma~\ref{lem:disjoint_2_spheres} to change $\tilde\phi_1(S_0)$ by some inverse connected sum operations within $\{-1/3\}\times S_{std}$ to make $S$ $\partial$-parallel, while Lemma~\ref{lem:partial_parallel_complement} and Lemma~\ref{lem:gluck_sum} guarantee that the induced map of the first row of \eqref{eq:gluck_las_functorial}, in terms of \eqref{eq:nonspin_repara_las}, is unchanged except possibly by altering the shift $\alpha\in H_2(D_{std})$. Theorem~\ref{thm:abstract_homology} now follows from Theorem~\ref{thm:nonspin_repara_las} by an argument analogous to that in the proof of Theorem~\ref{thm:abstract_spin_homology}. In particular, $\widetilde{KhR}_2^+$ is defined on objects of $\mathbf{Links}_1$ as a ``cross-section'' of $\prod_{\rho\in P(S,L)}\widetilde{KhR}_2^+(\rho(L))$ as in the proof of Theorem~\ref{thm:abstract_spin_homology}, where $P(S,L)$ is the set of $L$-admissible parametrizations of $S$.
\end{proof}

\subsection{\texorpdfstring{$\widetilde{KhR}_2^+$}{KhR2+} on morphisms}\label{sec:nonspin_mor}
We are finally able to prove Theorem~\ref{thm:KhR_2+}. 

\begin{proof}[Proof of Theorem~\ref{thm:KhR_2+}]
Let $(W,\Sigma)\colon(S_0,L_0)\to(S_1,L_1)$ be a morphism in $\mathbf{Links}_1$, where $W=X_1\backslash int(X_0)$ for $4$-dimensional $1$-handlebodies $X_0,X_1$. As in the proof of Theorem~\ref{thm:abstract_spin_functoriality}, choose an orientation-preserving embedding $i\colon X_1\hookrightarrow S^4$ and parametrizations $\tilde\phi_j\colon-(S^4\backslash int(X_j))\xrightarrow{\cong}D_{std,j}=\#_{i=1}^{k^{(j)}}\natural^{m_i^{(j)}}(D^2\times S^2)$ making $\phi_j:=\tilde\phi_j|_{S_j}$ $L_j$-admissible, $j=0,1$. Then the map $\widetilde{KhR}_2^+(\tilde\phi_0(W),$ $\tilde\phi_0(\Sigma))\colon\widetilde{KhR}_2^+(\phi_1(L_1))\to\widetilde{KhR}_2^+(\phi_0(L_0))$ determines a map $\widetilde{KhR}_2^+(W,\Sigma)\colon\widetilde{KhR}_2^+(S_1,L_1)\to\widetilde{KhR}_2^+(S_0,L_0)$. We have to check that $\widetilde{KhR}_2^+(W,\Sigma)$ is independent of the embedding $i\colon X_1\hookrightarrow S^4$. The functoriality will be automatic, as we may pick any spin structure on the relevant $4$-dimensional $1$-handlebodies and repeat the arguments in the proof of Theorem~\ref{thm:abstract_spin_functoriality}. Furthermore, it suffices to check the independence on $i$ when $(W,\Sigma)$ is an elementary morphism as described in Proposition~\ref{prop:abstract_decomposition}.

Suppose $i'\colon X_1\hookrightarrow S^4$ is another choice of embedding, and $\tilde\phi_j'\colon-(S^4\backslash int(X_j))\xrightarrow{\cong}D_{std,j}$ are parametrizations, $j=0,1$. Choose a diffeomorphism 
$$\psi_1\colon-(S^4\backslash int(i'(X_1)))\xrightarrow{\cong}(-(S^4\backslash int(i(X_1))))_S$$
rel boundary for some $\partial$-parallel union of $2$-spheres $S$, and let
$$\psi_0\colon-(S^4\backslash int(i'(X_0)))\xrightarrow{\cong}(-(S^4\backslash int(i(X_0))))_S$$
be the extension of $\psi_1$ rel $W$.

We have the following commutative diagram
$$\begin{tikzcd}
\mathcal S_0^2(D_{std,1};\phi_1(L_1))\ar[rrr,"{\mathcal S_0^2(\tilde\phi_0(i(W^t));\tilde\phi_0(i(\Sigma^t)))}"]&&&\mathcal S_0^2(D_{std,0};\phi_0(L_0))\\
\mathcal S_0^2(-(S^4\backslash int(i(X_1)));L_1)\ar[rrr,"{\mathcal S_0^2(i(W^t);i(\Sigma^t))}"]\ar[u,"{\mathcal S_0^2(\tilde\phi_1)}","\cong"']\ar[d,"{\tau_{-(S^4\backslash int(i(X_1))),L_1,S}}"',"\cong"]&&&\mathcal S_0^2(-(S^4\backslash int(i(X_0)));L_0)\ar[u,"{\mathcal S_0^2(\tilde\phi_0)}"',"\cong"]\ar[d,"{\tau_{-(S^4\backslash int(i(X_0))),L_0,S}}","\cong"']\\
\mathcal S_0^2((-(S^4\backslash int(i(X_1))))_S;L_1)\ar[rrr,"{\mathcal S_0^2(i(W^t);i(\Sigma^t))}"]&&&\mathcal S_0^2((-(S^4\backslash int(i(X_0))))_S;L_0)\\
\mathcal S_0^2(-(S^4\backslash int(i'(X_1)));L_1)\ar[rrr,"{\mathcal S_0^2(i'(W^t);i'(\Sigma^t))}"]\ar[u,"{\mathcal S_0^2(\psi_1)}","\cong"']\ar[d,"{\mathcal S_0^2(\tilde\phi_1')}"',"\cong"]&&&\mathcal S_0^2(-(S^4\backslash int(i'(X_0)));L_0)\ar[u,"{\mathcal S_0^2(\psi_0)}"',"\cong"]\ar[d,"{\mathcal S_0^2(\tilde\phi_0')}","\cong"']\\
\mathcal S_0^2(D_{std,1};\phi_1'(L_1))\ar[rrr,"{\mathcal S_0^2(\tilde\phi_0'(i'(W^t));\tilde\phi_0'(i'(\Sigma^t)))}"]&&&\mathcal S_0^2(D_{std,0};\phi_0'(L_0)).
\end{tikzcd}$$
Here, the composition down the first column gives the lasagna defining map for $\widetilde{KhR}_2^+(\phi_1\circ\phi_1'^{-1})\colon\widetilde{KhR}_2^+(\phi_1(L_1))\to\widetilde{KhR}_2^+(\phi_1'(L_1))$ in the sense of Theorem~\ref{thm:nonspin_repara_las}, because one can commute the first two isomorphisms.

Similarly, we claim that the composition down the second column serves as lasagna defining map for $\widetilde{KhR}_2^+(\phi_0\circ\phi_0'^{-1})$, which will finish the proof that $\widetilde{KhR}_2^+(W,\Sigma)$ is well-defined. Since we have assumed $(W,\Sigma)$ to be an elementary morphism as described in Proposition~\ref{prop:abstract_decomposition}, we check for each case. 

\eqref{item:product}\eqref{item:2_handle_cancel}: $S\subset\partial_+W$ is parallel to $\partial_-W$.

\eqref{item:ball_creation}: Up to capping off some unknots in $W$ (which does not affect the composite map by Theorem~\ref{thm:gluck}(2)), $S$ is parallel to $\partial_-W$.

\eqref{item:connected_sum}\eqref{item:1_handle}: One can surger as in Lemma~\ref{lem:disjoint_2_spheres} to make $S\subset\partial_+W$ disjoint from the belt sphere of the $1$-handle (which does not affect the induced map on $\widetilde{KhR}_2^+$ by Lemma~\ref{lem:partial_parallel_complement} and Lemma~\ref{lem:gluck_sum}), so that it is parallel to $\partial_-W$.

The proof is complete.
\end{proof}

\appendix
\section{Sign fixes}\label{sec:sign}
In this appendix, we prove Theorem~\ref{thm:gl2_webs_functorial}, the precise version of Theorem~\ref{thm:gl2_webs_functorial_intro}, and explain how to use the $\gl_2$ webs and foams formalism to fix various sign ambiguities appearing in the paper.

In Appendix~\ref{sec:gl2_top}, we recall the topological setup of $\gl_2$ webs in $S^3$ and $\gl_2$ foams between them, and introduce singular $\gl_2$ foams that are of interest to us. In Appendix~\ref{sec:gl2_functoriality}, we show that the closed Lee foam evaluation in $\R^4$ agrees with the abstract Lee foam evaluation, assign maps on $\gl_2$ homology induced by singular foams, and deduce Theorem~\ref{thm:gl2_webs_functorial}. In Appendix~\ref{sec:gl2_projectors}, we sketch the definition of $\gl_2$ Rozansky projectors. Throughout, we work over $\Z$, except finally in Section~\ref{sec:gl2_sign} where we work over $\Q$ and address all sign issues in the main text of this paper.

\subsection{\texorpdfstring{$\gl_2$}{gl2} webs and singular \texorpdfstring{$\gl_2$}{gl2} foams}\label{sec:gl2_top}
We set up the notion of (embedded) $\gl_2$ webs and foams that is relevant for us. See \cite{queffelec2024movie} for a more general topological setup.

For us, a \textit{$\gl_2$ web} in $\R^3$ (resp. $S^3$) is an embedded trivalent graph $W\subset\R^3$ (resp. $S^3$) together with the following data:
\begin{enumerate}
\item A label $1$ or $2$ on each edge;
\item An orientation on each edge;
\item An oriented ribbon $R$ of $W$, i.e. a smoothly embedded oriented surface $R\subset\R^3$ (resp. $S^3$) that has $W$ as its core.
\end{enumerate}
The data are subject to the following constraints\footnote{\cite{queffelec2024movie} further requires that the tangent vectors of all three edges at a trivalent vertex to point in the same direction, as this would cut down the number of generic Reidemeister-type moves and movie moves (a similar condition is posed on $\gl_2$ foams in $I\times\R^3$ or $I\times S^3$). We ignore this difference.}:
\begin{enumerate}
\item At each vertex, two of the adjacent edges are labeled $1$ and one is labeled $2$;
\item At each vertex, the two $1$-labeled edges induce the same orientation on the vertex, which is opposite to the orientation induced by the $2$-labeled edge.
\end{enumerate}
Each vertex of a $\gl_2$ web $W$ is assigned the orientation induced from the $2$-labeled edge adjacent to it. Moreover, the orientation of the ribbon $R$ at a vertex coupled with the vertex orientation induces a cyclic ordering of the three adjacent edges: for a positive vertex, we take the cyclic ordering determined by the orientation of the ribbon, and for a negative one the opposite.\smallskip

A \textit{(regular) $\gl_2$ foam} between $\gl_2$ webs $W_0,W_1$ in $\R^3$ (resp. $S^3$) is an embedded singular surface $F\subset I\times\R^3$ (resp. $I\times S^3$) cobounding $\{0\}\times W_0$ and $\{1\}\times W_1$, whose interior points have local neighborhood diffeomorphic to either $\R^2\subset\R^4$ or $Y\times\R\subset\R^4$, where $Y\subset\R^2\subset\R^3$ is the neighborhood of a trivalent vertex of an embedded graph in $\R^2$ (a ``Y shape''), together with the following additional data:
\begin{enumerate}
\item A label $1$ or $2$ on each face;
\item An orientation on each face;
\item An oriented ribbon $R$ of $F$, i.e. a smoothly properly embedded oriented $3$-manifold $R\subset I\times\R^3$ (resp. $I\times S^3$) that has $F$ as its core.
\end{enumerate}
Points on $F$ whose neighborhoods are of the form $Y\times\R$ form a $1$-manifold in $I\times\R^3$ (resp. $I\times S^3$) cobounding vertices of $W_0$ and vertices of $W_1$, each component of which is called a \textit{seam} of $F$. Each component of the exterior of seams in $F$ is a \textit{face} of $F$. The additional data of $F$ are subject to the following constraints:
\begin{enumerate}
\item Around each seam, two of the adjacent faces are labeled $1$ and one is labeled $2$;
\item Around each seam, the two $1$-labeled faces induces the same orientation on the seam, which is opposite to the orientation induced by the $2$-labeled face;
\item The labels and orientations on faces are compatible with the labels and orientations on edges when restricted to the boundary (here, as usual, for $W_0$ this means its edge orientations are given by $-\partial F$);
\item The ribbon restricts to the ribbons of the boundary webs, with compatible orientations.
\end{enumerate}
Each seam of a $\gl_2$ foam $F$ is assigned the orientation induced from the $1$-labeled faces adjacent to it. Moreover, the orientation of the ribbon $R$ at a seam coupled with the seam orientation induces a cyclic ordering of the three adjacent faces by the right-hand rule. As a consequence, both the orientations on seams and the cyclic orientations around them are compatible with those for vertices of the boundary webs.

\begin{figure}
\centering
\includegraphics[width=0.75\linewidth]{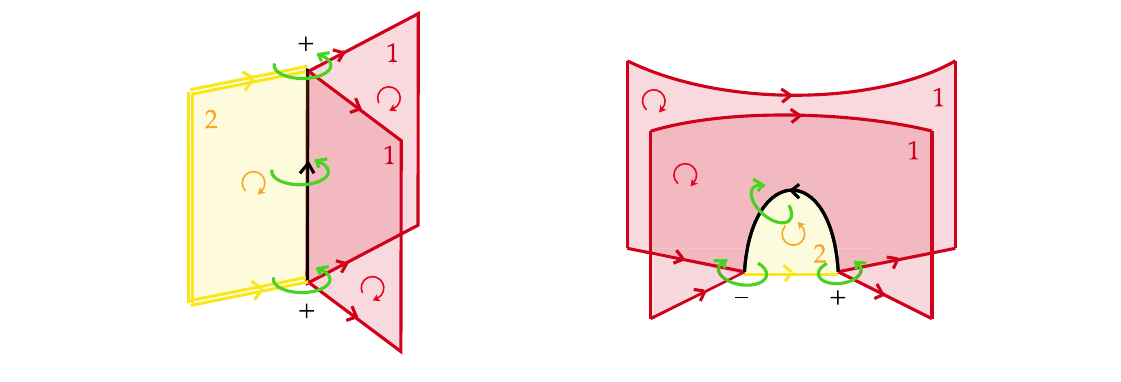}
\caption{Examples of $\gl_2$ foams between $\gl_2$ webs that are contained in $I\times\R^2\times\{0\}\subset I\times\R^3$, shown locally, with ribbons given by thickenings of the pictures in $I\times\R^2\times\{0\}$ with the induced orientations from $I\times\R^2\times\{0\}$. The orientations of the vertices, edges, seams, and faces, as well as cyclic orientations around vertices and seams are indicated.}
\label{fig:foam_examples}
\end{figure}
See Figure~\ref{fig:foam_examples} for two examples of $\gl_2$ foams contained in $I\times\R^2\times\{0\}\subset I\times\R^3$ shown locally, with orientation data indicated in the picture.\medskip

Below, when we speak about $\gl_2$ webs and foams without specifications, we always mean $\gl_2$ webs in $\R^3$ or $S^3$ and $\gl_2$ foams in $I\times\R^3$ or $I\times S^3$.

A \textit{dotted} $\gl_2$ foam is a $\gl_2$ foam with finitely many distinguished points (called \textit{dots}) on the interior of its $1$-labeled faces.

For our purposes, we also wish to allow more general $\gl_2$ foams.
\begin{Def}
A \textit{singular $\gl_2$ foam} is a dotted $\gl_2$ foam, but with finitely many singularities away from the seams and the dots of the following types allowed:
\begin{enumerate}
\item Transverse double points between $1$- and $2$-labeled faces or between $2$-labeled faces. The ribbon is also immersed near the transverse double points, but is embedded when restricted to a neighborhood in each sheet.
\item Framing points, which are $\Z$-labeled points on the interior of faces of the foam away from double points. In a closed neighborhood of such a $k$-labeled point, the foam is regular of the form $D^2=D^2\times\{0\}\subset D^2\times D^2$, but the ribbon is only immersed of the form $D^2\times(-\epsilon,\epsilon)\to D^2\times D^2$, $(z,t)\mapsto(z,tz^k)$, where we denote points in $D^2\subset\C$ using complex numbers.
\end{enumerate}
\end{Def}
\begin{Rmk}
\begin{enumerate}
\item After taking the Khovanov-Rozansky $\gl_2$ functor (see Section~\ref{sec:gl2_functoriality}), the framing points in a singular $\gl_2$ foam acts the same role as framing-changing input balls in \cite[Definition~2.5]{morrison2024invariants} when $N=2$ (note that they use a different renormalization convention).
\item We could have also allowed transverse double points between $1$-labeled edges, and insist that they act as immersion point input balls (as considered in the Lee case in \cite[Example~3.7]{morrison2024invariants}) on the $\gl_2$ homology. Theorem~\ref{thm:gl2_webs_functorial_intro} will still be valid once we fix cocycles in the Khovanov-Rozansky $\gl_2$ chain complexes of the positive/negative Hopf links (see the construction in Section~\ref{sec:gl2_functoriality}). However, the resulting cobordism maps will not be invariant under finger/Whitney moves between $1$-labeled faces.
\end{enumerate}
\end{Rmk}

We think of $1,2$-labeled edges and faces as having thickness $1,2$, respectively. The \textit{writhe} of a $\gl_2$ web $W$, denoted $w(W)$, is the linking number between $W$ and a push-off of itself in the normal direction of the ribbon. Analogously, by interpreting framing points as introducing local twistings, one could also define the \textit{self-intersection number} of a singular $\gl_2$ foam $F\colon W_0\to W_1$; more explicitly, it is $[F]\cdot[F]:=2i(F)+i_1(F)+4i_2(F)$, where $i(F)$ is twice the sum of the signed intersection number between $1$- and $2$-labeled faces plus four times the sum of the signed intersection number between $2$-labeled faces, and $i_k(F)$ is the sum of labels on the framing points on $k$-labeled faces, $k=1,2$. Thus, $[F]\cdot[F]=w(W_1)-w(W_0)$. In particular, the writhe of a $\gl_2$ web in $S^3$ is the self-intersection number of any singular $\gl_2$ foam in $B^4$ that bounds it (such a singular foam always exists). A regular $\gl_2$ foam has self-intersection number $0$, and the writhe of a $\gl_2$ web in $S^3$ is a complete obstruction to having a regular $\gl_2$ foam in $B^4$ bounding it.

\subsection{Functoriality of singular \texorpdfstring{$\gl_2$}{gl2} foams}\label{sec:gl2_functoriality}
For $M=\R^3,S^3$, let $\mathbf{Links}_{M}$ denote the category of admissible framed oriented links in $M$ and framed oriented link cobordisms between them up to isotopy rel boundary, where admissibility of the link is in the same sense as in Section~\ref{sec:RW}, namely that the link is contained in $\R^3$ and the projection onto $\R^2\times\{0\}$ is generic. The link diagram of an admissible framed link comes with $\Z$-labeled framing points away from crossings, which may move freely along the link components, combine or split in a weight-preserving way, and $0$-labeled framing points may be created or annihilated at will\footnote{Strictly speaking, a generic projection of the ribbon would only equip the diagram with $\pm$-half-framing points, which create or cancel in pairs only when we isotope the ribbon. Since framing points only affect the homology by global bigrading shifts, we ignore this technical difference. Once we restrict to integral framing points (by imposing this as a part of the admissibility condition), homological shifts are always even.}. Reidemeister I moves come at a cost of $\pm1$-labeled framing points.

Similarly, let $\TanWeb{M}$ (resp. $\TanWebsing{M}$) denote the category of admissible $\gl_2$ webs in $M$ and dotted (resp. singular) $\gl_2$ foams between them up to isotopy rel boundary (resp. isotopy rel boundary, weight-preserving collision/separation of labeled framing points on the same face, and creation/annihilation of $0$-labeled framing points). Here, in addition to the requirements of generality as in the case of links, the admissibility of $\gl_2$ webs further requires that the projection to $\R^2\times\{0\}$ is orientation-preserving on the ribbon at each trivalent vertex. Framing points in a web diagram are not allowed to move across trivalent vertices. We have the following diagram of functors
\begin{equation}\label{eq:links_webs_sing}
\begin{tikzcd}
\mathbf{Links}_{\R^3}\ar[r]\ar[d,hook]&\mathbf{Links}_{S^3}\ar[d,hook]\\
\TanWeb{\R^3}\ar[r]\ar[d,hook]&\TanWeb{S^3}\ar[d,hook]\\
\TanWebsing{\R^3}\ar[r]&\TanWebsing{S^3}.
\end{tikzcd}
\end{equation}
Here, all vertical arrows are inclusions of categories. All horizontal arrows are full, but not faithful. All functors except the vertical ones from the first row to the second are bijective on objects.

The Khovanov-Rozansky $\gl_2$ homology was first defined for links in $\R^3$ and $S^3$ and link cobordisms by Khovanov \cite{khovanov2000categorification} (referred to as Khovanov homology, where the renormalization convention is different than ours), although its functoriality turns out to be more difficult. The functoriality for links in $\R^3$, up to sign, was proved by Jacobsson \cite{jacobsson2004invariant}. Many sign fixes appeared in the literature, but the one that first introduced webs and foams in the language we will be using is due to Blanchet \cite{blanchet2010oriented}; thus, we obtain a functor 
\begin{equation}\label{eq:CKhR_2_links_R3}
CKhR_2\colon\mathbf{Links}_{\R^3}\to\Kb(\Z)^\Z,
\end{equation}
where the target is the bounded homotopy category of cochain complexes of quantum $\Z$-graded abelian groups, with quantum grading shifts allowed for morphisms. The functoriality for links in $S^3$ requires an additional check for a global elementary movie move called the sweep-around move, and was obtained only recently by Morrison--Walker--Wedrich \cite{morrison2022invariants}. This means \eqref{eq:CKhR_2_links_R3} descends to a functor $$CKhR_2\colon\mathbf{Links}_{S^3}\to\Kb(\Z)^\Z.$$
The Khovanov-Rozansky $\gl_2$ homology was also extended to $\gl_2$ webs in $\R^3$ or $S^3$ as the $N=2$ special case of Wu \cite{wu2014colored} and studied by many other authors. For us, singularities in the diagram of a $\gl_2$ web, in addition to trivalent vertices, are crossings with various kinds and $\Z$-labeled framing points where the ribbon twists along the strands by multiples of full turns rel the blackboard framing. One resolves the singularities and builds a cube of resolutions using the rules in Figure~\ref{fig:gl2_convention}.
\begin{figure}
\centering
\includegraphics[width=0.8\linewidth]{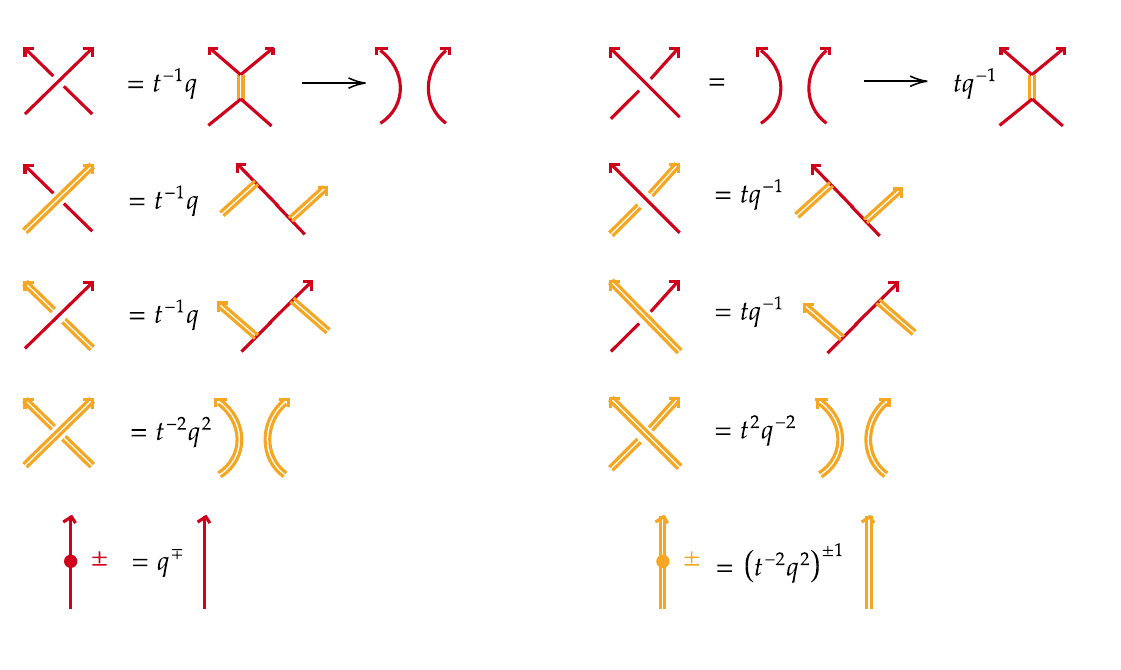}
\caption{Resolution of crossings or framing points in a web diagram into complexes of planar webs. Here $t$ and $q$ denote the homological and quantum degree shifts, respectively.}
\label{fig:gl2_convention}
\end{figure}
The functoriality of $CKhR_2$ for webs in $\R^3$ was proved recently by Queffelec \cite{queffelec2022gl2}. This means \eqref{eq:CKhR_2_links_R3} extends to a functor $$CKhR_2\colon\TanWeb{\R^3}\to\Kb(\Z)^\Z.$$ We warn the readers that our convention differs from that of Queffelec by mirroring the webs and foams (or on the level of diagrams, changing the signs of all crossings and framing points).

One could also define the universal version of Khovanov-Rozansky $\gl_2$ homology in the various settings above, by replacing the underlying Frobenius algebra $\Z[\Khdot]/(\Khdot^2)$ by its universal deformation $\Z[E_1,E_2,\Khdot]/(\Khdot^2-E_1\Khdot+E_2)$ over $\Z[E_1,E_2]$. The proof of functoriality is similar, and one obtains a functor $CKhR_2^{univ}$ to $\Kb(\Z[E_1,E_2])^\Z$, the bounded homotopy category of cochain complexes of quantum $\Z$-graded $\Z[E_1,E_2]$-modules, from $\mathbf{Links}_{\R^3},\mathbf{Links}_{S^3}$, or $\TanWeb{\R^3}$ (see \cite[Theorem~2.2]{morrison2024invariants} for the functoriality for links in $S^3$). Here, the free variables $E_1,E_2$ have $q$-degrees $2,4$, respectively. By setting $E_1=E_2=0$, one recovers the functoriality results for the undeformed theories.

\begin{Thm}\label{thm:gl2_webs_functorial}
The universal Khovanov-Rozansky $\gl_2$ homology on objects extends to a functor 
\begin{equation}\label{eq:CKhR_2_webs_S3_sing}
CKhR_2^{univ}\colon \TanWebsing{S^3}\to \HChb(\Z[E_1,E_2])^\Z.
\end{equation}
Here, $\HChb$ denotes the cohomology category of the dg category of bounded chain complexes, i.e. the extension of $\Kb$ where homological degree shifts are allowed for morphisms\footnote{In fact, morphisms in the image of $CKhR_2^{univ}$ are always of even homological degree.}. Moreover,
\begin{enumerate}
\item The bigrading shifts of $CKhR_2^{univ}(F)\colon CKhR_2^{univ}(W_0)\to CKhR_2^{univ}(W_1)$ for a singular foam $F\colon W_0\to W_1$ is $(t^{-1}q)^{i(F)+2i_2(F)}q^{-\chi(u(F))-i_1(F)+2\#(dots)}$, where $u(F)$ denotes the closure of the union of $1$-labeled faces in $F$.
\item $CKhR_2^{univ}(F)$ (up to homotopy) is independent of the embedding of the interior of the $2$-labeled faces of $F$. In fact, up to sign, it is determined by the abstract dotted surface $u(F)$. The sign is further determined by the germ of $u(F)$ in $F$ with all data (orientations, ribbon, dots, and framing points) and the mod $4$ total Euler characteristic of the $2$-labeled faces.
\end{enumerate}
\end{Thm}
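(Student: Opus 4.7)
The plan is to construct the functor $CKhR_2^{univ}$ in three stages: first extend Queffelec's functor from regular to singular $\gl_2$ foams in $\R^3$, then promote the result to $S^3$ by a sweep-around analysis, and finally deduce properties (1) and (2) from the explicit construction. For the $\R^3$ stage we decompose a generic singular foam into elementary pieces (regular pieces, framing points, and transverse double points of types $(1,2)$ and $(2,2)$), assign a chain map on $CKhR_2^{univ}$ to each, and verify compatibility with ambient isotopies of singular foams. A $k$-labeled framing point on a $j$-labeled face is assigned the bigrading shift dictated by the Reidemeister~I rules in Figure~\ref{fig:gl2_convention}, namely by $q^{2k}$ for $j=1$ and by $(t^{-1}q)^{2k}$ for $j=2$. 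A transverse double point is the local picture of a crossing between a thick/thin (resp. two thick) strand in a web, and we assign to it a canonical cocycle in the $\gl_2$ chain complex of its resolution cube, obtained by composing the single-crossing chain maps dictated by Figure~\ref{fig:gl2_convention}.

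The next step is to check that these assignments are invariant under the ambient isotopies of singular foams permitted by our definition. For framing points this is essentially bookkeeping; for double points it amounts to a finite list of local movie moves between double points, trivalent seams, and other generic singularities. Each such movie move is supported in a small ball, so we can strip off the singular data, apply Queffelec's \cite{queffelec2022gl2} functoriality on the regular complement, and then reduce to a finite collection of identities between cocycles in $\gl_2$ complexes of elementary webs. These identities can be checked by direct computation, though in practice the cleanest route is to lift each local move to a universal move between planar $\gl_2$ tangles and compare using the associativity and unit/counit axioms for the underlying Frobenius algebra $\Z[E_1,E_2,\Khdot]/(\Khdot^2-E_1\Khdot+E_2)$.

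To promote the functor from $\R^3$ to $S^3$, we import the sweep-around argument of Morrison--Walker--Wedrich \cite{morrison2022invariants}. Any isotopy in $I\times S^3$ between singular foams differs from an isotopy fixing $\{\infty\}$ by a finite sequence of sweep-around movies, each supported in a neighborhood of a single elementary piece (a trivalent seam, a double point, or a framing point). The key locality observation is that sweep-arounds commute with our singular features: for framing points and regular seams this is inherited from the Queffelec/MWW cases, and for double points it reduces to verifying that the sweep-around map on the resolution cube acts as the identity on the distinguished cocycle. I expect this last check to be the main technical obstacle, since one must track signs through the Blanchet-type \cite{blanchet2010oriented} foam calculus for the thick-strand resolution cube; however, it reduces to the MWW sweep-around identity in $\gl_2$ on a small collection of explicit webs.

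Finally, property (1) follows by summing the elementary bigrading shifts read off from Figure~\ref{fig:gl2_convention}: each framing point contributes $q^{2k}$ or $(t^{-1}q)^{2k}$, each transverse double point contributes its own shift ($(t^{-1}q)^{2}$ for $(1,2)$ and $(t^{-1}q)^{4}$ for $(2,2)$), each dot contributes $q^{2}$, and the Euler characteristic contribution of $u(F)$ carries the residual $q^{-\chi(u(F))-i_1(F)}$, yielding the total shift $(t^{-1}q)^{i(F)+2i_2(F)}q^{-\chi(u(F))-i_1(F)+2\#(\text{dots})}$. For property (2) we use that each $2$-labeled face is canonically the ``merge'' of two parallel $1$-labeled sheets along its boundary seam; a neck-cutting operation along a small loop on a $2$-labeled face expresses the foam as a composition involving only the germ of the $2$-labeled face near its seam together with a closed dotted surface, so the induced map depends only on the abstract dotted surface $u(F)$ up to a sign. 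By the standard Blanchet sign rule, this sign is determined by the germ data of $u(F)$ inside $F$ and the mod $4$ Euler characteristic of the $2$-labeled faces, the latter accounting for the $\pi/2$-rotational symmetry used when identifying a $2$-labeled sheet with a pair of oppositely-oriented $1$-labeled ones.
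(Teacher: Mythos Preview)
Your overall architecture is reasonable, but there are two substantive gaps compared to what is actually needed, and one ordering difference worth noting.

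\textbf{The sweep-around move for webs.} You correctly identify the sign-tracking in the sweep-around as the main obstacle, but you do not supply a method. The MWW argument \cite{morrison2022invariants} is for \emph{links}, and the case of a $2$-labeled strand sweeping around (or a $1$-labeled strand sweeping past trivalent vertices) is genuinely new. The paper's route is: first use the Beliakova--Hogancamp--Putyra--Wehrli comparison \cite{beliakova2023functoriality} to show the sweep-around acts as identity up to sign (by forgetting $2$-labeled pieces and reducing to the link case), and then fix the sign by passing to Lee homology and invoking a new $4$-dimensional Lee foam evaluation formula (Theorem~\ref{thm:Lee_foam_evaluation}): the Lee evaluation of a closed foam in $\R^4$ equals Blanchet's abstract foam evaluation. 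This last ingredient is nontrivial---its proof involves a bordism-theoretic reduction to a single explicit foam---and your proposal has nothing in its place.

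\textbf{Property (2).} Your neck-cutting sketch does not cover what is required. Independence of the embedding of $2$-labeled faces means invariance under \emph{all} re-embeddings preserving the germ near $u(F)$: these include finger/Whitney moves between $1$- and $2$-labeled faces, between $2$-labeled faces, fork versions of finger moves through seams, trading local self-intersections for framing points, and framing changes. The paper isolates these into an explicit list (Lemma~\ref{lem:reembed_thick_moves}) via a careful Morse-theoretic argument on immersed $3$-manifold cobordisms, and then checks invariance move-by-move using the explicit cocycles assigned to each singularity link (Table~\ref{tab:singular_maps}). A single neck-cutting does not, for instance, show why a pair of canceling $(1,2)$ double points created by a finger move contributes trivially.

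\textbf{Ordering and the singular extension.} You extend regular $\to$ singular in $\R^3$ first, then promote to $S^3$; the paper does the opposite. The paper's route is cleaner: once $S^3$ functoriality is in hand for regular foams, the singular extension is uniform---delete a ball around each singularity, tube its link of singularity to $\{0\}\times S^3$ near $\infty$, and insert a fixed cocycle $z(L)$ (computed once for each of the six singularity models). Well-definedness then follows directly from the already-established $S^3$ functoriality, rather than from a separate list of local movie moves. Your approach could likely be made to work, but you would still owe the sweep-around argument for webs at the end.

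Finally, a minor point: your elementary bigrading contributions for framing points are off (a $k$-labeled framing point on a $1$-labeled face contributes $q^{-k}$, not $q^{2k}$), though you state the final formula correctly.
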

\begin{Rmk}
The functor $CKhR_2^{univ}$ in Theorem~\ref{thm:gl2_webs_functorial} gives rise to functors from every term in the diagram \eqref{eq:links_webs_sing}. When restricted to the first two rows of \eqref{eq:links_webs_sing}, the image is contained in the subcategory $\Kb(\Z[E_1,E_2])^\Z$ where morphisms preserve the homological degree. Thus, Theorem~\ref{thm:gl2_webs_functorial} is a simultaneous generalization of all previous functoriality results in the context of $\gl_2$ homology.
\end{Rmk}

The proof of Theorem~\ref{thm:gl2_webs_functorial} is divided into two steps. In Section~\ref{sec:gl2_webs_functorial_S3}, by proving a $4$-dimensional Lee foam evaluation formula, we upgrade Queffelec's functoriality to webs in $S^3$ and nonsingular foams between them. In Section~\ref{sec:gl2_webs_functorial_singular}, we further extend the functoriality to singular foams. This allows the construction of (braided) monoidal $2$-categories in Section~\ref{ssec:braidedcat}. In Section~\ref{sec:sylleptic} we interpret the moves involving singular foams in terms of sylleptic centers.

\subsubsection{Functoriality of regular \texorpdfstring{$\gl_2$}{gl2} foams}\label{sec:gl2_webs_functorial_S3}
In this section, we show that Queffelec's functor 
\begin{equation}\label{eq:CKhR_2_webs_R3}
CKhR_2^{univ}\colon\TanWeb{\R^3}\to\Kb(\Z[E_1,E_2])^\Z
\end{equation}
descends to a functor 
\begin{equation}\label{eq:CKhR_2_webs_S3}
CKhR_2^{univ}\colon\TanWeb{S^3}\to\Kb(\Z[E_1,E_2])^\Z.
\end{equation}
This amounts to showing that the sweep-around movie move \cite[(1-1)]{morrison2022invariants} induces the identity chain map up to chain homotopy (for us, the strand that sweeps around can be either $1$-labeled or $2$-labeled).

For a $\gl_2$ web $W$ (resp. $\gl_2$ foam $F$), let $u(W)$ (resp. $u(F)$) denote the closure of the union of $1$-labeled edges (resp. faces), thought of as an unoriented link (resp. link cobordism). If $F\colon W_0\to W_1$ is a $\gl_2$ foam between admissible $\gl_2$ webs, then $u(F)\colon u(W_0)\to u(W_1)$ is an orientable unoriented link cobordism between admissible unoriented links, and the Bar-Natan formalism of Khovanov homology gives a chain map $CKhR_2^{univ}(u(F))\colon CKhR_2^{univ}(u(W_0))\to CKhR_2^{univ}(u(W_1))$, well-defined up to sign and chain homotopy, between the Khovanov chain complexes of links $u(W_0),u(W_1)$, where the bigrading is only well-defined up to an overall even shift. The main result of Beliakova--Hogancamp--Putyra--Wehrli \cite{beliakova2023functoriality} implies that there are isomorphisms $\iota_\bullet$ canonical up to signs making the following diagram commute up to sign and chain homotopy (as homologically $\Z/2$-graded, $\Z$-relatively graded chain complexes).
\begin{equation}\label{eq:forget_thick}
\begin{tikzcd}
CKhR_2^{univ}(W_0)\ar[rr,"CKhR_2^{univ}(F)"]\ar[d,"\iota_{W_0}","\cong"']&&CKhR_2^{univ}(W_1)\ar[d,"\iota_{W_1}","\cong"']\\
CKhR_2^{univ}(u(W_0))\ar[rr,"CKhR_2^{univ}(u(F))"]&&CKhR_2^{univ}(u(W_1)).
\end{tikzcd}
\end{equation}

Now, let $F\colon W\to W$ be the movie of a sweep-around move. Then $u(F)\colon u(W)\to u(W)$ is either the movie of a sweep-around move (if a $1$-labeled edge sweeps around) or the identity movie (if a $2$-labeled edge sweeps around) for links and link cobordisms, whose induced chain map $CKhR_2^{univ}(u(F))$ is the identity chain map up to sign and chain homotopy. Indeed, any version of Khovanov homology with analogs of \eqref{eq:forget_thick} commuting up to global sign and homotopy inherit the triviality of the sweep-around move up to sign from $CKhR_2^{univ}$, for which it is proven in \cite[Theorem~2.2]{morrison2024invariants} following \cite{morrison2022invariants}. By the commutativity of \eqref{eq:forget_thick}, we deduce that $CKhR^{univ}_2(F)$ is chain homotopic to the identity map up to sign.

To fix the sign, as usual, it suffices to do so on the level of an appropriately defined Lee homology. To this end, we denote by $$CKhR_{Lee}\colon\TanWeb{\R^3}\to\Kb(\Q)^\Z$$
the result of base-changing $CKhR_2^{univ}$ to $\Q$ by tensoring all complexes with the $\Z[E_1,E_2]$-module $\Q$, on which $E_1$ acts by $0$ and $E_2$ by $-1$. Further, we write $KhR_{Lee}$ for the homology of the complexes computed by $CKhR_{Lee}$ and refer to this as \emph{Lee homology}.

Returning to the movie of a sweep-around move $F\colon W\to W$, we now check the induced map $KhR_{Lee}(F)\colon KhR_{Lee}(W)\to KhR_{Lee}(W)$ is the identity map. Since the underlying abstract $\gl_2$ foam (by which we mean the underlying singular surface together with labels, orientations, and cyclic orderings around seams) of $F$ is the same as that of the identity foam cobordism $W\to W$, this follows from the next theorem using a tracing argument.

\begin{Thm}[$4$-dimensional Lee foam evaluation]\label{thm:Lee_foam_evaluation}
Let $F\colon\emptyset\to\emptyset$ be a closed dotted $\gl_2$ foam in $\R^4$. Then the following two rational numbers are equal:
\begin{enumerate}[(a)]
\item The $4$-dimensional Lee evaluation $\langle F\rangle_{Lee}$ of $F$, i.e. the eigenvalue of the endomorphism $KhR_{Lee}(F)$ of $KhR_{Lee}(\emptyset)=\Q$ provided by the functoriality of Lee homology.
\item The Lee evaluation of $F$ as an abstract foam, defined as in Blanchet \cite[Section~1.5 and 4]{blanchet2010oriented}.
\end{enumerate}
\end{Thm}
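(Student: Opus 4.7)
The plan is a Morse-theoretic reduction combined with a comparison on elementary pieces. Choose a generic height function $h\colon\R^4\to\R$ with respect to $F$, so that $F$ is decomposed as a vertical composition of elementary $\gl_2$ foam cobordisms between admissible $\gl_2$ webs in generic level sets. Each elementary piece lies in a slab $I\times\R^3$, so by Queffelec's functoriality of $CKhR_2^{univ}$ on $\TanWeb{\R^3}$, the endomorphism $KhR_{Lee}(F)\in\mathrm{End}(\Q)$ is computed as the composition of the induced maps of these elementary pieces. The independence of the resulting scalar from the choice of $h$, up to isotopies that move critical points past each other in $\R^4$, is ensured by the sweep-around invariance just established in the preceding paragraph.

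Next I would reduce the abstract Lee evaluation on the Blanchet side to the same elementary pieces. Blanchet's closed-foam evaluation is defined from a local Frobenius-type TQFT assigning to each web the appropriate Lee state module and to each elementary foam piece (cap, cup, saddle, dot, zip, unzip, seam event, trivalent vertex passage, and framing-point insertion) the canonical local map determined by the underlying dotted Frobenius structure $\Q[\Khdot]/(\Khdot^2-1)$ together with the sign conventions forced by the cyclic ordering around seams. This yields a multiplicative formula for $\langle F\rangle_{Lee}$ along the same Morse decomposition of $F$.

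On elementary foam pieces, the two sides agree essentially by construction of the functor $CKhR_2^{univ}$: the chain-level definition in \cite{blanchet2010oriented,queffelec2022gl2} is built from precisely the same local data Blanchet uses to define his abstract evaluation, base-changed to the Lee Frobenius algebra. I would verify this matching piece-by-piece for dotted births and deaths, saddles, unzips/zips (the trivalent vertex cobordisms), and the sign conventions introduced around seams. For framing-point insertions, the grading shift in Figure~\ref{fig:gl2_convention} together with the chosen cocycle gives the same scalar as the abstract twist contribution. Composing all elementary matches along the Morse decomposition gives the desired equality for closed $F$.

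The main obstacle will be matching signs globally. The Bar-Natan/Lee idempotent basis on $KhR_{Lee}$ of a web, carried through \eqref{eq:forget_thick} and the Beliakova--Hogancamp--Putyra--Wehrli comparison, produces diagonal coefficients on each Lee generator whose signs depend on the coloring and on the ribbon-induced cyclic orderings around seams. Showing these signs agree with Blanchet's combinatorial signs requires careful local bookkeeping at each seam event and at each trivalent vertex passage under the chosen Morse function, and is the only step that is not a purely formal consequence of the definitions.
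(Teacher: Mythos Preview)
Your approach differs substantially from the paper's and has a genuine gap at precisely the step you yourself flag as the main obstacle.

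The problem is that your list of elementary movie pieces omits the Reidemeister-type moves (RI, RII, RIII, fork slides) that arise whenever a level-set web projects to a diagram with crossings --- and such crossings cannot in general be avoided for a closed foam in $\R^4$. On a Reidemeister piece the underlying abstract foam is a product, so Blanchet's side contributes the identity; the $4$-dimensional side contributes a specific chain homotopy equivalence. To run your piece-by-piece comparison you would therefore need, for every admissible web diagram $W$, an explicit isomorphism $KhR_{Lee}(W)\cong V(W_{\mathrm{abs}})$ that is simultaneously natural under all Reidemeister moves \emph{and} under all the genuine foam pieces you list. Producing such a coherent family with the correct signs is not ``local bookkeeping at each seam event'': the comparison \eqref{eq:forget_thick} from Beliakova--Hogancamp--Putyra--Wehrli that you invoke only commutes up to a global sign, and pinning that sign down throughout an arbitrary movie is exactly the content of the theorem. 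In particular, nothing in your outline handles a foam like the twisted product $\tau(S^1\times H)$, which is not isotopic to anything lying in a $3$-dimensional slice $I\times\R^2\times\{0\}$ and whose movie necessarily carries nontrivial Reidemeister content.

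The paper's proof avoids this entirely by working globally rather than piece by piece. It observes that both evaluations satisfy the same local skein relations (those of \cite[(3.1)--(3.19)]{queffelec2022gl2}) and uses them to surger $F$: first make $F_1$ connected, then eliminate all seams so that $F_1$ and $F_2$ become disjoint closed embedded surfaces, and discard the framing. At that point Sanderson's computation that the oriented unframed bordism group of ordered pairs of surfaces in $\R^4$ is $\Z/2$ reduces everything to the single generator $\tau(S^1\times H)$, where $H$ is a $(1,2)$-labeled positive Hopf link and $\tau$ is an oddly framed embedding of $S^1\times B^3$. That one case is then checked directly: the trace of a full $2\pi$ rotation of any web in $\R^3$ induces the identity on $CKhR_2^{univ}$, so the twisted and untwisted embeddings of $S^1\times H$ give the same scalar, and the untwisted one is a null-bordant pair. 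This bordism reduction replaces your unresolved global sign-coherence problem with a single explicit verification.
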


\begin{Cor}
Let $F\colon\emptyset\to\emptyset$ be a closed dotted $\gl_2$ foam in $\R^4$. The $4$-dimensional Khovanov-Rozansky $\gl_2$ evaluation of $F$ agrees with the $\gl_2$ evaluation of $F$ as an abstract foam, defined as in \cite{blanchet2010oriented}.
\end{Cor}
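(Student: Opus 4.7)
The plan is to deduce this from Theorem~\ref{thm:Lee_foam_evaluation} by a universal-deformation argument, reading off the $\gl_2$ evaluation as the constant term of a homogeneous polynomial whose Lee specialization is controlled by the previous theorem. Writing $F\colon \emptyset \to \emptyset$ for the closed dotted foam with $d$ dots, I would first view $F$ as a morphism in Queffelec's category, so that the universal functor from \eqref{eq:CKhR_2_webs_R3} produces a $\Z[E_1,E_2]$-linear map $CKhR_2^{univ}(F)\colon\Z[E_1,E_2]\to\Z[E_1,E_2]$. By Theorem~\ref{thm:gl2_webs_functorial}(1) specialized to regular foams (this part is really due to Queffelec), the map is homogeneous of bidegree $(0,-\chi(u(F))+2d)$, hence equals multiplication by a polynomial $P_F\in\Z[E_1,E_2]$ that is homogeneous of $q$-degree $-\chi(u(F))+2d$, where $\deg E_1=2$ and $\deg E_2=4$. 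By construction, the $4$-dimensional $\gl_2$ evaluation of $F$ is $P_F(0,0)$, and since $CKhR_{Lee}$ is defined as the base change of $CKhR_2^{univ}$ along $E_1\mapsto 0,\ E_2\mapsto -1$, the $4$-dimensional Lee evaluation of $F$ is $P_F(0,-1)$.

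On the abstract side, Blanchet's evaluation \cite{blanchet2010oriented} extends verbatim to the universal theory over $\Z[E_1,E_2]$, producing a polynomial $\widetilde{P}_F\in\Z[E_1,E_2]$, also homogeneous of $q$-degree $-\chi(u(F))+2d$, whose specializations at $(E_1,E_2)=(0,0)$ and $(0,-1)$ recover the abstract $\gl_2$ and abstract Lee evaluations respectively.

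The argument then splits into two cases according to the sign of $-\chi(u(F))+2d$. If $\chi(u(F))\neq 2d$, the polynomial $P_F$ either has strictly positive $q$-degree (and so lies in the ideal $(E_1,E_2)$) or strictly negative $q$-degree (and so vanishes), and in either event $P_F(0,0)=0$; the same analysis applies to $\widetilde{P}_F$, so both $\gl_2$ evaluations vanish and the equality is automatic. If $\chi(u(F))=2d$, then $P_F$ and $\widetilde{P}_F$ are constants in $\Z$, so $P_F(0,-1)=P_F(0,0)$ and $\widetilde{P}_F(0,-1)=\widetilde{P}_F(0,0)$; applying Theorem~\ref{thm:Lee_foam_evaluation} to the Lee specializations yields $P_F(0,-1)=\widetilde{P}_F(0,-1)$, which in turn gives $P_F(0,0)=\widetilde{P}_F(0,0)$, as desired.

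The only points needing verification are that $CKhR_{Lee}$ really is the Lee specialization of $CKhR_2^{univ}$ (immediate from the construction recalled just before Theorem~\ref{thm:Lee_foam_evaluation}), and that Blanchet's abstract closed-foam evaluation admits a universal lift to $\Z[E_1,E_2]$ with the stated homogeneity -- a routine check. The substantive work lies entirely in Theorem~\ref{thm:Lee_foam_evaluation}; the corollary is essentially a degree-counting specialization, with no serious obstacle anticipated.
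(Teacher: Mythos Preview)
Your proposal is correct and follows essentially the same approach as the paper. The paper's proof is terser: it simply observes that if the $q$-degree shift of $KhR_2(F)$ is nonzero then both evaluations vanish, and if it is zero then $\langle F\rangle_{\gl_2}=\langle F\rangle_{Lee}$ (and likewise abstractly), whence Theorem~\ref{thm:Lee_foam_evaluation} finishes. Your version makes explicit the universal-deformation polynomial $P_F\in\Z[E_1,E_2]$ that justifies the equality $\langle F\rangle_{\gl_2}=\langle F\rangle_{Lee}$ in the degree-zero case, which the paper leaves implicit; both arguments are the same degree-counting specialization of Theorem~\ref{thm:Lee_foam_evaluation}.
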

Thus, if $u(F)$ contains a component that is not a one-dotted sphere or an undotted torus, then the $4$-dimensional evaluation is $\langle F\rangle_{\gl_2}=0$. Otherwise, $\langle F\rangle_{\gl_2}=\pm2^{\#\{\text{torus components in $u(F)$}\}}$, with the sign determined by the remaining data of $F$ viewed as an abstract foam. In particular, the sign is positive if $F$ has no $2$-labeled faces.
\begin{proof}
If the $q$-degree shift of $KhR_2(F)\colon KhR_2(\emptyset)\to KhR_2(\emptyset)$ is nonzero, then both the concrete and the abstract $\gl_2$ evaluations of $F$ are zero. If the $q$-degree shift of $KhR_2(F)$ is zero, then $\langle F\rangle_{\gl_2}=\langle F\rangle_{Lee}$ is equal to the abstract Lee/$\gl_2$ evaluation.
\end{proof}

For our purpose, it is convenient to allow \textit{Lee idempotent-colored foams}, namely (undotted) $\gl_2$ foams whose $1$-labeled faces are decorated with colors $+=(1+x)/2$ or $-=(1-x)/2$ (which are idempotents in the Lee Frobenius algebra $\Q[x]/(x^2-1)$). Every $\gl_2$ foam can be rewritten as a formal linear combination of idempotent-colored ones, and both evaluations in Theorem~\ref{thm:Lee_foam_evaluation} are extended to closed idempotent-colored foams by linearity.

We refer the readers to \cite[(3.1)-(3.19)]{queffelec2022gl2} for a collection of local skein relations satisfied by the Lee evaluation of abstract $\gl_2$ foams, some of which will be useful to us. Relations (3.1) and (3.3)-(3.12) therein can be converted into idempotent-colored skein relations in the natural way. We also note one more skein relation that if the two $1$-labeled faces around a seam are colored by the same idempotent, then the abstract Lee evaluation is zero.

The local pictures in these skein relations can also be interpreted as foams embedded in $B^3$, with ribbon given by a thickening of the core surface, oriented as a submanifold of $B^3$. To evaluate in Lee homology an idempotent-colored foam $F$ in $\R^4$, one can apply these local relations to simplify $F$. More explicitly, this means that if in some local $B^3$, $F$ with its ribbon is given by one side of a skein relation, then one can replace the local picture of $F$ by the other side of the the skein relation (this is justified by Queffelec's functoriality, since one can rotate the local $B^3$ to sit in the first three coordinates of $I\times\R^3$ (with orientation) and evaluate).

\begin{proof}[Proof of Theorem~\ref{thm:Lee_foam_evaluation}]
It suffices to prove the case when $F$ is idempotent-colored by $\pm$. If two of the $1$-labeled faces around a seam in $F$ are colored by the same idempotent, then both evaluations (a) and (b) are zero. Thus, it remains to prove Theorem~\ref{thm:Lee_foam_evaluation} for \textit{compatibly} idempotent-colored foams, namely the ones where two $1$-labeled faces around each seam have opposite colors $\pm$.

Let $F_1,F_2$ denote the closures of the union of $1,2$-labeled faces in $F$, respectively. As we observed above, one may simplify $F$ by the skein relations \cite[(3.1)-(3.19)]{queffelec2022gl2} without affecting the truth of the statement.

\textbf{Step 1}: We may assume $F_1$ to be connected.

This is because we can find a collection of paths connecting different components of $F_1$ in the complement of $F$, and perform the inverses of \cite[(3.15) or (3.18)]{queffelec2022gl2}.

\textbf{Step 2}: We may assume $F_1$ and $F_2$ to be disjoint smoothly embedded closed oriented surfaces, or equivalently, $F$ has empty seam.

The collection of seams is an embedded multicurve $\gamma$ on the closed surface $F_1$. The inverses of \cite[(3.4)]{queffelec2022gl2} allow us to change $\gamma$ by oriented band surgeries on $F_1$. If $\gamma\ne\emptyset$, since $F_1$ is compatibly colored, we may apply band surgeries to assume $\gamma$ has a single component, which is necessarily a separating curve on $F_1$. By further band surgeries, we may assume $\gamma$ is a contractible curve on $F_1$. Since $\gamma$ bounds the surface $F_2$ in the complement of $F_1$, the twisting of the germ of $F_2$ along $\gamma$ around $F_1$ is zero, hence we may apply the neck-cutting relation \cite[(3.2)]{queffelec2022gl2} and then \cite[(3.6)]{queffelec2022gl2} to detach $F_2$ from $F_1$.

\textbf{Step 3}: The ribbon of $F$ is the same as framings on the embedded surfaces $F_1,F_2\subset\R^4$. Changing these framings does not affect the Lee foam evaluation.

This is because $F$ with two different framings differ in a generic movie presentation levelwise by some framing points, and the assignment of induced maps is insensitive to framing points.

\textbf{Step 4}: We may assume $F=\tau(S^1\times H)$, where $H\subset B^3$ is the positive Hopf link with one $1$-labeled and one $2$-labeled component, both $0$-framed, and $\tau\colon S^1\times B^3\hookrightarrow\R^4$ is the twisted embedding induced by an oddly framed circle in $\R^4$.

Suppose $W_1\cup W_2\subset I\times\R^4$ is a paired oriented cobordism between $F_1\cup F_2\subset\R^4$ and some $F_1'\cup F_2'\subset\R^4$. Upon changing the framings of $F_1,F_2$, we may assume $W_1\cup W_2$ to be a framed cobordism (note that closed oriented $3$-manifolds embedded in $\R^5$ have trivial normal bundles). By making the projection $W_1\cup W_2\subset I\times\R^4\to I$ Morse, we see that $(F_1,F_2)$ and $(F_1',F_2')$ (with all components of $F_1'$ colored by the color of $F_1$) are related by a sequence of skein relations \cite[(3.2)(3.15)(3.17)]{queffelec2022gl2} and their inverses. The claim now follows from the result by Sanderson \cite[Example~1.3]{sanderson1987bordism} that the oriented unframed cobordism group of the pair $(F_1,F_2)$ in $\R^4$ is isomorphic to $\Z/2$, with the underlying unframed pair of $\tau(S^1\times H)$ representing the nontrivial bordism class. 

\textbf{Step 5}: Theorem~\ref{thm:Lee_foam_evaluation} holds for $F=\tau(S^1\times H)$.

If $W\subset\R^3$ is an admissible web and $T_W$ is the foam given by the trace of $W$ under a $2\pi$-rotation in $\R^3$, then by choosing the rotation to be along the $z$-axis, we see the induced map 
\[CKhR_2^{univ}(T_W)\colon CKhR_2^{univ}(W)\to CKhR_2^{univ}(W)\]
is chain homotopic to the identity map. Therefore, we may replace the twisted embedding $\tau\colon S^1\times B^3\hookrightarrow\R^4$ by the untwisted embedding $i\colon S^1\times B^3\hookrightarrow\R^4$ without affecting the Lee foam evaluation. But $i(S^1\times H)$ is a null-cobordant pair, so the proof is complete.
\end{proof}

\subsubsection{A local statement}
\label{sec:gl2_webs_local}
Before extending the functor \eqref{eq:CKhR_2_webs_S3} of the previous section to singular foams, we establish notation for a local version of functoriality in the spirit of Bar-Natan's notion of canopolis \cite{bar2005khovanov}, see \cite[Section~2.2]{ehrig2018functoriality} and \cite{queffelec2021khovanov}. We also prove a lemma that will be useful. This section only plays a minor role in proving Theorem~\ref{thm:gl2_webs_functorial}.

Let $S$ denote an oriented surface, and let $\epsilon$ denote a collection of oriented points $p\subset\partial S$, each with a label $1$ or $2$. We now consider the graded $\Z[E_1,E_2]$-linear additive category $\Foams{S,\epsilon}$ of $\gl_2$ foams in the thickened surface. (A version with $\epsilon=\emptyset$ and $E_1=E_2=0$ was defined in \cite[Definition~3.1]{queffelec2021khovanov}.) The category $\Foams{S,\epsilon}$ has as objects $\gl_2$ webs $W\subset S$ with $\partial W=\epsilon$, as well as formal grading shifts and direct sums thereof. The morphisms in $\Foams{S,\epsilon}$ are (matrices of) $\Z[E_1,E_2]$-linear combinations of dotted $\gl_2$ foams in $I\times S$ rel $I \times \epsilon$ between such webs, up to isotopy rel boundary and appropriate skein relations\footnote{Specifically, the relations \cite[(3.3)-(3.12)]{queffelec2022gl2} together with equivariant versions of sphere and neck-cutting relations, see e.g. \cite[Definition~2.6]{beliakova2023functoriality}. All these relation arise as local relations from (an equivariant analog of) Blanchet's abstract foam evaluation \cite{blanchet2010oriented}, as explained in detail in \cite[Section~2]{ehrig2018functoriality}.}.

For tangled webs in the thickening of $S$ we similarly define the category $\TanWeb{I\times S,\epsilon}$ with objects admissible $\gl_2$ webs $W\subset I\times S$ with $\partial W = \{1/2\}\times \epsilon$, and morphisms dotted $\gl_2$ foams in $I\times I \times S$ rel $I\times \{1/2\} \times \epsilon$ between such webs; these webs and foams are defined analogously to Section~\ref{sec:gl2_top}, with webs admissible if their projections onto $S$ are generic. (A version with $\epsilon=\emptyset$ was defined in \cite[Definition~4.6]{queffelec2021khovanov}.)

The local invariant we consider is a functor of the form
\begin{equation} 
 \label{eq:onsurface}
 \TanWeb{I\times S,\epsilon}\xrightarrow{\BNfunc{\cdot}}\Kb(\Foams{S,\epsilon}),
\end{equation}
where $\Kb(\Foams{S,\epsilon})$ is the bounded homotopy category of cochain complexes in $\Foams{S,\epsilon}$. The existence of a functor \eqref{eq:onsurface} that categorifies the evaluation of tangled webs in $\gl_2$ skein theory was proven for tangles (i.e. purely $1$-labeled webs) in \cite[Theorem~1.1]{queffelec2021khovanov}, conjectured in full in \cite[Conjecture~4.8]{queffelec2021khovanov} and proven in \cite{queffelec2022gl2}.\smallskip

With this notation in place, we describe the central idea of the local lemma we will need. Suppose we have a closed web $W\subset B^3$ within some larger web diagram $W'$, and suppose that we would like to slide $W$ either under or over some other strand in $W'$. We would like to prove that such a move induces ``the identity map on both $CKhR_2^{univ}(W)$ and the other strand.'' To interpret this idea properly, we first note that for any planar $\gl_2$ web $W\in\Foams{D^2,\emptyset}$, the universal construction implies that we can neck-cut the identity foam morphism $id_W$ and write it as a finite sum
\begin{align*}
\centering
\includegraphics[width=0.65\linewidth]{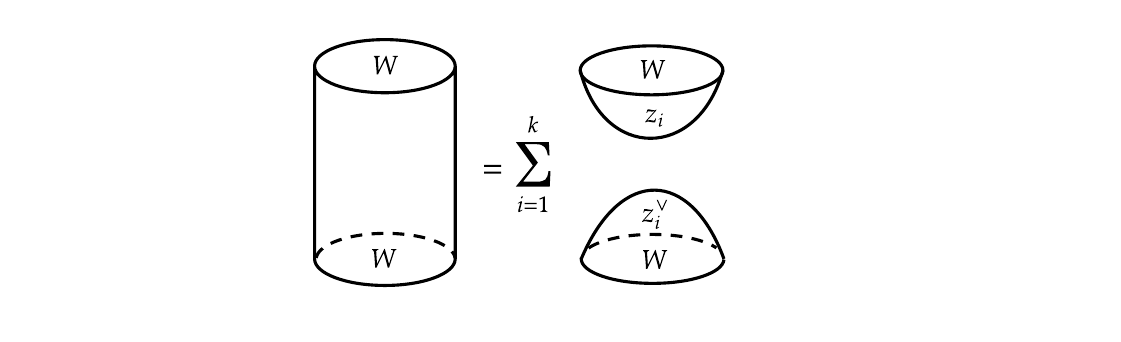},
\end{align*}
where $z_i$'s are foams representing a basis of $KhR_2^{univ}(W)$, and $z_i^\vee$'s are foams representing a basis of $KhR_2^{univ}(\bar W)$ dual to the basis representing by the $z_i$'s. If we now let $W_L$ (resp. $W_R$) denote the (planar) web in $\Foams{D^2,\epsilon}$ consisting of $W$ sitting to the left (resp. right) of a single strand through the disk $D^2$ (with endpoint data $\epsilon$), we can define the ``identity'' map between such webs by the ``shift'' map $W_L\to W_R$ in $\Foams{D^2,\epsilon}$ given by
\begin{align*}
\centering
\includegraphics[width=0.7\linewidth]{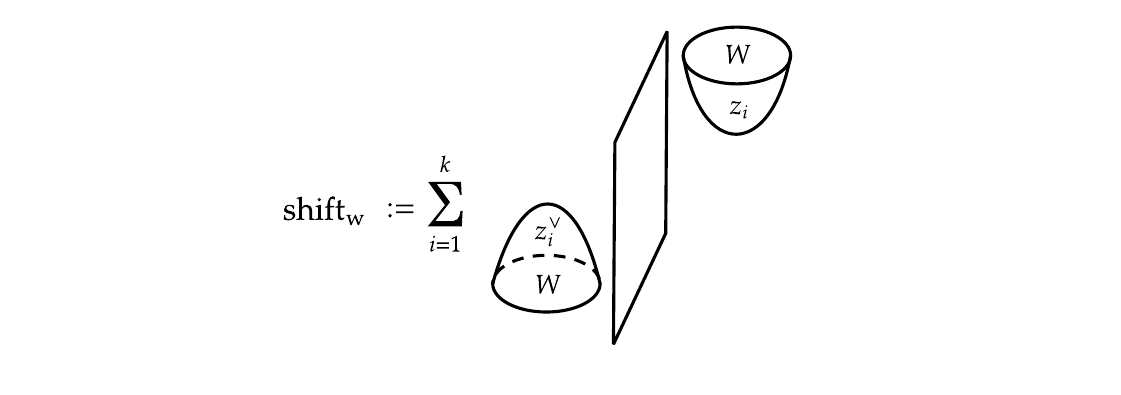}.
\end{align*}
More generally, if $W$ is any admissible closed web in $B^3=I\times D^2$, we let $W_L,W_R\in\TanWeb{I\times D^2,\epsilon}$ be defined similarly and then define the shift map $\BNfunc{W_L}\xrightarrow{\iota_{sh}}\BNfunc{W_R}$ by applying the shift described above termwise between the two complexes.

\begin{Lem}\label{lem:slide_=_shift}
Let $W$ be an admissible closed $\gl_2$ web in $B^3=I\times D^2$, and let $W_L$ (resp. $W_R$) be the web in $\TanWeb{I\times D^2,\epsilon}$ formed by placing $W$ to the left (resp. to the right) of an arbitrarily oriented and labeled through-strand in $\{1/2\}\times D^2$ (with endpoint data $\epsilon$). Let $W_L\xrightarrow{\eta_{un}}W_R$ (resp. $W_L\xrightarrow{\eta_{ov}}W_R$) denote the foam in $\TanWeb{I\times D^2,\epsilon}$ tracing out the isotopy of passing $W$ under (resp. over) the through-strand. Then the three maps $\iota_{sh},\BNfunc{\eta_{un}},\BNfunc{\eta_{ov}}$ are equal morphisms in $\Kb(\Foams{D^2,\epsilon})$.
\end{Lem}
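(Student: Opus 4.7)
The plan is to verify both equalities termwise on the cube of resolutions of $W$ and then apply a neck-cutting decomposition. Resolving all crossings and framing points of $W$ expresses $\BNfunc{W_L}$ and $\BNfunc{W_R}$ as complexes of planar webs in $\Foams{D^2,\epsilon}$, on which the three morphisms $\iota_{sh}$, $\BNfunc{\eta_{un}}$, $\BNfunc{\eta_{ov}}$ are each defined termwise. So I fix a planar closed web $W'\subset D^2$ placed to the left of the through-strand and aim to show that the three corresponding foam morphisms $W'_L\to W'_R$ coincide in $\Foams{D^2,\epsilon}$; descent to $\Kb(\Foams{D^2,\epsilon})$ is automatic.

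For such a $W'$, the universal construction yields the neck-cutting decomposition $\mathrm{id}_{W'}=\sum_i z_i\circ z_i^\vee$ in $\Foams{D^2,\emptyset}$, where $\{z_i^\vee\colon W'\to\emptyset\}$ and $\{z_i\colon\emptyset\to W'\}$ run over dual bases. By construction, $\iota_{sh}$ equals $\sum_i(z_i)_R\circ(z_i^\vee)_L$. I will apply the neck-cutting identity inside $\BNfunc{\eta_{un}}$ by inserting $\mathrm{id}_{W'}=\sum_i z_i\circ z_i^\vee$ in a small $4$-ball of $I\times I\times D^2$ containing $\{t_0\}\times W'$ at an early time $t_0$ chosen disjoint from the through-strand's world-surface (possible since $W'$ is initially far from the through-strand). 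This rewrites $\BNfunc{\eta_{un}}=\sum_i F^{(i)}_{un}$, where each summand factors through a time slice containing only the through-strand: the first half is $(z_i^\vee)_L$, and the second half creates $W'$ via $z_i$ on the left and then follows the underpass isotopy to land it on the right. An identical decomposition applies to $\BNfunc{\eta_{ov}}$.

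The remaining and most delicate step is to show that, for each $i$, the three creation-halves (direct creation on the right from $\iota_{sh}$, left-creation-then-underpass from $F^{(i)}_{un}$, and left-creation-then-overpass from $F^{(i)}_{ov}$) represent the same morphism $(\text{through-strand only})\to W'_R$ in $\Foams{D^2,\epsilon}$. This is a purely topological statement about $2$-foams in $I\times I\times D^2$ relative to the through-strand's world-surface. Since the surface traced by $W'$ in each creation half has trivial algebraic intersection with the through-strand world-surface (as $W'$ is a closed planar web not linking the through-strand), a $4$-dimensional lightbulb-type argument in the spirit of \cite{gabai20204} produces an isotopy rel boundary between any two such creation foams, yielding equality of the corresponding morphisms. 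Summing over $i$ then gives the three-way equality.

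The principal obstacle I expect lies in this last step, since the ``lightbulbs'' are not standard $2$-spheres but more general traced surfaces, so Gabai's theorem does not apply verbatim. A cleaner fallback is to first establish $\BNfunc{\eta_{un}}=\BNfunc{\eta_{ov}}$ by appealing to the sweep-around functoriality from Section~\ref{sec:gl2_webs_functorial_S3} after compactifying $B^3$ to $S^3$ (the difference of the two is an isotopy of $W$ looping once around the through-strand, which becomes isotopically trivial in $S^3$), and then equate the common value to $\iota_{sh}$ by a purely local neck-cut argument in a small ball containing $W'$ and not the through-strand, where the isotopy question becomes elementary.
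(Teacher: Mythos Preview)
Your proposal has a genuine gap at the very first step. You assert that $\BNfunc{\eta_{un}}$ and $\BNfunc{\eta_{ov}}$ ``are each defined termwise'' on the cube of resolutions of $W$, and then proceed as if the general case reduces for free to the planar case. But this termwise reduction is precisely the nontrivial content of the lemma. When a crossing of $W$ passes under (or over) the through-strand, the resulting Reidemeister III chain map is \emph{not} automatically diagonal with respect to the internal cube indexing: it can and generally does have cross terms mixing different resolutions of that crossing. The paper's proof is exactly about controlling these cross terms: one chooses the Reidemeister III representatives so that cross terms strictly increase the internal homological degree (following \cite{morrison2022invariants}), and since the total composition preserves homological degree, the cross terms must cancel in the end. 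Only after this argument does one obtain $\BNfunc{\eta_{un}}=\bigoplus_\delta\BNfunc{\eta_{un}}_\delta$, which is what you assumed at the outset. For Reidemeister III moves involving $2$-labeled strands an additional case check (or an endomorphism-space-rank argument plus a Lee-homology sign fix) is needed.

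Your treatment of the planar special case is morally the same as the paper's, though phrased through neck-cutting rather than directly. The paper simply observes that birthing a capping foam $z_i$ on the left and sliding it under the strand is isotopic in $I\times I\times D^2$ to birthing $z_i$ on the right, so Queffelec's functoriality gives equality immediately---no lightbulb theorem is needed, since the foams in question are genuinely ambiently isotopic rel boundary. Your invocation of Gabai here is both unnecessary and, as you note yourself, not directly applicable. Finally, your proposed fallback of deducing $\BNfunc{\eta_{un}}=\BNfunc{\eta_{ov}}$ from the sweep-around move has the implication backwards: as the paper remarks right after the lemma, it is this lemma that can be used to reprove the sweep-around move, not the other way around.
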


\begin{Rmk}
One could reprove the sweep-around move required for the $\R^3$ to $S^3$ functoriality upgrade using the equality of $\BNfunc{\eta_{un}}$ and $\BNfunc{\eta_{ov}}$ in Lemma~\ref{lem:slide_=_shift}, similarly as in \cite[Corollary~5.7]{chen2025flip}. Nevertheless, we proceeded as in Section~\ref{sec:gl2_webs_functorial_S3}, hoping that Theorem~\ref{thm:Lee_foam_evaluation} or its proof might be of independent interest.
\end{Rmk}

\begin{proof}
We only prove $\BNfunc{\eta_{un}}=\iota_{sh}$, as the other half is analogous. We remark that, in the case when the middle strand is $1$-labeled and the web $W$ is a ($1$-labeled) link, the equality follows from the argument of the sweep-around move in \cite[Theorem~1.1]{morrison2022invariants}. Following their idea, we proceed as follows:

\textbf{Special case}: $W$ is planar.

The universal $\gl_2$ homology of $W$ is generated by foams capping it off in $B^3$. Birthing a foam on the left of the strand and sliding the boundary web under the strand to the right is isotopic to birthing the foam on the right. By Queffelec's functoriality \cite{queffelec2022gl2}, this proves the desired equality.

\textbf{The general case}:

The map $W_L\xrightarrow{\eta_{un}}W_R$ can be decomposed into a sequence of elementary moves
\[ W_L=W_0 \xrightarrow{\eta_1} W_1 \xrightarrow{\eta_2} \cdots \xrightarrow{\eta_n} W_n =W_R,\]
where each $\eta_i$ is either a Reidemeister II move, a fork slide, or a Reidemeister III move involving three strands having non-alternating orientations. Letting $N$ denote the number of crossings in $W$, each complex $\BNfunc{W_t}$ can be viewed as a twisted complex with terms indexed by $\delta\in\{0,1\}^N$ via the cube of resolutions for $W$, and we have termwise maps
\[\BNfunc{W_L}_\delta=\BNfunc{W_0}_\delta \xrightarrow{\BNfunc{\eta_1}_\delta} \BNfunc{W_1}_\delta \xrightarrow{\BNfunc{\eta_2}_\delta} \cdots \xrightarrow{\BNfunc{\eta_n}_\delta} \BNfunc{W_n}_\delta = \BNfunc{W_R}_\delta\]
induced by the corresponding isotopies on the resolutions. If $W_{t-1}\xrightarrow{\eta_t}W_t$ is a Reidemeister II move or a fork slide, $\BNfunc{\eta_t} =\bigoplus_\delta \BNfunc{\eta_t}_\delta$ with no cross terms. If $\eta_t$ is a Reidemeister III move involving only $1$-labeled strands (with non-alternating orientations), \cite{morrison2022invariants} showed that $\BNfunc{\eta_t}$ can be chosen carefully in its chain homotopy class to ensure that the cross terms $\BNfunc{\eta_t}-\bigoplus_\delta\BNfunc{\eta_t}_\delta$ strictly increase the \emph{internal homological degree}, defined as the homological degree of the index $\delta$. Since the entire composition $\BNfunc{W_0}\xrightarrow{\BNfunc{\eta_{un}}}\BNfunc{W_1}$ preserves the (internal=total) homological degree, the contributions from these cross terms vanish in the full composition $\BNfunc{\eta_{un}}$.

It remains to consider $\BNfunc{\eta_t}$ for Reidemeister III moves involving $2$-labeled strands (with non-alternating orientations). One can check, case by case, using \cite[Appendix~A]{queffelec2022gl2}, that in these cases $\BNfunc{\eta_t}=\bigoplus_\delta \BNfunc{\eta_t}_\delta$ again with no cross terms, so that $\BNfunc{\eta_{un}}=\bigoplus_\delta \BNfunc{\eta_{un}}_\delta$, and the desired statement follows from the planar case. We outline an alternative effortless proof. Note that the endomorphism space between the local source and target of a Reidemeister III induced map with $2$-labeled strands involved in the quantum grading $0$ is isomorphic to $\Z$. Thus the local map must agree with the local termwise Reidemeister maps up to sign; combining this with the rest of the argument thus far, we see $\BNfunc{\eta_{un}}=\pm\iota_{sh}$. To fix the sign, we pass to Lee homology. We add in a $1$-labeled framed crossingless unknot as needed to force $W$ to have writhe zero, so that there exists a $\gl_2$ foam that caps off $W$ in $B^4$, compatibly idempotent-colored in the sense explained in the proof of Theorem~\ref{thm:Lee_foam_evaluation}. Then the functoriality of \eqref{eq:onsurface} \cite{queffelec2022gl2} shows that birthing $W$ on the left of the middle strand and sliding it to the right induces the same map on Lee homology as birthing $W$ on the right of the strand. Since both maps are nonzero, this proves $\BNfunc{\eta_{un}}=\iota_{sh}$.
\end{proof}

\begin{Rmk}
In fact, by taking advantage of Bar-Natan's canopolis formalism, one can view $(\cdot)_L$ (resp. $(\cdot)_R$) as a functor $\TanWeb{I\times D^2,\emptyset}\to \TanWeb{I\times D^2,\epsilon}$ which plugs objects and morphisms into a local thickened disc to the left (resp. to the right) of a through-strand in $\{1/2\}\times D^2$ with endpoint data $\epsilon$. In this language, the functoriality of \eqref{eq:onsurface} ensures that our maps in Lemma~\ref{lem:slide_=_shift} induce natural transformations (indeed, natural isomorphisms) $\iota_{sh},\BNfunc{\eta_{un}},\BNfunc{\eta_{ov}}$ between the functors $$\BNfunc{(\cdot)_L},\BNfunc{(\cdot)_R}\colon\TanWeb{I\times D^2,\emptyset}\to\Kb(\Foams{D^2,\epsilon}).$$ Lemma~\ref{lem:slide_=_shift} implies that these natural isomorphisms are in fact equal.
\end{Rmk}

\subsubsection{Extension to singular \texorpdfstring{$\gl_2$}{gl2} foams}\label{sec:gl2_webs_functorial_singular}
We now extend the functor \eqref{eq:CKhR_2_webs_S3} defined in Section~\ref{sec:gl2_webs_functorial_S3} to singular foams, obtain the claimed functor \eqref{eq:CKhR_2_webs_S3_sing} in Theorem~\ref{thm:gl2_webs_functorial}, and prove the extra assertions in Theorem~\ref{thm:gl2_webs_functorial}.

We first describe the link of singularity $L$ for each singularity model in singular $\gl_2$ foams, fix admissible representatives of them, and fix explicit cocycles $z(L)\in CKhR_2^{univ}(L)$, which will be useful in the construction. Below, we follow the notation in \cite{queffelec2022gl2}. In particular, $a_3,a_4\in\{\pm1\}$ are free sign variables for the universal Khovanov-Rozansky $\gl_2$ homology theory, which affects only the sign assignments to morphisms.
\begin{enumerate}[(i)]
\item A positive transverse double point between $1$- and $2$-labeled faces:
\begin{figure}[H]
\centering
\includegraphics[width=0.8\linewidth]{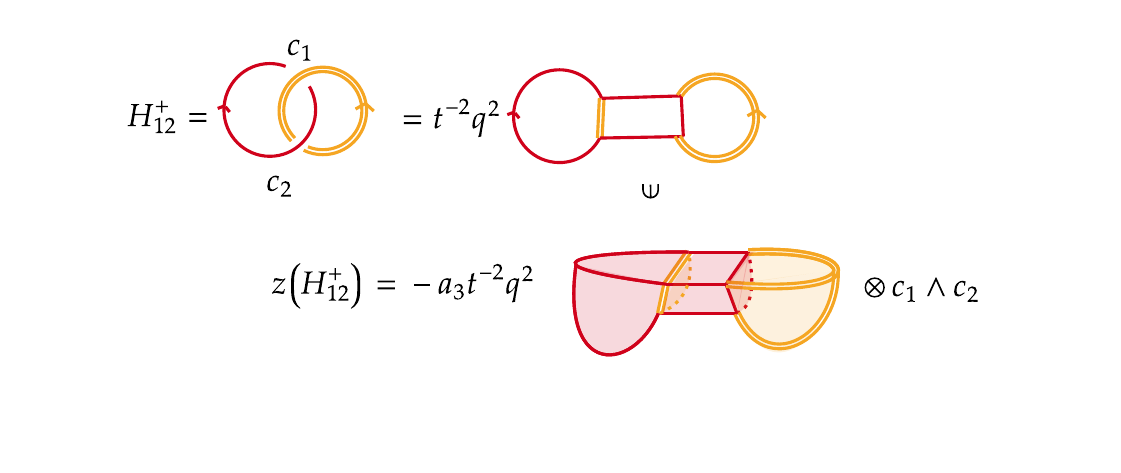}.
\end{figure}
\item A negative transverse double point between $1$- and $2$-labeled faces:
\begin{figure}[H]
\centering
\includegraphics[width=0.8\linewidth]{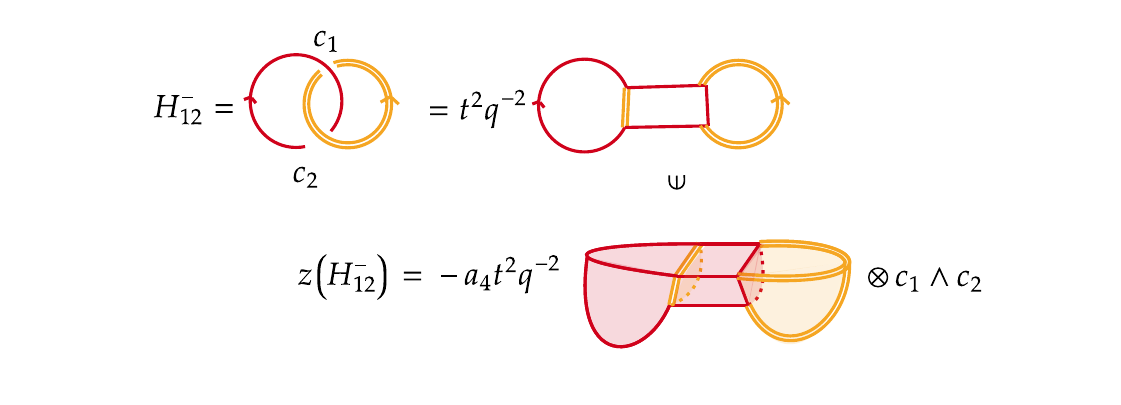}.
\end{figure}
\item A positive transverse double point between $2$-labeled faces:
\begin{figure}[H]
\centering
\includegraphics[width=0.8\linewidth]{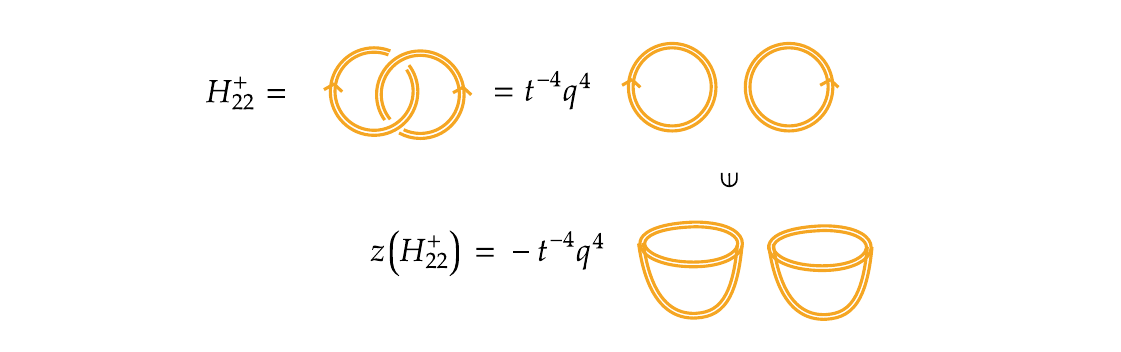}.
\end{figure}
\item A negative transverse double point between $2$-labeled faces:
\begin{figure}[H]
\centering
\includegraphics[width=0.8\linewidth]{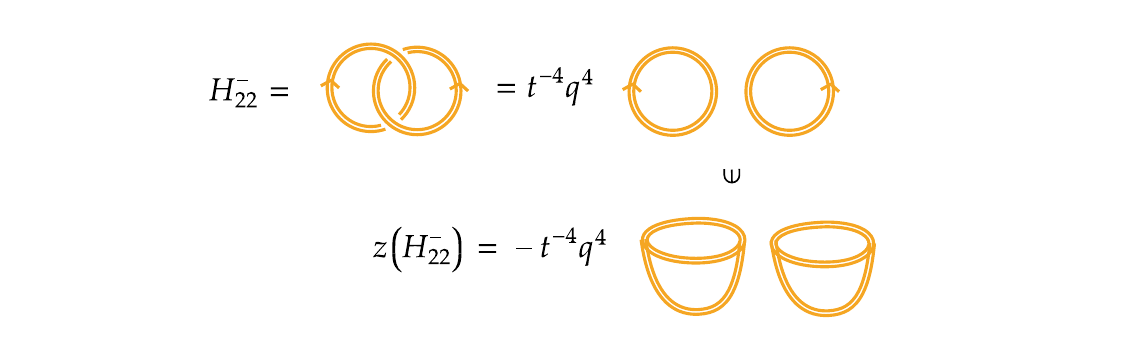}.
\end{figure}
\item An $n$-labeled framing point on a $1$-labeled face:
\begin{figure}[H]
\centering
\includegraphics[width=0.8\linewidth]{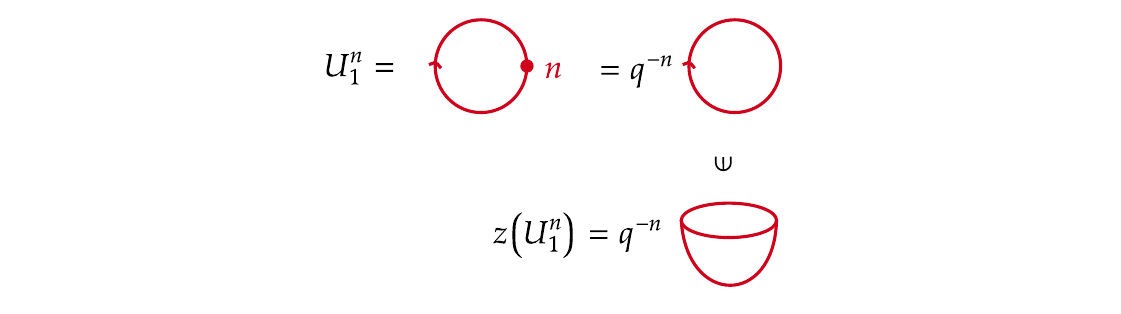}.
\end{figure}
\item An $n$-labeled framing point on a $2$-labeled face:
\begin{figure}[H]
\centering
\includegraphics[width=0.8\linewidth]{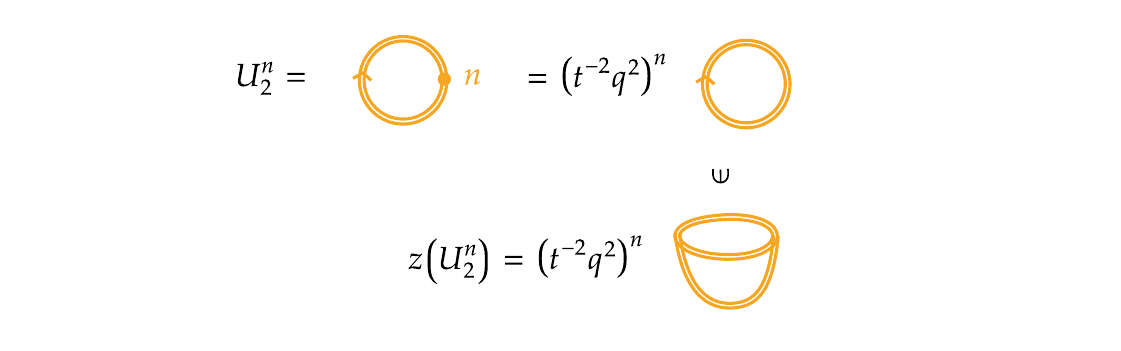}.
\end{figure}
\end{enumerate}

Now, for a singular $\gl_2$ foam $F\subset I\times S^3$ between admissible $\gl_2$ webs $W_0,W_1\subset S^3$, we delete small $4$-balls $B_1,\cdots,B_k$ from $I\times S^3$ around singular points of $F$, and choose disjoint framed paths in the exterior of $F\cup\sqcup_{i=1}^kB_i$ to tube each $\partial B_i$ to $\{0\}\times S^3$ so that the links of singularities land disjointly near $\{0\}\times\infty\in\{0\}\times S^3$ as admissible links of one of the standard models (i)-(vi) above, denoted $L_1,\cdots,L_k$. We have built a nonsingular dotted $\gl_2$ foam $F^\circ\colon W_0\sqcup(\sqcup_{i=1}^kL_i)\to W_1$ in $I\times S^3$, which induces a map $$CKhR_2^{univ}(F^\circ)\colon CKhR_2^{univ}(W_0)\otimes(\otimes_{i=1}^kCKhR_2^{univ}(L_i))\to CKhR_2^{univ}(W_1).$$ Evaluating at cocycles $z(L_1),\cdots,z(L_k)$ at all but the first tensorial factor in the source, we obtain a map $$CKhR_2^{univ}(F):=CKhR_2^{univ}(F^\circ)(-\otimes(\otimes_{i=1}^kz(L_i)))\colon CKhR_2^{univ}(W_0)\to CKhR_2^{univ}(W_1).$$ By the same argument as in \cite[Theorem~5.2]{morrison2022invariants}, $CKhR_2^{univ}(F)$ up to chain homotopy is independent of the choices of the ordering of singular points and of framed paths, and is functorial under composition of singular foams (\cite{morrison2022invariants} only argued this on the level of homology, but the relevant facts, namely the triviality of the sweep-around move and the $\pi_1(SO(3))$-action, both hold on the chain level up to homotopy).

It remains to prove the two extra items in Theorem~\ref{thm:gl2_webs_functorial}. Since Theorem~\ref{thm:gl2_webs_functorial}(1) holds for nonsingular dotted $\gl_2$ foams and respects compositions, it suffices to check it when $F$ has a single singular point of type (i)-(vi) and is a product elsewhere. By our construction, in addition to the contribution from $\chi(u(F))$, the bigrading shift of a singular point of type (i)-(vi) is given by $t^{-2}q^2$, $t^2q^{-2}$, $t^{-4}q^4$, $t^4q^{-4}$, $q^{-n}$, $(t^{-2}q^2)^n$, respectively, each of which is consistent with Theorem~\ref{thm:gl2_webs_functorial}(1), proving the statement.

Before proving Theorem~\ref{thm:gl2_webs_functorial}(2), we reinterpret the assignment $CKhR_2^{univ}(F)$ for singular foams $F$ on the level of diagrams. If $F$ has a single type (i) singularity and is a product elsewhere, then up to precomposing and postcomposing with isotopies, we may assume that $F$ takes a standard form, which is a negative-to-positive crossing change between a $1$-labeled edge and a $2$-labeled edge as shown in Figure~\ref{fig:H_12+_semi_std}.

The induced map $CKhR_2^{univ}(F)$ is represented by the movie which births the element $z(H_{12}^+)$ in the complex for the Hopf link $H_{12}^+$ near the point at $\infty$, drags this Hopf link near the relevant crossing (which maintains the element $z(H_{12}^+)$ via Lemma~\ref{lem:slide_=_shift}), and then performs the saddles and Reidemeister moves shown below. 
\begin{figure}[H]
\centering
\includegraphics[width=0.7\linewidth]{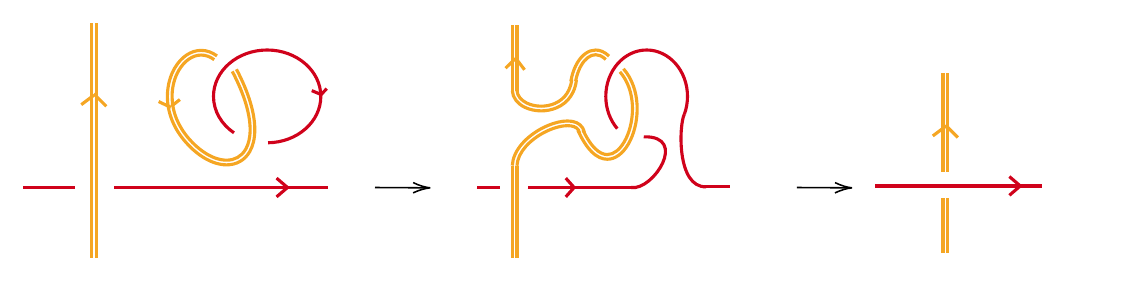}
\end{figure}
On the level of the resolution cube, this becomes
\begin{figure}[H]
\centering
\includegraphics[width=0.8\linewidth]{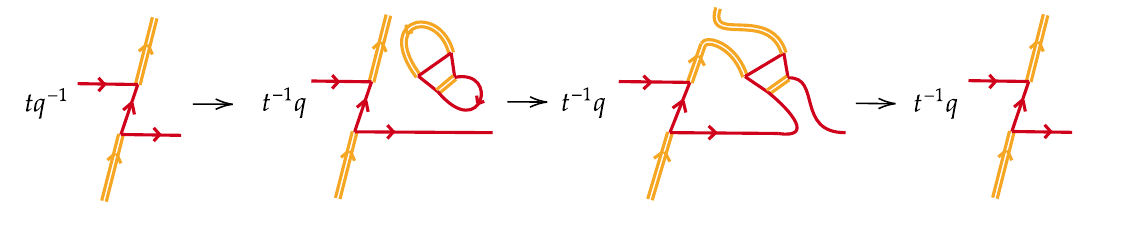},
\end{figure}
where the first map is the birth of $z(H_{12}^+)$, the second map is induced by two saddles, and the third map is a Reidemeister II induced map, given explicitly by the downward arrow in \cite[(A.15)]{queffelec2022gl2} (note that our diagrams correspond to the mirror of those in \cite{queffelec2022gl2}, hence we are using his (A.15) instead of (A.16))\footnote{The foam $F$ depicted in \cite[(A.15)]{queffelec2022gl2} and some other foams therein have some color inconsistency, which the readers may ignore.}. One can check that the composition of the underlying foams is isotopic to the identity foam, and the total shift and coefficient the composition carries is $t^{-2}q^2$. Therefore, the induced map of a standard type (i) singularity as shown in Figure~\ref{fig:H_12+_semi_std} is the degree shift by $t^{-2}q^2$.

In an analogous manner, one can compute the induced map of other type singularities explicitly in terms of movies on diagrams. We collect the results as follows.

\begin{Lem}\label{lem:singular_maps}
The induced map of a $\gl_2$ foam with a single singularity, represented as a chosen movie of diagrams, is given on the level of resolutions by the maps determined from Table~\ref{tab:singular_maps}.\qed
\end{Lem}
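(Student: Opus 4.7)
The plan is to treat each singularity type (i)--(vi) by the same procedure that was just illustrated for type (i) in the paragraph preceding the lemma. Given a singular foam $F$ with a single singular point whose product structure elsewhere is trivial, we first use isotopy to bring $F$ into a chosen standard form; concretely, we isotope the link of singularity $L_i$ to a neighborhood of the relevant crossing/framing point in the movie presentation and push the rest of $F$ away. We then unpack $CKhR_2^{univ}(F)$ as the composite of (a) the birth of the fixed cocycle $z(L_i)\in CKhR_2^{univ}(L_i)$ near $\{0\}\times\infty$, (b) the drag of that standard link to the location of the singularity, and (c) a short sequence of saddles and Reidemeister moves that replaces the crossing/framing point by its new local model. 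Step (b) preserves the cocycle $z(L_i)$ on the nose up to homotopy by Lemma~\ref{lem:slide_=_shift}, so the computation reduces to a purely local one in a cube of resolutions.

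The local computation is then carried out resolution by resolution. For each type, after choosing standard representatives for $z(L_i)$ as in the list preceding the lemma, the cube of resolutions of the composite movie is made explicit using the Reidemeister II/III and saddle formulas from \cite[Appendix~A]{queffelec2022gl2}; one then checks that the underlying foam in each resolution is isotopic rel boundary to the identity foam decorated with the indicated dots and framing modifications, and records the overall degree/sign shift dictated by Theorem~\ref{thm:gl2_webs_functorial}(1). For types (i)--(iv), the argument is essentially that already given for (i): the composite foam simplifies to an identity up to the bigrading shift $t^{\mp2}q^{\pm2}$ (or $t^{\mp4}q^{\pm4}$ in the $2,2$-case), and the identifications in Queffelec's Reidemeister maps pin down the entry in Table~\ref{tab:singular_maps}. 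For types (v) and (vi), the links of singularities are framed unknots, and a direct calculation using the Reidemeister~I induced maps (with framing-point corrections computed from Figure~\ref{fig:gl2_convention}) gives the termwise maps on resolutions.

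The main technical point, and the one requiring the most care, will be the $2,2$-case and the $2$-framing-point case, where the resolution cubes contain more terms and where non-trivial cross terms could in principle appear between different resolutions of the two incoming $2$-labeled crossings. We handle this exactly as in the proof of Lemma~\ref{lem:slide_=_shift}: using \cite[Appendix~A]{queffelec2022gl2} one checks case by case that the relevant Reidemeister III induced maps (with $2$-labeled strands and non-alternating orientations) have no cross terms, so that $CKhR_2^{univ}(F)$ splits as a direct sum of its termwise pieces. The termwise pieces are then each visibly identity foams up to dots and framing, and the combined degree shift matches the prediction of Theorem~\ref{thm:gl2_webs_functorial}(1). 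Once every case has been recorded, the resulting table is the one referenced as Table~\ref{tab:singular_maps}, completing the proof.
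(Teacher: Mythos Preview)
Your proposal is correct and follows exactly the paper's approach: the paper works out type (i) in detail in the paragraph preceding the lemma, then simply states ``In an analogous manner, one can compute the induced map of other type singularities explicitly in terms of movies on diagrams'' and marks the lemma with \qed. Your elaboration of what ``analogous'' means---birth the cocycle $z(L_i)$, drag via Lemma~\ref{lem:slide_=_shift}, then simplify the local movie against \cite[Appendix~A]{queffelec2022gl2}---is precisely the intended computation.

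One small remark: your worry about cross terms in the $2,2$-case is unnecessary. In the $\gl_2$ setting, crossings involving a $2$-labeled strand (whether $1$--$2$ or $2$--$2$) resolve to a \emph{single} term with a grading shift (see Figure~\ref{fig:gl2_convention}), so there is no cube of resolutions to worry about for types (iii), (iv), (vi). The only multi-term resolutions come from $1$--$1$ crossings, and none of the six standard singularity movies in Table~\ref{tab:singular_maps} involve those. This actually makes the computation simpler than you suggest, not harder.
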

\begin{table}
\centering
\begin{tabular}{|c|c|c|}
\hline
Type of singularity & Diagram & Induced map on resolution \\\hline
(i) & 
\includegraphics[scale=0.5]{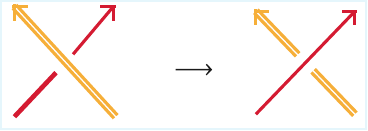}
\raisebox{10pt}
& $t^{-2}q^2$ \\[2pt]\hline
(ii) & 
\includegraphics[scale=0.5]{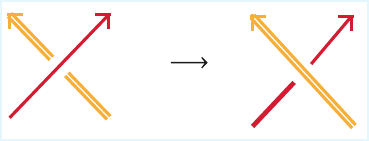}
\raisebox{10pt}
& $t^2q^{-2}$ \\\hline
(iii) & 
\includegraphics[scale=0.5]{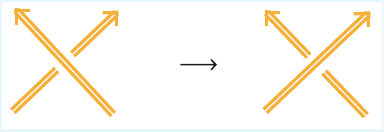}
\raisebox{10pt}
& $-t^{-4}q^4$ \\\hline
(iv) & 
\includegraphics[scale=0.5]{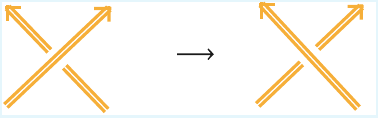}
\raisebox{10pt}
& $-t^4q^{-4}$ \\\hline
(v) & 
\includegraphics[scale=0.5]{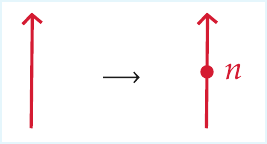}
\raisebox{10pt}
& $q^{-n}$ \\\hline
(vi) & 
\includegraphics[scale=0.5]{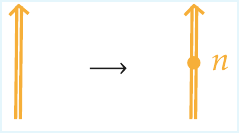}
\raisebox{10pt}
& $(t^{-2}q^2)^n$ \\\hline
(i) & 
\includegraphics[scale=0.5]{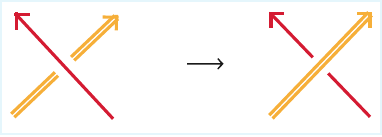}
\raisebox{10pt}
& $a_3a_4t^{-2}q^2$ \\\hline
(ii) & 
\includegraphics[scale=0.5]{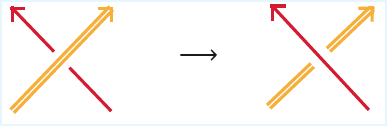}
\raisebox{10pt}
& $a_3a_4t^2q^{-2}$ \\\hline
\end{tabular}\vspace{5pt}
\caption{Induced maps by a singularity in a singular $\gl_2$ foam.}
\label{tab:singular_maps}
\end{table}

We will not use the last two rows of Table~\ref{tab:singular_maps}, but they are included for readers' convenience. It would be reasonable to impose the additional constraint $a_3=a_4$ in \cite{queffelec2022gl2} to remove the extra sign twists in the descriptions.

\begin{figure}
\centering
\includegraphics[width=0.7\linewidth]{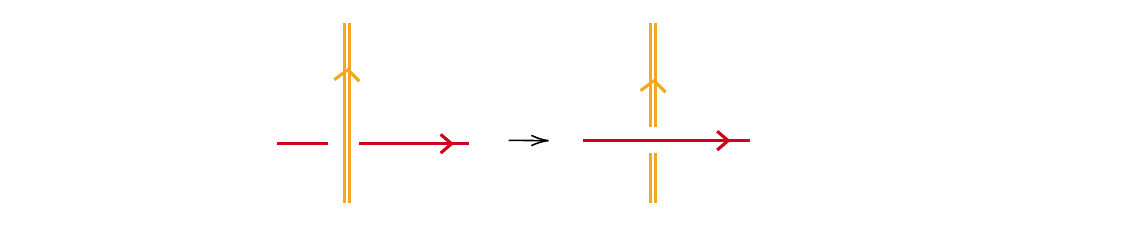}
\caption{The movie of a standard type (i) singularity.}
\label{fig:H_12+_semi_std}
\end{figure}

We are now ready to prove Theorem~\ref{thm:gl2_webs_functorial}(2). We first show the following topological lemma.
\begin{Lem}\label{lem:reembed_thick_moves}
If two singular $\gl_2$ foams $F,F'\colon W_0\to W_1$ in $I\times S^3$ have $u(F)=u(F')$, along which the germs of all data in $F$ and $F'$ agree, then $F$ is related to $F'$ by a sequence of the following moves:
\begin{enumerate}
\item Creation of a local unknotted $2$-labeled framed oriented $2$-sphere disjoint from $F$, or its inverse.
\item Tubing $2$-labeled faces along a framed arc ending on $F$ with interior disjoint from $F$, or its inverse.
\item Trading a local $\pm$-self-intersection (see Figure~\ref{fig:pos_self_int}) of a $2$-labeled face with a $\pm2$-labeled framing point, or its inverse.
\begin{figure}
\centering
\includegraphics[width=0.75\linewidth]{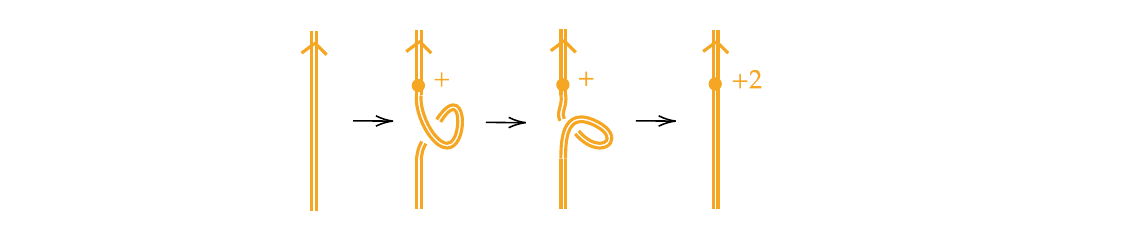}
\caption{A movie of a local positive self-intersection of a $2$-labeled face}
\label{fig:pos_self_int}
\end{figure}
\item A finger/Whitney move between $1$- and $2$-labeled faces.
\item A finger/Whitney move between $2$-labeled faces.
\item A fork version of the finger/Whitney move through a $2$-labeled face, as shown in Figure~\ref{fig:fork_Whitney}.
\item A framing change in the interior of $2$-labeled faces.
\item Collision of framing points on $2$-labeled faces in a weight-preserving way, or its inverse.
\begin{figure}
\centering
\includegraphics[width=0.85\linewidth]{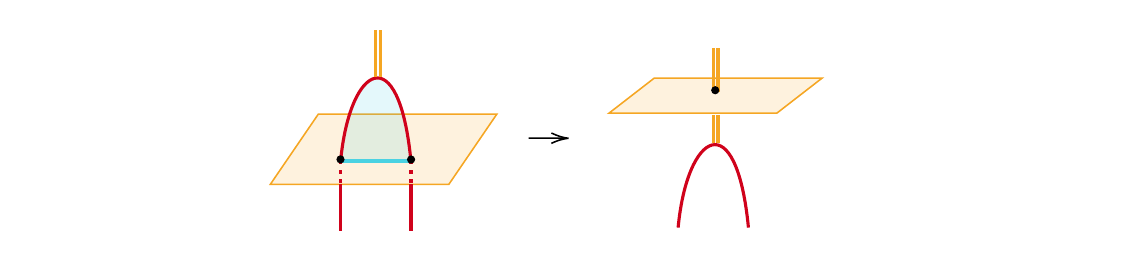}
\caption{A time slice of a fork-Whitney move between a $2$-labeled face and a neighborhood of a point on a seam. The shaded blue region indicates a Whitney disk. Any compatible orientation is allowed.}
\label{fig:fork_Whitney}
\end{figure}
\end{enumerate}
\end{Lem}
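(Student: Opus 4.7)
The plan is to reduce the comparison of $F$ and $F'$ to a relative cobordism problem between their $2$-labeled parts, and then decompose this cobordism into the listed moves via Morse theory. Let $F_2$ and $F_2'$ denote the union of $2$-labeled faces of $F$ and $F'$, viewed as properly immersed framed oriented surfaces in $I\times S^3$ with boundary on the seams. By hypothesis these share the same boundary with matching germs. After a small isotopy rel $u(F)$ we arrange $F_2$ and $F_2'$ to be mutually transverse, and form the closed immersed oriented surface $\Sigma := F_2 \cup_{\partial}(-F_2')$.

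Since $H_2(I\times S^3;\mathbb{Z})=0$ and $\Omega_2^{SO}=0$, the surface $\Sigma$ bounds an immersed oriented $3$-manifold $W$ in an auxiliary product $J\times I\times S^3$ with $J=[0,1]$, which by general position can be chosen transverse to $J\times u(F)$ and disjoint from the $J$-traces of dots and of $1$-labeled framing points. We may further assume that $W$ interacts with $J\times u(F)$ only along a finite collection of transverse arcs, corresponding to the traces of intersection points between $F_2, F_2'$ and $u(F)$.

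Perturbing the projection $W\to J$ to be generic Morse, each critical point translates via standard Cerf-theoretic arguments into an elementary modification of $F_2$. Index-$0$ and index-$3$ critical points yield move (1) (creation or deletion of a local unknotted $2$-sphere). Index-$1$ and index-$2$ critical points give: the tubing move (2), when the handle connects distinct components of $F_2$ or runs along an arc that changes its homology class; the ordinary finger/Whitney moves (4)--(5), when the handle cancels a pair of transverse double points between $F_2$ and $u(F)$ or between sheets of $F_2$; and the fork-Whitney move (6), when the attaching arc of the handle crosses a seam of $F$ in time. Double-point curves of $W$ meeting a generic slice translate into local self-intersections of $F_2$, which are then traded for $\pm 2$-labeled framing points via move (3); any surplus framing-point data is normalized by moves (7) and (8).

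The main obstacle will be verifying in the last step that the list of moves (4)--(6) is complete for describing every local way in which a $1$-handle attachment of $W$ can interact with the $1$-labeled skeleton $u(F)$, especially at or near the seams where the boundary of $F_2$ meets $u(F)$. This requires a careful finite case analysis of the local models for Morse critical points of $W$ sitting inside a neighborhood of $J\times u(F)$, and an argument that each such local picture is equivalent, rel $u(F)$, to one of the elementary models (4), (5), or (6). No deeper algebraic obstruction arises, since $I\times S^3$ is simply connected and framing is the only $H^2$-type invariant, which is fully absorbed by moves (3), (7), (8); but the bookkeeping of these local seam models is where the argument is most delicate.
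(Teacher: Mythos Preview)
Your overall strategy—bound the difference of the $2$-labeled parts by a $3$-manifold and read off moves via Morse theory—is the same as the paper's. The discrepancy is in how seams are handled, and this is where your argument has a genuine gap.

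You form a \emph{closed} surface $\Sigma=F_2\cup_\partial(-F_2')$ by gluing along the seams, and then bound it by an immersed $W$ in $J\times I\times S^3$. But for the time-slices of $W$ to represent a path of $2$-labeled surfaces with the \emph{fixed} seam boundary $s=\partial F_2=\partial F_2'$, the cobordism must be taken rel $s$: concretely, $W$ should agree with $J\times F_2$ in a neighborhood of $J\times s$. Your closed-$\Sigma$ setup does not impose this, so generic slices of $W$ need not have boundary on the seams at all, and your Morse analysis does not produce a movie of singular foams with fixed $u(F)$. Your claim that ``$W$ interacts with $J\times u(F)$ only along a finite collection of transverse arcs'' is incompatible with $\partial F_2\subset u(F)$ being a whole curve system.

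Relatedly, your interpretation of move (6) as ``a $1$-handle of $W$ crossing a seam in time'' is not how it is used, and it is unclear it can be made to work. The paper avoids analyzing Morse critical points near seams altogether. In the seamed case it first uses moves (2), (3), (6) as a \emph{preprocessing step}: connect $F_2$, then for each self-intersection point $p$ of $F_2$ choose a path on $F_2$ from $p$ to a seam and push the other sheet off through that seam via a fork-Whitney move (6), leaving $F_2$ embedded. A second preprocessing step, creating a local self-intersection near a seam via (3) and sliding it off via (6), changes $[F_2]$ by a meridian sphere of that seam; this is needed because $F_2$ and $F_2'$ may differ in relative homology in $(I\times S^3)\setminus\nu(s)$ by such classes (an obstruction you do not address). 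Only after $F_2,F_2'$ are embedded and homologous rel $s$ does the paper take an \emph{embedded} cobordism $W$, arranged to be the product $J\times F_2$ near $J\times s$, and run the Morse argument; now no critical point sees the seams, and only moves (1)(2)(4)(7)(8) arise. The seam-free case is handled separately with an immersed $W$ allowing Whitney umbrella singularities (which produce move (3)).

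So: the obstacle you flag at the end is real, but the right fix is not a local case analysis near seams; it is to preprocess with (3) and (6) and then insulate the cobordism from the seams entirely.
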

\begin{proof}
Let $F_1,F_2$ denote the closures of the union of $1,2$-labeled faces in $F$, respectively. We divide into two cases.

\textbf{Case 1}: $F$ has empty seam.

Without loss of generality, assume $F_2'\ne\emptyset$. Further assume $F_2'$ is connected by applying some moves (2). The immersed surface $(\{0\}\times F_2)\cup(I\times\partial F_2)\cup(\{1\}\times F_2')\subset\partial(I\times(I\times S^3))$ bounds an immersed $3$-manifold $W\looparrowright I\times(I\times S^3)$. By general position (and further modification of $W$ for the third item below if necessary), we may assume:
\begin{itemize}
\item $W$ has only transverse double point singularities along arcs and circles, where arcs may end either on $\partial W\subset\partial(I\times(I\times S^3))$ or on Whitney umbrella singularities in $int(W)$ \cite{whitney1944singularities}.
\item $W$ and $I\times F_1$ are in general position.
\item The projection of $W$ to the first $I$ coordinate is Morse without index $3$ critical points, and with all critical points away from $I\times F_1$.
\item The projection of $W\cap(I\times F_1)$ to the first $I$ coordinate is Morse.
\item The projection of the interior of the double point locus of $W$ to the first $I$ coordinate is Morse.
\item Local neighborhoods of Whitney umbrella singularities on $W$ are in general position with respect to the Morse function; thus, they appear in the time movie as local $\pm$-self-intersection creations/annihilations.
\end{itemize}
By going up the first $I$ coordinate, we see that there exists some singular $\gl_2$ foam $F''\colon W_0\to W_1$ that differs from $F'$ only by some framing points on $2$-labeled faces, so that $F$ and $F''$ are related by a sequence of moves (1)(2)(3)(4)(5)(7). The self-intersections of $F''$ and $F'$ are equal, since they are both determined by the common boundary data $W_0,W_1$; therefore, they are further related by some moves (8), as desired.

\textbf{Case 2}: $F$ has nonempty seams.

Using move (2), we assume $F_2$ (and $F_2'$) to be connected. If $p\in F_2$ is a self-intersection point, pick a generic path $\gamma$ on $F_2$ connecting $p$ to a point on a seam. One can then tube the other sheet of $F_2$ at $p$ along $\gamma$ and use move (6) to remove the intersection point $p$. Thus, we may assume $F_2$ (and $F_2'$) to have no self-intersections.

Let $s=\partial F_2=\partial F_2'$, and $\nu(s)$ be a tubular neighborhood of $s$. The relative homology classes represented by $F_2$ and $F_2'$ in $I\times S^3\backslash\nu(s)$ rel the common boundary differ by some meridian spheres of seams of $F$. If $s_0$ is a seam of $F$, using move (3), we may create a local self-intersection on $F_2$ near $s_0$. Then we may slide the self-intersection off the seam $s_0$ as in the previous paragraph, changing the relative homology class of $F_2$ by a meridian sphere of $s_0$. Hence, we may arrange so that $F_2$ and $F_2'$ are homologous rel the common boundary in $I\times S^3\backslash\nu(s)$.

Since $F_2$ and $F_2'$ are embedded and homologous rel boundary, there is an embedded $3$-manifold $W\subset I\times(I\times S^3)$ cobounding $\{0\}\times F_2$ and $\{1\}\times F_2'$, which agrees with $I\times F_2$ in $I\times\nu(s)$. Now the same Morse theory argument as in Case 1 shows that one may change $F_2$ to $F_2'$ by a sequence of moves (1)(2)(4)(7)(8).
\end{proof}

Hence, to prove Theorem~\ref{thm:gl2_webs_functorial}(2), it suffices to show that $CKhR_2^{univ}(F)$ changes sign under moves (1)(2) and is invariant under moves (3)-(8) described in Lemma~\ref{lem:reembed_thick_moves}.

Let $F$ and $F'$ be related by one of the moves (1)-(8).

(1)(2): $F$ and $F'$ are related by a local skein relation \cite[(3.2)]{queffelec2022gl2} which introduces a sign change.

(7): The induced maps by $F$ and $F'$ are equal by the same proof as in Claim 3 of the proof of Theorem~\ref{thm:Lee_foam_evaluation}.

(8): This follows from the description of induced maps for type (vi) singularities in Lemma~\ref{lem:singular_maps}.

(4): If $F'$ is obtained from $F$ by a finger move, then locally we can represent $F$ by the constant movie and $F'$ by a movie that does a crossing change shown in the first row of Table~\ref{tab:singular_maps}, followed by the inverse change shown in the second row of Table~\ref{tab:singular_maps}. Lemma~\ref{lem:singular_maps} implies that $F'$ induces the same map as $F$.

(5): This is similar to (4), where we use the third and fourth rows of Table~\ref{tab:singular_maps} instead of the first two.

(3): For positive local self-intersections, we need to run through the moves in Figure~\ref{fig:pos_self_int} and show that the induced map of the composition agrees with that of a type (vi) singularity with $n=2$. By the description of moves \cite[(A.3)(A.4)]{queffelec2022gl2} and the third row of Table~\ref{tab:singular_maps}, the composition induces the degree shift by $t^{-4}q^4$, which agrees with the description in the sixth row of Table~\ref{tab:singular_maps} for $n=2$. The calculation for negative local self-intersections is similar; alternatively, one may decompose a negative local self-intersection annihilation as a positive local self-intersection creation followed by a Whitney move.

(6): If $F'$ is obtained from $F$ by a fork-finger move, then locally we can represent $F$ by the constant movie and $F'$ by the movie
\begin{figure}[H]
\centering
\includegraphics[width=0.75\linewidth]{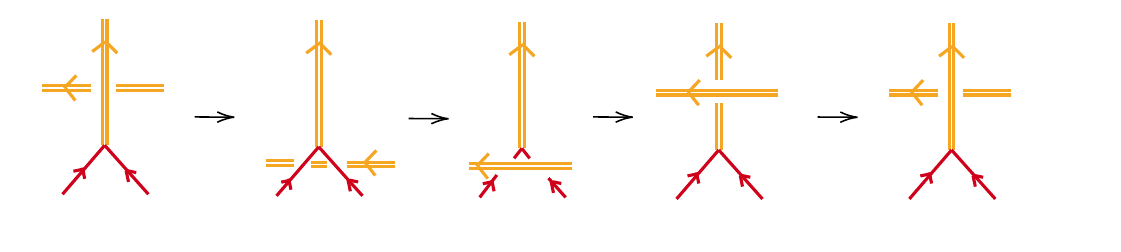}
\end{figure}
or its reverse. The reverse of every move in the movie induces the inverse map of the corresponding forward move. Hence it suffices to check that the forward movie induces the identity map, or more conveniently, that the composition of the first two maps is equal to the composition of the inverses of the last two maps. This follows from moves \cite[(A.57)(A.58)]{queffelec2022gl2} and the second and fourth rows of Table~\ref{tab:singular_maps}.

The proof of Theorem~\ref{thm:gl2_webs_functorial} is complete.\qed\smallskip

Before moving on, we note that the proof of Theorem~\ref{thm:gl2_webs_functorial} can be used to provide a singular version of the functor \eqref{eq:onsurface} with domain $\TanWebsing{I\times S,\epsilon}$, namely:
\begin{equation}\label{eq:local singular BNfunc}
\TanWebsing{I\times S,\epsilon}\xrightarrow{\BNfunc{\cdot}} \HChb(\Foams{S,\epsilon}).
\end{equation}

\subsubsection{Monoidal \texorpdfstring{$2$}{2}-categories and braidings}
\label{ssec:braidedcat}

In Section~\ref{sec:gl2_webs_local}, we regarded $\Foams{S,\epsilon},\TanWeb{I\times S,\epsilon}$ as $1$-categories, with boundary conditions fixed. In the special case $S=I^2$ one may instead leave the boundary data free (with certain restrictions) and proceed as follows.

\begin{enumerate}
\item We let $\Foams{I^2,-}$ denote the monoidal $2$-category defined as follows. Objects are labeled sign sequences $\sigma \in (\{+,-\}\times\{1,2\})^n$ for various $n\geq 0$, with monoidal structure given by concatenation. We often think of $\sigma$ alternatively as a set of evenly spaced labeled oriented points in $I$. The morphism category from $\sigma_0$ to $\sigma_1$ is precisely $\Foams{I^2,\epsilon(\sigma_0,\sigma_1)}$, where $\epsilon(\sigma_0,\sigma_1)=(-\sigma_0\times\{0\})\sqcup(\sigma_1\times\{1\})$. Thus $1$-morphisms are planar webs from $\sigma_0$ to $\sigma_1$, and $2$-morphisms are foams between them. The monoidal structure is given by placing webs and foams side-by-side and rescaling. Every $1$-morphism in the endomorphism category of the distinguished object $\emptyset$ (no boundary points), i.e. every closed web, is isomorphic to a direct sum of grading shifts of $\mathrm{id}_{\emptyset}$, i.e. the empty web. Since $\mathrm{End}(\mathrm{id}_{\emptyset})$ is the ground ring $\Z[E_1,E_2]$, the representable functor $\operatorname{Hom}(\mathrm{id}_{\emptyset}, -)$ witnesses an equivalence with the category of finitely generated graded free $\Z[E_1,E_2]$-modules which is braided (in fact, symmetric) monoidal.
\item We let $\TanWebsing{I\times I^2,-}$ denote the braided monoidal $2$-category with the same objects as $\Foams{I^2,-}$, whose morphism category from $\sigma_0$ to $\sigma_1$ is $\TanWebsing{I\times I^2,\epsilon(\sigma_0,\sigma_1)}$. The braiding $1$-morphisms use standard crossings in $I\times I^2$. 
\item We let $\HChb(\mathbf{Foams}_{I^2,-})$ denote the monoidal $2$-category whose objects are the same as in $\Foams{I^2,-}$, $1$-morphisms are chain complexes over $\mathbf{Foams}_{I^2,-}$, and $2$-morphisms are equivalence classes of homogeneous closed morphisms between such chain complexes, with components given by $2$-morphisms in $\mathbf{Foams}_{I^2,-}$, considered up to homogeneous exact morphisms. It is expected, but not yet proven, that this monoidal $2$-category admits a braiding\footnote{In light of the homological algebra involved, it may be more natural to model this braided monoidal $2$-category as a truncation of an $\mathbb{E}_2$-monoidal $(\infty,2)$-category, i.e. as a $\gl_2$ version of \cite{liu2024braided,stroppel2024braiding}. We refer to these articles for an in-depth discussion of the necessary higher algebra, which then accommodates all higher movie moves.}, such that the functors \eqref{eq:local singular BNfunc} assemble into a braided monoidal $2$-functor
\[
\TanWebsing{I\times I^2,-} \xrightarrow{\BNfunc{\cdot}} \HChb(\Foams{I^2,-}).
\]
Since endomorphisms of $\emptyset$ in $\TanWebsing{I\times I^2,-}$ can be identified with $\TanWebsing{\R^3}$, composing with a representable functor recovers the singular version
\[CKhR_2^{univ}\colon
\TanWebsing{\R^3}\to\HChb(\Z[E_1,E_2])^\Z
\]
of Queffelec's functor \eqref{eq:CKhR_2_webs_R3}.
 
\item As in \cite[Section~6]{morrison2022invariants}, one can construct a braided monoidal $2$-category without further higher-algebraic complications by a mixture between (2) and (3): objects and $1$-morphisms are as in $\TanWebsing{I\times I^2,-}$, but spaces of $2$-morphisms are computed inside $\HChb(\mathbf{Foams}_{I^2,-})$ after applying the functor \eqref{eq:onsurface}. The axioms of a braided monoidal $2$-category then follow from the functoriality of \eqref{eq:onsurface}. We invite the readers to keep this braided $2$-category in mind for the following discussion of sylleptic centers.
 \end{enumerate}

\begin{Rmk}
\label{rem:blockdecomposition}
Note that all (higher) categories here decompose by $\Z$-valued \emph{weight} computed on objects as signed sum of all labels.
\end{Rmk}

\subsubsection{Graded sylleptic considerations}
\label{sec:sylleptic}

Recall that in a braided monoidal $2$-category, every pair of objects $A,B$ admits a braiding $1$-morphism
\[ A \boxtimes B \xrightarrow{R_{A,B}} B \boxtimes A\]
which is invertible up to $2$-isomorphisms and equipped with higher coherence $2$-morphisms that express categorified analogs of the naturality and hexagon axioms of braided monoidal $1$-categories. A braided $1$-category is symmetric if the double braiding of any two objects is the identity. For braided $2$-categories the situation is more subtle. A coherent trivialization of the double braiding:
\[ \nu_{A,B}\colon (A \boxtimes B \xrightarrow{R_{A,B}} B \boxtimes A \xrightarrow{R_{B,A}} A \boxtimes B) \xrightarrow{\cong} (A \boxtimes B \xrightarrow{\mathrm{id}} A \boxtimes B)\] 
is called a \emph{syllepsis}. Coherence of the trivializations requires, amongst others, naturality in both arguments. Note that a syllepsis can also be considered as a coherent identification
\[
(A \boxtimes B \xrightarrow{R_{A,B}} B \boxtimes A) 
\xrightarrow{\cong} 
(A \boxtimes B \xrightarrow{R_{B,A}^{-1}} B \boxtimes A)
\]
between positive and negative (inverse) braiding $1$-morphisms.

Given a braided monoidal $2$-category, one can consider its \emph{sylleptic center} \cite[Section~5.1]{crans1998generalized}\footnote{Named \emph{$2$-center} there.}, which consists of objects $A$ equipped with a coherent trivialization of the double braiding with any other object $B$. True to the naming, the sylleptic center is then naturally a sylleptic monoidal $2$-category itself \cite[Theorem~5.1]{crans1998generalized}. 

For the putative braided $2$-category $\HChb(\Foams{I^2,-})$ from Section~\ref{ssec:braidedcat} we expect that all purely $2$-labeled objects can be interpreted as objects in a \emph{$\Z$-crossed} analog of the sylleptic center. Note that we have a natural $\Z$-action on $1$-morphisms (i.e. complexes of planar webs) with the generator $1\in\Z$ acting by the grading shift autoequivalence $t^2q^{-2}$. We observe that Section~\ref{sec:gl2_webs_functorial_singular} (with any choice of signs $a_3$ and $a_4$) provides for every purely $2$-labeled object $A$ and every other object $B$ a coherent identification
\[
(A \boxtimes B \xrightarrow{R_{A,B}} B \boxtimes A) 
\xrightarrow{\cong} 
(t^2q^{-2})^{|A|\cdot|B|/2}(A \boxtimes B \xrightarrow{R_{B,A}^{-1}} B \boxtimes A)
\]
of the braiding of $A$ and $B$ with a grading shift of the inverse braiding, see Table~\ref{tab:singular_maps}.

The ($\Z$-crossed) sylleptic center is the natural home of objects whose identity $1$-morphisms admit a coherent system of \emph{unbelting} $2$-isomorphisms. The bottom projector $P_{\sigma,0}^\vee$ to be constructed in Section~\ref{sec:gl2_projectors} can be interpreted as projection onto the full sub-$2$-category on purely $2$-labeled objects and thus, possibly, into the $\Z$-crossed sylleptic center.

\subsection{\texorpdfstring{$\gl_2$}{gl2} Rozansky projectors}\label{sec:gl2_projectors}
In this section, we follow a recipe of Hogancamp in \cite[Section~5.2]{hogancamp2020constructing} to construct Rozansky projectors in the universal $\gl_2$ webs and foams setting. We also give sketch proofs of properties of the Rozansky projectors stated in Proposition~\ref{prop:projector_properties} in this setup. We follow the terminologies in \cite{hogancamp2020constructing}, with the caveat that we are applying the dual construction of \cite{hogancamp2020constructing} (see the comment at the beginning of Section~1.2 in \cite{hogancamp2020constructing}).

Let $\sigma$ be an object in $\Foams{I^2,-}$, namely a labeled signed sequence $(\{+,-\}\times\{1,2\})^n$ for some $n\ge0$. Let $u(\sigma)$ denote the underlying (unoriented) $1$-labeled points determined by $\sigma$, and let $\ell:=\#u(\sigma)$. We build a Rozansky projector $P_{\sigma,0}^\vee\in\mathbf K^+(\mathbf{Foams}_{I^2,-}(\sigma,\sigma))$ which, upon forgetting the thick edges, orientations, and setting $E_1=E_2=0$, recovers the Rozansky projector $P_{\ell,0}^\vee$ that appeared in Section~\ref{sec:projectors}. Here, $\mathbf K^+(\mathbf{Foams}_{I^2,-}(\sigma,\sigma))$ denotes the bounded below homotopy category of cochain complexes in $\mathbf{Foams}_{I^2,-}(\sigma,\sigma)=\mathbf{Foams}_{I^2,\epsilon(\sigma,\sigma)}$.

Let $\mathcal B_{u(\sigma)}$ denote the finite set of crossingless matchings of $u(\sigma)\times\{1\}\subset I^2$ up to isotopy rel boundary\footnote{Note that for $\ell$ odd, $\mathcal B_{u(\sigma)}=\emptyset$, and the following construction will produce $P_{\sigma,0}^\vee=0$.}. For each $\delta\in\mathcal B_{u(\sigma)}$, we pick a web $W_\delta\in\mathbf{Foams}_{I^2,-}(\tau,\sigma)$ for some (necessarily $2$-labeled) object $\tau$ so that forgetting the $2$-labeled edges and orientations gives $u(W_\delta)=\delta$. We also pick a $2$-morphism $\eta_{\delta}\colon 1_\sigma\to q^{\ell/2}W_\delta\otimes W_\delta^t$ which recovers the cobordism $1_\ell\to q^{\ell/2}\delta\otimes\delta^t$ given by $\ell/2$ saddles upon forgetting $2$-labeled faces and orientations. Here, $1_\sigma$ is the identity $1$-morphism at $\sigma$, and $(\cdot)^t$ denotes vertical reflection composed with orientation reversal.

Let $C:=q^{\ell/2}(\oplus_{\delta\in\mathcal B_{u(\sigma)}}W_\delta\otimes W_\delta^t)$ be an object in the monoidal category $\mathcal A=\mathcal A_\sigma:=\mathbf{Foams}_{I^2,-}(\sigma,\sigma)$, equipped with the unit map
\[\eta_C=\oplus_{\delta\in\mathcal B_{u(\sigma)}}\eta_\delta\colon1_\sigma\to C.\]
Then, an object in $\mathcal A$ is (left, or equivalently right) $C$-injective in the dual sense of \cite[Definition~2.5]{hogancamp2020constructing} if and only if it factors through a purely $2$-labeled object.

Let $P_C^\vee\in\mathbf K^+(\mathcal A)$ be defined by applying the cobar construction dual to \cite[(3.1)]{hogancamp2020constructing} to the unital object $C$ in $\mathcal A$, which comes with a unit map $\iota_C\colon1_\sigma\to P_C^\vee$. By Hogancamp \cite[Theorem~3.12]{hogancamp2020constructing}, $(P_C^\vee,\iota_C)$ is an idempotent algebra in $\mathbf K^+(\mathcal A)$ characterized up to homotopy by
\begin{enumerate}[(a)]
\item $P_C^\vee$ is a complex of $C$-injective objects;
\item $\mathrm{id}_C\otimes\iota_C\colon C\to C\otimes P_C^\vee$ and/or $\iota_C\otimes\mathrm{id}_C\colon C\to P_C^\vee\otimes C$ is a homotopy equivalence.
\end{enumerate}
Moreover, if $P_C^\vee,P_C'^\vee$ are two unital idempotent algebras in $\mathbf K^+(\mathcal A)$ satisfying (a) and (b), then there is a unique homotopy equivalence $P_C^\vee\simeq P_C'^\vee$ up to homotopy that interwines with the unit maps up to homotopy.

We may further deloop all loops formed by $1$-labeled edges in the components of $P_C^\vee$ to obtain a preferred model of the Rozansky projector, denoted $(P_{\sigma,0}^\vee,\iota_\sigma)$. Properties (1) and (2) in Proposition~\ref{prop:projector_properties} in their webs and foams versions thus follow from the construction. When $\sigma$ is purely $2$-labeled, we may choose $C=1_\sigma$, hence (after a further homotopy) $P_{\sigma,0}^\vee=1_\sigma$ with the identity unit map. This shows the analog corresponding to Proposition~\ref{prop:projector_properties}(3). The analog of Proposition~\ref{prop:projector_properties}(4) follows from \cite[Remark~3.4]{hogancamp2020constructing}.

Let $\sigma^-$ denote the orientation-reversal of the flip of $\sigma$. The $\pi$-rotation of $P_{\sigma,0}^\vee$ is a complex in $\mathbf K^+(\mathcal A_{\sigma^-})$ satisfying the characterizing properties of $P_{\sigma^-,0}^\vee$. Hence, there is a canonical homotopy equivalence verifying the analog of Proposition~\ref{prop:projector_properties}(8).

To prove the analogs of Proposition~\ref{prop:projector_properties}(5)(6)(7) in our setup, we observe the following property of the Rozansky projectors. Let $\Foams{I^2,-}^{(1.5)}$ denote the collection of full subcategories of the hom-categories in $\Foams{I^2,-}$ on all $1$-morphisms that factor through purely $2$-labeled objects. We think of $\mathbf{Foams}_{I^2,-}^{(1.5)}$ as an ideal of $\mathbf{Foams}_{I^2,-}$ with respect to the horizontal composition. Thus, by construction, $P_{\sigma,0}^\vee\in\mathbf K^+(\mathcal A^{(1.5)})\subset\mathbf K^+(\mathcal A)$, where $\mathcal A^{(1.5)}=\mathcal A_\sigma^{(1.5)}:=\Foams{I^2,-}^{(1.5)}(\sigma,\sigma)$. By \cite[Theorem~3.12]{hogancamp2020constructing}, if $X\in\mathbf K^+(\mathbf{Foams}_{I^2,-}^{(1.5)}(\sigma',\sigma))$ for some $\sigma'$, then $\iota_\sigma\otimes\mathrm{id}_X\colon X\xrightarrow{\simeq}P_{\sigma,0}^\vee\otimes X$ is a homotopy equivalence. Similarly, if $X\in\mathbf K^+(\mathbf{Foams}_{I^2,-}^{(1.5)}(\sigma, \sigma'))$ for some $\sigma'$, then $\iota_\sigma\otimes\mathrm{id}_X\colon X\xrightarrow{\simeq}P_{\sigma,0}^\vee\otimes X$ is a homotopy equivalence. Morally, one should think of $P_{\sigma,0}^\vee$ as projecting $\mathbf K^+(\mathcal A)$ onto $\mathbf K^+(\mathcal A^{(1.5)})$. The analog of Proposition~\ref{prop:projector_properties}(5) and the first part of (7) directly follow from these properties. The second part of (7) follows by bending up the lower right half of the diagrams. Finally, (6) follows by bending down the two sides of the over/understrand, since the complex in the source is then termwise, hence overall, homotopy equivalent to a complex in some $\mathbf K^+(\mathbf{Foams}_{I^2,-}^{(1.5)}(\sigma',\sigma))$.

\subsection{The sign fixes}\label{sec:gl2_sign}
In this section we use the webs and foams formalism to resolve all sign ambiguities present in the main body of the paper. Throughout this section we consider webs and foams versions of various earlier diagrams. In particular, in any previous diagram involving $P_{\ell,0}^\vee$, we note that the orientations of the $\ell$ strands determine a (purely $1$-labeled) sign sequence $\sigma\in\Foams{I^2,-}$, and we replace such $P_{\ell,0}^\vee$ (boxes in the diagrams) with $P_{\sigma,0}^\vee$ from Section~\ref{sec:gl2_projectors}.

\subsubsection{Sliding belts down}\label{sec:sign_slide}
In Section~\ref{sec:SZ}, when deriving the isomorphism \eqref{eq:SZ} by breaking it into a sequence of isomorphisms, the isomorphism on row \eqref{eq:SZ_slide} by ``sliding off'' the belts was only well-defined up to sign. In the webs and foams formalism, the ``sliding-off'' maps are termwise given by singular foams that drag the belts off, hitting the $2$-labeled strands in the middle level transversely---these can be interpreted as components of the syllepsis in the sense of Section~\ref{sec:sylleptic}. By Theorem~\ref{thm:gl2_webs_functorial}, these termwise maps fit into a ``sliding-off'' isomorphism supplying \eqref{eq:SZ_slide}.

\subsubsection{Regions \texorpdfstring{$B$}{B} to \texorpdfstring{$H$}{H} and \texorpdfstring{$R_1$}{R1}}\label{sec:sign_region_B_R1}
We fix the sign in the proof of the commutativity of the lower triangle in region $R_1$ in Section~\ref{sec:handleslides}. It suffices to fix the sign termwise. On each term of the twisted complex, the two composite cobordisms agree up to re-embedding the interior of $2$-labeled faces, hence the commutativity follows from Theorem~\ref{thm:gl2_webs_functorial}.\medskip

We fix the sign in the proof of the commutativity of region $B$ in Section~\ref{sec:concrete_finger}, and the fixes for regions $C,D,E,F,G,H$ are analogous. This is a consequence of Lemma~\ref{lem:sweep_around_across} below.

\begin{Lem}[Enhanced sweep-around move]\label{lem:sweep_around_across}
The singular $\gl_2$ foam given by the movie in Figure~\ref{fig:sweep_around_across} induces the identity chain map up to homotopy on the universal $\gl_2$ tangle invariant.
\end{Lem}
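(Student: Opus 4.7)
My plan is to reduce the enhanced sweep-around to Theorem~\ref{thm:gl2_webs_functorial}(2) combined with the ordinary sweep-around triviality established in Section~\ref{sec:gl2_webs_functorial_S3}. The singular $\gl_2$ foam $F$ encoding the movie in Figure~\ref{fig:sweep_around_across} has the same source and target web $W$, and its underlying singular dotted surface $u(F)$ is isotopic to the identity cobordism on $u(W)$: the sweep does not alter the $1$-labeled skeleton of the web, and whatever singularities $F$ acquires are transverse intersections with $2$-labeled faces which do not appear in $u(F)$. Applying Theorem~\ref{thm:gl2_webs_functorial}(2) will then give $CKhR_2^{univ}(F) \simeq \pm\,\mathrm{id}$ up to homotopy.

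To fix the sign, I plan to pursue one of two complementary routes. The conceptual route is a direct appeal to Theorem~\ref{thm:gl2_webs_functorial}(2): after isotoping the interiors of the $2$-labeled faces of $F$ away from the $1$-labeled skeleton, the resulting regular foam is abstractly identical to the identity foam on $W$, with the same germ data along $u(F)$ and the same mod-$4$ Euler characteristic on its $2$-labeled faces, so the sign is $+1$. The more hands-on route is to decompose the movie into elementary pieces: (a) Reidemeister and Morse moves of the swept strand that do not cross anything, (b) under- or over-slides past other $1$-labeled strands, and (c) transverse passes through $2$-labeled strands. Pieces of type (a) induce the identity by the sweep-around triviality of Section~\ref{sec:gl2_webs_functorial_S3}; pieces of type (b) induce shift maps via Lemma~\ref{lem:slide_=_shift} whose contributions cancel as the sweep closes up; and pieces of type (c) occur in cancelling pairs (the swept strand enters and later exits), so by Lemma~\ref{lem:singular_maps} and Table~\ref{tab:singular_maps} each pair contributes either $(t^{-2}q^2)(t^2q^{-2}) = 1$ for a $1\times 2$ intersection or $(-t^{-4}q^4)(-t^4q^{-4}) = 1$ for a $2\times 2$ intersection.

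The main obstacle will be the bookkeeping in the hands-on approach, namely making sure that the degree shifts from Lemma~\ref{lem:slide_=_shift} and the signs from the singularity maps do not accumulate across the full movie. In the conceptual approach, the hard part will be verifying that a genuine ambient isotopy carries the $2$-labeled faces of $F$ to those of the identity foam without disturbing the $1$-labeled skeleton or altering the germ data: this reduces to showing that the closed regions of $2$-labeled surface swept past the $1$-labeled part are homologically trivial rel their boundary, which in turn will follow from the same Morse-theoretic analysis used in the proof of Lemma~\ref{lem:reembed_thick_moves}.
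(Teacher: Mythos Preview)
Your conceptual route via Theorem~\ref{thm:gl2_webs_functorial}(2) is different from the paper and is close to working, but there is a gap in how you read part (2). The phrase ``abstract dotted surface $u(F)$'' there means $u(F)$ regarded as an embedded unoriented dotted link cobordism with the $\gl_2$ decoration forgotten; it does \emph{not} mean $u(F)$ with its embedding forgotten. Knotted cylinders from a link to itself can induce nontrivial Khovanov maps, so the abstract topological type of $u(F)$ alone cannot determine $CKhR_2^{univ}(F)$ even up to sign. What actually salvages your argument is that $u(F)$, being the trace of an ambient isotopy (the sweep), is isotopic rel boundary to $u(\mathrm{id}_W)$ as an embedded surface; carrying the full singular foam along this isotopy produces $F''$ with $u(F'')=u(\mathrm{id}_W)$ on the nose, after which (2) together with your germ and mod-$4$ Euler check applies. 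Your sentence ``isotoping the interiors of the $2$-labeled faces away'' conflates this ambient isotopy of all of $F$ with the re-embedding of $2$-labeled faces that (2) licenses; both steps are needed, and in the stated order. You should also note that (2) is proved for closed webs in $S^3$, so the local/tangle analog (which follows since all moves in Lemma~\ref{lem:reembed_thick_moves} are local) must be invoked. Finally, your hands-on route misapplies Lemma~\ref{lem:slide_=_shift} in part (b): that lemma slides a \emph{closed} web past a single through-strand, which is not the configuration you describe.

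The paper's proof is different and much shorter. It saddles together the $k$ incoming and $k$ outgoing $2$-labeled boundary strands of $T$, reducing by functoriality to the case $k=0$, and then finishes by applying Lemma~\ref{lem:slide_=_shift} twice. Your approach trades this concrete reduction for an appeal to the full machinery of Theorem~\ref{thm:gl2_webs_functorial}(2), at the cost of having to track the embedded-versus-abstract distinction carefully.
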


\begin{figure}
\centering
\includegraphics[width=0.8\linewidth]{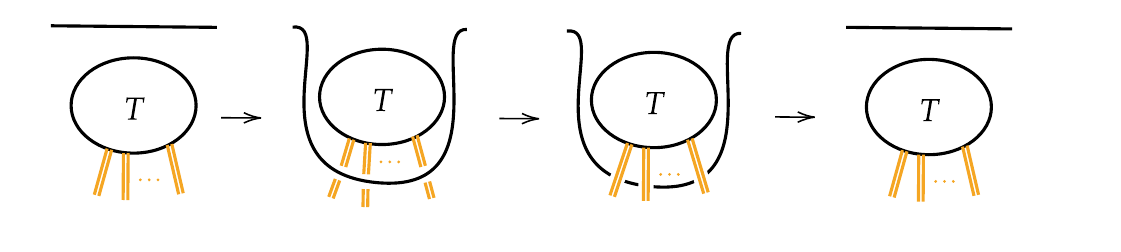}
\caption{The movie of a ``sweep-around-across'' singular foam, where $T$ is an arbitrary tangled web, the black strand has an arbitrary label, and all strands shown have arbitrary orientations.}
\label{fig:sweep_around_across}
\end{figure}

\begin{proof}
Say there are $k$ incoming $2$-labeled strands connected to $T$, and hence $k$ outgoing ones. When $k=0$, the statement follows from applying Lemma~\ref{lem:slide_=_shift} twice. In general, use $k$ saddles to pair up the incoming and outgoing strands, exploiting functoriality and reduce to the case $k=0$.
\end{proof}

\subsubsection{The barbell move}\label{sec:sign_barbell}
We fix the sign $c=1$ near the end of the proof of Lemma~\ref{lem:barbell_las}.

In the webs and foams formalism, when fixing the constant $c$, instead of capping the $(m_+,m_-)$ (resp. $(n_+,n_-)$ strands off by dotted annuli in each row of Figure~\ref{fig:barbell_check}, we need to perform a combination of dotted annular caps and zips, illustrated on $(+,+,+,-)$ (write for short ambiguously still as $(3,1)$) strands as
\begin{figure}[H]
\centering
\includegraphics[width=0.75\linewidth]{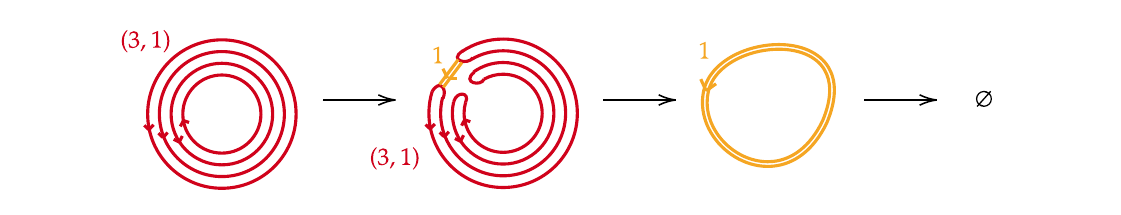},
\end{figure}
where the first map is a zip and a saddle, the second map is two dots, a cap, and a zip-cap, and the third map is a cap. Here, cyclic orderings on seams and placements of dots before zip-caps are fixed once and for all, for each sequence of $+,-$ of even length. As this capping procedure is more complicated than before, we present the relevant diagram chasing in Figure~\ref{fig:barbell_sign}. Here, only a half of each term is drawn, where the other half is understood as obtained by switching $m$ and $n$, $m'$ and $n'$, and reversing the orientations on the $m,m'$ strands. All squares and the triangle commute by locality and Theorem~\ref{thm:gl2_webs_functorial}. The two downward maps to the front middle term in the bottom are equal by Lemma~\ref{lem:sweep_around_across}. We need to show that the composite maps from $\emptyset$ to the rightmost term are equal in $KhR_2$, under either the upward or the downward coevaluation map composed with any other path of maps or their inverses (if invertible). For this purpose, we stay in the terms without projectors in Figure~\ref{fig:barbell_sign}, from which the equality of the two compositions follows from Theorem~\ref{thm:gl2_webs_functorial}.

\begin{figure}
\centering
\includegraphics[width=0.8\linewidth]{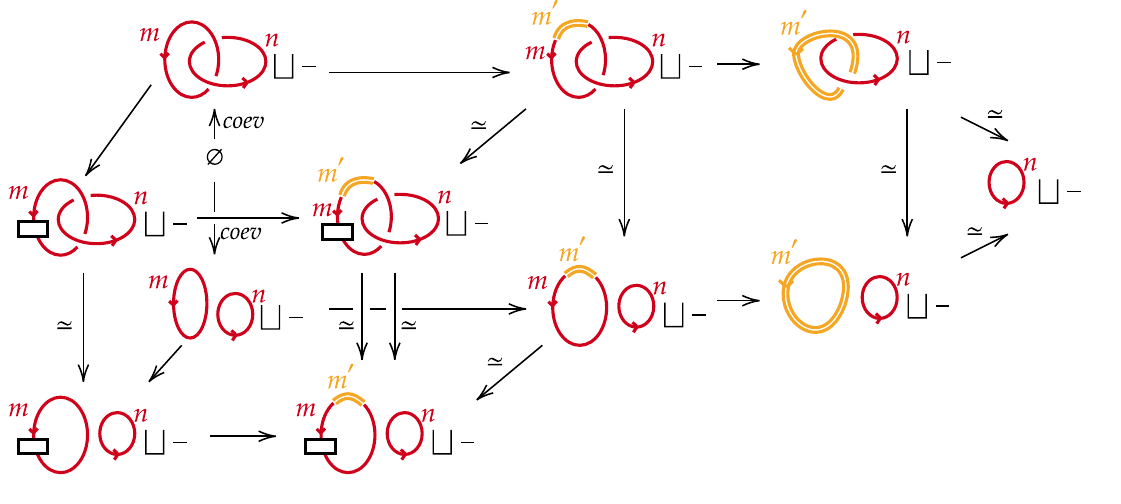}
\caption{Diagram chase for fixing $c=1$ in the proof of Lemma~\ref{lem:barbell_las}.}
\label{fig:barbell_sign}
\end{figure}

\subsubsection{The Gluck twist}\label{sec:sign_gluck}
We fix the sign ambiguities in Section~\ref{sec:las_gluck} that arose when defining various element $1$'s in the homology of twisted belt links, as well as when showing some compatibilities of these element $1$'s in Lemma~\ref{lem:twist_belt_link}.

Let $T\subset S^1\times S^2$ be a standard positive/negative twisted belt link and $U\subset S^1\times S^2$ be the corresponding standard belt link obtained by untwisting. In the webs and foams formalism, the isomorphism $\widetilde{KhR}_2^+(T)\cong\widetilde{KhR}_2^+(U)$ is obtained termwise by pushing the $\pm1$ twist above the Rozansky projector region to the vertical $2$-labeled edges in the middle of the Rozansky projector region by simplifying Reidemeister I,II moves and fork twist moves (in the sense of \cite{queffelec2022gl2}), post-composed with the singular $\gl_2$ foam that undoes the $\pm1$ twist on the $2$-labeled strands by crossing changes together with a $\mp1$ framing change on each strand. This sign fix consequently fixes the signs of $1_\pm\in\widetilde{KhR}_2^+(T)$, $1_\pm\in\mathcal S_0^2(D^2\times S^2;T)$, and $1\in\widetilde{KhR}_2^+(T(n,n)_{n_+,n_-})$.\medskip

It remains to fix the termwise signs when proving the commutativities of regions $TB$, $TC$, $TD$, $TE$, $TG$, $TX$. For regions $TB$, $TC$, $TD$, $TE$, it suffices to check that the composition
\begin{figure}[H]
\centering
\includegraphics[width=0.85\linewidth]{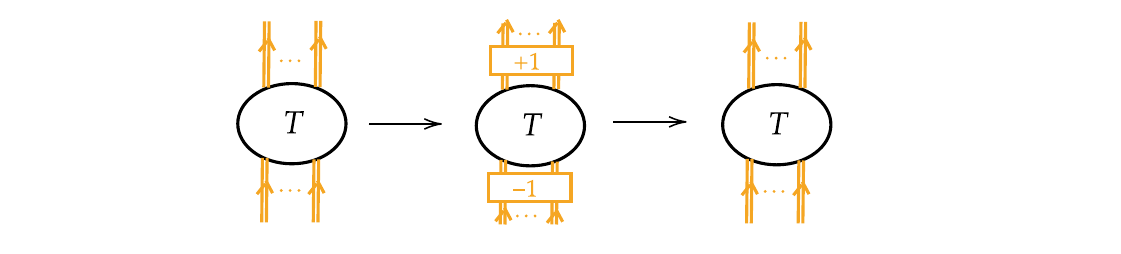}
\end{figure}
induces the identity map on homology (rather than its negative), where $T$ is any tangle, the first map is a twist around $T$, the second map is an unlinking of the $\pm1$ twists together with framing changes on $2$-labeled edges. It suffices to check this after we close up the $2$-labeled strands. In the closed up diagram, the second map can be replaced by the map that cancels the twists in the outer part without changing its induced map, thanks to Theorem~\ref{thm:gl2_webs_functorial}. The composition is now equal to the rotation by $2\pi$ around the core of the solid torus that the closed up diagrams live in. Since this rotation extends to a $2\pi$ rotation in $S^3$, it induces the identity map. By a re-embedding of $2$-labeled faces, the same argument applies to fix the sign for region $TG$. The sign fix for region $TX$ is easier and we omit the proof.

\printbibliography

\end{document}